\definecolor{MyLinkColor}{rgb}{0,0,0.4}
\newcommand{\R}{{\mathbb R}}
\newcommand{\E}{{\mathcal E}}
\newcommand{\N}{{\mathbb N}}
\newcommand{\cK}{\mathcal{K}}
\newcommand{\cI}{\mathcal{I}}
\newcommand{\cJ}{\mathcal{J}}
\newcommand{\cM }{\mathcal{M}}
\newcommand{\cP}{\mathcal{P}}
\newcommand{\h}{\mathfrak{h}}
\newcommand{\wt}{\widetilde}
\newcommand{\p}{\partial}
\newcommand{\e}{\varepsilon}
\newtheorem{thm}{Theorem}[section]
\newtheorem{prop}[thm]{Proposition}
\newtheorem{lemma}[thm]{Lemma}
\theoremstyle{remark} 
\newtheorem{rem}[thm]{Remark}
\numberwithin{equation}{section}
\title[Self-similarity in a thin film Muskat problem]{Self-similarity in a thin film Muskat problem}
\thanks{ }
\author[Ph. Lauren\c cot]{Philippe Lauren\c cot}
\address{Institut  de Math\'ematiques de Toulouse, UMR~5219, Universit\'e de Toulouse, CNRS, F-31062 Toulouse cedex~9, France}
\email{laurenco@math.univ-toulouse.fr}
\author[B.--V. Matioc]{Bogdan--Vasile Matioc}
\address{Institut f{\"u}r Angewandte Mathematik, Leibniz Universit{\"a}t Hannover, Welfengarten~1, 30167 Hannover, Deutschland}
\email{matioc@ifam.uni-hannover.de}
\date{\today}
\subjclass[2010]{35K65, 35K40,  35C06, 35Q35}
\keywords{thin film Muskat problem, degenerate parabolic system, self-similar solutions, asymptotic behavior}
\begin{document}

%%%%%%%%%%%%%%%%%%%%%%%%%%%%%%%%%%%%%%%%
\begin{abstract}
The large time behavior of non-negative weak solutions to a thin film approximation of the two-phase Muskat problem is studied. 
A classification of self-similar solutions is first provided: there is always a unique even self-similar solution while a continuum of non-symmetric 
self-similar solutions exist for certain fluid configurations. 
Despite this non-uniqueness, convergence of all non-negative weak solutions towards a self-similar solution is proved.
\end{abstract}
%%%%%%%%%%%%%%%%%%%%%%%%%%%%%%%%%%%%%%%%

\maketitle

%%%%%%%%%%%%%%%%%%%%%%%%%%%%%%%%%%%%%%%%
%%%%%%%%%%%%%%%%%%%%%%%%%%%%%%%%%%%%%%%%
\section{Introduction}\label{s:intro}
%%%%%%%%%%%%%%%%%%%%%%%%%%%%%%%%%%%%%%%%
%%%%%%%%%%%%%%%%%%%%%%%%%%%%%%%%%%%%%%%%

The purpose of this work is to investigate the large time asymptotics of a thin film approximation to the Muskat problem derived recently in \cite{EMM12}. 
It is a mathematical model describing the evolution of two immiscible and vertically superposed thin fluid layers, of different densities and viscosities,  
on a flat surface when gravity is the sole driving force.
More precisely, in a two-dimensional setting, we assume that  
the impermeable bottom of the porous medium is located at $y=0$, and we denote the thickness of the lower and upper fluids by $f=f(t,x)\ge 0$ and $g=g(t,x)\geq0$, 
respectively. The thin film Muskat problem then reads
\begin{equation}\label{eq:S2}
\left\{
\begin{array}{rcl}
\p_t f & = & \p_x\left( f \p_x\left(  (1+R) f +R g\right)\right),\\[1ex]
\p_t g & = & R_\mu\p_x\left( g \p_x\left(  f + g \right)\right),
\end{array}
\right.
\qquad (t,x)\in (0,\infty)\times \R,
\end{equation}
and appears as the singular limit of the two-phase Muskat problem when the thickness of the fluid layers vanishes.

%%%%%%%%%%%%%%%%%%%%%%%%%%%%%%%%%%%%%%%%
%%%%%%%%%%%%%%%%%%%%%%%%%%%%%%%%%%%%%%%%
\paragraph{\bf The thin film Muskat problem} 
%%%%%%%%%%%%%%%%%%%%%%%%%%%%%%%%%%%%%%%%
%%%%%%%%%%%%%%%%%%%%%%%%%%%%%%%%%%%%%%%%

The Muskat problem was proposed in \cite{Mu34} as a model for the motion of two immiscible fluids with different densities and viscosities in a porous medium, 
the intrusion of water into oil for instance. It describes the time evolution of the domains occupied by the two fluids and of the potential distributions of the fluids.
More precisely,  the space and time evolution of the thickness $f$ and $g$ of the two fluids
($h:=f+g$ being then the total height of the fluid system) and of   the potential distributions is described by the following system of equations
\begin{equation}\label{MP1}
\left\{
\begin{array}{rllllll}
\Delta u_+&=&0&\text{in} \ [f<y<h]\ , \\[1ex]
\Delta u_-&=&0&\text{in} \  [0<y<f]\ , \\[1ex]
\p_th&=&-\mu_+^{-1}\langle\nabla u_+|(-\partial_x h,1)\rangle &\text{on} \ [y=h]\ ,\\[1ex]
u_+&=&G\rho_+ h-\gamma_{h}\kappa_{h}&\text{on} \ [y=h]\ ,\\[1ex]
 { \p_yu_{-}}&=&0&\text{on} \ [y=0]\ ,\\[1ex]
u_+-u_-&=&G(\rho_+-\rho_-)f+\gamma_{f}\kappa_{f}&\text{on} \ [y=f]\ ,\\[1ex]
\p_tf&=&-\mu_\pm^{-1}\langle\nabla u_\pm|(-\partial_x f,1)\rangle &\text{on} \ [y=f]\ ,
\end{array}
\right.
\end{equation} 
with given initial data $(f,h)(0)=(f_0,h_0),$ cf. \cite{EMM12a, EMM12}. 
The interface $[y=f]$ separates the two fluids (we use the subscript $-$ for the fluid below and we refer to the fluid located 
above this interface by using the subscript $+$),
and we assume a uniform pressure, normalized to be zero, on the interface  $[y=h]$ which separates the fluid system from the air.
Moreover,
\begin{itemize}
\item $\rho_\pm$ and $\mu_\pm$ are the densities and viscosities of the fluids $\pm$, 
\item $G$ is the gravity constant,
\item $u_\pm := p_\pm + G\rho_\pm y$ are the velocity potentials,  the velocity $\mathbf{v}_\pm$ of the fluids being given by
Darcy's law $\mathbf{v}_\pm :=-\mu_\pm^{-1}\ \nabla u_\pm$,
\item $\gamma_f$ and $\kappa_{f}$ are the surface tension coefficient and curvature of the interface $[y=f]$,
\item $\gamma_h$ and $\kappa_{h}$ are the surface tension coefficient and curvature of the interface $[y=h]$.
\end{itemize}
This complex moving boundary value problem was studied in \cite{EMM12a} where it was shown to be of parabolic type for small initial data. 
This property is used to prove the well-posedness and to study the stability properties of the equilibria of \eqref{MP1} (see \cite{CG10} for a related problem).

For thin fluid layers, the full Muskat problem \eqref{MP1} is approximated in \cite{EMM12} by a strongly coupled parabolic system of
equations having only the functions $f$ and $g$ as unknowns, 
see also \cite{JM14} for a similar derivation in the context of seawater intrusion modeling.
More precisely, a new parameter $0<\e \ll 1$ is introduced in the system \eqref{MP1} to scale the thickness of the layers: the variables and the unknowns in \eqref{MP1}
are then scaled as follows
$$\begin{array}{ccc}
x=\tilde{x}, & y=\varepsilon \tilde{y}, & t=\tilde{t}/\varepsilon, \\ 
 & & \\
f(t,x)=\varepsilon \tilde{f}(\tilde{t},\wt x), & h(t,x)=\varepsilon \tilde{h}(\tilde{t},\wt x), & u_\pm(t,x,y) = \tilde{u}_\pm(\tilde{t},\wt x,\wt y) .
\end{array}
$$
Then, using formal expansions for $\wt u_\pm$ in $\e$ and omitting the tildes, one retains, at the lowest order in $\e$,  the following thin film Muskat problem
\begin{equation}\label{MP2}
\left\{
\begin{array}{rcl}
\p_t f & = & \displaystyle\p_x\left(   f\p_x\Big(\frac{G\rho_-}{\mu_-}f
+\frac{G\rho_+}{\mu_-}g-\frac{\gamma_f+\gamma_h}{\mu_-}\p_x^2f-\frac{\gamma_h}{\mu_-}\p_x^2g\Big)\right),\\[2ex]
\p_t g & = & \displaystyle\p_x\left(   g\p_x\Big(\frac{G\rho_+}{\mu_+}f+\frac{G\rho_+}{\mu_+}g-\frac{\gamma_h}{\mu_+}\p_x^2f-\frac{\gamma_h}{\mu_+}\p_x^2g\Big)\right),
\end{array}
\right.
\end{equation}
with initial data $(f,g)(0)=(f_0,g_0)$, where $g:=h-f.$ We emphasize that the cross-diffusion terms are nonlinear and have highest order. 

The existence, uniqueness, and life span of classical solutions to this limit system are studied  in \cite{EM12x} when 
considering surface tension effects at both interfaces, and in \cite{EMM12} when allowing only for gravity effects 
(which corresponds to setting $\gamma_f=\gamma_h=0$ in \eqref{MP2}). Non-negative global weak solutions on a bounded 
interval and with no-flux boundary conditions were constructed in \cite{ELM11} for $\gamma_f=\gamma_h=0$, 
and in \cite{BM12} when assuming only capillary forces. 
Weak solutions to a class of systems including \eqref{MP2} 
with $\gamma_f=\gamma_g=0$, $\mu_-=\mu_+$, and with periodic boundary conditions are also constructed in \cite{AIJMxx}.
 We subsequently uncover that the system \eqref{MP2} can be interpreted as the gradient flow of a certain energy functional with respect to the $2-$Wasserstein metric. 
This gradient flow structure allowed us  to use  tools from the  calculus of variations and to 
implement a discrete time scheme  to obtain, in the limit when the time step goes to zero, non-negative and globally defined  weak solutions of \eqref{MP2}, 
cf. \cite{LM12xx, LM12x}. While in \cite{LM12xx} we assumed $\gamma_f\gamma_h\neq 0,$ and the weak solutions are defined in $\R$ or $\R^2$, 
the solutions found in \cite{LM12x} are only subject to gravity effects and the analysis is one-dimensional. 
The uniqueness of these weak solutions is still an open question.

The above mentioned gradient flow structure is actually reminiscent from the porous medium equation (PME) 
\begin{equation}\label{PME}
\p_t f=\p_x(f\p_xf)
\end{equation}
and the thin film equation (TFE) 
\begin{equation}\label{TFE}
\p_t f=\p_x(f\p_x^3f)
\end{equation}
to which \eqref{MP2} reduces (up to a multiplicative constant) when $g=0$ and either $\gamma_f=0$ or gravity is neglected. 
Indeed, both equations are gradient flows associated to a suitable functional for the $2-$Wasserstein distance,
see \cite{GO01, MMS09, Ot98, Ot01} and the references therein. Such a gradient flow structure is rather seldom in the context
of parabolic systems and, apart from \eqref{MP2}, we are only aware of the model for diffusion of 
multiple species presented in \cite{BMPB10} and the parabolic-parabolic chemotaxis Keller-Segel system and its variants \cite{BCKKLLxx, BL13, Zixx}.

According to the discussion above, the thin film Muskat problem \eqref{MP2} can be interpreted as a 
two-phase generalization of the PME \eqref{PME} when capillary is neglected and of the TFE \eqref{TFE} when gravity is neglected. 
The large time behavior of non-negative solutions to these two equations in $\mathbb{R}^n$,  $n\ge 1,$ has been 
thoroughly investigated, see \cite{CT00, K76, K75, Ne84, Ot01, Ra84, Va07} for the PME and \cite{BPW92, CU07, CT02, MMS09} for the TFE and the references therein.
It is actually given by self-similar solutions and is a typical example of asymptotic simplification, 
in the sense that any non-negative solution converges towards the unique 
non-negative self-similar solution having the same $L_1$-norm as its initial condition. 
It is then tempting to figure out whether such a behavior is also enjoyed by \eqref{MP2} and the purpose of
this paper is to investigate thoroughly this issue when capillary forces are neglected.

More precisely, we focus on the system \eqref{eq:S2} endowed with the initial conditions
\begin{equation*} 
f(0)=f_0,\qquad g(0)=g_0.
\end{equation*}
which is obtained from \eqref{MP2} after introducing the parameters
\begin{equation}
R:=\frac{\rho_+}{\rho_- - \rho_+} , \qquad \mu:=\frac{\mu_-}{\mu_+},\qquad R_\mu:=\mu R , \label{banff1}
\end{equation}
neglecting capillary effects ($\gamma_f=\gamma_h=0$), and rescaling the space variable suitably. In the remainder of this paper
the parameters $R$ and $R_\mu$ are assumed to be positive. 
Physically, this means that the denser fluid layer is located beneath the less dense one. 

%%%%%%%%%%%%%%%%%%%%%%%%%%%%%%%%%%%%%%%%
%%%%%%%%%%%%%%%%%%%%%%%%%%%%%%%%%%%%%%%%
\paragraph{\bf Self-similar solutions} 
%%%%%%%%%%%%%%%%%%%%%%%%%%%%%%%%%%%%%%%%
%%%%%%%%%%%%%%%%%%%%%%%%%%%%%%%%%%%%%%%%

The first contribution of this paper is a classification of non-negative self-similar solutions to \eqref{eq:S2}. Let us first recall that, given $M>0$, the PME \eqref{PME} 
possesses a unique self-similar solution $f_M(t,x) = t^{-1/3} F_M(x t^{-1/3})$ which is given by the Barenblatt-Pattle profile
$$
F_M(x)=\left(a_M- \frac{x^2}{6} \right)_+,
$$
the positive constant $a_M$ being uniquely determined by the volume contraint $\|F_M\|_1=M$, see \cite{Va07} for instance. 
We note that the self-similar solution $f_M$ satisfies $\|f_M(t)\|_1=M$ for all $t\ge 0$ and that the self-similar profile $F_M$ is even and has a connected positivity set. 

Concerning \eqref{eq:S2}, a simple computation reveals that it enjoys the same scaling property as the PME \eqref{PME} and that
volume-preserving self-similar solutions shall be of the form 
\begin{equation}\label{sesi}
(f,g)(t,x) = t^{-1/3} (f_s,g_s)(x t^{-1/3})\ , \qquad (t,x)\in (0,\infty)\times \R\ .
\end{equation}
As we shall see below, the presence of a second fluid changes drastically the shape of the self-similar profiles $(f_s,g_s)$ and 
complicates the analysis a lot. 
Namely, we first show that, as the PME \eqref{PME}, the gravity driven thin film Muskat problem \eqref{eq:S2} 
has for each configuration of the fluids --viscosity, 
density and volumes-- a unique even self-similar solution. 
This solution is described in Proposition~\ref{P:-1} and illustrated in Figure~\ref{F:1}. 
It has the interesting property that, if the ratio of the viscosities is very large or very small (see Proposition~\ref{P:-1}~$(iii)$ and~$(iv)$),  
the less viscous fluid layer consists of two disconnected blobs while the other fluid  forms a blob which fills the space between the  two 
blobs of the less viscous fluid. 
Moreover, in this regime, there are other self-similar solutions which are determined by non-symmetric profiles. 
We show that there is actually a continuum of self-similar profiles parametrized by a real-analytic curve which contains the even
self-similar profile as an interior point, and all other points on this curve are non-symmetric self-similar profiles of the thin film 
Muskat problem \eqref{eq:S2}, see  Theorem~\ref{T:MT1} and Figures~\ref{F:2} and~\ref{F:3}. On the other hand, in the complement of 
this small/large viscosities ratio regime the existence of self-similar profiles, other than the even one, is excluded, see Theorem~\ref{T:MT1}.

%%%%%%%%%%%%%%%%%%%%%%%%%%%%%%%%%%%%%%%%
%%%%%%%%%%%%%%%%%%%%%%%%%%%%%%%%%%%%%%%%
\paragraph{\bf Large time behavior} 
%%%%%%%%%%%%%%%%%%%%%%%%%%%%%%%%%%%%%%%%
%%%%%%%%%%%%%%%%%%%%%%%%%%%%%%%%%%%%%%%%

The existence of a plethora of non-symmetric self-similar solutions makes the study of the asymptotic behavior of the weak solutions of \eqref{eq:S2} much more involved. 
Moreover, compared to the PME \eqref{PME}, we have a further unknown that corresponds to the height of the second fluid layer.
Due to this fact, the  problem  \eqref{eq:S2} has a higher degree of nonlinearity than the PME, being additionally doubly degenerate as 
all coefficients of the highest order spatial derivatives of \eqref{eq:S2} vanish on sets where $f=g=0$. Therefore, many techniques used when studying 
the asymptotic behavior of solutions of the  PME, e.g. the entropy method and the comparison principle fail in the context of \eqref{eq:S2}. 
Nevertheless, relaying on compactness arguments, we can still prove the convergence of the global non-negative weak solutions towards a 
self-similar solution, see Theorem~\ref{T:MT3}. 
A key observation here is that the energy computed on the continuum of self-similar profiles has some monotonicity properties.

%%%%%%%%%%%%%%%%%%%%%%%%%%%%%%%%%%%%%%%%
%%%%%%%%%%%%%%%%%%%%%%%%%%%%%%%%%%%%%%%%
\paragraph{\bf Film rupture} 
%%%%%%%%%%%%%%%%%%%%%%%%%%%%%%%%%%%%%%%%
%%%%%%%%%%%%%%%%%%%%%%%%%%%%%%%%%%%%%%%%

We emphasize that a particular feature of the gravity driven thin film Muskat problem is that it models the rupture of thin films.
This interesting phenomenon was studied by several authors in connection with model equations related to the TFE~\eqref{TFE},
see \cite{Const93, Or97, ZL99} and the references therein. In our setting, the film rupture occurs, for example, in the small/large viscosities ratio regime.
According to Theorem~\ref{T:MT3}, weak solutions corresponding to even initial configurations with both fluid layers having a connected set of positive  thickness converge 
towards the even self-similar  solution which has the property that the less viscous layer consists of two disjoint blobs.
We thus observe rupture of the less viscous fluid at least in infinite time, see the numerical simulation in Figure~\ref{F:4}. 
In fact, our simulations suggest that  the film rupture occurs in finite time.

%%%%%%%%%%%%%%%%%%%%%%%%%%%%%%%%%%%%%%%%
%%%%%%%%%%%%%%%%%%%%%%%%%%%%%%%%%%%%%%%%
\paragraph{\bf Outline} 
%%%%%%%%%%%%%%%%%%%%%%%%%%%%%%%%%%%%%%%%
%%%%%%%%%%%%%%%%%%%%%%%%%%%%%%%%%%%%%%%%

The outline of the paper is as follows. The next section is devoted to a detailed statement of the main results of this paper. 
As a preliminary step, we introduce a rescaled version \eqref{RS} of the thin film Muskat system \eqref{eq:S2} which relies in particular 
on the classical transformation to self-similar variables. The advantage of this alternative formulation is twofold: the profiles of 
non-negative self-similar solutions to \eqref{eq:S2} are non-negative stationary solutions to \eqref{RS} and it also allows us to reduce 
the study to non-negative self-similar solutions having both an $L_1$-norm equal to one. We then give a complete classification of
non-negative stationary solutions to \eqref{RS} in Theorem~\ref{T:MT1}. In particular, we identify a range of the parameters for 
which a continuum of stationary solutions exists. The convergence of any non-negative weak solution to \eqref{RS} to one of 
these stationary solutions is stated in Theorem~\ref{T:MT3}. Section~\ref{S:2} is devoted to the 
classification of self-similar profiles and the proof of Theorem~\ref{T:MT1}. After deriving some basic properties of the self-similar
profiles in Section~\ref{S:21}, we split the analysis in three parts and study first even profiles in Section~\ref{S:22} after
turning to non-symmetric profiles with either connected supports in Section~\ref{S:23} or disconnected supports in Section~\ref{S:24}. 
Identifying the supports of the profiles is at the heart of this classification and requires to solve nonlinear algebraic systems of equations in $\R^5$, 
their detailed analysis being partly postponed to the Appendix. Section~\ref{S:3} is devoted to the study of the asymptotic behavior of the weak 
solutions of the rescaled system \eqref{RS}. After recalling the existence of solutions to \eqref{RS} and their properties in Section~\ref{S:31}, 
the convergence to a stationary solution is established in Section~\ref{S:32}. The proof relies on the availability of a Liapunov functional which takes distinct values for 
different stationary solutions. In Section~\ref{S:5} we present numerical simulations which indicate that the even self-similar profile is
not the unique attractor of the system.

%%%%%%%%%%%%%%%%%%%%%%%%%%%%%%%%%%%%%%%%
%%%%%%%%%%%%%%%%%%%%%%%%%%%%%%%%%%%%%%%%
\section{Main results}\label{s:main}
%%%%%%%%%%%%%%%%%%%%%%%%%%%%%%%%%%%%%%%%
%%%%%%%%%%%%%%%%%%%%%%%%%%%%%%%%%%%%%%%%

%%%%%%%%%%%%%%%%%%%%%%%%%%%%%%%%%%%%%%%%
%%%%%%%%%%%%%%%%%%%%%%%%%%%%%%%%%%%%%%%%
\subsection{Alternative formulations}\label{S:main1}
%%%%%%%%%%%%%%%%%%%%%%%%%%%%%%%%%%%%%%%%
%%%%%%%%%%%%%%%%%%%%%%%%%%%%%%%%%%%%%%%%

The system \eqref{eq:S2} is a parabolic system with a double degeneracy: the eigenvalues of the matrix associated to the right-hand side of \eqref{eq:S2} are 
non-negative and they vanish both if $f=g=0$. 
A natural framework to work with is thus that of weak solutions and the analysis performed in \cite{LM12x} is dedicated to proving existence of non-negative global 
weak solutions to \eqref{eq:S2} corresponding 
to initial data $(f_0,g_0)$ which are probability densities in $\R$ and belong to $L_2(\R)$. However, as mentioned in the discussion following \cite[Remark~1.2]{LM12x},
one may consider arbitrary non-negative
initial  data by  simply introducing an additional scaling factor in \eqref{eq:S2}. More precisely, given non-negative initial data $(f_0,g_0) $ satisfying 
$ f_0, g_0 \in L_1(\R, (1+x^2) dx)\cap L_2(\R)$ and $f_0, g_0\not\equiv 0$, 
we define $\eta\in (0,\infty)$ by  $\eta^2 := \|f_0\|_1/\|g_0\|_1.$  Then, 
if $(f,g)$ is a global weak solution to \eqref{eq:S2} corresponding to  $(f_0,g_0)$, then setting
\begin{equation}\label{tr1}
\phi( t,x):=\frac{f(t \|g_0\|_1^{-1},x)}{\|f_0\|_1} \quad\text{ and }\quad \psi( t,x):=\frac{g( t \|g_0\|_1^{-1},x)}{\|g_0\|_1}
\end{equation}
for $(t,x)\in [0,\infty)\times\R,$ we see that $\left( \phi, \psi \right)$ solves the system
\begin{equation}\label{Pb}
\left\{
\begin{array}{rcl}
\p_{t}  \phi & = & \p_x\left(  \phi\ \p_x \left( (1+R) \eta^2  \phi + R \psi \right) \right),  \\[1ex]
\p_{t}  \psi & = & R_\mu \p_x\left(  \psi\ \p_x\left(\eta^{2}  \phi + \psi \right) \right),
\end{array}
\right.
\qquad (t,x)\in (0,\infty)\times \R \ ,
\end{equation}
with initial data
\begin{equation*}\label{PbIn}
\left( \phi, \psi \right)(0) = (\phi_0,\psi_0) := \left( \frac{f_0}{\|f_0\|_1} , \frac{g_0}{\|g_0\|_1} \right)\ .
\end{equation*}
Introducing the set
\begin{equation*}
\cK:=\left\{w\in L_1(\R, (1+x^2) dx) \cap L_2(\R)\,:\, w\ge 0 \text{ a.e. and } \|w\|_1 = 1 \right\}\ ,
\end{equation*}
it follows from \cite{LM12x} that, given $(\phi_0,\psi_0)\in\cK^2$, there is a global weak solution $\left( \phi, \psi \right)$ of \eqref{Pb} 
with initial data $(\phi_0,\psi_0)$ such that $\left( \phi(t), \psi(t) \right) \in \cK^2$ for all $t\geq 0,$ and the mapping  $t \mapsto \E\left( \phi(t), \psi(t) \right)$
is non-increasing a.e. in $(0,\infty)$.  Here,  $\E$ denotes the energy functional
\begin{equation} 
\E(u,v):=\frac{\eta^2}{2}\ \| u \|_2^2 + \frac{R}{2}\ \left\| \eta\ u + \eta^{-1}\ v \right\|_2^2\ , \quad (u,v)\in \cK^2 \ . \label{energy} 
\end{equation}
In fact, the system \eqref{Pb} is the gradient flow of the energy functional $\E$ with respect to the $2-$Wasserstein metric \cite{LM12x}. 

A further transformation of \eqref{Pb} involves the so-called self-similar variables and reads
\begin{equation}\label{eq:Cha2}
 (\bar{f},\bar{g})(t,x):=e^{t/3} \left( \phi , \psi \right) \left(e^t-1,xe^{t/3}\right), \qquad (t,x)\in [0,\infty)\times\R \ .
\end{equation}
Then, setting $\left( \bar{f}_0, \bar{g}_0 \right) := \left( \phi_0,\psi_0 \right)$ and dropping the bars to simplify the notation, we end up with the following 
rescaled system
\begin{equation}\label{RS}
\left\{
\begin{array}{lll}
\p_t f&=&\p_x\left( f \p_x(\eta^2 (1+R) f + R g + x^2/6)\right)\ ,\\[1ex]
\p_t g&=&\p_x\left( g \p_x(\eta^{2}R_\mu f + R_\mu   g + x^2/6)\right)\ ,
\end{array}
\right.\qquad (t,x)\in(0,\infty)\times\R\ ,
\end{equation}
with initial data $\left( f_0, g_0 \right)\in \cK^2.$ 
In addition, it clearly follows from the properties of $\left( \phi , \psi \right)$ and \eqref{eq:Cha2} that $\left( f(t) , g(t) \right)$ belong to $\cK^2$ for all times $t\ge 0$ and that $t\mapsto \E_*\left( f(t) , g(t) \right)$ is a non-increasing function a.e. in $(0,\infty)$. We have introduced here the rescaled energy $\E_*$ through
\begin{equation} \label{eq:sup3}
\E_{*}(u,v) := \E(u,v) + \frac{1}{6} \cM_{2}(u,v)\ , \quad (u,v)\in\cK^2\ , 
\end{equation}
with
\begin{equation}
\cM_{2}(u,v) := \int_\R \left( u + \Theta v \right)(x)\ x^2\, dx \quad \text{ and }\quad \Theta := \frac{R}{\eta^2R_\mu}=\frac{1}{\mu\eta^2} \ . \label{eq:sup1c}
\end{equation}
The main feature of \eqref{RS} is that, if $\left( \phi , \psi \right)$ is a self-similar solution of \eqref{Pb} of the form \eqref{sesi}, that is,
\begin{equation}
(\phi,\psi)(t,x) = t^{-1/3} (F,G)\left( x t^{-1/3} \right)\ , \qquad (t,x)\in (0,\infty)\times\R\ , \label{duboismortier}
\end{equation}
then the corresponding self-similar profile $(F , G)$ is a stationary solution to \eqref{RS}. Such a property is also useful when studying the attracting properties
of the self-similar solutions to \eqref{Pb}. Indeed, it amounts to the stability of steady-state solutions to \eqref{RS}.

%%%%%%%%%%%%%%%%%%%%%%%%%%%%%%%%%%%%%%%%
%%%%%%%%%%%%%%%%%%%%%%%%%%%%%%%%%%%%%%%%
\subsection{Main results}\label{S:main2}
%%%%%%%%%%%%%%%%%%%%%%%%%%%%%%%%%%%%%%%%
%%%%%%%%%%%%%%%%%%%%%%%%%%%%%%%%%%%%%%%%

We enhance that the value of the ratio $\mu$ of the viscosities of the fluids was not important when proving the existence of weak solutions for \eqref{Pb} on the real 
line or on a bounded interval. Also, when studying the asymptotic properties of weak and strong solutions defined on a bounded interval, the viscosities influence just 
the rate at which the solutions converge towards the (flat) equilibria.
In this setting though, it  turns out that, for fixed densities, $\mu$ is the parameter which determines the shape of the self-similar solutions of \eqref{Pb}. 
In other words, once $R$ and $\eta$ are fixed, the structure of the steady-state solutions to \eqref{RS} varies according to the values of $R_\mu$ and is described in 
the next theorem. 
For further use we set
\begin{equation}
\begin{split}
R_\mu^0(R,\eta) & := R + \frac{\eta^2}{1+\eta^2}\ , \quad
R_\mu^+(R,\eta) := R + \left( \frac{1+\eta^2}{\eta^2} \right)^2\ , \\ 
R_\mu^-(R,\eta) & := \frac{R^3(1+R)}{R^3+(\eta^2(1+R)+R)^2}\ .
\end{split} \label{spip}
\end{equation}

%%%%%%%%%%%%%%%%%%%%%%%%%%%%%%%%%%%%%%%%
\begin{thm}[Classification of self-similar profiles]\label{T:MT1}
Let $R$, $R_\mu$, and $\eta$ be positive constants. 
Then, the following hold.
\begin{itemize}

\item[$(i)$] There exists a unique even stationary solution $(F_0,G_0) \in \cK^2\cap H^1(\R,\R^2)$ of \eqref{RS}.

\item[$(ii)$] If $R_\mu\not \in \big[ R_\mu^-(R,\eta),R_\mu^+(R,\eta) \big]$, then there are a bounded interval $\Lambda := [\ell_-,\ell_+]$ containing zero 
and a one-parameter family $(F_\ell,G_\ell)_{\ell\in\Lambda}\subset \cK^2\cap H^1(\R,\R^2)$ of stationary solutions of \eqref{RS} which are non-symmetric if $\ell\ne 0$. 
In addition, $(F_\ell,G_\ell)$ depends continuously on $\ell\in\Lambda$ and even analytically on $\ell\in (\ell_-,\ell_+)$. 

\item[$(iii)$] Setting $\Lambda:=\{0\}$ and $\ell_-=\ell_+=0$ for $R_\mu\in \big[ R_\mu^-(R,\eta),R_\mu^+(R,\eta) \big]$, any steady-state solution of \eqref{RS} 
belongs to the family $(F_\ell,G_\ell)_{\ell\in\Lambda}$.

\item[$(iv)$] The map $\ell\mapsto \mathcal{E}_*(F_\ell,G_\ell)$ is decreasing on $[\ell_-,0]$ and increasing on $[0,\ell_+]$.

\end{itemize}

Furthermore, there are $R_\mu^M(R,\eta)>R_\mu^+(R,\eta)$ and $R_\mu^m(R,\eta)\in (0,R_\mu^-(R,\eta))$ such that 

\begin{itemize}

\item[$(v)$] If $R_\mu\not \in \big[ R_\mu^-(R,\eta),R_\mu^+(R,\eta) \big]$ and $\ell\not\in \{\ell_-,\ell_+\}$, then either $F_\ell$ or $G_\ell$ has a disconnected support.

\item[$(vi)$] If $R_\mu\not \in \big( R_\mu^m(R,\eta),R_\mu^M(R,\eta) \big)$ and $\ell \in\{\ell_-,\ell_+\}$, then both $F_\ell$ and $G_\ell$ have connected supports.

\item[$(vii)$] If $R_\mu\in (R_\mu^m(R,\eta),R_\mu^-(R,\eta))\cap (R_\mu^+(R,\eta),R_\mu^M(R,\eta))$ and $\ell \in\{\ell_-,\ell_+\},$ then either 
$F_\ell$ or $G_\ell$ has a disconnected support.

\end{itemize}

The threshold value $R_\mu^M(R,\eta)$ is actually the unique solution in $(R+1,\infty)$ of \eqref{RmuMc} while $R_\mu^m(R,\eta)$ is the 
unique solution in $(0,R)$ of \eqref{Rmums}.

\end{thm}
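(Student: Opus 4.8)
The plan is to translate the system of stationary equations for the self-similar profiles into a finite-dimensional problem for the endpoints of the supports of $F$ and $G$, and then to perform a careful case analysis. The starting point is the observation, presumably established in Section~\ref{S:21}, that on the interior of the support of $F$ one has $\eta^2(1+R)F + Rg + x^2/6 = \text{const}$ (and similarly for $g$ on the interior of its support), so that each profile is, on each connected component of its positivity set, an explicitly known quadratic (parabola) in $x$ whose coefficients are determined by which of the two fluids are present there. Consequently, a stationary solution is entirely encoded by the finitely many interface points where a component of $\text{supp}\,F$ or $\text{supp}\,G$ begins or ends, together with the continuity/matching conditions and the two volume constraints $\|F\|_1 = \|G\|_1 = 1$. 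This reduces the classification to solving nonlinear algebraic systems (in $\R^5$, as the outline announces) parametrised by $R$, $R_\mu$, $\eta$, and an asymmetry parameter $\ell$.

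\medskip

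\textbf{Even profiles and the continuum.} First I would settle $(i)$: impose evenness, so the configuration is symmetric about $x=0$; the unknowns reduce to one or two positive endpoints depending on whether the supports are connected or whether the less viscous fluid splits into two blobs around a central blob of the other fluid. Matching the parabolic arcs and enforcing the two mass constraints yields a small algebraic system whose unique solvability I would prove by monotonicity/implicit-function arguments, distinguishing the connected-support regime from the split regime according to the sign of the relevant discriminant — this is exactly where the thresholds $R_\mu^-$ and $R_\mu^+$ in \eqref{spip} enter, since they are precisely the values of $R_\mu$ at which the central value of the less viscous profile hits zero. For $(ii)$ and $(iii)$, I would introduce $\ell$ as (say) the signed horizontal offset of one of the interfaces from its even position and apply the analytic implicit function theorem to the algebraic system: at $\ell=0$ the linearisation is invertible, giving a local real-analytic branch; I would then continue this branch, showing it persists exactly as long as all the arcs stay non-negative and the supports retain their combinatorial type, and that it terminates at $\ell_\pm$ where either $F_\ell$ or $G_\ell$ develops a degenerate (zero-length or zero-height) component — this gives the compact interval $\Lambda$ and statement $(v)$. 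When $R_\mu \in [R_\mu^-,R_\mu^+]$, the same discriminant analysis shows the only solution of the algebraic system is the even one, which is $(iii)$.

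\medskip

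\textbf{Monotonicity of the energy and the remaining thresholds.} For $(iv)$, I would compute $\E_*(F_\ell,G_\ell)$ as an explicit (real-analytic) function of $\ell$ on $(\ell_-,\ell_+)$ using the parabolic formulas for the profiles, differentiate, and show $\frac{d}{d\ell}\E_*(F_\ell,G_\ell)$ vanishes only at $\ell=0$ with the correct sign on either side; by symmetry of the construction under $\ell \mapsto -\ell$ (reflection $x\mapsto -x$) it suffices to treat $\ell \in [0,\ell_+]$. A cleaner route, if available, is to use the variational characterisation: $(F_\ell,G_\ell)$ are constrained critical points and the derivative of the energy along the branch should reduce, after integrating by parts and using the Euler--Lagrange (constant-potential) relations, to a boundary term that is manifestly signed. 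For the final sentence of the theorem — the existence and uniqueness of $R_\mu^M \in (R+1,\infty)$ and $R_\mu^m \in (0,R)$ solving \eqref{RmuMc} and \eqref{Rmums}, and statements $(vi)$--$(vii)$ — I would analyse the algebraic conditions characterising when the endpoint profiles $(F_{\ell_\pm},G_{\ell_\pm})$ have connected versus disconnected support: these conditions define, for fixed $R,\eta$, a scalar equation in $R_\mu$, and I would verify (monotonicity of the defining function, sign changes at the endpoints of the indicated intervals) that it has exactly one root in the claimed range, with the connectedness alternative flipping across that root.

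\medskip

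The main obstacle I expect is the bookkeeping in the case analysis: the number of connected components of $\text{supp}\,F$ and $\text{supp}\,G$, their relative ordering, and which of the three possible arc-types (only $F$, only $g$, or both) occupies each subinterval all vary with the parameters, so one must enumerate the admissible configurations, rule out the spurious ones, and verify that the resulting $\R^5$ algebraic systems have the asserted solution structure uniformly in $(R,\eta)$. Establishing the \emph{global} extent of the analytic branch in $(ii)$ — i.e.\ that nothing goes wrong before a component degenerates — and pinning down the precise thresholds $R_\mu^M$, $R_\mu^m$ via \eqref{RmuMc}--\eqref{Rmums} are the technically heaviest points, and this is presumably why part of the endpoint analysis is deferred to the Appendix.
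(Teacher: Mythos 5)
Your overall reduction — encoding each stationary profile by its interface abscissas, matching the parabolic arcs, imposing the two mass constraints, and applying the analytic implicit function theorem to continue away from the even solution — is precisely the strategy of the paper (Propositions~\ref{P:-1}, \ref{P:0}, \ref{P:1}, \ref{P:2}, \ref{P:6} and Lemma~\ref{L:MI}), and the underdetermined five-equations-in-six-unknowns system \eqref{R1:1}--\eqref{R1:5} is exactly what produces the one-parameter continuum. You also correctly identify that the branch terminates when a component degenerates, which yields $(v)$--$(vii)$ and the threshold equations \eqref{RmuMc}, \eqref{Rmums} analysed in the Appendix.

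However, there is one genuine gap: your argument only proves \emph{existence} of the continuum, not the \emph{completeness} assertion in $(iii)$ (that \emph{every} steady state lies on the constructed family). Starting the IFT at $\ell=0$ and continuing ``until degeneracy'' does not preclude additional, disconnected branches of the solution set. The paper handles this differently: it applies the IFT at an \emph{arbitrary} solution of \eqref{R1:1}--\eqref{R1:5} satisfying the strict ordering (showing the Jacobian is everywhere nonzero there), obtaining a maximal local branch through \emph{that} solution, and then uses the monotonicity of $\varphi_3+\varphi_4$ along the branch (Proposition~\ref{P:2}~$(c)$) to show that \emph{every} such branch passes through the even solution; real-analyticity plus local uniqueness then forces all branches to coincide. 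Without this argument you cannot rule out exotic steady states off the main curve. Two further remarks: the paper's Lemma~\ref{L:2.4c}~$(ii)$ (the $(F,G)\mapsto(G,F)$ rescaling swapping $R_\mu\leftrightarrow R(1+R)/R_\mu$, $\eta\leftrightarrow\eta_1$) halves the bookkeeping by mapping the $R_\mu<R_\mu^-$ regime onto $R_\mu>R_\mu^+$, which is worth knowing; and your hoped-for clean variational route for $(iv)$ does not materialise — the paper's Lemma~\ref{L:MI} proceeds by brute-force differentiation of the explicit formula \eqref{xi} for $\E_*(F_\ell,G_\ell)$, with lengthy algebra before the sign of $\xi'$ becomes visible.
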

%%%%%%%%%%%%%%%%%%%%%%%%%%%%%%%%%%%%%%%%

The analysis performed below actually gives more information on the continuum $(F_\ell,G_\ell)_{\ell\in\Lambda}$ of stationary solutions of \eqref{RS}.
In particular, explicit formulas are available, see Proposition~\ref{P:-1} for the even solutions and Propositions~\ref{P:0} and~\ref{P:1} 
for the non-symmetric solutions with connected supports and disconnected supports, respectively. 
In addition, if $\ell\in \Lambda$, the reflection $x\mapsto (F_\ell(-x),G_\ell(-x))$ of $(F_\ell,G_\ell)$ is also a stationary 
solution to \eqref{RS} owing to the invariance of \eqref{RS} by reflection, so that there is $\ell'\in \Lambda$ such 
that $(F_\ell(-x),G_\ell(-x))=(F_{\ell'}(x),G_{\ell'}(x))$ for $x\in\R$. 
It is also worth pointing out that the interval $\Lambda$ depends on $R$, $\eta$, and $R_\mu$.

The proof of Theorem~\ref{T:MT1} is rather involved and relies on a detailed study of the connected components of the positivity sets of $F$ and $G$. 
The first step is to identify the number and location of these connected components. 
In doing so, we end up with systems of three to five algebraic equations. 
Each solution of one of these systems satisfying suitable constraints corresponds to a stationary solution of \eqref{RS} and 
the second step is to figure out for which values of the parameters $(R,R_\mu,\eta)$ these systems have solutions satisfying the constraints already mentioned. 
In particular, one of these systems turns out to be underdetermined and is the reason for getting a continuum of steady-solutions in some cases.

An important feature revealed by Theorem~\ref{T:MT1} is that the value of the energy selects at most two stationary solutions in the 
continuum $(F_\ell,G_\ell)_{\ell\in\Lambda}$ (when $\Lambda\ne \{0\}$). This property is the cornerstone of the proof of the next result dealing with the large time
behavior of the solutions to \eqref{RS}.

%%%%%%%%%%%%%%%%%%%%%%%%%%%%%%%%%%%%%%%%
\begin{thm}[Convergence towards a steady state]\label{T:MT3}
Let $R$, $R_\mu$, and $\eta$ be given positive constants and consider $(f_0,g_0)\in\cK^2$.
There are $\ell\in\Lambda$ and a stationary solution $(F_\ell,G_\ell)$ of \eqref{RS} 
such that the weak solution $(f,g)$ of \eqref{RS} with initial data $(f_0,g_0)$ constructed in Theorem~\ref{T:WP} satisfies
\begin{equation}\label{CV}
\lim_{t\to\infty} (f(t),g(t))=(F_\ell, G_\ell) \;\;\text{ in }\;\; L_1(\R, (1+x^2) dx,\R^2) \cap L_2(\R,\R^2)\ . 
\end{equation}
Additionally, if $f_0$ and $g_0$ are both even, then $\ell=0$.
\end{thm}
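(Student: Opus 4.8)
The plan is to prove Theorem~\ref{T:MT3} by combining the gradient-flow/Liapunov structure of \eqref{RS} with the rigidity provided by Theorem~\ref{T:MT1}, namely that the energy $\E_*$ is injective on the continuum $(F_\ell,G_\ell)_{\ell\in\Lambda}$ away from at most a reflection pair. First I would fix $(f_0,g_0)\in\cK^2$ and consider the weak solution $(f,g)$ from Theorem~\ref{T:WP}, together with the basic a priori bounds that this theorem and the energy inequality supply: $(f(t),g(t))\in\cK^2$ for all $t\ge 0$, $t\mapsto\E_*(f(t),g(t))$ is non-increasing, and — crucially — suitable dissipation estimates (the $2$-Wasserstein / entropy-dissipation bounds built into the construction) that control $\int_0^\infty\!\!\int_\R f|\p_x w_f|^2 + g|\p_x w_g|^2\,dx\,dt$, where $w_f,w_g$ denote the relevant chemical potentials $\eta^2(1+R)f+Rg+x^2/6$ and $\eta^2R_\mu f+R_\mu g+x^2/6$. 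Boundedness of $\E_*$ uniformly in $t$ gives uniform bounds on $\|f(t)\|_2$, $\|g(t)\|_2$ and on the second moments $\cM_2(f(t),g(t))$; tightness then follows from the second-moment bound. This yields, for any sequence $t_n\to\infty$, a subsequence along which $(f(t_n),g(t_n))$ converges in $L_1(\R,(1+x^2)dx,\R^2)\cap L_2(\R,\R^2)$ (weakly in $L_2$, and strongly after upgrading via the moment bound and an interpolation/Aubin–Lions-type argument on time averages) to some limit $(F_\infty,G_\infty)\in\cK^2$.

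Next I would identify every such subsequential limit as a stationary solution of \eqref{RS}. The standard argument: since $t\mapsto\E_*(f(t),g(t))$ is non-increasing and bounded below (it is bounded below on $\cK^2$ because the quadratic energy terms are non-negative and the moment term is non-negative), it has a limit $\E_\infty$ as $t\to\infty$; the total dissipation over $(0,\infty)$ is finite, so by a shift-and-pass-to-the-limit argument applied to the solutions $(f(\cdot+t_n),g(\cdot+t_n))$ on a fixed time window $[0,T]$, the limit trajectory has zero dissipation, hence $\p_x w_f=0$ on $\{f>0\}$ and $\p_x w_g=0$ on $\{g>0\}$, which is exactly the stationarity condition characterized in Section~\ref{S:2}. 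By Theorem~\ref{T:MT1}(iii), $(F_\infty,G_\infty)=(F_{\ell_*},G_{\ell_*})$ for some $\ell_*\in\Lambda$, and moreover $\E_*(F_{\ell_*},G_{\ell_*})=\E_\infty$ is the \emph{same} value for every subsequential limit. Now the key rigidity step: by Theorem~\ref{T:MT1}(iv) the map $\ell\mapsto\E_*(F_\ell,G_\ell)$ is strictly decreasing on $[\ell_-,0]$ and strictly increasing on $[0,\ell_+]$, so the level set $\{\ell\in\Lambda:\ \E_*(F_\ell,G_\ell)=\E_\infty\}$ consists of at most two points, one in $[\ell_-,0]$ and its mirror in $[0,\ell_+]$; combined with the reflection symmetry noted after Theorem~\ref{T:MT1}, these two profiles are reflections of one another. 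To promote this to convergence of the full trajectory (not just subsequences) I would invoke the continuity of $\ell\mapsto(F_\ell,G_\ell)$ and a connectedness argument for the $\omega$-limit set: $\omega$-limit sets of such trajectories are connected in $L_1\cap L_2$ (the solution is continuous in time with values in a metric space and the trajectory is precompact), and a set consisting of two reflected isolated profiles can be connected only if it is a singleton — unless the two profiles coincide, which happens exactly for the even profile. When $\ell_-<0<\ell_+$ and $\ell_*\ne 0$, the two candidate profiles are distinct, the $\omega$-limit set is therefore the singleton $\{(F_{\ell_*},G_{\ell_*})\}$ or $\{(F_{-\ell_*},G_{-\ell_*})\}$ — whichever the trajectory actually approaches — and \eqref{CV} follows; when $\ell_*=0$, directly $\omega((f_0,g_0))=\{(F_0,G_0)\}$.

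For the symmetry assertion, suppose $f_0$ and $g_0$ are both even. The system \eqref{RS} is invariant under $x\mapsto -x$, and the construction in Theorem~\ref{T:WP} respects this invariance (the same discrete-in-time/Wasserstein scheme applied to even data produces even iterates, and evenness passes to the limit), so $(f(t),g(t))$ is even for every $t\ge 0$. Hence any subsequential limit $(F_{\ell_*},G_{\ell_*})$ is even; but among the $(F_\ell,G_\ell)_{\ell\in\Lambda}$ the only even profile is the one with $\ell=0$ — indeed, if $(F_\ell,G_\ell)$ is even then so is $(F_\ell(-\cdot),G_\ell(-\cdot))=(F_\ell,G_\ell)$, forcing $\ell=\ell'$ in the reflection relation, which by the strict monotonicity in (iv) (and the fact that reflection reverses the sign of $\ell$, mapping $[\ell_-,0]$ to $[0,\ell_+]$) is only possible at the interior point $\ell=0$. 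Therefore $\ell=0$.

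The main obstacle I expect is the \emph{compactness upgrade}: turning the weak-in-$L_2$ and $L_1$-bounded-moment information at times $t_n\to\infty$ into \emph{strong} convergence in $L_1(\R,(1+x^2)dx)\cap L_2(\R)$, because the system is doubly degenerate and the comparison principle and entropy methods available for the PME are unavailable here (as the authors emphasize in the introduction). The resolution must go through the available dissipation functional: one controls a spatial derivative of some power/combination of $f$ and $g$ (weighted by $f$, resp.\ $g$), which — via the moment bound for tightness and a careful handling of the degeneracy, e.g.\ passing to time averages over $[t_n,t_n+1]$ and applying an Aubin–Lions–Simon compactness lemma adapted to the weighted spaces, then removing the averaging using the dissipation-to-zero property — yields compactness of the shifted trajectories in $C([0,T];L_1\cap L_2)$. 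A secondary, more bookkeeping-level obstacle is verifying that the limiting profile genuinely satisfies the \emph{full} stationary characterization used in Theorem~\ref{T:MT1} (including the correct behavior at the free boundaries of the supports and the $H^1$ regularity), rather than merely $\p_x w=0$ on the positivity sets in a weak sense; this requires matching the weak stationarity obtained from the limit with the explicit ODE/algebraic description derived in Section~\ref{S:2}, which is where most of the technical care will be concentrated.
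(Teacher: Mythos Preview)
Your plan is essentially the paper's own proof: compactness of shifted trajectories via Aubin--Lions--Simon, identification of $\omega$-limit points as stationary solutions through vanishing dissipation, constancy of $\E_*$ on the $\omega$-limit set, and then Theorem~\ref{T:MT1}(iv) plus connectedness to reduce the at-most-two candidates to one. Two small technical refinements to keep in mind: the time continuity available from Theorem~\ref{T:WP} is only in $H^{-3}(\R,\R^2)$, so the connectedness of the $\omega$-limit set should be argued in that topology rather than in $L_1\cap L_2$; and the gradient bound needed for the Aubin--Lions step (namely $(f,g)\in L_2(t_n-1,t_n+1;H^1)$ uniformly in $n$) comes from the \emph{entropy} inequality Theorem~\ref{T:WP}(iii), not from the energy dissipation~(iv) alone --- the latter only controls the weighted quantities $\sqrt{f}\,\p_x w_f$ and $\sqrt{g}\,\p_x w_g$, which degenerate.
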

%%%%%%%%%%%%%%%%%%%%%%%%%%%%%%%%%%%%%%%%

Owing to the gradient flow structure of \eqref{RS}, the outcome is quite obvious when $R_\mu\in \big[ R_\mu^-(R,\eta),R_\mu^+(R,\eta) \big]$ since \eqref{RS} has a 
unique stationary solution by Theorem~\ref{T:MT1}. 
This contrasts markedly with the situation for $R_\mu\not\in [R_\mu^-(R,\eta),R_\mu^+(R,\eta)]$ where there is a continuum of stationary solutions of \eqref{RS}. 
However, thanks to Theorem~\ref{T:MT1}~$(iv)$, there are at most two steady states having the same energy, a property which allows us to exclude the non-convergence of 
the trajectory with the help of the connectedness of the $\omega$-limit set. 

Theorem~\ref{T:MT3} guarantees the convergence of any trajectory of \eqref{RS} to a steady state but provides no information on the speed of convergence. 
At this point there is a major difference between the system \eqref{RS} and the porous medium equation written in self-similar variables
\begin{equation}
\partial_t f = \partial_x \left( f \partial_x (f + x^2/6) \right)\ , \qquad (t,x)\in (0,\infty)\times \R\ . \label{rsPME}
\end{equation}
The exponential convergence of weak solutions of \eqref{rsPME} towards the corresponding Barenblatt-Pattle profile is obtained by showing the exponential
decay of the relative entropy, the 
latter being a consequence of the exponential decay of the entropy dissipation, see \cite{CJMTA01, CT00, Va07} and the references therein.
Coming back to the system \eqref{RS}, if 
a weak solution $(f,g)$ of \eqref{RS} converges to some steady state $(F_\infty, G_\infty)$, the 
relative entropy $\E_*((f,g)|(F_\infty, G_\infty))$ is 
\begin{equation}\label{R:E}
\E_*((f,g)|(F_\infty, G_\infty)):=\E_*(f,g)-\E_*(F_\infty, G_\infty)\geq 0,
\end{equation}
and the entropy dissipation $\cI(f,g)$ is 
\begin{eqnarray}\label{EP}
2 \cI(f,g) & := & \int_\R f\left( \eta^2(1+R)\p_x f+R\p_x g+\frac{x }{3}\right)^2\ dx \nonumber\\
& & \ + \Theta \int_\R g \left( \eta^2 R_\mu \p_x f + R_\mu \p_x g+\frac{x }{3}\right)^2 \  dx\ ,
\end{eqnarray}
see Theorem~\ref{T:WP}~$(iv)$. However, the entropy/entropy dissipation approach which proves successful for \eqref{rsPME} does not seem to extend 
easily to the system \eqref{RS}. 
One reason is likely to be that, since there may exist several steady-state solutions, the choice of the relative entropy becomes unclear. 
Moreover, it is not   clear whether $\cI(f,g)$ is a decreasing function of time.

%%%%%%%%%%%%%%%%%%%%%%%%%%%%%%%%%%%%%%%%
%%%%%%%%%%%%%%%%%%%%%%%%%%%%%%%%%%%%%%%%
\section{Self-similar profiles}\label{S:2}
%%%%%%%%%%%%%%%%%%%%%%%%%%%%%%%%%%%%%%%%
%%%%%%%%%%%%%%%%%%%%%%%%%%%%%%%%%%%%%%%%

According to the discussion in Section~\ref{S:main1} the profiles $(F,G)$ of self-similar solutions of \eqref{Pb} defined in \eqref{duboismortier} are
steady-state solutions of \eqref{RS} and thereby satisfy the equations
\begin{equation}\label{ststa}
\left\{
\begin{array}{rllll}
 F\p_x\left( (1+R) \eta^2  F + R  G+x^2/6 \right)&=&0\ ,\\[1ex]
 G\p_x\left( R_\mu \eta^{2} F + R_\mu   G+x^2/6 \right)&=&0\ ,
\end{array}
\right. \qquad\text{a.e. in }\; \R\ ,
\end{equation}
with $(F,G)\in \cK^2\cap H^1(\R,\R^2)$. Note that these properties guarantee in particular that neither $F$ nor $G$ vanishes identically. 
The aim of this section is to classify all solutions of \eqref{ststa}.

To this end, let $(F,G)\in \cK^2\cap H^1(\R,\R^2)$ be a solution of \eqref{ststa} and define the positivity sets $\cP_F$ and $\cP_G$ of $F$ and $G$ by
$$
\cP_F := \left\{ x\in\R\ :\ F(x)>0 \right\} \ , \quad \cP_G := \left\{ x\in\R\ :\ G(x)>0 \right\} \ .
$$
We notice that $\cP_F$ and $\cP_G$ are both non-empty as $\|F\|_1=\|G\|_1=1$ and open as $F$ and $G$ are continuous on $\R$. It can be easily seen from \eqref{ststa} that:
\begin{itemize}
\item If $\cI$ is an interval in $\cP_F\cap \cP_G$, then there are $(a, b)\in\R^2$ such that
\begin{equation}\label{F1}
\begin{aligned}
 \eta^2 F(x) & = a-b-\frac{R_\mu-R}{6R_\mu}x^2\ ,\qquad x\in \cI\ ,\\
G(x) & = \frac{(1+R)b-aR}{R}-\frac{1+R-R_\mu}{6R_\mu}x^2\ ,\qquad x\in \cI\ .\\
\end{aligned}
\end{equation}
\item If $\cI$ is an interval in $\cP_F\setminus\cP_G,$ then there is $a\in\R$ such that
\begin{equation}\label{F2}
\eta^2 F(x)=\frac{a}{1+R}-\frac{x^2}{6(1+R)}\ , \quad G(x) = 0\ , \qquad x\in \cI\ .
\end{equation}
\item If $\cI$ is an interval in $\cP_G\setminus\cP_F,$ then there is $b\in\R$ such that
\begin{equation}\label{F3}
\eta^2 F(x) = 0\ , \quad G(x) = \frac{b}{R} - \frac{x^2}{6R_\mu}\ , \qquad x\in \cI\ .
\end{equation}
\end{itemize}

We emphasize here that the parameters $a$ and $b$ are likely to depend upon the interval $\cI$.

%%%%%%%%%%%%%%%%%%%%%%%%%%%%%%%%%%%%%%%%
%%%%%%%%%%%%%%%%%%%%%%%%%%%%%%%%%%%%%%%%
\subsection{First properties}\label{S:21}
%%%%%%%%%%%%%%%%%%%%%%%%%%%%%%%%%%%%%%%%
%%%%%%%%%%%%%%%%%%%%%%%%%%%%%%%%%%%%%%%%

We collect in this section several basic properties of solutions of \eqref{ststa}.

%%%%%%%%%%%%%%%%%%%%%%%%%%%%%%%%%%%%%%%%
\begin{lemma}\label{L:T1} Let  $(F,G)$ be a solution to \eqref{ststa}. Then:
\begin{itemize}
\item[$(i)$] $\cP_F\cap \cP_G \neq\emptyset$.
\item[$(ii)$]  Every connected component of  $\cP_F$ and $ \cP_G$ is bounded.
\item[$(iii)$] If $R_\mu>R$ and $\cI$ is a connected component of $\cP_F$, then $0\in\cI$ and $\cP_F$ is an interval of $\R$.
\item[$(iv)$] If $R_\mu<1+R$ and $\cI$ is a connected component of $\cP_G$, then $0\in\cI$ and $\cP_G$ is an interval of $\R$.
\item[$(v)$] If $\cI$ is a connected component of $\cP_F$ (resp. $\cP_G$) with $0\not\in\cI,$ then $\cI\cap\cP_G \ne \emptyset$ (resp. $\cI\cap\cP_F \ne \emptyset$).
\end{itemize}
\end{lemma}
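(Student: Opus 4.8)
The plan is to analyze the algebraic consequences of the profile equations \eqref{F1}--\eqref{F3} on each connected component, using the regularity $(F,G)\in\cK^2\cap H^1(\R,\R^2)$ (so $F,G$ are continuous, non-negative, integrable, with finite second moment, and vanish at $\pm\infty$) together with the fact that neither $F$ nor $G$ is identically zero.

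For $(i)$, I would argue by contradiction: if $\cP_F\cap\cP_G=\emptyset$, then on $\cP_F$ the profile $F$ is given by \eqref{F2} and on $\cP_G$ the profile $G$ is given by \eqref{F3}, so both $\cP_F$ and $\cP_G$ are disjoint unions of open intervals on which $F$, respectively $G$, equals a downward-opening parabola in $x$ (since $1+R>0$ and $R_\mu>0$). But a non-negative continuous function that equals $\alpha-\beta x^2$ with $\beta>0$ on an interval and is continuous and non-negative across the endpoints must vanish at both endpoints, forcing $\cP_F$ and $\cP_G$ to each be a \emph{single} bounded interval. Then $F$ and $G$ are supported on two disjoint bounded intervals $\cI_F$ and $\cI_G$. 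Writing $U:=\eta^2(1+R)F+Rg+x^2/6$ and $V:=\eta^2R_\mu F+R_\mu g+x^2/6$, the equations \eqref{ststa} say $U$ is constant on $\cI_F$ and $V$ is constant on $\cI_G$; outside $\cI_F\cup\cI_G$ both reduce to $x^2/6$, which is not constant on any interval — the contradiction will come from examining the common endpoint behavior, or more simply: on the unbounded component of $\R\setminus(\cI_F\cup\cI_G)$ adjacent to, say, $\cI_F$, continuity of $F$ at the finite endpoint of $\cI_F$ forces the parabola for $F$ to vanish there, but then integrating the first equation of \eqref{ststa} in the distributional sense against a test function straddling that endpoint yields a jump contradiction unless $F\equiv0$ near it, which is fine; the actual contradiction is that $\cI_F$ and $\cI_G$ being disjoint means there is a point in one, say $x_0\in\cP_F\setminus\ov{\cP_G}$, where $F(x_0)>0$ but also a neighborhood where $g\equiv0$, consistent with \eqref{F2}, and symmetrically for $G$ — so disjointness alone is not contradictory at the level of \eqref{ststa}; the real obstruction is the second moment / mass normalization interacting with the shape. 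I expect the cleanest route for $(i)$ is: the vertex of each parabola must lie inside the corresponding interval (else the parabola is monotone there and cannot return to zero at both ends while staying positive in between unless the interval is one-sided), and one checks that the vertex of the $F$-parabola from \eqref{F2} and of the $G$-parabola from \eqref{F3} are both at $x=0$; hence $0\in\ov{\cI_F}$ and $0\in\ov{\cI_G}$, forcing $\cI_F$ and $\cI_G$ to overlap in a neighborhood of $0$ (both being intervals containing $0$ in their closure with positive length), contradicting $\cP_F\cap\cP_G=\emptyset$.

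Parts $(ii)$--$(iv)$ are then almost immediate from the parabola structure: on any connected component $\cI$ of $\cP_F$ the function $\eta^2F$ equals either $a-b-\frac{R_\mu-R}{6R_\mu}x^2$ (on the part of $\cI$ meeting $\cP_G$) or $\frac{a}{1+R}-\frac{x^2}{6(1+R)}$ (on the part not meeting $\cP_G$); in the second case the coefficient of $x^2$ is strictly negative, so this piece is a strictly concave parabola, hence $\cI$ is bounded; if $\cI$ lies entirely in $\cP_F\cap\cP_G$ with $R_\mu>R$ then again the coefficient $-\frac{R_\mu-R}{6R_\mu}<0$ gives strict concavity and boundedness of $\cI$, and the vertex being at $x=0$ together with $F>0$ on $\cI$ and $F\to0$ at $\pm\infty$ forces $0\in\cI$ and, since two distinct components would each have to contain $0$, that $\cP_F$ is a single interval — giving $(iii)$; $(iv)$ is the same argument with $g$, using that \eqref{F1} gives $g$-coefficient $-\frac{1+R-R_\mu}{6R_\mu}<0$ when $R_\mu<1+R$ and \eqref{F3} gives $g$-coefficient $-\frac{1}{6R_\mu}<0$ always, with vertex at $x=0$ in both cases. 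For $(ii)$ one must also cover the (a priori possible) case $R_\mu\le R$ on a component inside $\cP_F\cap\cP_G$: there the $x^2$-coefficient in \eqref{F1} is $\ge0$, so $F$ could be convex; but such a component, if unbounded, would have $F$ bounded below by a positive convex function near $\pm\infty$, contradicting $F\in L_1$ — so boundedness still follows, just from integrability rather than concavity, and similarly for $\cP_G$ using $\cM_2(F,G)<\infty$ or $g\in L_1$.

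Finally, for $(v)$: let $\cI=(\alpha,\beta)$ be a connected component of $\cP_F$ with $0\notin\cI$, and suppose for contradiction $\cI\cap\cP_G=\emptyset$. Then $G\equiv0$ on $\cI$ and \eqref{F2} holds throughout $\cI$, so $\eta^2F(x)=\frac{a-x^2}{6(1+R)}$ on $\cI$ with $F>0$ there and $F(\alpha)=F(\beta)=0$ by continuity (each endpoint is either a boundary point of $\cP_F$, where $F$ vanishes, or — impossible — interior to $\cP_F$). Thus $\alpha^2=\beta^2=a$, forcing $\alpha=-\beta$, i.e. $0=(\alpha+\beta)/2\in\cI$, contradicting $0\notin\cI$. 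The symmetric statement for a component of $\cP_G$ not containing $0$ follows identically from \eqref{F3}, where $G(x)=\frac{b-x^2/R_\mu\cdot R}{R}$... more precisely $G(x)=\frac{b}{R}-\frac{x^2}{6R_\mu}$ vanishing at both endpoints forces them to be symmetric about $0$. I expect part $(i)$ to be the main obstacle, since it is the only statement that genuinely uses the coupling between the two equations rather than the single-component parabola shape; the key lemma to make $(i)$ clean will be the observation that a non-negative continuous function equal to $c-dx^2$ on an interval, with $d>0$ and the function vanishing at both endpoints of that interval, has its (necessarily symmetric) endpoints straddling $0$ — so both $\cP_F$ and $\cP_G$, in the would-be disjoint configuration, are symmetric intervals about the origin and hence intersect, the desired contradiction.
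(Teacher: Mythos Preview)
Your arguments for $(i)$ and $(v)$ are, once you strip away the detours, exactly the paper's: on a component where the other function vanishes, \eqref{F2} or \eqref{F3} gives a downward parabola with vertex at $x=0$, so vanishing at both endpoints forces the component to be symmetric about $0$; in $(i)$ this makes $0$ belong to both $\cP_F$ and $\cP_G$, contradicting disjointness, and in $(v)$ it contradicts $0\notin\cI$. The excursion in $(i)$ through the auxiliary functions $U,V$ and distributional jumps is a dead end and should be deleted. For $(iii)$--$(iv)$ your core observation (each piece of $F$ is of the form $c-dx^2$ with $d>0$ when $R_\mu>R$, so $\p_x F$ has the sign of $-x$) is the paper's, but you only spell it out under the hypothesis ``$\cI$ lies entirely in $\cP_F\cap\cP_G$''; you must note that \emph{both} \eqref{F1} and \eqref{F2} are downward parabolas centered at $0$ in this regime, so that $F$ is strictly monotone on all of $\cI\cap(0,\infty)$ and on all of $\cI\cap(-\infty,0)$ regardless of where $G$ is positive.

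There is a genuine gap in your treatment of $(ii)$. You write that on a piece of $\cI\setminus\cP_G$ the profile is a strictly concave parabola, ``hence $\cI$ is bounded''. This is a non sequitur: a concave piece being bounded does not bound the whole component, which may continue past the endpoints of that piece into regions where $G>0$, and your subsequent case analysis (``$\cI$ lies entirely in $\cP_F\cap\cP_G$'' with either $R_\mu>R$ or $R_\mu\le R$) never covers components that mix $\cP_G$ and non-$\cP_G$ parts. The paper handles this by a two-step argument you are missing: first show that every connected component of $\cP_F\cap\cP_G$ is bounded, because on an unbounded interval the non-negativity of \emph{both} $F$ and $G$ in \eqref{F1} would force simultaneously $R_\mu\le R$ and $R_\mu\ge 1+R$; second, on a putatively unbounded component $\cJ$ of $\cP_F$, the first step guarantees that $G\equiv 0$ on $\cJ$ for $|x|$ large, whence \eqref{F2} applies there and contradicts $F\ge 0$. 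Your integrability argument for the convex case is correct in isolation but does not close the gap for mixed components.
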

%%%%%%%%%%%%%%%%%%%%%%%%%%%%%%%%%%%%%%%%

\begin{proof}
$(i)$: Assume for contradiction that $\cP_F\cap \cP_G = \emptyset$ and  let $\cI=(\alpha,\beta)$ be a connected component of $\cP_F.$ 
Then $F$ is given by \eqref{F2} on $\cI$ for some $a\in\R$ and satisfies $F(\alpha)=F(\beta)=0$. Consequently, $a>0$ and $\beta=-\alpha=\sqrt{6a}$. 
Similarly, recalling that $G\not\equiv 0$, if $\cJ$ is a connected component of $\cP_G,$ it follows from \eqref{F3} that there is $b>0$ 
such that $\cJ = (-\sqrt{6bR_\mu/R},\sqrt{6bR_\mu/R})$. 
Thus, $0\in \cI\cap\cJ\neq\emptyset$ and this contradicts $\cI\cap\cJ \subset \cP_F\cap \cP_G$.

\smallskip

\noindent $(ii)$: Assume first for contradiction that $\cP_F\cap \cP_G$ has an unbounded connected component $\cI$. 
Then $(F,G)$ are given by \eqref{F1} in $\cI$ for some $(a,b)\in\R^2$ and their non-negativity implies that $R_\mu\le R$ and $1+R\le R_\mu$, whence a contradiction.
Assume next for contradiction that $\cP_F$ has an unbounded connected component $\cJ$.
Owing to the just established boundedness of  the connected components of $\cP_F\cap \cP_G$, there is $r>0$ such that $G(x)=0$ for all $x\in\cJ$ such that $|x|>r$.
Then, $F$ is given by \eqref{F2} on that set which contradicts its non-negativity. This proves the claim for  $\cP_F$, the assertion for  $\cP_G$ following  by a
similar argument.

\smallskip

\noindent $(iii)$: Let $\cI$ be a connected component of $\cP_F$ and recall that it is a bounded interval by~$(ii)$. 
Assume for contradiction that $\cI\subset (0,\infty)$. 
According to \eqref{F1} and \eqref{F2}, for $x\in\cI$, $\eta^2\partial_x F(x)$ is given either by $-(R_\mu-R)x/3 R_\mu$ if $G(x)>0$ or by $-x/3(1+R)$ if $G(x)=0$.
Therefore, $\partial_x F<0$ in $\cI$ which, together with the continuity of $F$, entails that $F$ is decreasing in $\cI$ and contradicts the fact
that $F$ vanishes at both ends of $\cI$. 
A similar argument rules out the possibility that $\cI \subset (-\infty,0)$ and completes the proof.

\smallskip

\noindent $(iv)$: The proof is similar to that of~$(iii)$.

\smallskip

\noindent $(v)$: Consider a connected component $\cI$ of $\cP_F$ and assume that $\cI\subset (0,\infty)$. 
Assuming for contradiction that $\cI\cap \cP_G = \emptyset$, we readily infer from \eqref{F2} 
that $F$ is decreasing in $\cI$ which contradicts that $F$ vanishes at both ends of $\cI$ (recall that $\cI$ is bounded by~$(ii)$). 
We argue in a similar way if $\cI \subset (-\infty,0)$. 
\end{proof}

We next notice some invariance properties of \eqref{ststa} which can be checked by direct computations and allow us to reduce the range of the
parameters $R$, $R_\mu$, and $\eta$ to study.

%%%%%%%%%%%%%%%%%%%%%%%%%%%%%%%%%%%%%%%%
\begin{lemma}\label{L:2.4c}
Let $(F,G)$ be a solution of \eqref{ststa} with parameters $(R,R_\mu,\eta)$. Then
\begin{itemize}
\item[$(i)$] $x\mapsto (F(-x),G(-x))$ is also a solution of \eqref{ststa} with parameters $(R,R_\mu,\eta)$.
\item[$(ii)$] Introducing
\begin{equation}
R_{\mu,1} := \frac{R(1+R)}{R_\mu}\ , \quad \eta_1 := \frac{1}{\eta} \sqrt{\frac{R}{1+R}} \ , \label{spirou}
\end{equation}
and
\begin{equation}
\lambda := \left( \frac{\eta^2 R_\mu}{R} \right)^{1/3}\ , \quad F_1(x) := \lambda G(\lambda x)\ , G_1(x) := \lambda F(\lambda x)\ , \quad x\in \R\ , \label{fantasio}
\end{equation}
the pair $(F_1,G_1)$ belongs to $\cK^2\cap H^1(\R,\R^2)$ and is a solution of \eqref{ststa} with parameters $(R,R_{\mu,1},\eta_1)$ instead of $(R,R_\mu,\eta)$.
\end{itemize}
\end{lemma}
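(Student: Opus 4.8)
The statement asserts two invariance properties of the stationary system \eqref{ststa}, both of which are claimed to be checkable "by direct computations." The plan is therefore to verify each of them by substitution, keeping careful track of which algebraic conditions on the profiles must be preserved: nonnegativity, membership in $\cK^2\cap H^1(\R,\R^2)$ (i.e. the unit-mass and integrability/regularity constraints), and of course the two PDEs in \eqref{ststa}.

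For part $(i)$: set $\wt F(x):=F(-x)$ and $\wt G(x):=G(-x)$. The map $x\mapsto x^2/6$ is even, so $\p_x\big((1+R)\eta^2 \wt F + R\wt G + x^2/6\big)(x) = -(\p_x\varphi)(-x) + x/3$ where $\varphi:=(1+R)\eta^2 F + R G + \cdot^2/6$; but $x/3 = -\big(\p_x(\cdot^2/6)\big)(-x)$, so the whole bracket equals $-(\p_x\varphi)(-x)$. Multiplying by $\wt F(x)=F(-x)$ and using the first equation of \eqref{ststa} at the point $-x$ gives zero, and similarly for the second equation. The constraints $\wt F,\wt G\ge 0$, $\|\wt F\|_1=\|\wt G\|_1=1$, finite second moment, and $H^1$ regularity are all manifestly invariant under $x\mapsto -x$. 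This part is genuinely routine.

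Part $(ii)$ is the substantial one and where most of the care is needed. The idea is that the swap $F\leftrightarrow G$ together with the dilation $x\mapsto \lambda x$ and the parameter change \eqref{spirou} exchanges the roles of the two fluid layers. I would first record the two scalar identities that make \eqref{spirou}--\eqref{fantasio} work, namely that with $R_{\mu,1}=R(1+R)/R_\mu$ and $\eta_1^2 = R/\big((1+R)\eta^2\big)$ one has $\eta_1^2 R_{\mu,1} = R$ and $\eta_1^2(1+R)+R$... — more precisely, I expect the key algebra to be that the coefficient matrix $\begin{pmatrix}(1+R)\eta^2 & R\\ R_\mu\eta^2 & R_\mu\end{pmatrix}$ of system \eqref{ststa}, after the swap and with the new parameters, becomes a scalar multiple (by $\lambda^{-3}$, accounting for the Jacobian of the dilation in the $x^2/6$ term) of the original. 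Concretely I would substitute $F_1(x)=\lambda G(\lambda x)$, $G_1(x)=\lambda F(\lambda x)$ into the first equation of \eqref{ststa} written with parameters $(R,R_{\mu,1},\eta_1)$: the bracket becomes $\p_x\big((1+R)\eta_1^2\lambda G(\lambda x) + R\lambda F(\lambda x) + x^2/6\big)$, and the goal is to show this equals (up to a positive constant) $\lambda^2 \big(\p_x\chi\big)(\lambda x)$ where $\chi:=R_\mu\eta^2 F + R_\mu G + \cdot^2/6$ is the potential from the \emph{second} original equation; then $F_1(x)\cdot(\text{bracket})= \lambda^3 G(\lambda x)(\p_x\chi)(\lambda x)\cdot(\text{const}) = 0$ by the second equation of \eqref{ststa} at $\lambda x$. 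The definition $\lambda^3 = \eta^2 R_\mu/R$ is exactly what is needed to match the $x^2/6$ coefficient after pulling out $\lambda^2$ and rescaling. The second equation with the new parameters is handled symmetrically, landing on the first original equation. Finally I would check the structural constraints: $F_1,G_1\ge 0$ is clear; the mass is preserved since $\int_\R \lambda G(\lambda x)\,dx = \int_\R G(y)\,dy = 1$ by the substitution $y=\lambda x$, and likewise for $F_1$; the weighted $L_1$ and $L_2$ bounds and $H^1$ regularity transform correctly under a fixed dilation.

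The main obstacle is purely bookkeeping: getting the constants $R_{\mu,1}$, $\eta_1$, $\lambda$ to line up simultaneously in both equations, since the swap $F\leftrightarrow G$ must turn the first PDE into the second and vice versa. I expect the cleanest route is to verify the two scalar relations $\eta_1^2(1+R) = \Theta^{-1}\cdot(\text{something})$... — rather, to verify directly that $(1+R)\eta_1^2 = R_\mu\eta^2\cdot\lambda^{-3}\cdot\lambda\cdot\kappa$ for the appropriate scalar $\kappa=R_\mu$, and track the analogous identity for the $RF$ term — and then everything collapses. Everything else is substitution plus a change of variables in the integrals, so no conceptual difficulty arises beyond this.
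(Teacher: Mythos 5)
The paper offers no written proof for this lemma—it is stated as a "direct computation"—and your outline follows exactly that route: substitution for part~$(i)$, and the swap-plus-dilation for part~$(ii)$. Part~$(i)$ is handled correctly. For part~$(ii)$ your matrix-level framing is the right one and is in fact the cleanest way to organize the bookkeeping: with
$M_0 := \begin{pmatrix}(1+R)\eta^2 & R\\ R_\mu\eta^2 & R_\mu\end{pmatrix}$,
$M_1 := \begin{pmatrix}(1+R)\eta_1^2 & R\\ R_{\mu,1}\eta_1^2 & R_{\mu,1}\end{pmatrix}$,
and $P := \begin{pmatrix}0&1\\1&0\end{pmatrix}$ implementing the swap of both equations and unknowns, one checks $PM_1P = \lambda^{-3} M_0$, all four entries collapsing under the single relation $R\lambda^3 = R_\mu\eta^2$, which is precisely the definition of $\lambda$ in \eqref{fantasio}. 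The factor $\lambda^{-3}$ then exactly matches the mismatch of exponents between $\partial_x\big(\lambda G(\lambda x)\big) = \lambda^2(\partial_y G)(\lambda x)$ and $\partial_x(x^2/6) = \lambda^{-1}\cdot\lambda x/3$, and your mass/regularity bookkeeping under dilation is also correct.

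However, the two scalar "key identities" you write out are wrong and would derail a literal reading: $\eta_1^2 R_{\mu,1} = \dfrac{R}{(1+R)\eta^2}\cdot\dfrac{R(1+R)}{R_\mu} = \dfrac{R^2}{R_\mu\eta^2}$, which is not $R$ in general; and the proposed identity $(1+R)\eta_1^2 = R_\mu\eta^2\cdot\lambda^{-3}\cdot\lambda\cdot R_\mu$ evaluates on the right to $R\lambda R_\mu$ while the left side is $R/\eta^2$, so it does not hold. These stray lines should simply be deleted; they are inconsistent with your own (correct) matrix statement. The relations you actually need are $(1+R)\eta_1^2 = R/\eta^2 = \lambda^{-3}R_\mu$, $R_{\mu,1} = \lambda^{-3}(1+R)\eta^2$, $R_{\mu,1}\eta_1^2 = \lambda^{-3}R$, and $R = \lambda^{-3}R_\mu\eta^2$, each of which is an immediate consequence of $R\lambda^3 = R_\mu\eta^2$. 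With that correction the proof is complete and coincides with what the paper intends.
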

%%%%%%%%%%%%%%%%%%%%%%%%%%%%%%%%%%%%%%%%

%%%%%%%%%%%%%%%%%%%%%%%%%%%%%%%%%%%%%%%%
%%%%%%%%%%%%%%%%%%%%%%%%%%%%%%%%%%%%%%%%
\subsection{Even self-similar profiles}\label{S:22}
%%%%%%%%%%%%%%%%%%%%%%%%%%%%%%%%%%%%%%%%
%%%%%%%%%%%%%%%%%%%%%%%%%%%%%%%%%%%%%%%%

The observation~$(i)$ in Lemma~\ref{L:T1} is the starting point of the classification of even solutions of \eqref{ststa}.

%%%%%%%%%%%%%%%%%%%%%%%%%%%%%%%%%%%%%%%%
\begin{prop}[Classification of  even self-similar profiles] \label{P:-1} Let $R,R_\mu$, and $\eta$ be given positive parameters. 
There is a unique even solution $(F,G)$ of \eqref{ststa} with parameters $(R,R_\mu,\eta)$ which is given by:
\begin{itemize}
\item[$(i)$] If $R_\mu=R_\mu^0(R,\eta),$ then $\cP_F=\cP_G=(-\beta,\beta)$ and
$$
F(x)=G(x)=\frac{R_\mu-R}{6\eta^2R_\mu}(\beta^2-x^2)\ ,\qquad x\in \cP_F=\cP_G=(-\beta,\beta)\ ,
$$
where $\beta>0$ is defined in \eqref{c2:V}.

\item[$(ii)$] If $R_\mu\in (R_\mu^0(R,\eta),R_\mu^+(R,\eta)),$ then $\cP_F=(-\beta,\beta)$, $\cP_G=(-\gamma,\gamma)$,
and
\begin{align*}
\eta^2 F(x) = & \frac{R_\mu-R}{6R_\mu}(\beta^2-x^2)\ ,\qquad x\in \cP_F=(-\beta,\beta)\ ,\\
 & \\
G(x) = &
\left\{
\begin{array}{ll}
\displaystyle\frac{\gamma^2}{6R_\mu}+ \frac{R-R_\mu}{6R_\mu}\beta^2-\frac{1+R-R_\mu}{6R_\mu}x^2\ ,& |x|\leq\beta\ ,\\
 & \\
\displaystyle \frac{1}{6R_\mu}(\gamma^2-x^2)\ , & \beta\le |x|\le \gamma\ ,
\end{array}
\right. 
\end{align*} 
where $0<\beta<\gamma$ are defined by \eqref{c2:V}.

\item[$(iii)$] if $R_\mu \ge R_\mu^+(R,\eta)$, then $\cP_F=(-\beta,\beta)$, $\cP_G=(-\gamma,-\alpha)\cup (\alpha,\gamma)$, and
\begin{align*}
&\eta^2 F(x)=
\left\{
\begin{array}{ll}
\displaystyle\frac{R_\mu-R}{6 R_\mu}\beta^2+\frac{R(1+R-R_\mu)}{6 R_\mu(1+R)}\alpha^2-\frac{x^2}{6 (1+R)}\ ,& |x|\leq\alpha,\\
 & \\
\displaystyle \frac{R_\mu-R}{6 R_\mu}(\beta^2-x^2), & \alpha\le |x|\le \beta\ ,
\end{array}
\right.  \\
& G(x)=
\left\{
\begin{array}{ll}
\displaystyle \frac{1+R-R_\mu}{6R_\mu}(\alpha^2-x^2)\ , & \alpha\le |x|\leq\beta,\\
& \\
\displaystyle\frac{1}{6R_\mu}(\gamma^2-x^2)\ ,& \beta\le |x|\le \gamma\ ,\\
\end{array}
\right. 
\end{align*} 
where $0\le \alpha<\beta<\gamma$ is the solution of \eqref{eq:41}-\eqref{eq:43}.  

\item[$(iv)$] if $R_\mu\le R_\mu^-(R,\eta)$ then $\cP_F=(-\gamma,-\alpha)\cup (\alpha,\gamma)$, $\cP_G = (-\beta,\beta)$, and
\begin{align*}
&\eta^2 F(x)=
\left\{
\begin{array}{ll}
\displaystyle \frac{R_\mu-R}{6 R_\mu}(\alpha^2-x^2), & \alpha\le |x|\leq\beta\ ,\\
 & \\
\displaystyle\frac{1}{6 (1+R)}(\gamma^2-x^2)\ ,& \beta\le |x|\le \gamma,
\end{array}
\right.  \\
&G(x)=
\left\{
\begin{array}{ll}
\displaystyle\frac{R_\mu-R}{6R_\mu}\alpha^2+\frac{1+R-R_\mu}{6R_\mu}\beta^2-\frac{x^2}{6R_\mu} \ ,& |x|\leq\alpha\ , \\
& \\
\displaystyle \frac{1+R-R_\mu}{6R_\mu}(\beta^2-x^2)\ , & \alpha\le |x|\le \beta\ ,
\end{array}
\right. 
\end{align*} 
where $0\le \alpha<\beta<\gamma$ is the solution of \eqref{eq:51}-\eqref{eq:53} . 

\item[$(v)$] if $R_\mu\in (R_\mu^-(R,\eta),R_\mu^0(R,\eta)),$ then $\cP_F=(-\gamma,\gamma)$, $\cP_G=(-\beta,\beta)$, and
\begin{align*}
&\eta^2 F(x)=\left\{
\begin{array}{ll}
\displaystyle \frac{\gamma^2}{6 (1+R)}-\frac{R(1+R-R_\mu)}{6 (1+R)R_\mu}\beta^2-\frac{R_\mu-R}{6 R_\mu}x^2\ , & |x|\leq\beta\ ,\\
 & \\
\displaystyle\frac{1}{6 (1+R)}(\gamma^2-x^2)\ ,& \beta\le |x|\le \gamma\ ,
\end{array}
\right. \\
&G(x)=\frac{1+R-R_\mu}{6R_\mu}(\beta^2-x^2)\quad\text{ in }\;\; \cP_G=(-\beta,\beta)\ ,
\end{align*} 
where $0<\beta<\gamma$ is the solution of \eqref{c8:V}.
\end{itemize}
\end{prop}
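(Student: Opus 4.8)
The plan is to exploit the evenness assumption to reduce the ODE system \eqref{ststa} to a finite‑dimensional algebraic problem, whose solvability across the parameter range decomposes exactly into the five mutually exclusive cases $(i)$–$(v)$ delimited by the thresholds $R_\mu^-(R,\eta)<R_\mu^0(R,\eta)<R_\mu^+(R,\eta)$. First I would record the structural consequences of evenness. Since $F$ and $G$ are even, continuous, in $H^1(\R)$, and strictly positive near $x=0$ (by Lemma~\ref{L:T1}$(i)$, $\cP_F\cap\cP_G\neq\emptyset$, and since any component of $\cP_F$ or $\cP_G$ missing the origin would come in a symmetric pair meeting $\cP_G$, resp.\ $\cP_F$, by Lemma~\ref{L:T1}$(v)$), the origin lies in a component where \eqref{F1} holds. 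From the sign conditions on the quadratic coefficients in \eqref{F1}, \eqref{F2}, \eqref{F3}, the ordering of the boundary points of the components of $\cP_F$ and $\cP_G$ on $(0,\infty)$ is forced, and the number of components is at most two for each of $F$, $G$, with at most one of them disconnected. Combining this with Lemma~\ref{L:T1}$(iii)$ (if $R_\mu>R$ then $\cP_F$ is an interval) and Lemma~\ref{L:T1}$(iv)$ (if $R_\mu<1+R$ then $\cP_G$ is an interval), one gets a short list of admissible configurations, and the whole parameter line $R_\mu\in(0,\infty)$ splits into the regimes where each is the only candidate; the borderline values where a component degenerates to a point or where $F$ and $G$ share an endpoint are precisely $R_\mu^0$, $R_\mu^\pm$.

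Next, in each regime I would write the profile as a piecewise quadratic in $x^2$ using \eqref{F1}–\eqref{F3} on the successive intervals, with the matching constants determined by: continuity of $F$ and $G$ at the interior interface points (here $H^1$ is what forces continuity, hence no jumps), vanishing of $F$ and $G$ at the outer endpoints of their respective supports, and—crucially—the two normalisations $\|F\|_1=\|G\|_1=1$. Counting: in case $(iii)$, say, the unknowns are $0\le\alpha<\beta<\gamma$ together with the constants in \eqref{F1}/\eqref{F2}/\eqref{F3}, and the continuity relations let one eliminate all the constants in terms of $(\alpha,\beta,\gamma)$, producing a system of three equations \eqref{eq:41}–\eqref{eq:43} in the three unknowns $(\alpha,\beta,\gamma)$ (two volume constraints plus the continuity relation linking $F$'s and $G$'s profiles at $x=\beta$, or an analogous bookkeeping). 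The explicit formulas displayed in $(i)$–$(v)$ are then just the result of this elimination, so the content of the proposition is really: \emph{for each parameter regime the corresponding algebraic system has exactly one admissible solution}. Cases $(i)$ (both supports the single interval $(-\beta,\beta)$, only $\beta$ free, one volume equation since then $F\equiv G$) and $(v)$ (the mirror image of $(ii)$ under the duality of Lemma~\ref{L:2.4c}$(ii)$, which swaps the roles of $F$ and $G$ and maps $R_\mu^0\leftrightarrow R_\mu^0$, $R_\mu^+\leftrightarrow R_\mu^-$) are the easy ones; case $(ii)$ is elementary since $\beta$ is determined by $F$'s volume alone and then $\gamma$ by $G$'s volume; cases $(iii)$ and $(iv)$ are dual to each other via Lemma~\ref{L:2.4c}$(ii)$, so it suffices to treat one of them.

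The main obstacle is thus the unique solvability of the $3\times3$ nonlinear algebraic system for $(\alpha,\beta,\gamma)$ in case $(iii)$ (equivalently $(iv)$), together with verifying that the resulting solution does satisfy the admissibility constraints $0\le\alpha<\beta<\gamma$ and the positivity of $F$ and $G$ on the claimed supports—in particular that $\alpha$ crosses $0$ exactly at $R_\mu=R_\mu^+(R,\eta)$, so that $(ii)$ and $(iii)$ glue continuously there, and similarly at $R_\mu=R_\mu^-(R,\eta)$. I expect to handle this by a monotonicity/degree argument: parametrise the system by one variable, show the reduced map is strictly monotone (or use the implicit function theorem along $R_\mu$ starting from the explicitly solvable endpoint $R_\mu=R_\mu^+$ where $\alpha=0$), and track the sign of the discriminant‑type quantities that appear. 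Part of this bookkeeping—the precise derivation of \eqref{eq:41}–\eqref{eq:43}, \eqref{eq:51}–\eqref{eq:53}, \eqref{c2:V}, \eqref{c8:V}, and the proof that each has a unique admissible root—is where the real work lies and is what I would relegate to the Appendix. Finally, uniqueness of the even solution overall follows because the configuration analysis in the first step shows the five regimes are exhaustive and mutually exclusive, and within each regime the algebraic system has at most one admissible solution; existence follows because it has at least one.
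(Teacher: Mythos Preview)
Your overall strategy is sound and closely mirrors the paper's: both glue the quadratic pieces \eqref{F1}--\eqref{F3} via continuity and the mass constraints, exploit the duality of Lemma~\ref{L:2.4c}$(ii)$ to reduce cases $(iv)$--$(v)$ to $(iii)$--$(ii)$, and defer the solvability of the resulting algebraic systems (notably \eqref{eq:41}--\eqref{eq:43}) to an auxiliary lemma in the appendix.

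There is, however, a genuine gap in your opening reduction. You assert that ``$F$ and $G$ are \ldots\ strictly positive near $x=0$'' and hence that ``the origin lies in a component where \eqref{F1} holds.'' This is false precisely in the most interesting cases: in case $(iii)$ with $R_\mu > R_\mu^+(R,\eta)$ one has $\cP_G=(-\gamma,-\alpha)\cup(\alpha,\gamma)$ with $\alpha>0$, so $G(0)=0$ and $0\notin\cP_F\cap\cP_G$; the dual failure occurs in case $(iv)$. Your parenthetical justification via Lemma~\ref{L:T1}$(i)$ and~$(v)$ does not establish $0\in\cP_F\cap\cP_G$: part~$(i)$ only gives $\cP_F\cap\cP_G\ne\emptyset$, and part~$(v)$ says that off-origin components of $\cP_F$ meet $\cP_G$, not that the intersection itself contains the origin. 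Since your subsequent classification of configurations is built on this starting point, the argument as written does not cover $(iii)$ and $(iv)$.

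The paper sidesteps this by a different organising decomposition: it picks \emph{any} connected component $(\alpha,\beta)$ of $\cP_F\cap\cP_G$ (without assuming $0\in(\alpha,\beta)$) and splits into four cases according to which of $F$, $G$ vanishes at each endpoint --- e.g.\ Case~(I): $F(\alpha)=F(\beta)=0$; Case~(II): $F(\beta)=G(\alpha)=0$; and so on. The quadratic structure of \eqref{F1} then \emph{forces} the location of the component relative to the origin (for instance in Case~(II) one derives $\alpha\beta\ge 0$), and only afterwards is evenness invoked to propagate the local picture to the full line. Replacing your ``origin lies in $\cP_F\cap\cP_G$'' starting point by this endpoint-based case split would repair the gap; the rest of your outline then goes through essentially as in the paper.
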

%%%%%%%%%%%%%%%%%%%%%%%%%%%%%%%%%%%%%%%%

%%%%%%%%%%%%%%%%%%%%%%%%%%%%%%%%%%%%%%%%
\begin{figure}
\includegraphics[scale=0.71]{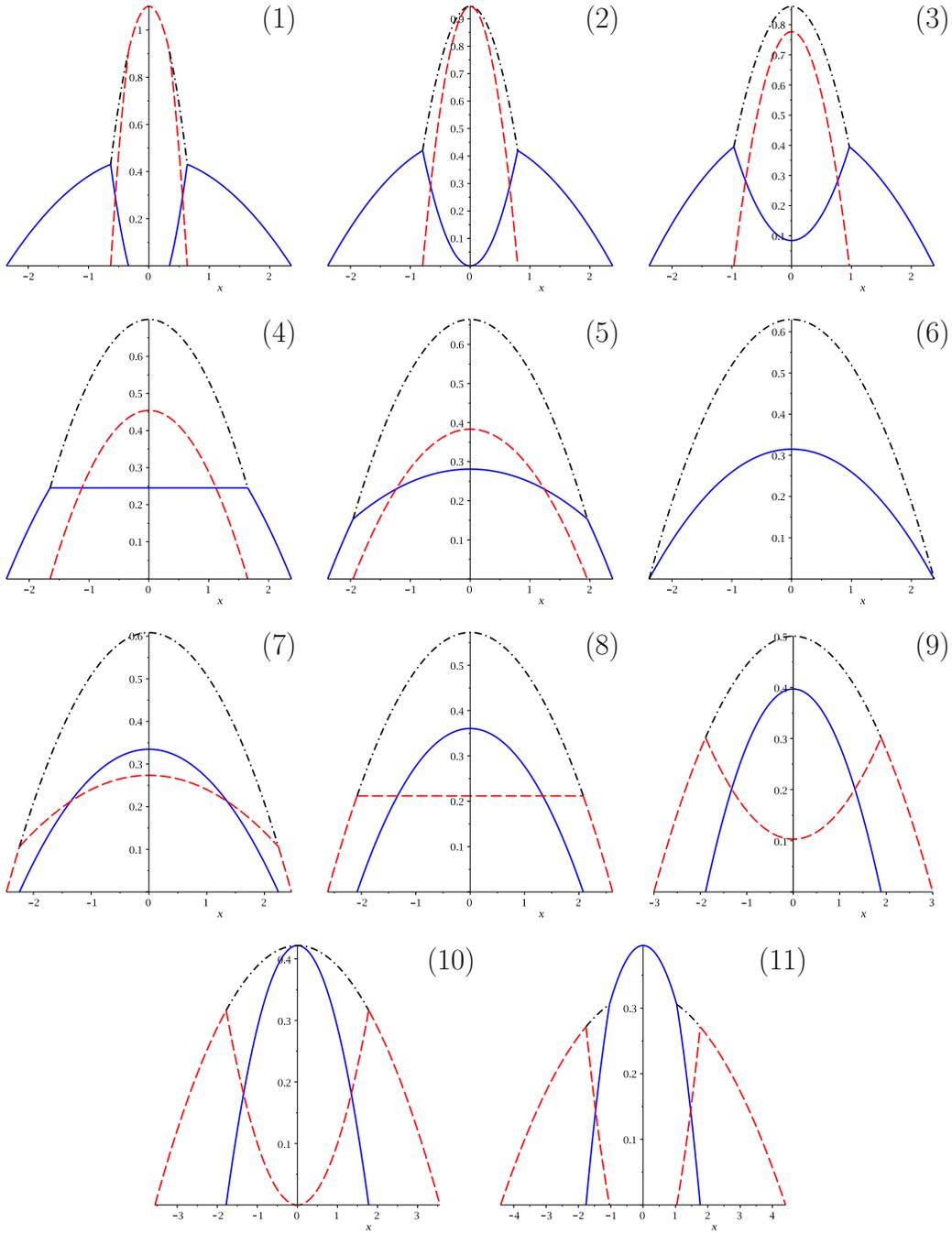}
\caption[Even self-similar profiles]{\small Even self-similar profiles of \eqref{Pb}  for $\eta=R=1$, and: 
   $(1)$ $R_\mu=0.1$; $(2)$ $R_\mu=0.2$;  
   $(3)$ $R_\mu=1/3$; $(4)$ $R_\mu=1$;
   $(5)$ $R_\mu=1.25$; $(6)$ $R_\mu=1.5$;
   $(7)$ $R_\mu=5/3$; $(8)$ $R_\mu=2$;
   $(9)$ $R_\mu=3$; $(10)$ $R_\mu=5$;
   $(11)$ $R_\mu=10$.
   The blue line is $F$, the dashed red line is $G$, and the dash-dotted black line is $\eta^2F+G$. The pair $(f_s,g_s)$ is a self-similar profile of \eqref{eq:S2}, 
   whereby  $f_s:=\|f_0\|_1 F$ is the interface between the layers and $f_s+g_s:=\|g_0\|_1(\eta^2F+G)$ is the  upper boundary of the less dense layer.} \label{F:1}
\end{figure}
%%%%%%%%%%%%%%%%%%%%%%%%%%%%%%%%%%%%%%%%

\begin{proof}[Proof of Proposition~\ref{P:-1}] 
According to Lemma~\ref{L:T1}~$(i)$, there is at least one non-empty connected component $\cI=(\alpha,\beta)$ of $\cP_F\cap \cP_G$ and we necessarily have $\alpha\in\R$ and $\beta\in\R$ by Lemma~\ref{L:T1}~$(ii)$. Then $(FG)(\alpha)=(FG)(\beta)=0$ and we classify the (even) solutions of \eqref{ststa} by considering all possible cases determined by these relations.

\medskip

\noindent\textbf{Case (I): $F(\alpha)=F(\beta)=0$.}  By \eqref{F1},  $F$ and $G$ are given by
\begin{equation}
\eta^2 F(x) = a-b-\frac{R_\mu-R}{6R_\mu}x^2 \;\;\text{ and }\;\; G(x) = \frac{(1+R)b-aR}{R}-\frac{1+R-R_\mu}{6R_\mu}x^2 \label{F2001}
\end{equation}
for $x\in\cI$ for some $(a,b)\in \R^2$. Since $F>0$ in $\cI$ and $F(\alpha)=F(\beta)=0$, we realize that necessarily $\alpha=-\beta<0$ and $R_\mu>R$.
Combining the latter with Lemma~\ref{L:T1}~$(iii)$ implies that $\cP_F=(-\beta,\beta)$.

Next either $G(-\beta) G(\beta)=0$ and \eqref{F2001} entails that $G(\beta)=G(-\beta)=0$. Or $G(\beta) G(-\beta)>0$ and we denote the connected component of $\cP_G$ 
containing $(-\beta,\beta)$ by $(\delta,\gamma)$. Clearly, $\delta<-\beta < \beta < \gamma$ and, due to \eqref{F3}, there are $b_1, b_2$ such that 
$$
G(x) = \frac{b_1}{R}-\frac{x^2}{6R_\mu}\ , \quad x\in (\beta,\gamma)\ , \;\;\text{ and }\;\; G(x) = \frac{b_2}{R} - \frac{x^2}{6R_\mu}\ , \quad x\in (\delta,-\beta)\ .
$$
Since $G(\beta)=G(-\beta)$ by \eqref{F2001}, we realize that $b_1=b_2$ and thus that $\delta=-\gamma$. Furthermore the continuity of $G$ at $x=\beta$ and the
property $F(\beta)=0$ give $b_1=b$.  

Finally let $\cJ$ be a connected component of $\cP_G$ lying outside $(-\beta,\beta)$ (resp. $(-\gamma,\gamma)$) if $G(\beta)=0$ (resp. $G(\beta)>0$). 
Since $F$ vanishes on $\cJ$, the function $G$ is given by \eqref{F3} and thus is monotone in $\cJ$, leading us again to a contradiction. 
Therefore, $\cP_G=(-\beta,\beta)$ (resp. $\cP_G=(-\gamma,\gamma)$). 

Summarizing, we have shown that there is $0<\beta \le \gamma$ and $(a,b)\in\R^2$ such that
$$
\eta^2F(x)= a-b-\frac{R_\mu-R}{6R_\mu}x^2\ , \quad|x|\le \beta\ ,
$$
and
$$
G(x)=
\left\{
\begin{array}{ll}
\displaystyle \frac{(1+R)b-aR}{R}-\frac{1+R-R_\mu}{6R_\mu}x^2\ ,& |x|\leq\beta\ ,\\
 & \\
\displaystyle \frac{b}{R}-\frac{x^2}{6R_\mu}\ , & \beta\le |x| \le \gamma \;\; (\text{if }\;\; \beta\ne\gamma).
\end{array}
\right.
$$
In view of $F(\beta)=G(\gamma)=0$ we have the following relations
$$
a-b=\frac{R_\mu-R}{6R_\mu} \beta^2 \qquad\text{and}\qquad \frac{b}{R}=\frac{\gamma^2}{6R_\mu}.
$$
Moreover, since $\|F\|_1=\|G\|_1=1,$ we also find 
$$
(R_\mu-R) \beta^3 = \frac{9\eta^2R_\mu}{2} \qquad \text{and} \qquad \gamma^3 -(R_\mu-R) \beta^3 = \frac{9R_\mu}{2}.
$$
Consequently, $\beta,\gamma,a,b$ are uniquely determined by $(R, R_\mu,\eta)$ and
\begin{align}
&\beta^3 = \frac{9 \eta^2 R_\mu}{2 (R_\mu-R)}\ , \qquad \gamma^3 = \frac{9 R_\mu}{2} (1+\eta^2)\ , \label{c2:V} \\
&a = \frac{\gamma R}{6 R_\mu} + \frac{R_\mu-R}{6 R_\mu}\ , \qquad b = \frac{\gamma R}{6 R_\mu}\ . \nonumber
\end{align}
Imposing that $\gamma\ge\beta$ and that $G(0)>0,$  
we obtain that this case occurs exactly when
\begin{equation}\label{a2}
R_\mu^0(R,\eta) = R+\frac{\eta^2}{1+\eta^2}\le R_\mu<R+\left(\frac{1+\eta^2}{\eta^2}\right)^2 = R_\mu^+(R,\eta),
\end{equation}
with $\beta=\gamma$ if and only if $R_\mu = R_\mu^0(R,\eta)$. Note that the constraints \eqref{a2} are consistent with the condition $R_\mu>R$. Observe also that $G$ 
is convex in $(-\beta,\beta)$ if  $R_\mu>1+R$, $G$ is concave in $(-\beta,\beta)$ if  $R_\mu<1+R$, and $G$ is constant in $(-\beta,\beta)$ if $R_\mu=1+R$. 
This completes the proof of Proposition~\ref{P:-1}~(i)-(ii).

\medskip

\noindent\textbf{Case (II): $F(\beta)=G(\alpha)=0$.} We may additionally assume that $F(\alpha)>0$ since the case where $F$ vanishes at both $\alpha$ and $\beta$
has been handled in \textbf{Case~(I)}. 
Next, assume for contradiction that $G(\beta)=0$. Since $F$ and $G$ are given by
\begin{equation}
\eta^2 F(x) = a-b-\frac{R_\mu-R}{6R_\mu}x^2 \;\;\text{ and }\;\; G(x) = \frac{(1+R)b-aR}{R}-\frac{1+R-R_\mu}{6R_\mu}x^2 \label{F2002}
\end{equation}
in $(\alpha,\beta)$ for some $(a,b)\in \R^2$ according to \eqref{F1}, the property $G(\alpha)=G(\beta)=0$ and \eqref{F2002} imply that $\alpha=-\beta<0$. 
Using again \eqref{F2002}, we realize that it gives $F(\alpha)=F(\beta)=0$ and a contradiction. Therefore $G(\beta)>0$ and we may define
\begin{equation}
\begin{split}
\beta_1 & := \inf\{x<\alpha\,:\, F>0 \;\;\text{ in }\;\; (x,\beta) \} < \alpha\ , \\
\gamma & := \sup\{x>\beta\,:\, G>0 \;\;\text{ in }\;\; (\alpha,x)\}> \beta\ . 
\end{split} \label{pachelbel}
\end{equation}
Since $\partial_x F(\beta-)\le 0$ and $\partial_x G(\alpha+)\ge 0$, we deduce from \eqref{F2002} that
\begin{equation}
(R_\mu - R) \beta \ge 0 \;\;\text{ and }\;\; (1+R-R_\mu) \alpha \le 0\ . \label{ce4}
\end{equation}
In addition, 
\begin{equation}
- \beta \not\in [\alpha,\beta)\ , \quad  -\alpha \not\in (\alpha,\beta]\ , \;\;\text{ and }\;\; \alpha \beta \ge 0\ . \label{ce5}
\end{equation}
Indeed, assume for contradiction that $- \beta \in [\alpha,\beta)$. Then $F(-\beta)=F(\beta)=0$ by \eqref{F2002} and a contradiction. 
A similar argument gives the second claim in \eqref{ce5}. 
Finally, assume for contradiction that $\alpha\beta<0$, so that $\alpha<0<\beta$. It then follows from the first two statements in \eqref{ce5} 
that $-\beta<\alpha$ and $\beta<-\alpha$, and a contradiction. 
As a consequence of \eqref{ce5}, we realize that either $0\le \alpha < \beta$ or $\alpha< \beta \le 0$ and study separately these two cases.

\smallskip

\noindent\textbf{Case~(II-a):} We first consider the case $0 \le \alpha < \beta$. Since $F$ and $G$ are not constant in $(\alpha,\beta)$ we infer from 
\eqref{ce4} that $R_\mu>R$ and either $\alpha>0$ and $R_\mu>R+1$ or $\alpha=0$. 
In the latter, $G(0)=0$ and the positivity of $G$ in $(0,\beta)$ entails that $R_\mu>R+1$ as well. We have thus shown that $R_\mu>R+1$ in that case. 
We then infer from Lemma~\ref{L:T1}~$(iii)$ that $\cP_F=(\beta_1,\beta)$ and $\beta_1<0$. 
The assumed evenness of $F$ (which has not be used up to this point) entails that $\beta_1=-\beta$. 
Assume next for contradiction that there is $x_0\in (0,\alpha)$ such that $G(x_0)>0$, a situation which can only occur if $\alpha>0$. 
Then $x_0\in \cP_F\cap \cP_G$ and it follows from \eqref{F1} and the property $R_\mu>R+1$ that $G$ is increasing on $(x_0,\alpha)$ which contradicts $G(\alpha)=0$.
Therefore, recalling that $G$ is assumed to be even, we conclude that $G\equiv 0$ in $(-\alpha,\alpha)$. 
Finally, since $\cP_F = (-\beta,\beta)$, we deduce from \eqref{F3} that $G$ shall be monotone on any connected component of $\cP_G\setminus(-\gamma,\gamma)$, 
so that necessarily $\cP_G =(-\gamma,-\beta)\cup(\beta,\gamma).$  

Summarizing, there are $0\le \alpha < \beta < \gamma$ such that $\cP_F=(-\beta,\beta)$ and $\cP_G=(-\gamma,-\alpha)\cup (\alpha,\gamma)$ and it
follows from \eqref{F1}-\eqref{F3}, 
the continuity of $F$ and $G$, and the constraints $F(\beta) = G(\alpha) = G(\gamma) = 0$ that there are real numbers $(a,b)$ such that
$$
\eta^2F(x)=
\left\{
\begin{array}{ll}
 \displaystyle\frac{a}{1+R}-\frac{x^2}{6(1+R)}\ , &  |x|\leq\alpha\ ,\\
  & \\
\displaystyle a-b-\frac{R_\mu-R}{6R_\mu}x^2\ , & \alpha\le |x| \le \beta\ ,
\end{array}
\right.
$$
and
$$
G(x)=
\left\{
\begin{array}{ll}
\displaystyle \frac{(1+R)b-aR}{R}-\frac{1+R-R_\mu}{6R_\mu}x^2\ , & \alpha\le |x| \le \beta\ ,\\
 & \\
\displaystyle \frac{b}{R}-\frac{x^2}{6R_\mu}\ , & \beta\le |x|\le \gamma\ .
\end{array}
\right. 
$$
The parameters $a,b, \alpha,\beta,\gamma$ satisfy
\begin{equation}\label{eq:as}
a-b=\frac{R_\mu-R}{6R_\mu}\beta^2,\qquad \frac{(1+R)b-aR}{R}=\frac{1+R-R_\mu}{6R_\mu}\alpha^2,\qquad\frac{b}{R}=\frac{\gamma^2}{6R_\mu},
\end{equation}
as well as
\begin{eqnarray}
 (R_\mu-R)\beta^3-\frac{R(R_\mu-R-1)}{1+R}\alpha^3 &=&\frac{9\eta^2R_\mu}{2}\ ,\label{eq:41}\\
\gamma^3-(R_\mu-R)\beta^3+(R_\mu-R-1)\alpha^3&=&\frac{9R_\mu}{2} \ ,\label{eq:42}
\end{eqnarray}
since $\|F\|_1=\|G\|_1=1$. We are left with solving the algebraic system \eqref{eq:as}-\eqref{eq:42} for the unknowns $(a,b,\alpha,\beta,\gamma)$, keeping 
in mind the constraint $0\le \alpha < \beta < \gamma$. 
It however easily follows from \eqref{eq:as} that $a$ and $b$ can be computed in terms of $(\alpha,\beta,\gamma)$ and that \eqref{eq:as} reduces to
\begin{equation}
\gamma^2-(R_\mu-R)\beta^2+(R_\mu-R-1)\alpha^2=0\ .\label{eq:43}
\end{equation}
Thus, we only have to solve the system of three algebraic equations \eqref{eq:41}-\eqref{eq:43} for $(\alpha,\beta, \gamma)$ and
find out for which values of the parameters $(R,R_\mu,\eta)$ 
satisfying $R_\mu>R+1$ it has a solution enjoying the property $0\le \alpha < \beta < \gamma$. 
According to Lemma~\ref{L:Z1} which is stated and proved in the appendix the system \eqref{eq:41}-\eqref{eq:43} 
has a unique solution $(\alpha,\beta, \gamma)$ satisfying $0\le \alpha < \beta < \gamma$ if and only if $R\ge R_\mu^+(R,\eta)$.
Moreover, $\alpha>0$ if $R_\mu>R_\mu^+(R,\eta)$ and $\alpha=0$ if $R_\mu=R_\mu^+(R,\eta)$.  
We have thus proved Proposition~\ref{P:-1}~(iii).

\smallskip

\noindent\textbf{Case (II-b):} We are left with the case $\alpha < \beta \le 0$ which actually can be deduced from the previous one with the help of Lemma~\ref{L:2.4c}. 
Indeed, define the parameters $(R_{\mu,1},\eta_1)$ and $\lambda$ as in Lemma~\ref{L:2.4c} and set
$$
F_1(x) = \lambda G(-\lambda x)\ , \qquad G_1(x) = \lambda F(-\lambda x)\ , \qquad x\in\R\ .
$$ 
Then $(F_1,G_1)$ is a solution of \eqref{ststa} with parameters $(R,R_{\mu,1},\eta_1)$, $F_1(-\alpha/\lambda)=G_1(-\beta/\lambda)=0$,
and $(-\beta/\lambda,-\alpha/\lambda)$ is a connected component of $\cP_{F_1}\cap \cP_{G_1}$. In addition, recalling the definition \eqref{pachelbel} 
of $\beta_1$ and $\gamma$,
the interval $(-\gamma/\lambda,-\alpha/\lambda)$ is a connected component of $\cP_{F_1}$ while the interval $(-\beta/\lambda,-\beta_1/\lambda)$ is a
connected component of $\cP_{G_1}$. 
We are thus in the situation analysed in \textbf{Case~(II-a)} for $(F_1,G_1)$ and $(F_1,G_1)$ is given by
\begin{align*}
&\eta_1^2 F_1(x)=
\left\{
\begin{array}{ll}
\displaystyle\frac{R_{\mu,1}-R}{6R_{\mu,1}} \frac{\alpha^2}{\lambda^2} + \frac{R(1+R-R_{\mu,1})}{6 R_{\mu,1}(1+R)}\frac{\beta^2}{\lambda^2} - \frac{x^2}{6(1+R)}\ ,& 
|x|\leq \displaystyle -\frac{\beta}{\lambda},\\
 & \\
\displaystyle \frac{R_{\mu,1}-R}{6R_{\mu,1}} \left( \frac{\alpha^2}{\lambda^2} -x^2 \right), & \displaystyle -\frac{\beta}{\lambda} \le |x|\le -\frac{\alpha}{\lambda}\ ,
\end{array}
\right.  \\
& G_1(x)=
\left\{
\begin{array}{ll}
\displaystyle \frac{1+R-R_{\mu,1}}{6R_{\mu,1}} \left( \frac{\beta^2}{\lambda^2} -x^2 \right)\ , & 
\displaystyle -\frac{\beta}{\lambda} \le |x|\leq - \frac{\alpha}{\lambda},\\
& \\
\displaystyle\frac{1}{6R_{\mu,1}} \left( \frac{\beta_1^2}{\lambda^2} -x^2 \right)\ ,& \displaystyle -\frac{\alpha}{\lambda}\le |x|\le - \frac{\beta_1}{\lambda}\ ,\\
\end{array}
\right. 
\end{align*} 
where $\gamma = -\alpha$ and $0\le -\beta/\lambda<-\alpha/\lambda<-\beta_1/\lambda$ is the solution of \eqref{eq:41}-\eqref{eq:43} with $(R,R_{\mu,1},\eta_1)$ 
instead of $(R,R_\mu,\eta)$ which is known to exist if and only if
$$
R_{\mu,1}\ge R_\mu^+(R,\eta_1) = R + \left( \frac{1+\eta_1^2}{\eta_1^2} \right)^2\ ,
$$ 
owing to the analysis performed in \textbf{Case~(II-a)}. 
Equivalently $R_\mu\le R_\mu^-(R,\eta)$ and $\beta_1<\alpha<\beta \le 0$ is the unique solution of
\begin{eqnarray*}
(1+R-R_\mu) (-\alpha)^3 - (R-R_\mu) (-\beta)^3 & = & \frac{9 R_\mu}{2}\ , \\
R_\mu (-\beta_1)^3 - R ( 1+R-R_\mu) (-\alpha)^3 + (1+R)(R-R_\mu) (-\beta)^3 & = & \frac{9}{2} R_\mu (1+R) \eta^2\ , \\
R_\mu \beta_1^2 - R (1+R-R_\mu) \alpha^2 + (1+R) (R-R_\mu) \beta^2 & = & 0\ .
\end{eqnarray*}
Furthermore, $(F,G)$ are given by
\begin{align*}
&\eta^2 F(x)=
\left\{
\begin{array}{ll}
\displaystyle \frac{R_\mu-R}{6R_\mu} \left( \beta^2 -x^2 \right)\ , & \displaystyle -\beta \le |x|\leq - \alpha,\\
& \\
\displaystyle\frac{1}{6(1+R)} \left( \beta_1^2 -x^2 \right)\ ,& \displaystyle -\alpha\le |x|\le -\beta_1\ ,\\
\end{array}
\right.  
\end{align*} 
\begin{align*}
& G(x)=
\left\{
\begin{array}{ll}
\displaystyle\frac{1+R-R_\mu}{6R_\mu} \alpha^2 + \frac{R_\mu-R}{6 R_\mu} \beta^2 - \frac{x^2}{6R_\mu}\ ,& |x|\leq  -\beta\ ,\\
 & \\
\displaystyle \frac{1+R-R_\mu}{6R_\mu} \left( \alpha^2 -x^2 \right), & -\beta \le |x|\le -\alpha\ .
\end{array}
\right. 
\end{align*} 
Changing the notation $(-\beta,-\alpha,-\beta_1)$ to $(\alpha,\beta,\gamma)$ for consistency, the above analysis shows that $(\alpha,\beta,\gamma)$ is
the unique solution to
\begin{eqnarray}
(1+R-R_\mu) \beta^3 - (R-R_\mu) \alpha^3 & = & \frac{9 R_\mu}{2}\ , \label{eq:51}\\
R_\mu \gamma^3 - R ( 1+R-R_\mu) \beta^3 + (1+R)(R-R_\mu) \alpha^3 & = & \frac{9}{2} R_\mu (1+R) \eta^2\ , \label{eq:52}\\
R_\mu \gamma^2 - R (1+R-R_\mu) \beta^2 + (1+R) (R-R_\mu) \alpha^2 & = & 0\ , \label{eq:53}
\end{eqnarray}
satisfying $0\le \alpha<\beta<\gamma$ which exists if and only if $R_\mu\le R_\mu^-(R,\eta)$. We have thus completed the proof of Proposition~\ref{P:-1}~(iv).

\medskip

\noindent\textbf{Case~(III): $F(\alpha)=G(\beta)=0$.} This case actually reduces to the previous ones thanks to Lemma~\ref{L:2.4c}. 
Indeed, define the parameters $(R_{\mu,1},\eta_1)$ and $\lambda$ as in Lemma~\ref{L:2.4c} and set
$$
F_1(x) = \lambda G(\lambda x)\ , \qquad G_1(x) = \lambda F(\lambda x)\ , \qquad x\in\R\ .
$$ 
Then $(F_1,G_1)$ is a solution to \eqref{ststa} with parameters $(R,R_{\mu,1},\eta_1)$ with $F_1(\beta/\lambda)=G_1(\alpha/\lambda)=0$ and
$(\alpha/\lambda,\beta/\lambda)$ is
a connected component of $\cP_{F_1}\cap \cP_{G_1}$. 
We are thus in the situation already analysed in \textbf{Case~(II)} for $(F_1,G_1)$ and we do not obtain other solutions.

\medskip

\noindent\textbf{Case~(IV): $G(\alpha)=G(\beta)=0$.} Once more, using Lemma~\ref{L:2.4c} and keeping the same notation as in \textbf{Case~(III)} allow us 
to deduce this case from \textbf{Case~(I)}. 
Indeed, arguing as in \textbf{Case~(III)} above we realize that we are in the situation analyzed in \textbf{Case~(I)} for $(F_1,G_1)$. Then $\alpha=-\beta$,
$$
\eta_1^2 F_1(x)= \frac{R_{\mu,1} - R}{6 R_{\mu,1}} \left( \frac{\beta^2}{\lambda^2} - x^2 \right)\ , \quad |x|\le \frac{\beta}{\lambda}\ ,
$$
and
$$
G_1(x)=
\left\{
\begin{array}{ll}
\displaystyle \frac{1}{6R_{\mu,1}} \frac{\gamma^2}{\lambda^2} + \frac{R-R_{\mu,1}}{6 R_{\mu,1}} \frac{\beta^2}{\lambda^2} -\frac{1+R-R_{\mu,1}}{6R_{\mu,1}} x^2\ ,&
\displaystyle |x|\leq\frac{\beta}{\lambda}\ ,\\
 & \\
\displaystyle \frac{1}{6R_{\mu,1}} \left( \frac{\gamma^2}{\lambda^2} - x^2 \right), & \displaystyle \frac{\beta}{\lambda}\le |x| \le \frac{\gamma}{\lambda} \ ,
\end{array}
\right.
$$
where $(\beta/\lambda,\gamma/\lambda)$ are given by
$$
\frac{\beta^3}{\lambda^3} = \frac{9}{2} \frac{\eta_1^2R_{\mu,1}}{R_{\mu,1}-R} \ , \quad \frac{\gamma^3}{\lambda^3} = \frac{9 R_{\mu,1}}{2} (1+\eta_1^2)\ ,
$$
and satisfy $\beta\le \gamma$, the latter being true if and only if $R_{\mu,1}\in [R_\mu^0(R,\eta_1),R_\mu^+(R,\eta_1))$. 
This condition also reads $R_\mu\in (R_\mu^-(R,\eta),R_\mu^0(R,\eta)]$ while $\beta$ and $\gamma$ are explicitly given by
\begin{equation}
\beta^3 = \frac{9}{2} \frac{R_\mu}{1+R-R_\mu}\ , \quad \gamma^3 = \frac{9}{2} ((1+R)\eta^2 + R)\ . \label{c8:V}
\end{equation}
Observing that $\beta=\gamma$ if $R_\mu=R_\mu^0(R,\eta)$ which corresponds to the solution to \eqref{ststa} already described in Proposition~\ref{P:-1}~(i), 
we have shown Proposition~\ref{P:-1}~(v) and thereby completed the proof.
\end{proof}

%%%%%%%%%%%%%%%%%%%%%%%%%%%%%%%%%%%%%%%%
\begin{rem}\label{R:2.7}
It is worth emphasizing here that the assumption of evenness of the solution $(F,G)$ to \eqref{ststa} is used only in the analysis
of \textbf{Case~(II)} and \textbf{Case~(III)} in the proof of Proposition~\ref{P:-1}.
Therefore, on the one hand, only even solutions of \eqref{ststa} exist when $R_\mu\in (R_\mu^-(R,\eta),R_\mu^+(R,\eta))$.
On the other hand, there may exist other, non-symmetric, solutions of \eqref{ststa} when $R_\mu\not\in (R_\mu^-(R,\eta),R_\mu^+(R,\eta))$. 
In the following we shall prove that non-symmetric solutions of \eqref{ststa} exist if and only if $R_\mu\not\in [R_\mu^-(R,\eta),R_\mu^+(R,\eta)]$.
\end{rem}
%%%%%%%%%%%%%%%%%%%%%%%%%%%%%%%%%%%%%%%%

%%%%%%%%%%%%%%%%%%%%%%%%%%%%%%%%%%%%%%%%
%%%%%%%%%%%%%%%%%%%%%%%%%%%%%%%%%%%%%%%%
\subsection{Non-symmetric self-similar profiles with connected supports}\label{S:23}
%%%%%%%%%%%%%%%%%%%%%%%%%%%%%%%%%%%%%%%%
%%%%%%%%%%%%%%%%%%%%%%%%%%%%%%%%%%%%%%%%

Up to now, we have shown that for each choice of the parameters $(R,R_\mu,\eta)$ there exists exactly one even solution of \eqref{ststa}. 
We show next that for certain values of the parameters there exist other solutions $(F,G)$ of \eqref{ststa} which are not symmetric and have 
the property that both $F$ and $G$ have connected supports. Observe that non-symmetric solutions of \eqref{ststa} appear always pairwise 
according to Lemma~\ref{L:2.4c}~$(i)$.

%%%%%%%%%%%%%%%%%%%%%%%%%%%%%%%%%%%%%%%%
\begin{prop}\label{P:0}  Let $(R,R_\mu, \eta)$ be positive parameters. There are $R_\mu^M(R,\eta)>R_\mu^+(R,\eta)$ and $R_\mu^m(R,\eta)<R_\mu^-(R,\eta)$ such that:
\begin{itemize}
\item [$(i)$] if $R_\mu\ge R_\mu^M(R,\eta)$, then the pair $(F,G)$ with $\cP_F=(\beta_1,\beta),$ $\cP_G=(\alpha,\gamma),$ and
$$
\eta^2 F(x)=
\left\{
\begin{array}{ll}
 \displaystyle\frac{\beta_1^2-x^2}{6 (1+R)}\ , &  x\in [\beta_1,\alpha]\ ,\\
  & \\
\displaystyle \frac{R_\mu-R}{6 R_\mu}(\beta^2-x^2)\ , & x\in [\alpha,\beta]\ ,
\end{array}
\right.
$$
$$
G(x)=
\left\{
\begin{array}{ll}
\displaystyle \frac{1+R-R_\mu}{6R_\mu}(\alpha^2-x^2)\ , & x\in [\alpha,\beta]\ ,\\
 & \\
\displaystyle \frac{\gamma^2-x^2}{6R_\mu}\ , & x\in [\beta,\gamma]\ ,
\end{array}
\right. 
$$
is a non-symmetric solution  of \eqref{ststa} where $(\beta_1,\alpha,\beta,\gamma)$ is the unique solution of the system of algebraic equations \eqref{d1:V1}-\eqref{d1:V4} 
satisfying $\beta_1<0\le\alpha<\beta<\gamma$. Additionally, its reflection $x\mapsto (F(-x),G(-x))$ is also a solution of \eqref{ststa}.

\item [$(ii)$] if $R_\mu\le R_\mu^m(R,\eta)$, then the pair $(F,G)$ with $\cP_F=(\alpha,\gamma)$, $\cP_G=(\beta_1,\beta)$, and
$$
\eta^2 F(x)=
\left\{
\begin{array}{ll}
\displaystyle \frac{R_\mu-R}{6 R_\mu}(\alpha^2-x^2)\ , & x\in [\alpha,\beta]\ ,\\
 & \\
 \displaystyle\frac{\gamma^2-x^2}{6 (1+R)}\ , &  x\in [\beta,\gamma]\ ,
\end{array}
\right.
$$
$$
G(x)=
\left\{
\begin{array}{ll}
\displaystyle \frac{\alpha_1^2-x^2}{6R_\mu}\ , & x\in [\beta_1,\alpha]\ ,\\
 & \\
\displaystyle \frac{1+R-R_\mu}{6R_\mu}(\beta^2-x^2)\ , & x\in [\alpha,\beta]\ ,
\end{array}
\right. 
$$
is a non-symmetric solution  of \eqref{ststa} where $(\beta_1,\alpha,\beta,\gamma)$ is the unique solution of the system of algebraic equations \eqref{d2:V1}-\eqref{d2:V4} 
satisfying $\beta_1<0\leq\alpha<\beta<\gamma$. Additionally, its reflection $x \mapsto (F(-x),G(-x))$ is also a solution of \eqref{ststa}.

\item [$(iii)$] if $R_\mu\in (R_\mu^m(R,\eta),R_\mu^M(R,\eta))$, there is no non-symmetric solution $(F,G)$ of \eqref{ststa} which have the property
that the supports of $F$ and $G$ are connected.
\end{itemize}
\end{prop}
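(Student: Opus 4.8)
The plan is to classify all non-symmetric solutions $(F,G)$ of \eqref{ststa} whose positivity sets $\cP_F$ and $\cP_G$ are both intervals, by following the same scheme used in the proof of Proposition~\ref{P:-1} but \emph{without} invoking evenness. Start from a non-empty bounded connected component $\cI=(\alpha,\beta)$ of $\cP_F\cap\cP_G$ (which exists by Lemma~\ref{L:T1}~$(i)$--$(ii)$), so that $(FG)(\alpha)=(FG)(\beta)=0$. Because $\cP_F$ and $\cP_G$ are assumed connected, one has $\cP_F=\cI_F$ and $\cP_G=\cI_G$ for some bounded open intervals, and $\cI=\cP_F\cap\cP_G$. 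The boundary conditions at $\alpha$ and $\beta$ fall into the four cases \textbf{(I)}--\textbf{(IV)} of the proof of Proposition~\ref{P:-1}. Case~\textbf{(I)} ($F(\alpha)=F(\beta)=0$) forces $\alpha=-\beta$ by \eqref{F1}, hence forces the even solution and produces nothing new; likewise Case~\textbf{(IV)} reduces to Case~\textbf{(I)} via Lemma~\ref{L:2.4c}. So the genuinely new possibilities are Cases~\textbf{(II)} and~\textbf{(III)}, and by Lemma~\ref{L:2.4c}~$(ii)$ it suffices to treat Case~\textbf{(II)} ($F(\beta)=G(\alpha)=0$) and then transfer the conclusion under $(R,R_\mu,\eta)\mapsto(R,R_{\mu,1},\eta_1)$; by Lemma~\ref{L:2.4c}~$(i)$ the reflected profile is automatically a solution as well.

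In Case~\textbf{(II)}, following the argument in the proof of Proposition~\ref{P:-1} up to where evenness is first used (cf.\ Remark~\ref{R:2.7}), one gets $G(\beta)>0$, the sign constraints \eqref{ce4}, \eqref{ce5}, and hence the dichotomy $0\le\alpha<\beta$ or $\alpha<\beta\le 0$. Take the first; then $R_\mu>R+1$, $\cP_F=(\beta_1,\beta)$ with $\beta_1<0$ (Lemma~\ref{L:T1}~$(iii)$), $G\equiv0$ on $(\beta_1,\alpha)$ by \eqref{F2}--\eqref{F3} and the connectedness of $\cP_G$, and $\cP_G=(\alpha,\gamma)$ with $\gamma>\beta$. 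Writing $F$ via \eqref{F2} on $(\beta_1,\alpha)$, via \eqref{F1} on $(\alpha,\beta)$, $G$ via \eqref{F1} on $(\alpha,\beta)$ and via \eqref{F3} on $(\beta,\gamma)$, and imposing continuity of $F,G$, the vanishing conditions $F(\beta_1)=F(\beta)=G(\alpha)=G(\gamma)=0$, and the two mass constraints $\|F\|_1=\|G\|_1=1$, one obtains exactly the four scalar unknowns $(a,b)$ determined explicitly from $(\beta_1,\alpha,\beta,\gamma)$, and a closed algebraic system — to be labelled \eqref{d1:V1}--\eqref{d1:V4} — in $(\beta_1,\alpha,\beta,\gamma)$. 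The reflected case $\alpha<\beta\le 0$ is deduced from the case just treated applied to $(F_1,G_1)=(\lambda G(-\lambda\,\cdot),\lambda F(-\lambda\,\cdot))$ with the parameters of Lemma~\ref{L:2.4c}~$(ii)$, exactly as in \textbf{Case~(II-b)} of the proof of Proposition~\ref{P:-1}, yielding the mirror configuration $\cP_F=(\alpha,\gamma)$, $\cP_G=(\beta_1,\beta)$ and the system \eqref{d2:V1}--\eqref{d2:V4}; this gives part~$(ii)$ up to relabelling.

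The analytic heart of the argument — and the step I expect to be the main obstacle — is to show that the $4\times 4$ algebraic system \eqref{d1:V1}--\eqref{d1:V4} has a \emph{unique} solution satisfying the constraint $\beta_1<0\le\alpha<\beta<\gamma$, and that this happens precisely when $R_\mu\ge R_\mu^M(R,\eta)$ for a well-defined threshold $R_\mu^M(R,\eta)>R_\mu^+(R,\eta)$ (the analogue for \eqref{d2:V1}--\eqref{d2:V4} giving $R_\mu\le R_\mu^m(R,\eta)<R_\mu^-(R,\eta)$). The plan is to eliminate $a,b$ and one more variable to reduce to a two-parameter family, parametrize solutions by, say, the ratio $\alpha/\beta$ or by $\gamma$, and study the resulting scalar equation as a function of $R_\mu$ with $(R,\eta)$ fixed: show the relevant function is monotone in $R_\mu$, identify the boundary case $\alpha=0$ as giving the threshold value $R_\mu^M(R,\eta)$ characterized by an equation \eqref{RmuMc} in $(R+1,\infty)$, and check $R_\mu^M(R,\eta)>R_\mu^+(R,\eta)$ by comparing with the even-profile computation of Case~(II-a) (where the constraint $\alpha\ge0$ was saturated at $R_\mu^+$). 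Existence and uniqueness for the reduced scalar equation, plus a sign/monotonicity analysis ruling out $R_\mu\in(R_\mu^m,R_\mu^M)$, then give part~$(iii)$. As in the proof of Proposition~\ref{P:-1}, the detailed solvability analysis of the algebraic systems is the technically heavy part and is naturally deferred to an appendix lemma (an analogue of Lemma~\ref{L:Z1}); here I would only set up the reduction, state that lemma, and quote it.
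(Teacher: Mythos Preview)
Your overall strategy is the same as the paper's: reduce to \textbf{Case~(II)} (since \textbf{Cases~(I)} and~\textbf{(IV)} force evenness), treat \textbf{Case~(II-a)} directly to derive the algebraic system \eqref{d1:V1}--\eqref{d1:V4}, transfer to \textbf{Case~(II-b)} via Lemma~\ref{L:2.4c}~$(ii)$, and defer the solvability analysis to an appendix lemma. Your sketch of that lemma (reduce to a scalar equation in $\alpha/\beta$, locate the threshold at $\alpha=0$, monotonicity in $R_\mu$) is also what the paper does in Lemma~\ref{P:NSC}.

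There is, however, one genuine gap. You write ``whose positivity sets $\cP_F$ and $\cP_G$ are both intervals'' and then deduce ``$G\equiv 0$ on $(\beta_1,\alpha)$ by \eqref{F2}--\eqref{F3} and the connectedness of $\cP_G$''. But the hypothesis in Proposition~\ref{P:0} is that the \emph{supports} $\overline{\cP_F}$, $\overline{\cP_G}$ are connected, not the positivity sets themselves. Since $G$ is merely continuous and non-negative, it can vanish at an isolated interior point of its support: a priori one could have $\alpha=0$, $G(0)=0$, and $G>0$ on $(\beta_1,0)\cup(0,\beta)$, so that $\overline{\cP_G}=[\beta_1,\gamma]$ is connected while $\cP_G$ is not an interval. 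Your quick step does not rule this out. The paper handles exactly this scenario by a separate contradiction argument: assuming $G(x_0)>0$ for some $x_0\in(\beta_1,\alpha)$, it uses the explicit form \eqref{F1} on both sides of $\alpha$ together with $G(\alpha)=0$ to force $\alpha=0$, $\beta_1=-\beta$, and ultimately that $F$ and $G$ are even, contradicting the non-symmetry assumption. You need to insert this step (or an equivalent one) before concluding $\cP_G=(\alpha,\gamma)$; once it is in place, the rest of your outline goes through as in the paper.
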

%%%%%%%%%%%%%%%%%%%%%%%%%%%%%%%%%%%%%%%%

The threshold value $R_\mu^M(R,\eta)$ is actually the unique solution in $(R+1,\infty)$ of the equation
\begin{equation}
\sqrt{R_\mu^M-R} \left( \eta^2 - \sqrt{\frac{1+R}{R_\mu^M}} \right) = 1 + \eta^2\ , \label{RmuMc}
\end{equation}
while $R_\mu^m(R,\eta)$ is the unique solution in $(0,R+1)$ of
\begin{equation}
\sqrt{1+R-R_\mu^m} \left( \sqrt{\frac{R}{R_\mu^m}} - \eta^2 \frac{1+R}{R} \right) = 1 + \eta^2 \frac{1+R}{R} \ . \label{Rmums}
\end{equation}

%%%%%%%%%%%%%%%%%%%%%%%%%%%%%%%%%%%%%%%%
\begin{figure}
\hspace{32cm}\includegraphics[width=3.5in]{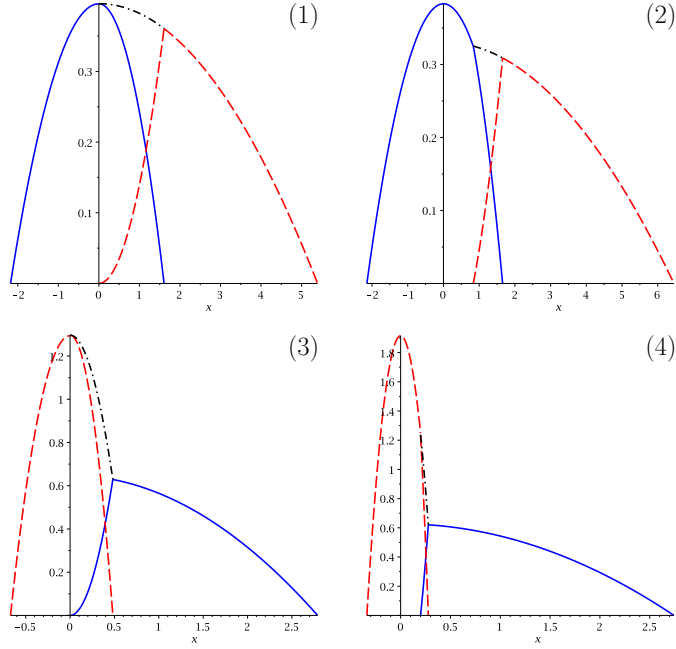}
\caption[Non-symmetric self-similar profiles with connected supports]{\small Non-symmetric self-similar profiles of \eqref{Pb} with connected supports.
The parameters are $\eta=R=1$ and: $(1)$  $R_\mu=R_\mu^M(1,1)\approx 12.258$; $(2)$   $R_\mu=21$; $(3)$  $R_\mu=R_\mu^m(1,1)\approx 0.058$;  $(4)$  $R_\mu=0.01$.} \label{F:2}
\end{figure}
%%%%%%%%%%%%%%%%%%%%%%%%%%%%%%%%%%%%%%%%

\begin{proof} 
As already pointed out in Remark~\ref{R:2.7}, one of the outcome of the proof of Proposition~\ref{P:-1} is that solutions to \eqref{ststa} are 
necessarily even in \textbf{Cases~(I) \&~(IV)}. To prove Proposition~\ref{P:0} we are left to consider \textbf{Cases~(II) \&~(III)} without
assuming that the solutions sought for are even but assuming that their supports are connected. 

\medskip

\noindent\textbf{Case~(II-a):} Recall that we are in the situation where there are $0\le \alpha < \beta$ such that $(\alpha,\beta)$ is a
connected component of $\cP_F\cap \cP_G$ with $F(\beta)=G(\alpha)=0$, $F(\alpha)>0$, and $G(\beta)>0$.
Also, we assume that either $F$ or $G$ is not even. 
As in the proof of  Proposition~\ref{P:-1}, we define
\begin{align*}
\beta_1 & := \inf\{x<\alpha\,:\, F>0 \;\;\text{ in }\;\; (x,\beta) \} < \alpha\ , \\
\gamma & := \sup\{x>\beta\,:\, G>0 \;\;\text{ in }\;\; (\alpha,x)\}> \beta\ ,
\end{align*}
and recall that $R_\mu>R+1$. Then Lemma~\ref{L:T1}~$(iii)$ guarantees that $\beta_1<0$ and $\cP_F=(\beta_1,\beta)$.
Now assume for contradiction that there is $x_0\in (\beta_1,\alpha)$ such that $G(x_0)>0$. 
Since $\overline{\cP_G}$ is connected this implies that $G(x)>0$ for $x\in [x_0,\alpha)$ and thus $\partial_x G(\alpha-) \le 0$ since $G(\alpha)=0$.
By \eqref{F1} $\partial_x G(\alpha-) = -(1+R-R_\mu) \alpha/3 R_\mu$ and combining the previous properties with the inequality $R_\mu>R+1$ 
implies that necessarily $\alpha=0$ and $G$ is decreasing on the connected component of $\cP_F\cap \cP_G$ to which $x_0$ belongs. 
Consequently, $(\beta_1,0)$ is the connected component of $\cP_F\cap \cP_G$ containing $x_0$ and we infer from \eqref{F1} that there 
are real numbers $(a,b,a_1,b_1)$ such that
$$
\eta^2 F(x) = \left\{
\begin{array}{ll}
\displaystyle a_1-b_1-\frac{R_\mu-R}{6R_\mu}x^2\ , & x\in[\beta_1,0]\ , \\
\displaystyle a-b-\frac{R_\mu-R}{6R_\mu}x^2\ , &  x\in [0,\beta] \ ,
\end{array}
\right.
$$
and
$$
G(x)= \left\{
\begin{array}{ll}
\displaystyle \frac{(1+R)b_1-a_1R}{R} -\frac{1+R-R_\mu}{6R_\mu}x^2\ , & x\in[\beta_1,0]\ , \\
\displaystyle \frac{(1+R)b-aR}{R} -\frac{1+R-R_\mu}{6R_\mu}x^2\ , & x\in [0,\beta]\ .
\end{array}
\right.
$$
Since $G(0)=0$ we realize that $(1+R) b_1= R a_1$ and $(1+R)b = Ra$ while the continuity of $F$ requires $a_1-b_1=a-b$. 
Consequently, $a=a_1$, $b=b_1$ and 
$$
\eta^2 F(x) = a-b-\frac{R_\mu-R}{6R_\mu}x^2\ , \quad G(x) = -\frac{1+R-R_\mu}{6R_\mu}x^2\ , \qquad x\in(\beta_1,\beta)\ ,
$$
from which we deduce that $\beta_1=-\beta$. In particular, $\cP_F=(-\beta,\beta)$, $F$ is even, and $G(-\beta) = G(\beta) > 0$. 
Denoting the connected component of $\cP_G$ containing $-\beta$ by $\cJ_1$, 
it follows from \eqref{F3} that there are $b_2\in\R$ and $b_3\in\R$ such that
$$
G(x) = \frac{b_2}{R} - \frac{x^2}{6 R_\mu}\ , \quad x\in (\beta,\gamma)\ , \;\;\text{ and }\;\;
G(x) = \frac{b_3}{R} - \frac{x^2}{6 R_\mu}\ , \quad x\in \cJ_1\cap (-\infty,-\beta)\ .
$$
As $G(-\beta)=G(\beta)$, we realize that $b_2=b_3$ so that $\cJ_1=(-\gamma,0)$ and $G$ is even on $(-\gamma,\gamma)$. 
Furthermore, since $\cP_F=(-\beta,\beta)\subset (-\gamma,\gamma)$, 
Lemma~\ref{L:T1}~$(v)$ entails that $\cP_G=(-\gamma,\gamma)$. 
We have thus shown that $F$ and $G$ are even which contradicts our starting assumption. Consequently,
\begin{equation}
G(x) = 0\ , \quad x\in (\beta_1,\alpha)\ , \label{as1}
\end{equation}
whence $\cP_G\subset (\alpha,\infty)$ since $\overline{\cP_G}$ is connected and $\beta_1<0\le \alpha$. 
Finally, since $\cP_F=(\beta_1,\beta)$,
Lemma~\ref{L:T1}~$(v)$ excludes the existence of another connected component of
$\cP_G$ included in $(\gamma,\infty)$ and we have thus established that $\cP_F=(\beta_1,\beta)$ and $\cP_G=(\alpha,\gamma)$. 
Then, according to \eqref{F1}-\eqref{F3}, there are $(a,b,a_1,b_1)\in \R^4$ such that 
$$
\eta^2F(x)=
\left\{
\begin{array}{ll}
 \displaystyle\frac{a_1}{1+R}-\frac{x^2}{6(1+R)}\ , &  x\in [\beta_1,\alpha]\ ,\\
  & \\
\displaystyle a-b-\frac{R_\mu-R}{6R_\mu}x^2\ , & x\in [\alpha,\beta]\ ,
\end{array}
\right.
$$
and
$$
G(x)=
\left\{
\begin{array}{ll}
\displaystyle \frac{(1+R)b-aR}{R}-\frac{1+R-R_\mu}{6R_\mu}x^2\ , & x\in [\alpha,\beta]\ ,\\
 & \\
\displaystyle \frac{b_1}{R}-\frac{x^2}{6R_\mu}\ , & x\in [\beta,\gamma]\ .
\end{array}
\right. 
$$
Since $F$ and $G$ are both continuous and   $F(\beta_1)=F(\beta)=G(\alpha)=G(\gamma)=0$ we find that   $a_1=a$, $b_1=b$, and moreover
\begin{align} \label{d:1} 
a-b=\frac{R_\mu-R}{6R_\mu}\beta^2\ , \,\ \frac{(1+R)b-aR}{R} = \frac{1+R-R_\mu}{6R_\mu}\alpha^2\ , \,\ a = \frac{\beta_1^2}{6}\ , \,\ \frac{b}{R} = \frac{\gamma^2}{6R_\mu}\ .
\end{align}
Requiring that both $F$ and $G$ have unitary $L_1$-norm, we arrive at the following relations 
\begin{eqnarray}
\label{d1:V1}
R_\mu\beta_1^3-(1+R)(R_\mu-R)\beta^3 +R(R_\mu-R-1)\alpha^3&=&-9\eta^2R_\mu(1+R)\ ,\\
\label{d1:V2}
\gamma^3-(R_\mu-R)\beta^3 +(R_\mu-R-1)\alpha^3&=&9R_\mu\ ,
\end{eqnarray}
while we deduce from \eqref{d:1} that
\begin{eqnarray}
\label{d1:V3}
\gamma^2-(R_\mu-R)\beta^2 +(R_\mu-R-1)\alpha^2&=&0\ ,\\
\label{d1:V4}
R_\mu\beta_1^2-(1+R)(R_\mu-R)\beta^2 +R(R_\mu-R-1)\alpha^2&=&0\ .
\end{eqnarray}
We are thus looking for solutions $(\beta_1,\alpha,\beta,\gamma)$ of \eqref{d1:V1}-\eqref{d1:V4} satisfying $\beta_1<0\leq\alpha<\beta<\gamma$, which is a 
rather involved problem. Nevertheless, according to Lemma~\ref{P:NSC}, there is $R_\mu^M(R,\eta)>R_\mu^+(R,\eta)$ such that \eqref{d1:V1}-\eqref{d1:V4} 
has a unique solution $(\beta_1,\alpha,\beta,\gamma)$ satisfying $\beta_1<0\leq\alpha<\beta<\gamma$ if and only if $R_\mu\ge R_\mu^M(R,\eta)$. 
In that case, the constants $a,b$ are given by \eqref{d:1} and we obtain the solution $(F,G)$ to \eqref{ststa} given in Proposition~\ref{P:0}~(i). 
We then use Lemma~\ref{L:2.4c}~$(i)$ to conclude that $x\mapsto (F(-x),G(-x))$ is also a solution to \eqref{ststa} and complete the proof of Proposition~\ref{P:0}~(i).

\medskip

\noindent\textbf{Case~(II-b):} As in the proof of Proposition~\ref{P:-1}, we define the parameters $(R_{\mu,1},\eta_1)$ and $\lambda$ as in Lemma~\ref{L:2.4c} and set
$$
F_1(x) = \lambda G(-\lambda x)\ , \qquad G_1(x) = \lambda F(-\lambda x)\ , \qquad x\in\R\ .
$$ 
Then $(F_1,G_1)$ is a solution to \eqref{ststa} with parameters $(R,R_{\mu,1},\eta_1)$ satisfying $F_1(-\alpha/\lambda)=G_1(-\beta/\lambda)=0$ 
with $-\beta/\lambda<-\alpha/\lambda$ and $(-\beta/\lambda,-\alpha/\lambda)$ is a connected component of $\cP_{F_1}\cap \cP_{G_1}$. 
Moreover, both $F_1$ and $G_1$ have connected supports. We are therefore back to the situation analysed in \textbf{Case~(II-a)} and,
according to the analysis performed in that case, there are $\beta_1<\alpha$ and $\gamma>0$ such that 
$\cP_{F_1}=(-\gamma/\lambda,-\alpha/\lambda)$, $\cP_{G_1} = (-\beta/\lambda,-\beta_1/\lambda)$, 
$$
\eta_1^2 F_1(x)=
\left\{
\begin{array}{ll}
\displaystyle \frac{1}{6(1+R)} \left( \frac{\gamma^2}{\lambda^2} - x^2 \right), & \displaystyle -\frac{\gamma}{\lambda}
\le x\le - \frac{\beta}{\lambda}\ , \\
 & \\
\displaystyle \frac{R_{\mu,1}-R}{6 R_{\mu,1}} \left( \frac{\alpha^2}{\lambda^2} - x^2 \right) , & 
\displaystyle -\frac{\beta}{\lambda} \le x \le - \frac{\alpha}{\lambda}\ , 
\end{array}
\right.
$$
and
$$
G_1(x)=
\left\{
\begin{array}{ll}
\displaystyle \frac{1+R-R_{\mu,1}}{6R_{\mu,1}} \left( \frac{\beta^2}{\lambda^2} - x^2 \right), 
&  \displaystyle  -\frac{\beta}{\lambda} \le x \le - \frac{\alpha}{\lambda}\ ,\\
& \\
\displaystyle \frac{1}{6R_{\mu,1}} \left( \frac{\beta_1^2}{\lambda^2} - x^2 \right), & \displaystyle - \frac{\alpha}{\lambda} \le x \le -\frac{\beta_1}{\lambda}\ ,
\end{array}
\right. 
$$
if and only if $R_{\mu,1}\ge R_\mu^M(R,\eta_1)$, the parameters $-\gamma/\lambda<0 \le -\beta/\lambda<-\alpha/\lambda < -\beta_1/\lambda$ being 
the unique solution to \eqref{d1:V1}-\eqref{d1:V4} given by Lemma~\ref{P:NSC} with $(R,R_{\mu,1},\eta_1)$ instead of $(R,R_\mu,\eta)$. Setting
\begin{equation}
R_\mu^m(R,\eta) := \frac{(1+R) R}{R_\mu^M(R,\eta_1)}\ , \label{mozart}
\end{equation}
we have thus shown that there is a solution $(F,G)$ to \eqref{ststa} with connected supports given by
$$
\eta^2 F(x)=
\left\{
\begin{array}{ll}
\displaystyle \frac{1}{6(1+R)} \left( \beta_1^2 - x^2 \right), & \beta_1 \le x \le \alpha\ , \\
 & \\
\displaystyle \frac{R_\mu-R}{6 R_\mu} \left( \beta^2 - x^2 \right) , &  \alpha \le x \le \beta\ , 
\end{array}
\right.
$$
and
$$
G(x)=
\left\{
\begin{array}{ll}
\displaystyle \frac{1+R-R_\mu}{6R_\mu} \left( \alpha^2 - x^2 \right), &  \alpha \le x \le \beta\ ,\\
& \\
\displaystyle \frac{1}{6R_\mu} \left( \gamma^2 - x^2 \right), & \beta \le x \le  \gamma\ ,
\end{array}
\right. 
$$
if and only if $R_\mu\le R_\mu^m(R,\eta)$, the parameters $(\beta_1,\alpha,\beta,\gamma)$ satisfying $\beta_1<\alpha<\beta\le 0 <\gamma$ and solving
\begin{eqnarray*}
R_\mu (-\beta_1)^3 + (1+R)(R-R_\mu) (-\beta)^3 - R(1+R-R_\mu)(-\alpha)^3 & = & 9\eta^2 R_\mu (1+R)\ ,\\
(-\gamma)^3 + (R-R_\mu) (-\beta)^3  - (1+R-R_\mu)(-\alpha)^3 & = & - 9R_\mu\ ,\\
R_\mu \beta_1^2 + (1+R)(R-R_\mu) \beta^2 - R(1+R-R_\mu)\alpha^2 & = & 0\ ,\\
\gamma^2 + (R-R_\mu) \beta^2 - (1+R-R_\mu) \alpha^2 & = & 0\ .
\end{eqnarray*}
Changing the notation $(-\gamma,-\beta,-\alpha,-\beta_1)$ to $(\beta_1,\alpha,\beta,\gamma)$ for consistency, the above algebraic system reads
\begin{eqnarray}
R_\mu \gamma^3 + (1+R)(R-R_\mu) \alpha^3 -R(1+R-R_\mu)\beta^3 & = & 9\eta^2R_\mu(1+R)\ , \label{d2:V1} \\
-\beta_1^3 - (R-R_\mu) \alpha^3 + (1+R-R_\mu)\beta^3 & = & 9R_\mu\ ,\label{d2:V2} \\
R_\mu \gamma^2 + (1+R)(R-R_\mu) \alpha^2 - R(1+R-R_\mu)\beta^2 & = & 0\ ,  \label{d2:V3} \\
\beta_1^2 + (R-R_\mu)\alpha^2 - (1+R-R_\mu)\beta^2 & = & 0\ , \label{d2:V4}
\end{eqnarray}
while $(F,G)$ is given by Proposition~\ref{P:0}~(ii). That its reflection $x\mapsto (F(-x),G(-x))$ also solves \eqref{ststa} is a consequence of Lemma~\ref{L:2.4c}~(i). 
This completes the proof of Proposition~\ref{P:0}~(ii). 

Finally, since \textbf{Case~(III)} ($F(\alpha)=G(\beta)=0$) reduces to \textbf{Case~(II)} as already observed in the proof of Proposition~\ref{P:-1}, we
have identified all possible non-symmetric solutions with connected supports, showing in particular that they exist if and only 
if $R_\mu\not\in (R_\mu^m(R,\eta),R_\mu^M(R,\eta))$, and thereby completed the proof of Proposition~\ref{P:0}.
\end{proof}

%%%%%%%%%%%%%%%%%%%%%%%%%%%%%%%%%%%%%%%%
%%%%%%%%%%%%%%%%%%%%%%%%%%%%%%%%%%%%%%%%
\subsection{Non-symmetric self-similar profiles with disconnected supports}\label{S:24}
%%%%%%%%%%%%%%%%%%%%%%%%%%%%%%%%%%%%%%%%
%%%%%%%%%%%%%%%%%%%%%%%%%%%%%%%%%%%%%%%%

Since we have explicitly used the assumption of connected supports of the solution of \eqref{ststa} we were looking for in Proposition~\ref{P:0},  
there may exist solutions $(F,G)$ of \eqref{ststa}  which have the property that at least one of the functions $F$ and $G$ has a disconnected support.  
The following proposition gives a classification of such solutions, showing in particular that only one of the functions $F$ and $G$ may have a disconnected support.

%%%%%%%%%%%%%%%%%%%%%%%%%%%%%%%%%%%%%%%%
\begin{prop}\label{P:1}  Let $R$, $R_\mu$, and $\eta$ be given positive parameters.
\begin{itemize}
 \item [$(i)$] If $R_\mu > R+1$ and the system \eqref{R1:1}-\eqref{R1:5} has a solution $(\gamma_1,\beta_1,\alpha_1,\alpha,\beta,\gamma)\in\mathbb{R}^6$ satisfying 
 \begin{equation}
 \gamma_1<\beta_1<\alpha_1\leq 0\leq\alpha<\beta<\gamma \;\;\text{ and }\;\; \alpha_1\neq-\alpha\ , \label{haydn} 
 \end{equation} 
 then the pair $(F,G)$ given by 
\begin{align*}
\eta^2 F(x)&=
\left\{
\begin{array}{ll}
\displaystyle \frac{R_\mu-R}{6 R_\mu}(\beta_1^2-x^2)\ , & x\in [\beta_1,\alpha_1]\ ,\\
 & \\
 \displaystyle\frac{R\gamma_1^2+(R_\mu-R)\beta_1^2}{6 (1+R)R_\mu}-\frac{x^2}{6(1+R)}\ , &  x\in [\alpha_1,\alpha]\ ,\\ 
  & \\
\displaystyle \frac{R_\mu-R}{6R_\mu}(\beta^2 -x^2)\ , & x\in [\alpha,\beta]\ ,
\end{array}
\right.
\end{align*}
\begin{align*}
G(x)&=
\left\{
\begin{array}{ll}
\displaystyle \frac{\gamma_1^2-x^2}{6R_\mu}\ , & x\in [\gamma_1,\beta_1]\ ,\\
& \\
\displaystyle\frac{1+R-R_\mu}{6R_\mu}(\alpha_1^2-x^2)\ , & x\in [\beta_1,\alpha_1]\ ,\\
& \\
\displaystyle \frac{1+R-R_\mu}{6R_\mu}(\alpha^2-x^2)\ , & x\in [\alpha,\beta]\ ,\\
& \\
\displaystyle\frac{\gamma^2-x^2}{6R_\mu}\ , & x\in [\beta,\gamma]\ ,
\end{array}
\right. 
\end{align*}
and $\cP_F=(\beta_1,\beta),$ $\cP_G=(\gamma_1,\alpha_1)\cup (\alpha,\gamma)$, is a non-symmetric solution of \eqref{ststa}.
Additionally, its reflection $x\mapsto (F(-x),G(-x))$ is also a solution of \eqref{ststa}.

\item [$(ii)$] If $R_\mu<R$ and the system \eqref{R2:1}-\eqref{R2:5} has a solution $(\gamma_1,\beta_1,\alpha_1,\alpha,\beta,\gamma)$ satisfying \eqref{haydn}, 
then the pair $(F,G)$ given by  
\begin{align*}
\eta^2 F(x)&=
\left\{
\begin{array}{ll}
\displaystyle\frac{\gamma_1^2-x^2}{6(1+R)}\ , &  x\in [\gamma_1,\beta_1]\ , \\
& \\
\displaystyle \frac{R_\mu-R}{6 R_\mu}(\alpha_1^2-x^2)\ , & x\in [\beta_1,\alpha_1]\ ,\\
& \\
\displaystyle \frac{R_\mu-R}{6 R_\mu}(\alpha^2-x^2)\ , & x\in [\alpha,\beta]\ ,\\
& \\
 \displaystyle\frac{\gamma^2-x^2}{6 (1+R)}\ , &  x\in [\beta,\gamma]\ , 
\end{array}
\right.
\end{align*}
\begin{align*}
G(x)&=
\left\{
\begin{array}{ll}
\displaystyle \frac{1+R-R_\mu}{6R_\mu}(\beta_1^2-x^2)\ , & x\in [\beta_1,\alpha_1]\ ,\\
& \\
\displaystyle \frac{(R_\mu-R)\alpha_1^2+(1+R-R_\mu)\beta_1^2}{6R_\mu}-\frac{x^2}{6R_\mu}\ , & x\in [\alpha_1,\alpha]\ ,\\
& \\
\displaystyle\frac{1+R-R_\mu}{6R_\mu}(\beta^2-x^2)\ , & x\in [ \alpha,\beta]\ ,
\end{array}
\right. 
\end{align*} 
and $\cP_F=(\gamma_1,\alpha_1)\cup(\alpha,\gamma),$ $\cP_G=(\beta_1,\beta)$,  is a non-symmetric solution  of \eqref{ststa}. 
Additionally, its reflection $x\mapsto (F(-x),G(-x))$ is also a solution of \eqref{ststa}.

\item[$(iii)$] Moreover, there exist no other non-symmetric solutions of \eqref{ststa} which have the property that the support of either $F$ or $G$ is disconnected.
\end{itemize}
\end{prop}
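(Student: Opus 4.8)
The strategy mirrors that of Propositions~\ref{P:-1} and~\ref{P:0}: we return to the case analysis driven by Lemma~\ref{L:T1}~$(i)$, which furnishes a connected component $\cI=(\alpha,\beta)$ of $\cP_F\cap\cP_G$ with $(FG)(\alpha)=(FG)(\beta)=0$, and we split according to whether $F$ or $G$ vanishes at each endpoint. By Remark~\ref{R:2.7} and Proposition~\ref{P:0} we may assume the sought solution is non-symmetric and that at least one of $F$, $G$ has a disconnected support; by Lemma~\ref{L:2.4c}~$(ii)$ it suffices to treat, say, the situation where $\cP_G$ is disconnected, the case of a disconnected $\cP_F$ following from the transformation $(R_\mu,\eta)\mapsto(R_{\mu,1},\eta_1)$. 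First I would invoke Lemma~\ref{L:T1}~$(iii)$--$(iv)$: since $\cP_G$ is disconnected, part $(iv)$ forces $R_\mu\ge 1+R$, and then part $(iii)$ forces $\cP_F$ to be a single interval containing $0$; similarly, in the dual case $\cP_F$ disconnected forces $R_\mu\le R$. This already isolates the two parameter regimes $R_\mu>R+1$ (for $(i)$) and $R_\mu<R$ (for $(ii)$) announced in the statement, and shows in particular that $F$ and $G$ cannot both have disconnected supports.

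Next, in the regime $R_\mu>R+1$ with $\cP_F$ a single interval, I would argue that $\cP_G$ has exactly two connected components. The monotonicity argument from the proof of Lemma~\ref{L:T1} (and reused in Proposition~\ref{P:0}) shows that on any connected component of $\cP_G\setminus\cP_F$ the function $G$ is given by \eqref{F3} and is therefore monotone, hence such a component must be adjacent to $\cP_F$; since $\cP_F$ is bounded with two endpoints, there are at most two such components, and Lemma~\ref{L:T1}~$(v)$ rules out a component of $\cP_G$ not meeting $\cP_F$. Together with the non-symmetry assumption (which, via the evenness arguments of \textbf{Case~(II-a)} in Proposition~\ref{P:-1}, excludes the configuration where $0$ lies in $\cP_F\cap\cP_G$ with $\cP_G$ symmetric about $0$), one is led to the configuration $\cP_F=(\beta_1,\beta)$, $\cP_G=(\gamma_1,\alpha_1)\cup(\alpha,\gamma)$ with $\gamma_1<\beta_1<\alpha_1\le 0\le\alpha<\beta<\gamma$. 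On each subinterval $F$ and $G$ are then read off from \eqref{F1}--\eqref{F3}, the six matching/vanishing conditions ($F(\beta_1)=F(\beta)=G(\gamma_1)=G(\alpha_1)=G(\alpha)=G(\gamma)=0$ plus continuity of $F$ and $G$ across $\alpha_1$, $\alpha$) determine the internal constants, and the two normalizations $\|F\|_1=\|G\|_1=1$ give the remaining equations, producing the algebraic system \eqref{R1:1}--\eqref{R1:5}. Conversely, given a solution of that system satisfying \eqref{haydn}, the explicit formulas define a genuine non-negative solution of \eqref{ststa}; its reflection solves \eqref{ststa} by Lemma~\ref{L:2.4c}~$(i)$. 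Part $(ii)$ follows verbatim by applying Lemma~\ref{L:2.4c}~$(ii)$ to part $(i)$, as in \textbf{Case~(II-b)} of Proposition~\ref{P:-1}, and relabelling; the constraint $R_{\mu,1}>R+1$ translates into $R_\mu<R$.

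The main obstacle is the bookkeeping needed to show that the configurations listed are \emph{exhaustive} among disconnected-support solutions, i.e.\ part $(iii)$: one must run through all of \textbf{Cases~(II)} and \textbf{(III)} of the endpoint analysis (\textbf{Cases~(I)} and \textbf{(IV)} already give only even, connected-support solutions by Proposition~\ref{P:-1}) and, in each, use the monotonicity of $G$ on components of $\cP_G\setminus\cP_F$ together with Lemma~\ref{L:T1}~$(v)$ to pin down the number and relative position of the components of $\cP_G$, while carefully tracking the forced sign conditions ($R_\mu$ versus $R$ and $R+1$) coming from non-negativity of the parabolic profiles and from the one-sided derivative inequalities at the endpoints (as in \eqref{ce4}--\eqref{ce5}). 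Unlike in Proposition~\ref{P:0}, here I would \emph{not} attempt to resolve for which $(R,R_\mu,\eta)$ the systems \eqref{R1:1}--\eqref{R1:5} and \eqref{R2:1}--\eqref{R2:5} actually admit an admissible solution — the proposition is stated conditionally on solvability — so the delicate existence analysis of these $\R^6$-systems is deferred to the Appendix, and the proof here is confined to deriving the systems and verifying that any admissible solution yields, and conversely comes from, a solution of \eqref{ststa} of the stated form.
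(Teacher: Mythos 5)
Your proposal follows essentially the same route as the paper: the same case analysis inherited from Propositions~\ref{P:-1} and~\ref{P:0}, the same use of Lemma~\ref{L:T1} to constrain the supports and of Lemma~\ref{L:2.4c}~$(ii)$ to pass from part~$(i)$ to part~$(ii)$, and the same derivation of \eqref{R1:1}--\eqref{R1:5} and \eqref{R2:1}--\eqref{R2:5} from the parabolic forms \eqref{F1}--\eqref{F3}, the matching/vanishing conditions, and the two normalizations. That the two configurations in $(i)$ and $(ii)$ are exhaustive, and that $F$ and $G$ cannot both have disconnected supports, is also obtained in the same way.

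Two items you fold into ``bookkeeping'' are, however, genuine steps that the paper treats explicitly and that your sketch would have to supply. First, your count of ``at most two'' only bounds the components of $\cP_G\setminus\cP_F$ (each adjacent to an endpoint of $\cP_F$); to bound the components of $\cP_G$ itself you must also rule out a component of $\cP_G$ contained entirely in $\cP_F$. This follows from \eqref{F1}: for $R_\mu>R+1$ the restriction of $G$ to $\cP_F\cap\cP_G$ is a concave-up parabola, which cannot vanish at both ends of a bounded interval while being positive in between. Second, in the forward direction you must verify that the resulting sextuplet satisfies $\alpha_1\ne-\alpha$, which is part of \eqref{haydn}; the paper does this via Lemma~\ref{L:2.12b}~$(iii)$, which shows that $\alpha_1=-\alpha$ forces the profile to be even and contradicts the non-symmetry assumption. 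Finally, a small point on your shortcut: reading the parameter constraint off Lemma~\ref{L:T1}~$(iv)$ gives only $R_\mu\ge R+1$, whereas the paper already has the strict inequality $R_\mu>R+1$ from the one-sided derivative conditions \eqref{ce4} established in \textbf{Case~(II-a)} of Proposition~\ref{P:-1}; you would still need to exclude $R_\mu=R+1$ (there $G$ is constant on $\cP_F\cap\cP_G$, which is incompatible with $G(\alpha)=0<G(\beta)$). None of these is fatal, but all three must appear in a complete write-up.
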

%%%%%%%%%%%%%%%%%%%%%%%%%%%%%%%%%%%%%%%%

%%%%%%%%%%%%%%%%%%%%%%%%%%%%%%%%%%%%%%%%
\begin{figure}
\hspace{32cm}\includegraphics[width=3.5in]{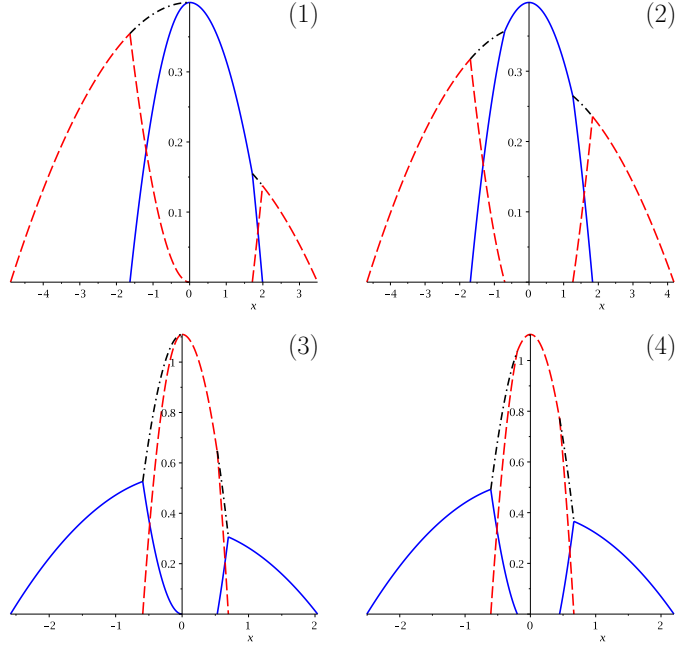}
\caption[]{\small Non-symmetric  self-similar profiles of \eqref{Pb} with disconnected supports. The parameters are $\eta=R=1$ and:  $R_\mu=10$ for $(1)$ and $(2);$ $R_\mu=0.1$ for $(3)$ and $(4)$.} \label{F:3}
\end{figure}
%%%%%%%%%%%%%%%%%%%%%%%%%%%%%%%%%%%%%%%%

\begin{proof} 
Recalling that \eqref{ststa} has only even solutions in \textbf{Cases~(I) \&~(IV)} introduced in the proof of Proposition~\ref{P:-1},
we are left with \textbf{Cases~(II) \&~(III)}.

\medskip

We first return to the \textbf{Case~(II-a)}, that is, $\cP_F\cap \cP_G$ has a connected component $\cI=(\alpha,\beta)$ with 
$0 \leq \alpha<\beta$, $ F(\beta)=G(\alpha)=0$, and   $F(\alpha)G(\beta)>0$. In that case, we already know that necessarily $R_\mu>R+1$ 
and recall the definition \eqref{pachelbel} of $(\beta_1,\gamma)$:
\begin{equation*}
\begin{split}
\beta_1 & := \inf\{x<\alpha\,:\, F>0 \;\;\text{ in }\;\; (x,\beta) \} < \alpha\ , \\
\gamma & := \sup\{x>\beta\,:\, G>0 \;\;\text{ in }\;\; (\alpha,x)\}> \beta\ . 
\end{split}
\end{equation*}
By Lemma~\ref{L:T1}~$(iii)$, $\beta_1<0$ and $\cP_f=(\beta_1,\beta)$ so that it is the support of $G$ which is disconnected.
It then has a connected component $\cJ:= (\gamma_1,\alpha_1)$ which does not intersect $(\alpha,\gamma)$. We claim that
\begin{equation}
\cJ \cap (\beta_1,\alpha) \ne \emptyset \;\;\text{ and }\;\; \beta_1 < \alpha_1 < \alpha\ . \label{ad1}
\end{equation}
Indeed, assume for contradiction that $\cJ \cap (\beta_1,\alpha) = \emptyset$. Then either $\cJ\subset (\gamma,\infty)$ and Lemma~\ref{L:T1}~$(v)$ 
implies that $\cJ\cap \cP_F \ne \emptyset$ and a contradiction. 
Or $\cJ \subset (-\infty,\beta_1)$ and a contradiction follows by the same argument. 
In addition, $\alpha_1<\alpha$ since the support of $G$ is disconnected and we have proved \eqref{ad1}. 

As $\cJ\cap (\beta_1,\alpha) \subset \cP_f \cap \cP_G$, we infer from \eqref{F1} that there are $(a_2,b_2)\in\mathbb{R}^2$ such that
\begin{align*}
\eta^2 F(x) & = a_2 - b_2 - \frac{R_\mu-R}{6 R_\mu} x^2\ , \quad x\in \cJ\cap (\beta_1,\alpha)\ , \\
G(x)  & = \frac{(1+R)b_2-Ra_2}{R} - \frac{1+R-R_\mu}{6R_\mu} x^2\ , \quad x\in \cJ\cap (\beta_1,\alpha)\ ,
\end{align*}
with $G(\alpha_1)=0$ and $\partial_x G(\alpha_1)\le 0$. Recalling that $R_\mu>R+1$, we deduce from the latter that 
\begin{equation}
\alpha_1 \le 0\ . \label{ad2}
\end{equation}
Consequently, $\partial_x G<0$ in $\cJ\cap (\beta_1,\alpha)$ so that $G(x)>G(\alpha_1)=0$ for $x\in \cJ \cap (\beta_1,\alpha)$ and thus $\gamma_1<\beta_1$.
We have thus shown that
\begin{equation}
\gamma_1<\beta_1<\alpha_1\le 0 \le \alpha < \beta < \gamma\ , \label{praetorius}
\end{equation}
and we use once more Lemma~\ref{L:T1}~$(v)$ to conclude that $\cP_G = (\gamma_1,\alpha_1)\cup (\alpha,\gamma)$. Then, according to \eqref{F1}-\eqref{F3}, 
there are $(a,b,a_1,b_1,a_2,b_2,b_3)\in\mathbb{R}^7$ such that 
$$
\eta^2F(x)=
\left\{
\begin{array}{ll} 
\displaystyle a_2-b_2-\frac{R_\mu-R}{6R_\mu}x^2\ , & x\in[\beta_1,\alpha_1]\ , \\
& \\
\displaystyle\frac{a_1}{1+R}-\frac{x^2}{6(1+R)}\ , &  x\in [\alpha_1,\alpha]\ , \\ 
& \\
\displaystyle a-b-\frac{R_\mu-R}{6R_\mu}x^2\ , & x\in [\alpha,\beta]\ ,
\end{array}
\right.
$$
and 
$$
G(x)=
\left\{
\begin{array}{ll}
\displaystyle \frac{b_3}{R}-\frac{x^2}{6R_\mu}\ , & x\in [\gamma_1,\beta_1]\ ,\\
& \\
\displaystyle \frac{(1+R)b_2-Ra_2}{R}-\frac{1+R-R_\mu}{6R_\mu}x^2\ , & x\in [\beta_1,\alpha_1]\ , \\
& \\
\displaystyle \frac{(1+R)b - Ra}{R}-\frac{1+R-R_\mu}{6R_\mu}x^2\ , & x\in [\alpha,\beta]\ ,\\
& \\
\displaystyle\frac{b_1}{R}-\frac{x^2}{6R_\mu}\ , & x\in [\beta,\gamma]\ .
\end{array}
\right. 
$$
We then deduce from the properties of $F$ and $G$ that
\begin{align}
& a_2 - b_2=\frac{R_\mu-R}{6R_\mu}\beta_1^2\ , \qquad a-b = \frac{R_\mu-R}{6R_\mu}\beta^2\ , \label{ad3e4} \\
& a_2 - b_2 - \frac{R_\mu-R}{6 R_\mu} \alpha_1^2 = \frac{a_1}{1+R} - \frac{\alpha_1^2}{6(1+R)} \label{ad5}\\
& \frac{a_1}{1+R} - \frac{\alpha^2}{6(1+R)} = a - b - \frac{R_\mu-R}{6R_\mu} \alpha^2 \ , \label{ad6}\\
& \frac{b_3}{R} = \frac{\gamma_1^2}{6R_\mu}\ , \qquad \frac{(1+R)b_2-Ra_2}{R} = \frac{1+R-R_\mu}{6R_\mu}\alpha_1^2\ , \label{ad7e8}\\
& \frac{(1+R)b-Ra}{R} = \frac{1+R-R_\mu}{6R_\mu}\alpha^2\ , \qquad \frac{b_1}{R}=\frac{\gamma^2}{6R_\mu}\ , \label{ad9e10}\end{align}\begin{align}
& \frac{b_3}{R} - \frac{\beta_1^2}{6R_\mu} = \frac{(1+R)b_2 - R a_2}{R} - \frac{1+R-R_\mu}{6R_\mu}\beta_1^2\ , \label{ad11} \\ 
& \frac{(1+R)b-Ra}{R} - \frac{1+R-R_\mu}{6R_\mu}\beta^2 = \frac{b_1}{R} - \frac{\beta^2}{6R_\mu}\ . \label{ad12}
\end{align}
It follows from \eqref{ad6} and \eqref{ad9e10} that $a=a_1$ and from \eqref{ad5} and \eqref{ad7e8} that $a_1=a_2$. Also, $b_3=b_2$ by \eqref{ad3e4} and \eqref{ad11}
and $b=b_1$ by \eqref{ad3e4} and \eqref{ad12}. Summarizing,
\begin{equation}
a=a_1=a_2\ , \qquad b=b_1\ , \qquad b_2=b_3\ . \label{purcell}
\end{equation} 
Using \eqref{ad3e4}-\eqref{purcell} we identify $a$, $b$, and $b_2$ in terms of $(\gamma_1,\beta_1,\alpha_1,\alpha,\beta,\gamma)$ and find
$$
a = \frac{R}{6 R_\mu}  \gamma^2 + \frac{R_\mu-R}{6R_\mu} \beta^2\ , \quad b_1 = \frac{R}{6 R_\mu} \gamma^2\ , \quad b_2 = \frac{R}{6R_\mu} \gamma_1^2\ .
$$
Combining these identities with \eqref{ad3e4}-\eqref{purcell}, we finally deduce three algebraic equations having $(\gamma_1,\beta_1,\alpha_1,\alpha,\beta,\gamma)$
as unknown
\begin{eqnarray}
 \gamma_1^2-(R_\mu-R)\beta_1^2+(R_\mu-R-1)\alpha_1^2&=&0\ ,\label{R1:1}\\
 \gamma^2-(R_\mu-R)\beta^2+(R_\mu-R-1)\alpha^2&=&0\ ,\label{R1:2}\\
 R(\gamma_1^2-\gamma^2)+(R_\mu-R)(\beta_1^2-\beta^2)&=& 0\ .\label{R1:3}
\end{eqnarray}
There are two more equations obtained from the constraints $(F,G)\in\cK^2$, namely 
\begin{eqnarray}
(R_\mu-R)(\beta^3-\beta_1^3)-\frac{R(R_\mu-R-1)}{1+R}(\alpha^3-\alpha_1^3)&=&9\eta^2R_\mu\ ,\label{R1:4}\\
 (\gamma^3-\gamma_1^3)-(R_\mu-R)(\beta^3-\beta_1^3)+(R_\mu-R-1)(\alpha^3-\alpha_1^3)&=&9R_\mu\ .\label{R1:5}
\end{eqnarray}
Recalling \eqref{praetorius}, it follows from Lemma~\ref{L:2.12b}~$(iii)$ that, if $\alpha_1=-\alpha$, then $(F,G)$ is an even solution of \eqref{ststa} which
contradicts the assumption of non-symmetric profiles. Consequently, $(\gamma_1,\beta_1,\alpha_1,\alpha,\beta,\gamma)$ satisfies \eqref{haydn}. 
We have thus established that, if the system \eqref{R1:1}-\eqref{R1:5} has a solution $(\gamma_1,\beta_1,\alpha_1,\alpha,\beta,\gamma)$ satisfying \eqref{haydn}, 
then the pair $(F,G)$ 
defined in Proposition~\ref{P:1}~$(i)$ is a non-symmetric solution of \eqref{ststa}, the function $G$ having clearly a disconnected support.
Since its reflection $x\mapsto (F(-x),G(-x))$ 
also solves \eqref{ststa} by Lemma~\ref{L:2.4c}~$(i)$, this completes the proof of Proposition~\ref{P:1}~$(i)$.

\medskip

We next consider \textbf{Case~(II-b)}, where $\cP_F \cap \cP_G$ has a connected component $\cI = (\alpha,\beta)$ with $\alpha<\beta\le 0$, $F(\beta)=G(\alpha)=0$,
and $F(\alpha)G(\beta)>0$. 
As in the proofs of Propositions~\ref{P:-1} and~\ref{P:0}, we use Lemma~\ref{L:2.4c} to map this case to the one previously studied.
Recalling the definition \eqref{pachelbel} of $\beta_1$ and $\gamma$ and defining the parameters $(R_{\mu,1},\eta)$ and $\lambda$ as in Lemma~\ref{L:2.4c}, 
the pair $(F_1,G_1)$ given by
$$
F_1(x) = \lambda G(-\lambda x)\ , \qquad G_1(x) = \lambda F_1(-\lambda x)\ , \qquad x\in\R\ ,
$$
is a solution of \eqref{ststa} with parameters $(R,R_{\mu,1},\eta_1)$ such that $(-\beta/\lambda,-\alpha/\lambda)$ is a connected component 
of $\cP_{F_1} \cap \cP_{G_1}$ with $0\le -\beta/\lambda<-\alpha/\lambda$, $F_1(-\alpha/\lambda)=G_1(-\beta/\lambda)=0$, and $F_1(-\beta/\lambda) G_1(-\alpha/\lambda)>0$.
We are then in the situation analysed in \textbf{Case~(II-a)} and deduce that, if $R_{\mu,1}>R+1$ and the system \eqref{R1:1}-\eqref{R1:5} with 
parameters $(R,R_{\mu,1},\eta_1)$ instead of $(R,R_\mu,\eta)$ has a solution $(\gamma_1',\beta_1',\alpha_1',\alpha',\beta',\gamma')\in \R^6$ satisfying 
$$
\gamma_1'<\beta_1'<\alpha_1'\le 0 \le \alpha' < \beta'< \gamma'\ , \quad \alpha_1'\ne -\alpha'\ ,
$$
then $(F_1,G_1)$ are given by
\begin{align*}
\eta_1^2 F_1(x)&=
\left\{
\begin{array}{ll}
\displaystyle \frac{R_{\mu,1}-R}{6 R_{\mu,1}}((\beta_1')^2-x^2)\ , & x\in [\beta_1',\alpha_1']\ ,\\
 & \\
 \displaystyle\frac{R(\gamma_1')^2+(R_{\mu,1}-R) (\beta_1')^2}{6 (1+R)R_{\mu,1}}-\frac{x^2}{6(1+R)}\ , &  x\in [\alpha_1',\alpha']\ ,\\ 
  & \\
\displaystyle \frac{R_{\mu,1}-R}{6R_{\mu,1}}((\beta')^2 -x^2)\ , & x\in [\alpha',\beta']\ ,
\end{array}
\right.\\
G_1(x)&=
\left\{
\begin{array}{ll}
\displaystyle \frac{(\gamma_1')^2-x^2}{6R_{\mu,1}}\ , & x\in [\gamma_1',\beta_1']\ ,\\
& \\
\displaystyle\frac{1+R-R_{\mu,1}}{6R_{\mu,1}}((\alpha_1')^2-x^2)\ ,& x\in [\beta_1',\alpha_1']\ ,\\
& \\
\displaystyle \frac{1+R-R_{\mu,1}}{6R_{\mu,1}}((\alpha')^2-x^2)\ ,& x\in [\alpha',\beta']\ ,\\
& \\
\displaystyle\frac{(\gamma')^2-x^2}{6R_{\mu,1}}\ , & x\in [\beta',\gamma']\ ,
\end{array}
\right. 
\end{align*}
and $\cP_{F_1}=(\beta_1',\beta'),$ $\cP_{G_1}=(\gamma_1',\alpha_1')\cup (\alpha',\gamma')$. 
The condition $R_{\mu,1}>1+R$ readily implies that $R_\mu<R$ while the properties of $(\gamma_1',\beta_1',\alpha_1',\alpha',\beta',\gamma')$ entail that the system 
\begin{eqnarray}
R_\mu\gamma_1^2-R(1+R-R_\mu)\beta_1^2+(1+R)(R-R_\mu)\alpha_1^2 & =&0 \ ,\label{R2:1}\\
R_\mu\gamma^2-R(1+R-R_\mu)\beta^2+(1+R)(R-R_\mu)\alpha^2 & =&0\ ,\label{R2:2}\\
R_\mu(\gamma^2-\gamma_1^2)+(R-R_\mu)(\alpha^2-\alpha_1^2) & =& 0\ ,\label{R2:3} \\
R_\mu(\gamma^3-\gamma_1^3)-R(1+R-R_\mu)(\beta^3-\beta_1^3)\!\!\!\! & +& \!\!\!\!(1+R)(R-R_\mu)(\alpha^3-\alpha_1^3) \nonumber \\
& =&9\eta^2R_\mu(1+R)\ ,\label{R2:4}\\
(1+R-R_\mu)(\beta^3-\beta_1^3)-(R-R_\mu)(\alpha^3-\alpha_1^3) & =&9R_\mu\ \label{R2:5}
\end{eqnarray}
has a solution $(\gamma_1,\beta_1,\alpha_1,\alpha,\beta,\gamma)
=(-\gamma'/\lambda,-\beta'/\lambda,-\alpha'/\lambda,-\alpha_1'/\lambda,-\beta_1'/\lambda,-\gamma_1'/\lambda)$ satisfying \eqref{haydn}. 
Using this notation, the above identities for $(F_1,G_1)$ ensure that $(F,G)$ are given by Proposition~\ref{P:1}~$(ii)$. 
That its reflection $x\mapsto (F(-x),G(-x))$ is also a solution of \eqref{ststa} follows again from Lemma~\ref{L:2.4c}~$(i)$.

\medskip

Finally, as already observed in the proofs of Propositions~\ref{P:-1} and~\ref{P:0}, the \textbf{Case~(III)} ($F(\alpha)=G(\beta)=0$) reduces to the \textbf{Case~(II)}. 
We have thus identified all possible non-symmetric solutions with disconnected supports and thereby completed the proof of Proposition~\ref{P:1}.
\end{proof}

We note that the systems \eqref{R1:1}-\eqref{R1:5} and \eqref{R2:1}-\eqref{R2:5} are both under-determined, having six unknowns and only five equations. 
In this case we can no longer expect a uniqueness result as for the system \eqref{eq:41}-\eqref{eq:43} determining the 
even profiles or for the system \eqref{d1:V1}-\eqref{d1:V4} 
determining the profiles with connected supports. Instead, we can prove that each solution $(\gamma_1,\beta_1,\alpha_1,\alpha,\beta,\gamma)$ of \eqref{R1:1}-\eqref{R1:5} 
and  \eqref{R2:1}-\eqref{R2:5} satisfying 
\begin{equation}
\gamma_1<\beta_1<\alpha_1<0<\alpha<\beta<\gamma \label{clementi}
\end{equation} 
belongs to a real-analytic curve consisting, with the exception of the even profile, only of non-symmetric solutions of \eqref{ststa}. 

%%%%%%%%%%%%%%%%%%%%%%%%%%%%%%%%%%%%%%%%
\begin{prop}\label{P:2}
 Let $R$, $R_\mu$, and $\eta$ be given positive real numbers such that $R_\mu>R+1$ and consider a solution
 $(\gamma_1^*,\beta_1^*,\alpha_1^*,\alpha^*,\beta^*,\gamma^*)$ of \eqref{R1:1}-\eqref{R1:5} satisfying \eqref{clementi}. 
 Then there exist $\overline{\alpha}_1\in (\alpha_1^*,0]$, $\underline{\alpha}_1\in (-\infty,\alpha_1^*)$, and a bounded continuous function 
\begin{equation}
\varphi:=(\varphi_1,\varphi_2,\varphi_3,\varphi_4,\varphi_5,\varphi_6):
[\underline{\alpha}_1,\overline{\alpha}_1]\to\R^6 \;\;\text{ with }\;\; \varphi_3 \equiv \mathrm{id}\ , \label{vivaldi}
\end{equation} 
which is real-analytic in $(\underline{\alpha}_1,\overline{\alpha}_1)$  and has the following properties:

\begin{itemize}

\item[$(a)$] Given $\alpha_1\in (\underline{\alpha}_1,\overline{\alpha}_1),$ the sextuplet $\varphi(\alpha_1)$ is a 
solution of \eqref{R1:1}-\eqref{R1:5} satisfying \eqref{clementi};

\item[$(b)$] The end points $\varphi(\underline{\alpha}_1)$ and $\varphi(\overline{\alpha}_1)$ satisfy
$$
\varphi_{7-k}(\underline{\alpha}_1) = - \varphi_k(\overline{\alpha}_1)\ , \quad 1 \le k \le 6\ .
$$
If $R_\mu\ge R_\mu^M(R,\eta)$, then 
$$
\varphi_1(\underline{\alpha}_1)=\varphi_2(\underline{\alpha}_1)=\varphi_3(\underline{\alpha}_1)<0\leq
\varphi_4(\underline{\alpha}_1)< \varphi_5(\underline{\alpha}_1)< \varphi_6(\underline{\alpha}_1)
$$ 
and $(\varphi_2(\underline{\alpha}_1), \varphi_4(\underline{\alpha}_1), \varphi_5(\underline{\alpha}_1), \varphi_6(\underline{\alpha}_1))$ 
solves \eqref{d1:V1}-\eqref{d1:V4} and is given by Lemma~\ref{P:NSC}. 
If $R_\mu\in \left(R_\mu^+(R,\eta),R_\mu^M(R,\eta) \right)$, then $\varphi(\underline{\alpha}_1)$ is the solution of \eqref{R1:1}-\eqref{R1:5}
given by Lemma~\ref{L:Nconn} and satisfies $\varphi_4(\underline{\alpha}_1)=0$. 

\item[$(c)$] If $R_\mu>R_\mu^+(R,\eta)$, we denote the unique solution to \eqref{eq:41}-\eqref{eq:43} given by Lemma~\ref{L:Z1} by $(\alpha_*,\beta_*,\gamma_*)$. 
Then
$$
-\alpha_* \in (\underline{\alpha}_1,\overline{\alpha}_1) \;\;\text{ and }\;\; \varphi(-\alpha_*) = (-\gamma_*,-\beta_*,-\alpha_*,\alpha_*,\beta_*,\gamma_*)\ .
$$
\end{itemize}
\end{prop}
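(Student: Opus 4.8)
The plan is to realize the solution set of \eqref{R1:1}-\eqref{R1:5} inside the open region \eqref{clementi} as a one-parameter real-analytic family parametrised by $\alpha_1$, via the implicit function theorem, and then to continue this family up to the boundary of \eqref{clementi}, identifying the two endpoints with the help of the algebraic lemmas in the Appendix. As a first step, write $A:=R_\mu-R>1$ and $B:=R_\mu-R-1>0$ (here the hypothesis $R_\mu>R+1$ is used). Then \eqref{R1:1}-\eqref{R1:2} read $\gamma_1^2=A\beta_1^2-B\alpha_1^2$ and $\gamma^2=A\beta^2-B\alpha^2$, and on \eqref{clementi} the right-hand sides are $\ge\beta_1^2>0$, resp.\ $\ge\beta^2>0$, so that $\gamma_1=-\sqrt{A\beta_1^2-B\alpha_1^2}$ and $\gamma=\sqrt{A\beta^2-B\alpha^2}$ are real-analytic functions of $(\beta_1,\alpha_1)$, resp.\ $(\alpha,\beta)$, there. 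Substituting them into \eqref{R1:3}-\eqref{R1:5} leaves three equations $\tilde E_3=\tilde E_4=\tilde E_5=0$ in the four unknowns $(\beta_1,\alpha_1,\alpha,\beta)$, with $\tilde E_3:=A(1+R)(\beta_1^2-\beta^2)-RB(\alpha_1^2-\alpha^2)$, with $\tilde E_4$ the left-hand side of \eqref{R1:4} minus $9\eta^2R_\mu$, and with $\tilde E_5$ the left-hand side of \eqref{R1:5} minus $9R_\mu$. The crucial point is that the Jacobian $\partial(\tilde E_3,\tilde E_4,\tilde E_5)/\partial(\beta_1,\alpha,\beta)$ is invertible at every point of \eqref{clementi}: factoring the nonvanishing quantities $A\beta_1$, $B\alpha$, $A\beta$ out of its three columns (legitimate precisely because of the strict orderings in \eqref{clementi}), its determinant is, up to this nonzero factor, equal to
\begin{equation*}
D=(\alpha-\beta_1)(\gamma-\beta)+(\beta-\alpha)\bigl[(1+R)(\gamma-\beta_1)+R(\beta_1-\gamma_1)\bigr]\ ,
\end{equation*}
which is a sum of strictly positive terms on \eqref{clementi}. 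The implicit function theorem then yields, near $(\beta_1^*,\alpha_1^*,\alpha^*,\beta^*)$, a real-analytic map $\alpha_1\mapsto(\varphi_2(\alpha_1),\varphi_4(\alpha_1),\varphi_5(\alpha_1))$ solving this reduced system; one recovers $\varphi_1=-\sqrt{A\varphi_2^2-B\alpha_1^2}$, $\varphi_6=\sqrt{A\varphi_5^2-B\varphi_4^2}$, and $\varphi_3\equiv\mathrm{id}$ as in \eqref{vivaldi}, with $\varphi(\alpha_1)$ solving \eqref{R1:1}-\eqref{R1:5} and satisfying \eqref{clementi}, which is property $(a)$.

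Next I would extend the branch to a maximal open interval $(\underline\alpha_1,\overline\alpha_1)$ around $\alpha_1^*$. The required a priori bound comes from the mass constraints: $\tfrac{1+R}{R}\times\eqref{R1:4}$ added to \eqref{R1:5} gives
\begin{equation*}
(\gamma^3-\gamma_1^3)+\frac{R_\mu-R}{R}(\beta^3-\beta_1^3)=\frac{9R_\mu\bigl(R+(1+R)\eta^2\bigr)}{R}\ ,
\end{equation*}
a sum of positive quantities on \eqref{clementi} equal to a fixed constant; hence $\gamma$, $|\gamma_1|$, $\beta$, $|\beta_1|$, and then also $\alpha<\beta$ and $|\alpha_1|<|\beta_1|$, stay bounded along the branch. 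Since the reduced Jacobian is invertible throughout \eqref{clementi}, the branch can only cease to be continuable when it reaches the boundary of \eqref{clementi}; using \eqref{R1:1} one sees that $\gamma_1=\beta_1$ forces $\gamma_1=\beta_1=\alpha_1$, and using \eqref{R1:2} that $\alpha=\beta$ or $\beta=\gamma$ forces $\alpha=\beta=\gamma$, so the only possible limiting behaviours are the collapse $\gamma_1=\beta_1=\alpha_1$ of the left component of $\cP_G$, the symmetric collapse $\alpha=\beta=\gamma$, or $\alpha_1=0$, or $\alpha=0$; boundedness and the semialgebraic nature of the branch ensure that $\varphi$ extends continuously to $[\underline\alpha_1,\overline\alpha_1]$. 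Since $x\mapsto(F(-x),G(-x))$ is again a solution of \eqref{ststa} by Lemma~\ref{L:2.4c}~$(i)$ — which acts on the sextuplet by $(\gamma_1,\dots,\gamma)\mapsto(-\gamma,\dots,-\gamma_1)$ and hence maps the branch to a branch — the extended curve is invariant under this reflection, and together with $\varphi_3\equiv\mathrm{id}$ this forces $\varphi_{7-k}(\underline\alpha_1)=-\varphi_k(\overline\alpha_1)$ for $1\le k\le 6$. Finally, substituting $\gamma_1=\beta_1=\alpha_1$ into \eqref{R1:1}-\eqref{R1:5} one checks that \eqref{R1:1} holds identically and the remaining quadruple $(\beta_1,\alpha,\beta,\gamma)$ then solves exactly \eqref{d1:V1}-\eqref{d1:V4}; by Lemma~\ref{P:NSC} such a solution exists if and only if $R_\mu\ge R_\mu^M(R,\eta)$, and this is therefore precisely the regime in which the branch terminates at that facet (and by Lemma~\ref{P:NSC} the endpoint is the solution announced in $(b)$). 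When $R_\mu^+(R,\eta)<R_\mu<R_\mu^M(R,\eta)$ this is impossible, so the branch instead terminates with $\alpha_1=0$ at the right end, whence $\overline\alpha_1=0$ and, by the reflection identity, $\alpha=0$ at the left end, where Lemma~\ref{L:Nconn} describes the limiting profile; this establishes $(b)$.

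For $(c)$, when $R_\mu>R_\mu^+(R,\eta)$, Lemma~\ref{L:Z1} provides the unique solution $(\alpha_*,\beta_*,\gamma_*)$ of \eqref{eq:41}-\eqref{eq:43} with $0<\alpha_*<\beta_*<\gamma_*$, and a direct substitution shows that $(-\gamma_*,-\beta_*,-\alpha_*,\alpha_*,\beta_*,\gamma_*)$ solves \eqref{R1:1}-\eqref{R1:5}: equations \eqref{R1:1} and \eqref{R1:2} both reduce to \eqref{eq:43}, \eqref{R1:3} is satisfied trivially, and in \eqref{R1:4}-\eqref{R1:5} the odd powers double up so that the right-hand sides $9\eta^2R_\mu$ and $9R_\mu$ match $2$ times the right-hand sides of \eqref{eq:41} and \eqref{eq:42}. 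This sextuplet lies in the interior of \eqref{clementi}, hence on the branch $\varphi$ by the uniqueness of the solution curve established above, so that $-\alpha_*\in(\underline\alpha_1,\overline\alpha_1)$ and $\varphi(-\alpha_*)=(-\gamma_*,-\beta_*,-\alpha_*,\alpha_*,\beta_*,\gamma_*)$, which is $(c)$.

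I expect the main obstacle to be the boundary analysis in $(b)$: ruling out spurious termination of the branch, proving that the two endpoints are actually attained as limits of $\varphi$, and matching them with the algebraic descriptions of Lemma~\ref{P:NSC} and Lemma~\ref{L:Nconn} in the two parameter regimes. The Jacobian computation, although the technical heart of the implicit function theorem step, collapses to the manifestly positive quantity $D$ once the strict orderings of \eqref{clementi} are exploited, and the algebraic verifications in $(c)$ are routine.
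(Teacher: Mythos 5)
Your IFT setup is sound and essentially parallels the paper's, with a nice twist: you eliminate $\gamma_1,\gamma$ from \eqref{R1:1}--\eqref{R1:2} first and apply the implicit function theorem to the reduced $3\times 3$ system, and your determinant $D$ does reduce, after expansion, to the same positive bracket $(\beta-\beta_1)(\gamma-\alpha)+R(\gamma-\gamma_1)(\beta-\alpha)$ that appears in the paper's $J_\Phi$. Your a priori bound via $\tfrac{1+R}{R}\cdot\eqref{R1:4}+\eqref{R1:5}$ is a clean identity that correctly bounds the sextuplet, and the verification in $(c)$ that $(-\gamma_*,-\beta_*,-\alpha_*,\alpha_*,\beta_*,\gamma_*)$ solves \eqref{R1:1}--\eqref{R1:5} is correct. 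However, the proof of $(b)$ and $(c)$ has two substantive gaps.

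First, the claim that ``the extended curve is invariant under this reflection'' is not justified and is actually the heart of the matter. Lemma~\ref{L:2.4c}~$(i)$ does map solutions to solutions, so it maps the branch through $(\gamma_1^*,\ldots,\gamma^*)$ to \emph{a} branch, namely the one through the reflected point $(-\gamma^*,\ldots,-\gamma_1^*)$, but nothing so far shows that this reflected branch coincides with the original one. If the solution set inside \eqref{clementi} had two disjoint components exchanged by the reflection, your relation $\varphi_{7-k}(\underline{\alpha}_1)=-\varphi_k(\overline{\alpha}_1)$ would fail. The paper avoids this entirely: it extracts subsequential limits of $\varphi$ at each end of the maximal interval, shows each limit solves \eqref{R1:1}--\eqref{R1:5} with some degeneracy from \eqref{verdi}/\eqref{puccini}, and then carries out a case analysis using Lemma~\ref{L:2.12b} together with the \emph{uniqueness} statements in Lemmas~\ref{P:NSC}, \ref{L:Nconn}, \ref{L:Z1}. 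This pins each endpoint down as a specific algebraic object, which both gives convergence of the full limit (not just a subsequence) and yields the reflection relation between the two endpoints as a consequence. Your ``semialgebraic nature'' remark is pointing at something true but would need genuine curve-selection machinery to make rigorous, and even then it would only give existence of the endpoint limits, not their identification.

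Second, in $(c)$ you invoke ``the uniqueness of the solution curve established above'' to place the even sextuplet on the branch $\varphi$. But you have established only local uniqueness near $(\gamma_1^*,\ldots,\gamma^*)$ — the Jacobian argument says the solution set in \eqref{clementi} is a $1$-manifold, not that it is connected. Without connectedness, the even sextuplet could sit on a different component. The paper's route is different and closes this: once $(b)$ is proved, it observes $(\varphi_3+\varphi_4)(\underline{\alpha}_1)<0<(\varphi_3+\varphi_4)(\overline{\alpha}_1)$ and applies the intermediate value theorem to find $\alpha_1'$ with $\alpha_1'+\varphi_4(\alpha_1')=0$; Lemma~\ref{L:2.12b}~$(iii)$ then forces $\varphi(\alpha_1')$ to be the even sextuplet, and Lemma~\ref{L:Z1} gives $\alpha_1'=-\alpha_*$. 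You would need a similar argument — or at minimum to first prove $(c)$ and then deduce $(b)$ from it — rather than invoking an unproved global uniqueness.
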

%%%%%%%%%%%%%%%%%%%%%%%%%%%%%%%%%%%%%%%%

\begin{proof} Using $\alpha_1$ as a parameter, we show that we may apply the implicit function theorem to the system \eqref{R1:1}-\eqref{R1:5} in a 
neighborhood of $(\gamma_1^*,\beta_1^*,\alpha_1^*,\alpha^*,\beta^*,\gamma^*)$.
To this end we recast the system \eqref{R1:1}-\eqref{R1:5} as an equation $\Phi(\gamma_1, \beta_1,\alpha_1, \alpha,\beta,\gamma)=0,$ where
$\Phi:\R^6\to\R^5$ is the real-analytic 
function with components defined by the equations \eqref{R1:1}-\eqref{R1:5}. We need to show that  the derivative
$\p_{(\gamma_1,\beta_1,\alpha,\beta,\gamma)}\Phi(\gamma^*_1,\beta^*_1,\alpha^*_1,\alpha^*, \beta^*,\gamma^*) $ is invertible.
It turns out that 
\begin{align*}
& J_\Phi(\gamma^*_1,\beta^*_1,\alpha^*_1,\alpha^*, \beta^*,\gamma^*) :=
\left| \mathrm{det}\,  \p_{ (\gamma_1,\beta_1,\alpha,\beta,\gamma)}\Phi(\gamma^*_1,\beta^*_1,\alpha^*_1,\alpha^*, \beta^*,\gamma^*) \right| \\
& = 72(R_\mu-R-1)(R_\mu-R)^2\alpha^*\beta^*\beta_1^*\gamma^*\gamma_1^* \ \left[ (\beta^*-\beta_1^*)(\gamma^*-\alpha^*)+R(\gamma^*-\gamma_1^*)(\beta^*-\alpha^*) \right]\ ,
\end{align*}
and we infer from \eqref{clementi} that $J_\Phi(\gamma^*_1,\beta^*_1,\alpha^*_1,\alpha^*, \beta^*,\gamma^*)>0$. 
We are thus in a position to use the implicit function theorem and obtain the existence of a maximal open interval
$\mathcal{I}^* := (\underline{\alpha}_1,\overline{\alpha}_1)$ containing $\alpha_1^*$ 
and a real-analytic function $\varphi=(\varphi_i)_{1\le i \le 6}: \mathcal{I}^*\to\mathbb{R}^6$ such that  $\varphi_3\equiv \mathrm{id}$
and $\varphi(\alpha_1)$ solves \eqref{R1:1}-\eqref{R1:5} and satisfies \eqref{clementi} for all $\alpha_1\in \mathcal{I}^*$. 

\smallskip

We now claim that $\mathcal{I}^*$ is a bounded interval included in $(-\infty,0)$ and that  $\varphi$ is  also bounded. 
Indeed, the equation \eqref{R1:4} implies, in view of $R_\mu>R+1$, $\varphi_5^3>\varphi_4^3$, and $-\varphi_2^3>-\varphi_3^3$ that 
$$
0<\max\left\{ \varphi_4(\alpha_1)^3, - \alpha_1^3 \right\} \le \varphi_4(\alpha_1)^3-\alpha_1^3<9\eta^2(1+R) \ , \quad \alpha_1\in \mathcal{I}^*\ ,
$$
which implies that
$$
\mathcal{I}^* \subset \left( - (9 (1+R)\eta^2)^{1/3} , 0 \right) \;\text{ and }\; \varphi_4(\mathcal{I}^*) \subset \left( 0 , (9 (1+R)\eta^2)^{1/3} \right)\ .
$$
Using again \eqref{R1:4} and the positivity of $R_\mu-R$ we realize that 
$$
0 < \max\left\{ \varphi_5(\alpha_1)^3, - \varphi_2(\alpha_1)^3 \right\} \le \varphi_5(\alpha_1)^3-\varphi_2(\alpha_1)^3<9\eta^2(1+R) \ , \quad \alpha_1\in \mathcal{I}^*\ ,
$$
hence the boundedness of $\varphi_2$ and $\varphi_5$. In a similar way, we use \eqref{R1:5} to establish the boundedness of $\varphi_1$ and $\varphi_6$. 
Next, as a consequence of the boundedness of $\mathcal{I}^*$ and $\varphi$, there are two sequences  $\underline{\alpha}_{1,n}\searrow\underline{\alpha}_1$
and $\overline{\alpha}_{1,n}\nearrow\overline{\alpha}_1$  such that the limits
$$
(\underline{\gamma}_1,\underline{\beta}_1,\underline{\alpha}_1,\underline{\alpha},\underline{\beta},\underline{\gamma})
:= \lim_{n\to\infty}\varphi(\underline{\alpha}_{1,n})\qquad\text{and}\qquad 
(\overline{\gamma}_1,\overline{\beta}_1,\overline{\alpha}_1,\overline{\alpha},\overline{\beta},\overline{\gamma}) := \lim_{n\to\infty}\varphi(\overline{\alpha}_{1,n})
$$
exist in $\R^6.$ Clearly $(\underline{\gamma}_1,\underline{\beta}_1,\underline{\alpha}_1,\underline{\alpha},\underline{\beta},\underline{\gamma})$ 
and $(\overline{\gamma}_1,\overline{\beta}_1,\overline{\alpha}_1,\overline{\alpha},\overline{\beta},\overline{\gamma})$ solve \eqref{R1:1}-\eqref{R1:5} but the 
maximality of the interval $\mathcal{I}^* = (\underline{\alpha}_1,\overline{\alpha}_1)$ prevents them from satisfying \eqref{clementi}.
However, recalling that $\underline{\alpha}_1<\alpha_1^*<0$ entails that we necessarily have
\begin{equation}
0 \in \left\{ \underline{\beta}_1 - \underline{\gamma}_1 , 
\underline{\alpha}_1 - \underline{\beta}_1 , \underline{\alpha} , \underline{\beta} - \underline{\alpha} , \underline{\gamma} - \underline{\beta} \right\}\ , \label{verdi}
\end{equation}
and
\begin{equation}
0\in \left\{ \overline{\beta}_1 - \overline{\gamma}_1 , \overline{\alpha}_1 - \overline{\beta}_1 , \overline{\alpha}_1, \overline{\alpha} , \overline{\beta} - \overline{\alpha} , \overline{\gamma} - \overline{\beta} \right\}\ . \label{puccini}
\end{equation}
We shall now prove that 
$(\overline{\gamma}_1,\overline{\beta}_1,\overline{\alpha}_1,\overline{\alpha},\overline{\beta},\overline{\gamma})=(-\underline\gamma,-\underline\beta,-\underline\alpha,-\underline\alpha_1,-\underline\beta_1,-\underline\gamma_1)$ and that
\begin{itemize}
 \item for $R_\mu \ge R_\mu^M(R,\eta)$ we 
 $\underline{\gamma}_1=\underline{\beta}_1 = \underline{\alpha}_1 <0\leq\underline{\alpha}<\underline{\beta}<\underline{\gamma}$
 and $( \underline{\alpha}_1,\underline{\alpha},\underline{\beta},\underline{\gamma})$
is the unique solution of \eqref{d1:V1}-\eqref{d1:V4}  satisfying \eqref{chopin}. 
\item for $R_\mu\in \left( R_\mu^+(R,\eta),R_\mu^M(R,\eta) \right)$ we have 
$\underline{\gamma}_1 < \underline{\beta}_1 < \underline{\alpha}_1 < 0 =\underline{\alpha} < \underline{\beta} < \underline{\gamma}$
and  $(\underline{\gamma}_1, \underline{\beta}_1, \underline{\alpha}_1,  0, \underline{\beta}, \underline{\gamma}) $ is
the unique solution of \eqref{R1:1}-\eqref{R1:5} satisfying \eqref{monteverdi}.
\end{itemize}
By \eqref{verdi} we may face the following three situations:
$\underline{\beta}_1 = \underline{\gamma}_1$ or $\underline{\beta}_1 = \underline{\alpha}_1$, $\underline{\beta} = \underline{\gamma}$ 
or $\underline{\beta} = \underline{\alpha}$, and $\underline{\alpha}=0$ which we handle separately.

\medskip

\noindent\textbf{Case~(i).} Assume that $\underline{\beta}_1 = \underline{\gamma}_1$ or $\underline{\beta}_1 = \underline{\alpha}_1$. 
In both cases we deduce from \eqref{R1:1}-\eqref{R1:3} that $\underline{\gamma}_1=\underline{\beta}_1 = \underline{\alpha}_1 <0$. 
It then follows from Lemma~\ref{L:2.12b}~$(i)$ that $(\underline{\alpha}_1,\underline{\alpha},\underline{\beta},\underline{\gamma})$ 
satisfies $\underline{\alpha}_1<0\leq\underline{\alpha}<\underline{\beta}<\underline{\gamma}$ and solves \eqref{d1:V1}-\eqref{d1:V4}. 
According to Lemma~\ref{P:NSC} such a solution exists only if 
\begin{equation}
R_\mu \ge R_\mu^M(R,\eta)\ . \label{supertramp}
\end{equation}

Concerning $(\overline{\gamma}_1,\overline{\beta}_1,\overline{\alpha}_1,\overline{\alpha},\overline{\beta},\overline{\gamma})$, let us first consider the following case:

\smallskip

\noindent\textbf{Case~(i1): $\overline{\alpha}_1<0$.} 
Assume for contradiction that $\overline{\beta}_1 = \overline{\gamma}_1$ or $\overline{\beta}_1 = \overline{\alpha}_1$. 
Arguing as above, we deduce from \eqref{R1:1}-\eqref{R1:3}  and Lemma~\ref{L:2.12b}~$(i)$ that $\overline{\beta}_1 = \overline{\alpha}_1 = \overline{\gamma}_1<0$ and 
$(\overline{\alpha}_1,\overline{\alpha},\overline{\beta},\overline{\gamma})$ satisfies $\overline{\alpha}_1<0\leq\overline{\alpha}<\overline{\beta}<\overline{\gamma}$ 
and is the unique solution of \eqref{d1:V1}-\eqref{d1:V4} given by Lemma~\ref{P:NSC}. Thus $(\overline{\alpha}_1,\overline{\alpha},\overline{\beta},\overline{\gamma})$ 
coincides 
with $( \underline{\alpha}_1,\underline{\alpha},\underline{\beta},\underline{\gamma})$. 
Since this fact contradicts the property $\underline{\alpha}_1<\alpha_1^*<\overline{\alpha}_1$, we conclude that
$$
\overline{\gamma}_1 < \overline{\beta}_1 < \overline{\alpha}_1 < 0\ . 
$$
Assume next for contradiction that $\overline{\alpha}<\overline{\beta}$ or $\overline{\beta}<\overline{\gamma}$.
Then $\overline{\alpha}<\overline{\beta}<\overline{\gamma}$ by \eqref{R1:2} and it follows from \eqref{puccini} that $\overline{\alpha}=0$.
According to Lemma~\ref{L:Nconn}, the system \eqref{R1:1}-\eqref{R1:5} has such a solution only if $R_\mu^+(R,\eta)<R_\mu<R_\mu^M(R,\eta)$
which is not compatible with \eqref{supertramp}. 
Consequently,
$$
0<\overline{\alpha}=\overline{\beta}=\overline{\gamma}\ ,
$$
the positivity of $\overline{\alpha}=0$ being a consequence of \eqref{R1:3}. 
We then infer from Lemma~\ref{L:2.12b}~$(ii)$ that $(-\overline{\alpha},-\overline{\alpha}_1,-\overline{\beta}_1,-\overline{\gamma}_1)$ is
the unique solution of \eqref{d1:V1}-\eqref{d1:V4} given by
Lemma~\ref{P:NSC} which is known to exist for $R> R_\mu^M(R,\eta)$ since $-\overline{\alpha}_1>0$.
We have thus shown that, in \textbf{Case~(i1)}, one has necessarily $R_\mu>R_\mu^M(R,\eta)$ and both
$(\underline{\alpha}_1,\underline{\alpha},\underline{\beta},\underline{\gamma})$ 
and $(-\overline{\alpha},-\overline{\alpha}_1,-\overline{\beta}_1,-\overline{\gamma}_1)$ solve \eqref{d1:V1}-\eqref{d1:V4}. 
According to Lemma~\ref{P:NSC}, 
$$
( \underline{\alpha}_1,\underline{\alpha},\underline{\beta},\underline{\gamma}) = (-\overline{\alpha},-\overline{\alpha}_1,-\overline{\beta}_1,-\overline{\gamma}_1)
$$
and the uniqueness statement in Lemma~\ref{P:NSC} entails that the limits as $\alpha_1\searrow \underline{\alpha}_1$ and $\alpha_1\nearrow \overline{\alpha}_1$ are
both well-defined and uniquely determined. We may thus extend $\varphi$ by continuity to $[\underline{\alpha}_1,\overline{\alpha}_1]$ by 
$$
\varphi(\underline{\alpha}_1) := (\underline{\alpha}_1,\underline{\alpha}_1,\underline{\alpha}_1,\underline{\alpha},\underline{\beta},\underline{\gamma}) \;\text{ and }\; 
\varphi(\overline{\alpha}_1) := (-\underline{\gamma},-\underline{\beta},-\underline{\alpha},-\underline{\alpha}_1,-\underline{\alpha}_1,-\underline{\alpha}_1)
$$ 
and complete the proof of Proposition~\ref{P:2}~$(a)$-$(b)$ in that case.

\smallskip

\noindent\textbf{Case~(i2): $\overline{\alpha}_1=0$.} There are several possibilities which we analyze successively.

\begin{itemize}

\item[$-$] If $\overline{\gamma}_1<\overline{\beta}_1<0=\overline{\alpha}_1<\overline{\alpha}<\overline{\beta}<\overline{\gamma}$, 
then $(-\overline{\gamma},-\overline{\beta},-\overline{\alpha},0,-\overline{\beta}_1,-\overline{\gamma}_1)$ is 
the unique solution to \eqref{R1:1}-\eqref{R1:5} found in Lemma~\ref{L:Nconn}. As it only exists when $R^+_\mu(R,\eta)<R_\mu<R_\mu^M(R,\eta)$,
this case is excluded according to \eqref{supertramp}.

\item[$-$] If $\overline{\gamma}_1=\overline{\beta}_1$ or $\overline{\alpha}_1=\overline{\beta}_1$, then $\overline{\gamma}_1=\overline{\beta}_1=\overline{\alpha}_1=0$  
by \eqref{R1:1}.
Owing to \eqref{R1:2} and \eqref{R1:3}, this in turn implies that $\overline{\gamma}=\overline{\beta}=\overline{\alpha}=0$ and
a contradiction with \eqref{R1:4}. This case is thus excluded as well.

\item[$-$] If $\overline{\alpha}=0,$ then $\overline{\alpha}_1=\overline{\alpha}$ so that $-\overline{\gamma}=\overline{\gamma}_1 < \overline{\beta}_1=-\overline{\beta}$, 
and $(\overline{\alpha},\overline{\beta},\overline{\gamma})$ solves \eqref{eq:41}-\eqref{eq:43} by Lemma~\ref{L:2.12b}~$(iii)$ with $\overline{\alpha}=0$. 
Recalling Lemma~\ref{L:Z1} this implies that $R_\mu=R_\mu^+(R,\eta)$ and does not match \eqref{supertramp} since $R_\mu^+(R,\eta)<R_\mu^M(R,\eta)$. 
This case is thus also excluded.

\item[$-$] If $\overline{\gamma}=\overline{\beta}$ or 
$\overline{\alpha}=\overline{\beta},$ then $\overline{\gamma}_1<\overline{\beta}_1<\overline{\alpha}_1=0<\overline{\alpha}=\overline{\beta}=\overline{\gamma}$ 
and $(-\overline{\alpha},0,-\overline{\beta}_1,-\overline{\gamma}_1)$ 
solves \eqref{d1:V1}-\eqref{d1:V4} by Lemma~\ref{L:2.12b}~$(ii)$ and the previous case. Gathering these information we deduce from Lemma~\ref{P:NSC} 
that necessarily $R_\mu=R_\mu^M(R,\eta)$.
Since $( \underline{\alpha}_1,\underline{\alpha},\underline{\beta},\underline{\gamma})$ is also a solution of \eqref{d1:V1}-\eqref{d1:V4} when $R_\mu=R_\mu^M(R,\eta)$, 
we use again Lemma~\ref{P:NSC} to conclude that $\underline{\alpha}=0$ and 
$( \underline{\alpha}_1,\underline{\alpha},\underline{\beta},\underline{\gamma}) = (-\overline{\alpha},0,-\overline{\beta}_1,-\overline{\gamma}_1)$. 
We then extend $\varphi$ by continuity to $[\underline{\alpha}_1,\overline{\alpha}_1]$ by 
$$
\varphi(\underline{\alpha}_1) := (\underline{\alpha}_1,\underline{\alpha}_1,\underline{\alpha}_1,0,\underline{\beta},\underline{\gamma}) 
\;\text{ and }\; \varphi(\overline{\alpha}_1) := (-\underline{\gamma},-\underline{\beta},0,-\underline{\alpha}_1,-\underline{\alpha}_1,-\underline{\alpha}_1)\ ,
$$
and complete the proof of Proposition~\ref{P:2}~$(a)$-$(b)$ in that case.

\end{itemize}

\medskip

\noindent\textbf{Case~(ii).} 
We now turn to the case $\underline{\alpha}=\underline{\beta}$ or $\underline{\beta}=\underline{\gamma}$. 
Then, according to Lemma~\ref{L:2.12b}~$(ii)$, 
$\underline{\gamma}_1<\underline{\beta}_1<\underline{\alpha}_1\le 0<\underline{\alpha}=\underline{\beta}=\underline{\gamma}$ and 
$(-\underline{\alpha},-\underline{\alpha}_1,-\underline{\beta}_1,-\underline{\gamma}_1)$ is a solution of \eqref{d1:V1}-\eqref{d1:V4}. 
According to Lemma~\ref{P:NSC}, such a solution exists only if $R_\mu\ge R_\mu^M(R,\eta)$, that is, \eqref{supertramp} holds true and it satisfies 
\begin{equation}
\underline{\alpha}>-\underline{\alpha}_1\ .\label{rossini}
\end{equation}

As for $(\overline{\gamma}_1,\overline{\beta}_1,\overline{\alpha}_1,\overline{\alpha},\overline{\beta},\overline{\gamma})$, 
we study separately the cases $\overline{\alpha}_1<0$ and $\overline{\alpha}_1=0$.

\smallskip

\noindent\textbf{Case~(ii1): $\overline{\alpha}_1<0$.} 
Assume first for contradiction that $\overline{\alpha}=\overline{\beta}$ or $\overline{\beta}=\overline{\gamma}$. 
Then $\overline{\gamma}<\overline{\beta}<\overline{\alpha}\le 0< \overline{\alpha}=\overline{\beta}=\overline{\gamma}$ and 
$(-\overline{\alpha},-\overline{\alpha}_1,-\overline{\beta}_1,-\overline{\gamma}_1)$ solves \eqref{d1:V1}-\eqref{d1:V4} by Lemma~\ref{L:2.12b}~$(ii).$ 
We then infer from Lemma~\ref{P:NSC} that 
$$
(-\underline{\alpha},-\underline{\alpha}_1,-\underline{\beta}_1,-\underline{\gamma}_1) 
= (-\overline{\alpha},-\overline{\alpha}_1,-\overline{\beta}_1,-\overline{\gamma}_1)\ ,
$$
hence $\underline{\alpha}_1 = \overline{\alpha}_1$ and a contradiction with $\underline{\alpha}_1 < \alpha_1^* < \overline{\alpha}_1$.
Therefore
\begin{equation}
\overline{\alpha} < \overline{\beta} < \overline{\gamma}\ . \label{silbermond}
\end{equation}
Assume next for contradiction that $\overline{\gamma}_1 <\overline{\beta}_1 <\overline{\alpha}_1$. 
It then follows from \eqref{puccini} and \eqref{silbermond} that $\overline{\alpha}=0$, 
so that $(\overline{\gamma}_1,\overline{\beta}_1,\overline{\alpha}_1,\overline{\alpha},\overline{\beta},\overline{\gamma})$ is
the solution of \eqref{R1:1}-\eqref{R1:5} given by Lemma~\ref{L:Nconn}. 
Since the existence of such a solution requires $R_\mu\in (R_\mu^+(R,\eta),R_\mu^M(R,\eta))$ according to Lemma~\ref{L:Nconn}, 
this is not compatible with \eqref{supertramp} and we again end up with a contradiction. 
Consequently, $\overline{\alpha}_1=\overline{\beta}_1$ or $\overline{\beta}_1=\overline{\gamma}_1$.
Then $\overline{\alpha}_1=\overline{\beta}_1=\overline{\gamma}_1 <0\le \overline{\alpha} <\overline{\beta}<\overline{\gamma}$
and $(\overline{\alpha}_1,\overline{\alpha},\overline{\beta},\overline{\gamma})$ solves \eqref{d1:V1}-\eqref{d1:V4} 
by Lemma~\ref{L:2.12b}~$(i)$. Lemma~\ref{P:NSC} then guarantees that
$(\overline{\alpha}_1,\overline{\alpha},\overline{\beta},\overline{\gamma}) 
= (-\underline{\alpha}, - \underline{\alpha}_1, - \underline{\beta}_1 , - \underline{\gamma}_1)$. 
Recalling \eqref{rossini}, we realize that $\overline{\alpha}_1 = - \underline{\alpha} < \underline{\alpha}_1$ and 
thereby obtain a contradiction. We have therefore excluded that $\overline{\alpha}_1<0$ in \textbf{Case~(ii)}.

\smallskip

\textbf{Case~(ii2): $\overline{\alpha}_1=0$.} Arguing as in the analysis of \textbf{Case~(i2)} we exclude the following situations:
\begin{itemize}

\item[$-$] $\overline{\gamma}_1<\overline{\beta}_1<\overline{\alpha}_1=0<\overline{\alpha}<\overline{\beta}<\overline{\gamma}$, %then $(-\overline{\gamma},-\overline{\beta},-\overline{\alpha},0,-\overline{\beta}_1,-\overline{\gamma}_1)$ is the solution of \eqref{R1:1}-\eqref{R1:5} given by Lemma~\ref{L:Nconn} which only exists if $R_\mu\in (R_\mu^+(R,\eta),R_\mu^M(R,\eta))$ and \eqref{supertramp} excludes this possibility.

\item[$-$] $\overline{\gamma}_1=\overline{\beta}_1$ or $\overline{\beta}_1=\overline{\alpha}_1$, %then $\overline{\gamma}_1=\overline{\beta}_1=\overline{\alpha}_1=0$ which, together with \eqref{R1:2} and \eqref{R1:3}, implies that $\overline{\alpha}=\overline{\beta} = \overline{\gamma}=0$ and contradicts \eqref{R1:4} and \eqref{R1:5}. This case is thus also excluded.

\item[$-$] $\overline{\alpha}=0$. %then $\overline{\alpha}_1=\overline{\alpha}$ so that $\overline{\gamma}_1=-\overline{\gamma}$, $\overline{\beta}_1=-\overline{\beta}$, and $(\overline{\alpha},\overline{\beta},\overline{\gamma})$ is a solution of \eqref{eq:41}-\eqref{eq:43} byLemma~\ref{L:2.12b}~$(iii)$ with $\overline{\alpha}=0$. Recalling Lemma~\ref{L:Z1} this implies that $R_\mu=R_\mu^+(R,\eta)$ and \eqref{supertramp} and the property $R_\mu^+(R,\eta)<R_\mu^M(R,\eta)$ exclude the occurrence of this case as well.

\end{itemize}

Consequently $\overline{\alpha}=\overline{\beta}$ or $\overline{\gamma}=\overline{\beta}$ with $\overline{\alpha}>0$.
We then argue as at the end of the analysis of \textbf{Case~(i2)} to deduce from Lemma~\ref{P:NSC} and Lemma~\ref{L:2.12b}~$(ii)$ that 
necessarily $R_\mu=R_\mu^M(R,\eta)$ and that $\underline{\alpha}=0$ and 
$(\underline{\alpha}_1,\underline{\alpha},\underline{\beta},\underline{\gamma}) = (-\overline{\alpha},0,-\overline{\beta}_1,-\overline{\gamma}_1)$
is the solution of \eqref{d1:V1}-\eqref{d1:V4} given by  Lemma~\ref{P:NSC} in that case. 
We then extend $\varphi$ by continuity to $[\underline{\alpha}_1,\overline{\alpha}_1]$ by 
$$
\varphi(\underline{\alpha}_1) := (\underline{\alpha}_1,\underline{\alpha}_1,\underline{\alpha}_1,0,\underline{\beta},\underline{\gamma}) \;\text{ and }\;
\varphi(\overline{\alpha}_1) := (-\underline{\gamma},-\underline{\beta},0,-\underline{\alpha}_1,-\underline{\alpha}_1,-\underline{\alpha}_1)\ ,
$$
and complete the proof of Proposition~\ref{P:2}~$(a)$-$(b)$ in that case.

\medskip

\noindent\textbf{Case~(iii).} Owing to the above analysis, we may assume that
$$
\underline{\gamma}_1 < \underline{\beta}_1 < \underline{\alpha}_1 < 0 \;\;\text{ and }\;\; \underline{\alpha} < \underline{\beta} < \underline{\gamma}\ ,
$$
and infer from \eqref{verdi} that $\underline{\alpha}=0$.
Then $(\underline{\gamma}_1, \underline{\beta}_1, \underline{\alpha}_1,  0, \underline{\beta}, \underline{\gamma})$ is 
the unique solution of \eqref{R1:1}-\eqref{R1:5} given by Lemma~\ref{L:Nconn} which 
only exists for $R_\mu\in (R_\mu^+(R,\eta),R_\mu^M(R,\eta)$. Owing to \eqref{puccini}, Lemma~\ref{L:Z1}, Lemma~\ref{P:NSC}, Lemma~\ref{L:2.12b}, 
and Lemma~\ref{L:Nconn}, this constraint on $R_\mu$ ensures that the only possibility 
for $(\overline{\gamma}_1,\overline{\beta}_1,\overline{\alpha}_1,\overline{\alpha},\overline{\beta},\overline{\gamma})$ is to   be 
$$
(\overline{\gamma}_1,\overline{\beta}_1,\overline{\alpha}_1,\overline{\alpha},\overline{\beta},\overline{\gamma}) 
= (-\underline{\gamma},-\underline{\beta},0,-\underline{\alpha}_1,- \underline{\beta}_1,-\underline{\gamma}_1)\ .
$$
Since the limits $(\underline{\gamma}_1,\underline{\beta}_1,\underline{\alpha}_1,\underline{\alpha},\underline{\beta},\underline{\gamma})$ 
and $(\overline{\gamma}_1,\overline{\beta}_1,\overline{\alpha}_1,\overline{\alpha},\overline{\beta},\overline{\gamma})$
are uniquely determined, we extend $\varphi$ by continuity to $[\underline{\alpha}_1,\overline{\alpha}_1]$ by 
$$
\varphi(\underline{\alpha}_1) := (\underline{\gamma}_1,\underline{\beta}_1,\underline{\alpha}_1,0,\underline{\beta},\underline{\gamma}) \;\text{ and }
\; \varphi(\overline{\alpha}_1) := (-\underline{\gamma},-\underline{\beta},0,-\underline{\alpha}_1,-\underline{\beta}_1,-\underline{\gamma}_1)\ .
$$
This last step completes the proof of Proposition~\ref{P:2}~$(a)$-$(b)$.

\medskip

We next turn to the proof of the property~$(c)$ stated in Proposition~\ref{P:2}. We first consider the case $R_\mu\ge R_\mu^M(R,\eta)$. 
Then $\varphi(\underline{\alpha}_1)$ is given by Lemma~\ref{P:NSC} with $\varphi_1(\underline{\alpha}_1) = \varphi_2(\underline{\alpha}_1) = \underline{\alpha}_1$ 
and satisfies $- \underline{\alpha}_1> \varphi_4(\underline{\alpha}_1)$. 
Equivalently, $(\varphi_3+\varphi_4)(\underline{\alpha}_1)<0$ which implies, together with Proposition~\ref{P:2}~$(b)$ that 
$(\varphi_3+\varphi_4)(\overline{\alpha}_1) = - (\varphi_3+\varphi_4)(\underline{\alpha}_1) >0$. 
Owing to the continuity of $\varphi_3+\varphi_4$, we readily conclude that there is $\alpha_1'\in (\underline{\alpha}_1,\overline{\alpha}_1)$ 
such that $(\varphi_3+\varphi_4)(\alpha_1')=0$. 
Now, $\varphi(\alpha_1')$ is a solution of \eqref{R1:1}-\eqref{R1:5} satisfying \eqref{clementi} and it follows from Lemma~\ref{L:2.12b} 
that $(\varphi_1+\varphi_6)(\alpha_1')=(\varphi_2+\varphi_5)(\alpha_1')=0$ and $(-\alpha_1', \varphi_5(\alpha_1'),\varphi_6(\alpha_1'))$ solves \eqref{eq:41}-\eqref{eq:43}. 
Consequently, $-\alpha_1'=\alpha_*$ by Lemma~\ref{L:Z1} and $\varphi(-\alpha_*)$ is given by Proposition~\ref{P:2}~$(c)$. 

Consider next the case $R_\mu\in (R_\mu^+(R,\eta) , R_\mu^M(R,\eta))$. Then $\varphi(\underline{\alpha}_1)$ is 
given by Lemma~\ref{L:Nconn} with $\underline{\alpha}_1 < 0 = - \varphi_4(\underline{\alpha}_1)$. 
Therefore, $(\varphi_3+\varphi_4)(\underline{\alpha}_1)<0$ and we deduce from Proposition~\ref{P:2}~$(b)$ 
that $(\varphi_3+\varphi_4)(\overline{\alpha}_1) = - (\varphi_3+\varphi_4)(\underline{\alpha}_1) >0$. 
We then proceed as in the case $R_\mu\ge R_\mu^M(R,\eta)$ to complete the proof of Proposition~\ref{P:2}~$(c)$. 
\end{proof}

A direct consequence of Lemma~\ref{P:NSC}, Lemma~\ref{L:Nconn}, and Proposition~\ref{P:2} is the following non-existence result.

%%%%%%%%%%%%%%%%%%%%%%%%%%%%%%%%%%%%%%%%
\begin{lemma}\label{L:NON}
 If $R_\mu\in(R+1,  R_\mu^+(R,\eta)],$ then there is no solution $(\gamma_1,\beta_1,\alpha_1,\alpha,\beta,\gamma)$ of the system \eqref{R1:1}-\eqref{R1:5} satisfying 
\begin{equation}
\gamma_1<\beta_1<\alpha_1\leq 0 \leq \alpha< \beta< \gamma \;\;\text{ and }\;\; \alpha_1\neq-\alpha\ .\label{ravel}
\end{equation}
\end{lemma}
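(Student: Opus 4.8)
The plan is to argue by contradiction and to dispatch every admissible configuration to one of the three cases already settled by Lemma~\ref{P:NSC}, Lemma~\ref{L:Nconn}, and Proposition~\ref{P:2}. So I would suppose $R_\mu\in(R+1,R_\mu^+(R,\eta)]$ and let $(\gamma_1,\beta_1,\alpha_1,\alpha,\beta,\gamma)$ be a solution of \eqref{R1:1}-\eqref{R1:5} satisfying \eqref{ravel}. Since $\alpha_1\le 0\le\alpha$ while $\alpha_1\neq-\alpha$ excludes $\alpha_1=\alpha=0$, exactly one of the following holds: $(1)$ $\alpha_1<0<\alpha$; $(2)$ $\alpha_1<0=\alpha$; $(3)$ $\alpha_1=0<\alpha$.

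First I would reduce Case~$(3)$ to Case~$(2)$: a direct inspection shows that the system \eqref{R1:1}-\eqref{R1:5} is invariant under the reflection $(\gamma_1,\beta_1,\alpha_1,\alpha,\beta,\gamma)\mapsto(-\gamma,-\beta,-\alpha,-\alpha_1,-\beta_1,-\gamma_1)$, which is the algebraic counterpart of the reflection invariance of \eqref{ststa} recorded in Lemma~\ref{L:2.4c}~$(i)$ (together with Proposition~\ref{P:1}). Applied to a solution of type~$(3)$ it produces a solution of \eqref{R1:1}-\eqref{R1:5} whose third entry is $-\alpha<0$ and whose fourth entry vanishes, hence one of type~$(2)$. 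In Case~$(2)$ the sextuplet $(\gamma_1,\beta_1,\alpha_1,0,\beta,\gamma)$ solves \eqref{R1:1}-\eqref{R1:5} with $\gamma_1<\beta_1<\alpha_1<0=\alpha<\beta<\gamma$; by Lemma~\ref{L:Nconn} such a solution can exist only if $R_\mu\in(R_\mu^+(R,\eta),R_\mu^M(R,\eta))$, which is incompatible with $R_\mu\le R_\mu^+(R,\eta)$.

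It then remains to handle Case~$(1)$, where the sextuplet fulfils \eqref{clementi}, so that Proposition~\ref{P:2} applies and yields the bounded real-analytic curve $\varphi$ on an interval $[\underline{\alpha}_1,\overline{\alpha}_1]$. The exhaustive case distinction carried out in the proof of Proposition~\ref{P:2} (its Cases~(i), (ii), (iii)) shows that at least one of the endpoints $\varphi(\underline{\alpha}_1)$, $\varphi(\overline{\alpha}_1)$ gives rise either to a solution of \eqref{d1:V1}-\eqref{d1:V4} — which forces $R_\mu\ge R_\mu^M(R,\eta)$ by Lemma~\ref{P:NSC} — or to the solution of \eqref{R1:1}-\eqref{R1:5} furnished by Lemma~\ref{L:Nconn} with vanishing fourth entry — which forces $R_\mu\in(R_\mu^+(R,\eta),R_\mu^M(R,\eta))$. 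Since $R_\mu^+(R,\eta)<R_\mu^M(R,\eta)$, both alternatives contradict $R_\mu\le R_\mu^+(R,\eta)$, and the proof would be complete.

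The only point requiring genuine care is Case~$(1)$: one must appeal to the \emph{proof} of Proposition~\ref{P:2} rather than to its statement, since the statement only describes the endpoints of $\varphi$ when $R_\mu\ge R_\mu^M(R,\eta)$ or $R_\mu\in(R_\mu^+(R,\eta),R_\mu^M(R,\eta))$, whereas what is needed is that a solution of \eqref{R1:1}-\eqref{R1:5} satisfying \eqref{clementi} cannot exist outside these two regimes — which is precisely what the case distinction in that proof establishes. Everything else is routine bookkeeping resting on the three ingredients Lemma~\ref{P:NSC}, Lemma~\ref{L:Nconn}, and Proposition~\ref{P:2}.
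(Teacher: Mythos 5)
Your proposal is correct and follows essentially the same route as the paper's proof: contradiction, then dispatch the two boundary cases $\alpha_1=0$ or $\alpha=0$ via the reflection invariance and Lemma~\ref{L:Nconn} to reduce to $\alpha_1<0<\alpha$, and finally invoke Proposition~\ref{P:2} together with Lemma~\ref{P:NSC} and Lemma~\ref{L:Nconn} to see that an endpoint of the curve would require $R_\mu>R_\mu^+(R,\eta)$. Your closing remark about having to lean on the \emph{proof} of Proposition~\ref{P:2} rather than its literal statement is apt — the statement of part~$(b)$ only describes the endpoints conditionally on $R_\mu\ge R_\mu^M(R,\eta)$ or $R_\mu\in(R_\mu^+(R,\eta),R_\mu^M(R,\eta))$, and what the paper's one-line citation really uses is the exhaustive case analysis inside the proof, which shows that a solution satisfying \eqref{clementi} forces one of those two regimes; the paper's proof makes the same (implicit) appeal.
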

%%%%%%%%%%%%%%%%%%%%%%%%%%%%%%%%%%%%%%%%

\begin{proof}
Let $R_\mu\in (R+1,R_\mu^+(R,\eta)]$ and assume for contradiction that there exists a solution $(\gamma_1^*,\beta_1^*,\alpha_1^*,\alpha^*,\beta^*,\gamma^*)$ 
of \eqref{R1:1}-\eqref{R1:5} satisfying \eqref{ravel}. 
As $(-\gamma^*,-\beta^*,-\alpha^*,-\alpha_1^*,-\beta_1^*,-\gamma_1^*)$ is also a solution of \eqref{R1:1}-\eqref{R1:5} and $\alpha_1$ and $\alpha$ 
do not vanish simultaneously, we infer from Lemma~\ref{L:Nconn} that $\alpha_1<0<\alpha.$ Consequently, $(\gamma_1^*,\beta_1^*,\alpha_1^*,\alpha^*,\beta^*,\gamma^*)$ 
is a solution of \eqref{R1:1}-\eqref{R1:5} satisfying \eqref{clementi}. By Proposition~\ref{P:2} this solution belongs to a continuous curve of 
solutions of \eqref{R1:1}-\eqref{R1:5} with one end being a solution of \eqref{R1:1}-\eqref{R1:5} given by Lemma~\ref{P:NSC} or Lemma~\ref{L:Nconn}. 
Since such solutions only exist for $R_\mu>R_\mu^+(R,\eta)$, we obtain a contradiction and complete the proof. 
\end{proof}
  
Now, the outcome of the above analysis enables us to provide a complete picture of the non-symmetric self-similar profiles.

%%%%%%%%%%%%%%%%%%%%%%%%%%%%%%%%%%%%%%%%
\begin{prop}\label{P:6} Let $(R,R_\mu,\eta)$ be three given positive real numbers.

\begin{itemize}

\item[$(a)$] If $R_\mu>R_\mu^+(R,\eta)$, there are a bounded interval $\Lambda := [\ell_-,\ell_+]$ and  a function $\zeta := (\gamma_1,\beta_1,\alpha_1,\alpha,\beta,\gamma)\in C(\Lambda,\R^6)$ that determines a one-parameter family $(F_\ell,G_\ell)_{\ell\in\Lambda}$ of solutions of \eqref{ststa} such that

\begin{itemize}
\item[$(a1)$] The function $\zeta$ is real-analytic in $(\ell_-,\ell_+)$ and $\ell_-< 0 < \ell_+$. 

\item[$(a2)$] The pair $(F_0,G_0)$ is the unique even solution of \eqref{ststa} given by Proposition~\ref{P:-1}. 

\item[$(a3)$] For each $\ell\in (\ell_-,\ell_+)\setminus\{0\}$ the sextuplet $\zeta(\ell)$ is a solution of \eqref{R1:1}-\eqref{R1:5} satisfying \eqref{clementi} and the pair $(F_\ell,G_\ell)$ is the non-symmetric solution of \eqref{ststa} given by Proposition~\ref{P:1} corresponding to the parameters $\zeta(\ell)$.

\item[$(a4)$] If $R_\mu\geq R_\mu^M(R,\eta),$ then $\gamma_1(\ell_-)=\beta_1(\ell_-)=\alpha_1(\ell_-)$, the quadruplet 
$$
(\beta_1(\ell_-), \alpha(\ell_-), \beta(\ell_-),\gamma(\ell_-))$$ 
is the solution of \eqref{d1:V1}-\eqref{d1:V4} given by Lemma~\ref{P:NSC}, and 
$$
\zeta(\ell_+) = (-\gamma(\ell_-),-\beta(\ell_-),-\alpha(\ell_-),-\alpha_1(\ell_-),-\beta_1(\ell_-),-\gamma_1(\ell_-))\ .
$$
The pair $(F_{\ell_\pm},G_{\ell_\pm})$ is the non-symmetric solution of \eqref{ststa} given by Proposition~\ref{P:0} corresponding to the
parameters $(\beta_1(\ell_\pm),\alpha(\ell_\pm),\beta(\ell_\pm),\gamma(\ell_\pm))$.

\item[$(a5)$] If $R_\mu\in(R_\mu^+(R,\eta),R_\mu^M(R,\eta)),$ then $\zeta(\ell_-)$ is the solution of \eqref{R1:1}-\eqref{R1:5} given by 
Lemma~\ref{L:Nconn} satisfying $\alpha(\ell_-)=0$ and 
$$
\zeta(\ell_+) = (-\gamma(\ell_-),-\beta(\ell_-),-\alpha(\ell_-),-\alpha_1(\ell_-),-\beta_1(\ell_-),-\gamma_1(\ell_-))\ .
$$
The pair $(F_{\ell_\pm},G_{\ell_\pm})$ is the non-symmetric solution of \eqref{ststa} given by Proposition~\ref{P:1} corresponding to the parameters $\zeta(\ell_\pm)$.

\item[$(a6)$] There is no other non-symmetric solution of \eqref{ststa}.

\end{itemize}

\item[$(b)$]  If $R_\mu\in (0,R_\mu^-(R,\eta))$, there are a bounded interval $\Lambda := [\ell_-,\ell_+]$ and a function  $\zeta := (\gamma_1,\beta_1,\alpha_1,\alpha,\beta,\gamma)\in C(\Lambda,\R^6)$ that determines a one-parameter family $(F_\ell,G_\ell)_{\ell\in\Lambda}$ 
of solutions of \eqref{ststa} such that 

\begin{itemize}
\item[$(b1)$] The function $\zeta$ is real-analytic in $(\ell_-,\ell_+)$ and $\ell_-< 0 < \ell_+$. 

\item[$(b2)$] The pair $(F_0,G_0)$ is the unique even solution of \eqref{ststa} given by Proposition~\ref{P:-1}. 

\item[$(b3)$] For each $\ell\in (\ell_-,\ell_+)\setminus\{0\}$ the sextuplet $\zeta(\ell)$ is a solution of \eqref{R2:1}-\eqref{R2:5} satisfying \eqref{clementi} and the pair $(F_\ell,G_\ell)$ is the non-symmetric solution of \eqref{ststa} given by Proposition~\ref{P:1} corresponding to the parameters $\zeta(\ell)$.

\item[$(b4)$] If $R_\mu\in (0, R_\mu^m(R,\eta)],$ then $\gamma_1(\ell_-)=\beta_1(\ell_-)=\alpha_1(\ell_-)$, 
the quadruplet 
$$
(\beta_1(\ell_-), \alpha(\ell_-), \beta(\ell_-),\gamma(\ell_-))$$ 
is the solution of \eqref{d2:V1}-\eqref{d2:V4} associated to that of \eqref{d1:V1}-\eqref{d1:V4} given by Lemma~\ref{P:NSC} by the transformation described in Lemma~\ref{L:2.4c}~$(ii)$, and 
$$
\zeta(\ell_+) = (-\gamma(\ell_-),-\beta(\ell_-),-\alpha(\ell_-),-\alpha_1(\ell_-),-\beta_1(\ell_-),-\gamma_1(\ell_-))\ .
$$
The pair $(F_{\ell_\pm},G_{\ell_\pm})$ is the non-symmetric solution of \eqref{ststa} given by Proposition~\ref{P:0} corresponding to the
parameters $(\beta_1(\ell_\pm),\alpha(\ell_\pm),\beta(\ell_\pm),\gamma(\ell_\pm))$.

\item[$(b5)$] If $R_\mu\in (R_\mu^m(R,\eta),R_\mu^-(R,\eta)),$ then $\zeta(\ell_-)$ is the solution of \eqref{R2:1}-\eqref{R2:5}  satisfying $\alpha(\ell_-)=0$ associated 
that of \eqref{R1:1}-\eqref{R1:5} 
given by Lemma~\ref{L:Nconn} by the transformation described in Lemma~\ref{L:2.4c}~$(ii)$ and 
$$
\zeta(\ell_+) = (-\gamma(\ell_-),-\beta(\ell_-),-\alpha(\ell_-),-\alpha_1(\ell_-),-\beta_1(\ell_-),-\gamma_1(\ell_-))\ .
$$
The pair $(F_{\ell_\pm},G_{\ell_\pm})$ is the non-symmetric solution of \eqref{ststa} given by Proposition~\ref{P:1} corresponding to the parameters $\zeta(\ell_\pm)$.

\item[$(b6)$] There is no other non-symmetric solution of \eqref{ststa}.

\end{itemize}

\end{itemize}
\end{prop}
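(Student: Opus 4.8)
The plan is to prove part~$(a)$ directly and then to deduce part~$(b)$ from it via the scaling--swapping transformation of Lemma~\ref{L:2.4c}~$(ii)$. Throughout part~$(a)$ we have $R_\mu>R_\mu^+(R,\eta)$, hence $R_\mu>R+1$ (as $R_\mu^+(R,\eta)>R+1$), and the unique even solution $(F_0,G_0)$ of \eqref{ststa} is the one of Proposition~\ref{P:-1}~$(iii)$, with $\cP_{F_0}=(-\beta_*,\beta_*)$, $\cP_{G_0}=(-\gamma_*,-\alpha_*)\cup(\alpha_*,\gamma_*)$ and $0<\alpha_*<\beta_*<\gamma_*$, where $(\alpha_*,\beta_*,\gamma_*)$ is the solution of \eqref{eq:41}-\eqref{eq:43} furnished by Lemma~\ref{L:Z1}.

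First I would note that $s_*:=(-\gamma_*,-\beta_*,-\alpha_*,\alpha_*,\beta_*,\gamma_*)$ solves \eqref{R1:1}-\eqref{R1:5} and satisfies \eqref{clementi}, so Proposition~\ref{P:2} applies to it and produces a bounded continuous curve $\varphi\colon[\underline{\alpha}_1,\overline{\alpha}_1]\to\R^6$, real-analytic on the interior, whose interior values solve \eqref{R1:1}-\eqref{R1:5} and satisfy \eqref{clementi}, whose endpoints are the solutions described in Proposition~\ref{P:2}~$(b)$, and which by Proposition~\ref{P:2}~$(c)$ passes through $s_*$ at the interior parameter $-\alpha_*$. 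Reparametrizing by $\ell:=\alpha_1+\alpha_*$, setting $\Lambda:=[\ell_-,\ell_+]$ with $\ell_-:=\underline{\alpha}_1+\alpha_*<0<\overline{\alpha}_1+\alpha_*=:\ell_+$ and $\zeta(\ell):=\varphi(\ell-\alpha_*)$ yields $(a1)$ and $\zeta(0)=s_*$. The profiles are then assembled piecewise: at $\ell=0$ one takes the even solution of Proposition~\ref{P:-1}~$(iii)$, whose formula is the limit of the formulas of Proposition~\ref{P:1}~$(i)$ as $\zeta(\ell)\to s_*$ by Lemma~\ref{L:2.12b}~$(iii)$, which gives $(a2)$; for $\ell\in(\ell_-,\ell_+)\setminus\{0\}$ one has $\zeta(\ell)\ne s_*$, so by Lemma~\ref{L:2.12b}~$(iii)$ the third and fourth components of $\zeta(\ell)$ are not opposite and $(F_\ell,G_\ell)$ is the non-symmetric disconnected-support solution attached to $\zeta(\ell)$ by Proposition~\ref{P:1}~$(i)$, which gives $(a3)$; and for $\ell\in\{\ell_-,\ell_+\}$ one reads off $\zeta(\ell_\pm)$ from Proposition~\ref{P:2}~$(b)$, obtaining $(a4)$ (connected-support profiles from Proposition~\ref{P:0}~$(i)$ and Lemma~\ref{P:NSC} when $R_\mu\ge R_\mu^M(R,\eta)$) and $(a5)$ (profiles from Proposition~\ref{P:1}~$(i)$ attached to the $\alpha$- resp.\ $\alpha_1$-vanishing solution of \eqref{R1:1}-\eqref{R1:5} from Lemma~\ref{L:Nconn} when $R_\mu\in(R_\mu^+(R,\eta),R_\mu^M(R,\eta))$). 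Continuity of $\ell\mapsto(F_\ell,G_\ell)$ on $\Lambda$ then follows from continuity of $\zeta$, the explicit piecewise-polynomial formulas, and the degenerations just listed.

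The completeness statement $(a6)$ is the crux. Given any non-symmetric solution $(F,G)$ of \eqref{ststa}, Remark~\ref{R:2.7} excludes \textbf{Cases~(I) \&~(IV)} of the proof of Proposition~\ref{P:-1}, so $(F,G)$ arises in \textbf{Cases~(II) \&~(III)} and is classified either by Proposition~\ref{P:0} (connected supports) or by Proposition~\ref{P:1} (disconnected support). In the first situation, since $R_\mu>R_\mu^+(R,\eta)>R_\mu^m(R,\eta)$, Proposition~\ref{P:0} forces $R_\mu\ge R_\mu^M(R,\eta)$ and $(F,G)$, up to reflection, is the solution of Proposition~\ref{P:0}~$(i)$, which equals $(F_{\ell_-},G_{\ell_-})$ by construction and the uniqueness in Lemma~\ref{P:NSC}. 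In the second, $G$ has the disconnected support (Lemma~\ref{L:T1}~$(iii)$, as $R_\mu>R$), so $(F,G)$ corresponds through Proposition~\ref{P:1}~$(i)$ to a solution $\xi$ of \eqref{R1:1}-\eqref{R1:5} satisfying \eqref{haydn}; using the reflection invariance of this system and Lemma~\ref{L:NON} (vacuous here), either $\xi$ has a vanishing $\alpha$- or $\alpha_1$-component, and is then the solution of Lemma~\ref{L:Nconn}, which exists only for $R_\mu\in(R_\mu^+(R,\eta),R_\mu^M(R,\eta))$ and coincides up to reflection with $\zeta(\ell_\pm)$, or $\xi$ satisfies \eqref{clementi}. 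In the latter case the key point is that $\xi$ lies on $\zeta(\Lambda)$: Proposition~\ref{P:2} applied to $\xi$ yields a maximal real-analytic curve of solutions of \eqref{R1:1}-\eqref{R1:5} obeying \eqref{clementi} which passes through $s_*$ by Proposition~\ref{P:2}~$(c)$; since the Jacobian computed in the proof of Proposition~\ref{P:2} does not vanish along \eqref{clementi}, the component $\alpha_1$ is a local coordinate on that solution set, so the new curve agrees with $\varphi$ near $s_*$ and hence, by analytic continuation and maximality of both, coincides with $\varphi$. Thus $\xi=\zeta(\ell)$ for some $\ell$ and, up to reflection, $(F,G)=(F_\ell,G_\ell)$, which proves $(a6)$.

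For part~$(b)$ I would fix $R_\mu\in(0,R_\mu^-(R,\eta))$, introduce $R_{\mu,1}$, $\eta_1$, $\lambda$ as in \eqref{spirou}-\eqref{fantasio}, and check by a direct computation (of the kind already done in \textbf{Case~(II-b)} of the proof of Proposition~\ref{P:-1} and in \eqref{mozart}) that $R_\mu<R_\mu^-(R,\eta)$ is equivalent to $R_{\mu,1}>R_\mu^+(R,\eta_1)$, that $R_\mu\le R_\mu^m(R,\eta)$ is equivalent to $R_{\mu,1}\ge R_\mu^M(R,\eta_1)$, and that the substitution $x\mapsto\lambda x$ together with $(\gamma_1,\beta_1,\alpha_1,\alpha,\beta,\gamma)\mapsto(-\gamma/\lambda,-\beta/\lambda,-\alpha/\lambda,-\alpha_1/\lambda,-\beta_1/\lambda,-\gamma_1/\lambda)$ sends \eqref{d1:V1}-\eqref{d1:V4} onto \eqref{d2:V1}-\eqref{d2:V4} and \eqref{R1:1}-\eqref{R1:5} onto \eqref{R2:1}-\eqref{R2:5}. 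Applying part~$(a)$ with the triple $(R,R_{\mu,1},\eta_1)$ and then Lemma~\ref{L:2.4c}~$(ii)$ --- which, as $(R_{\mu,1})_1=R_\mu$ and $(\eta_1)_1=\eta$, returns to the original triple --- produces the family of part~$(b)$; properties $(b1)$--$(b5)$ are the images of $(a1)$--$(a5)$ under this transformation, using the above equivalences, and $(b6)$ follows from $(a6)$ since the transformation is a bijection of the solution sets of \eqref{ststa} preserving (non-)symmetry. Beyond this bookkeeping, the main obstacle is to make rigorous the uniqueness-of-the-maximal-curve argument behind $(a6)$ and to verify that the three boundary degenerations $\alpha_1=-\alpha$, $\alpha=0$, $\alpha_1=0$ of \eqref{R1:1}-\eqref{R1:5} match, respectively, the even profile of Proposition~\ref{P:-1}, the connected-support profiles of Proposition~\ref{P:0}, and the $\alpha$-vanishing profiles of Lemma~\ref{L:Nconn}.
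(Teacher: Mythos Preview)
Your proposal is correct and follows essentially the same approach as the paper: you start from the even sextuplet $s_*=(-\gamma_*,-\beta_*,-\alpha_*,\alpha_*,\beta_*,\gamma_*)$, apply Proposition~\ref{P:2} to obtain the curve $\varphi$, reparametrize by $\ell=\alpha_1+\alpha_*$, and read off $(a1)$--$(a5)$ from Proposition~\ref{P:2}; for $(a6)$ you argue that any other solution yields, via Proposition~\ref{P:2}, a second maximal analytic curve through $s_*$ which must coincide with $\varphi$ by the implicit function theorem and analytic continuation, and you deduce part~$(b)$ from part~$(a)$ via Lemma~\ref{L:2.4c}~$(ii)$. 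This is exactly the paper's argument, with the only cosmetic difference that you spell out the connected-support subcase of $(a6)$ separately, whereas the paper absorbs it into the description of the endpoints in Proposition~\ref{P:2}~$(b)$.
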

%%%%%%%%%%%%%%%%%%%%%%%%%%%%%%%%%%%%%%%%

\begin{proof} 
\textbf{Case~1: $R_\mu>R_\mu^+(R,\eta)$.} Let $(\alpha_*,\beta_*,\gamma_*)$ be the unique solution of \eqref{eq:41}-\eqref{eq:43} given by Lemma~\ref{L:Z1}.
Observing that  $(-\gamma_*, -\beta_*, -\alpha_*, \alpha_*, \beta_*, \gamma_*)$ solves \eqref{R1:1}-\eqref{R1:5} and satisfies \eqref{clementi} for $R_\mu>R_\mu^+(R,\eta)$ 
we infer 
from Proposition~\ref{P:2} that there are $\overline{\alpha}_1\in (-\alpha_*,0]$, $\underline{\alpha}_1\in (-\infty,-\alpha_*)$, and a bounded
continuous function $\varphi\in C([\underline{\alpha}_1,\overline{\alpha}_1],\R^6)$ such that $\varphi$ is real-analytic in $(\underline{\alpha}_1,\overline{\alpha}_1)$ 
with $\varphi_3 = \mathrm{id}$ 
and satisfies the properties~$(a)$ and~$(b)$ of Proposition~\ref{P:2}. Setting $\ell_-:= \alpha_*+\underline{\alpha}_1<0$, $\ell_+:= \alpha_*+\overline{\alpha}_1>0$, 
and $\zeta(\ell) := \varphi(\ell-\alpha_*)$ for $\ell\in \Lambda := [\ell_-,\ell_+]$,
the statements~(a1)-(a5) of Proposition~\ref{P:6} are straightforward consequences of Proposition~\ref{P:2}.

\medskip

In order to prove $(a6)$, assume for contradiction that there exists a non-symmetric self-similar profile solving \eqref{ststa} 
with one of its components having a disconnected support which does not lie on the curve $\varphi([\underline{\alpha}_1,\overline{\alpha}_1])$ constructed above. 
In view of Proposition~\ref{P:1}, this solution corresponds to a 
sextuplet $(\gamma_1',\beta_1',\alpha_1',\alpha',\beta',\gamma')$ solving  \eqref{R1:1}-\eqref{R1:5} and satisfying \eqref{haydn}. 
The possibility that $\alpha_1'=0$ or $\alpha'=0$ is actually excluded since it 
corresponds to the solutions of \eqref{R1:1}-\eqref{R1:5} described in Lemma~\ref{L:Nconn} which
are already on the curve $\varphi([\underline{\alpha}_1,\overline{\alpha}_1])$. 
Consequently, $(\gamma_1',\beta_1',\alpha_1',\alpha',\beta',\gamma')$ also satisfies \eqref{clementi} and we infer
from Proposition~\ref{P:2} that there is a function $\psi:=(\psi_k)_{\{1\le k \le 6\}} \in C([\underline{\alpha}_1,\overline{\alpha}_1],\mathbb{R}^6)$
which is real-analytic in $(\underline{\alpha}_1,\overline{\alpha}_1)$ 
such that $\psi_3\equiv \mathrm{id}$ and $\psi(\alpha_1)$ is a solution to \eqref{R1:1}-\eqref{R1:5} satisfying \eqref{clementi} for every 
$\alpha_1\in (\underline{\alpha}_1,\overline{\alpha}_1)$.
We emphasize here that $\varphi$ and $\psi$ are defined on the same interval as their end points are uniquely identified in Proposition~\ref{P:2}~$(b)$ 
and this also implies that 
$$
\varphi(\underline{\alpha}_1)=\psi(\underline{\alpha}_1) \;\;\text{ and }\;\;\varphi(\overline{\alpha}_1)=\psi(\overline{\alpha}_1)\ .
$$ 
We further infer from Proposition~\ref{P:2}~$(c)$ that $\psi(-\alpha_*)=\varphi(-\alpha_*)$ and the local uniqueness 
stemming from the implicit function theorem implies that $\psi$ and $\varphi$ coincide in a neighborhood of $-\alpha_*$. 
Being both real-analytic they actually coincide on $[\underline{\alpha_1},\overline{\alpha_1}]$.

\bigskip

\noindent\textbf{Case~2: $R_\mu\in (0,R_\mu^-(R,\eta))$}. This case can actually be deduced from the previous one, thanks to the transformation 
described in Lemma~\ref{L:2.4c}~$(ii)$. 
Recall in particular that the parameter $R_{\mu,1} = R(1+R)/R_\mu$ defined in \eqref{spirou} ranges in $(R_\mu^+(R,\eta_1),\infty)$ when $R_\mu\in (0,R_\mu^-(R,\eta))$.
\end{proof}

%%%%%%%%%%%%%%%%%%%%%%%%%%%%%%%%%%%%%%%%
\begin{lemma}\label{L:MI}
Let $R$, $R_\mu$, and $\eta$ be given positive real numbers. 
If $R_\mu>R_\mu^+(R,\eta)$ or $R_\mu\in (0,R_\mu^-(R,\eta))$, then $\ell\mapsto \mathcal{E}_*(F_\ell,G_\ell)$ is decreasing on $[\ell_-,0]$ and 
increasing on $[0,\ell_+]$ where $(F_\ell,G_\ell)_{\ell\in\Lambda}$ is the curve of solutions of \eqref{ststa} described in Proposition~\ref{P:6}.
\end{lemma}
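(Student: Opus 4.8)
The plan is to exploit the gradient-flow/stationarity structure: since each $(F_\ell,G_\ell)$ is a steady state of \eqref{RS}, and \eqref{RS} is the $2$-Wasserstein gradient flow of $\E_*$, the derivative of $\ell\mapsto\E_*(F_\ell,G_\ell)$ should vanish to first order in a way controlled by the constraints, so the monotonicity must come from second-order information encoded in the algebraic systems \eqref{R1:1}-\eqref{R1:5} (resp. \eqref{R2:1}-\eqref{R2:5}). Concretely, I would first reduce to the case $R_\mu>R_\mu^+(R,\eta)$, the case $R_\mu\in(0,R_\mu^-(R,\eta))$ following by the transformation of Lemma~\ref{L:2.4c}~$(ii)$ (one checks that $\E_*$ transforms by an $\ell$-independent affine change under $(F,G)\mapsto(F_1,G_1)$, up to the relabeling $\ell\mapsto\ell'$, so monotonicity is preserved). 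Using the parametrization $\zeta(\ell)=(\gamma_1,\beta_1,\alpha_1,\alpha,\beta,\gamma)(\ell)$ from Proposition~\ref{P:6} with $\alpha_1=\ell-\alpha_*$ as the genuine free parameter, I would write $\E_*(F_\ell,G_\ell)$ explicitly as a function of the six endpoints by integrating the piecewise-quadratic profiles of Proposition~\ref{P:1}.

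Next I would compute $\frac{d}{d\ell}\E_*(F_\ell,G_\ell)$. The key algebraic fact to establish is that this derivative factors as a product of a strictly positive quantity (coming from the Jacobian $J_\Phi>0$ of Proposition~\ref{P:2}, or from the positivity of the endpoints and the ordering \eqref{clementi}) times a simple linear combination of the endpoints that changes sign exactly once, at $\ell=0$. The natural candidate for that sign-changing factor is something like $(\gamma+\gamma_1)(\ell)$ or $(\alpha_1+\alpha)(\ell)$: by Lemma~\ref{L:2.12b}~$(iii)$, $(F_\ell,G_\ell)$ is even (i.e.\ $\ell=0$) precisely when $\alpha_1=-\alpha$, and the analysis in Proposition~\ref{P:2}~$(b)$ shows that at $\ell=\ell_-$ one has $(\varphi_3+\varphi_4)(\underline{\alpha}_1)<0$ while at $\ell=\ell_+$ one has $(\varphi_3+\varphi_4)(\overline{\alpha}_1)>0$. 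So $\ell\mapsto(\alpha_1+\alpha)(\ell)$ is continuous, vanishes only at $\ell=0$ (any other zero would by Lemma~\ref{L:2.12b} force a second even profile, contradicting the uniqueness in Proposition~\ref{P:-1}), and is negative on $[\ell_-,0)$, positive on $(0,\ell_+]$. If I can show $\frac{d}{d\ell}\E_*(F_\ell,G_\ell)$ has the same sign as $(\alpha_1+\alpha)(\ell)$ up to a positive factor, the conclusion follows. To identify the derivative I would differentiate the energy using the explicit formulas, then substitute the differentiated forms of the constraints \eqref{R1:1}-\eqref{R1:5} (which express $\dot\gamma_1,\dot\beta_1,\dot\alpha,\dot\beta,\dot\gamma$ in terms of $\dot\alpha_1=1$) to collapse the long expression; experience with such Barenblatt-type computations suggests massive cancellation, leaving a single clean term.

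An alternative, more structural route I would pursue in parallel: use the fact that along the curve, the ``heights'' (the constants $a,b$ in \eqref{F1}-\eqref{F3}) are Lagrange multipliers for the mass constraints. Differentiating $\E_*(F_\ell,G_\ell)$ and using that $(F_\ell,G_\ell)$ is stationary for \eqref{RS}, one gets that $\frac{d}{d\ell}\E_*$ equals a boundary term supported at the endpoints of the positivity intervals where two components of the support meet, weighted by the jump in the relevant potential's derivative. Since the profiles are $H^1$ (no jump in the profiles themselves) but the potentials $\eta^2(1+R)F+Rg+x^2/6$ etc.\ are constant on each component with possibly different constants across a ``dry'' gap, the derivative reduces to terms like $(\text{potential jump})\times\frac{d}{d\ell}(\text{gap endpoint})$, and one identifies the single surviving contribution. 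This reproduces the same factorization.

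The main obstacle I anticipate is the bookkeeping: there are several sub-cases according to the structure of the supports ($\alpha_1<0$ vs.\ $\alpha_1=0$, $\alpha>0$ vs.\ $\alpha=0$, connected vs.\ disconnected endpoints — corresponding to Propositions~\ref{P:0} and~\ref{P:1}), and in each the explicit energy formula looks different, so one must verify the sign-factorization holds uniformly and matches across the transition points $\ell=\ell_-,\ell_+$ where the parametrization degenerates. The continuity of $\zeta$ on the closed interval $\Lambda$ (Proposition~\ref{P:6}) plus real-analyticity on the open interval means it suffices to prove the derivative formula on $(\ell_-,0)\cup(0,\ell_+)$ and then invoke continuity of $\E_*\circ(F_\cdot,G_\cdot)$; strict monotonicity on the closed subintervals follows since a real-analytic function with a one-signed derivative on the interior that is continuous up to the boundary is strictly monotone. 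Thus the crux is purely the one differentiation-plus-cancellation computation identifying $\frac{d}{d\ell}\E_*(F_\ell,G_\ell)=(\text{positive})\cdot(\alpha_1+\alpha)(\ell)$ (or the appropriate analogue), which I expect to be the technical heart of the argument.
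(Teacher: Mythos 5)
Your main route is the same as the paper's: reduce to $R_\mu>R_\mu^+(R,\eta)$ via Lemma~\ref{L:2.4c}~$(ii)$, write $\E_*(F_\ell,G_\ell)$ explicitly from the piecewise-quadratic profiles, differentiate along the parametrization from Proposition~\ref{P:6}, and show the derivative is a positive factor times an odd quantity that vanishes only at $\ell=0$. Two remarks. First, you miss a decisive simplification: for any stationary solution, Theorem~\ref{T:WP}~$(v)$ gives $\E_*(F_\ell,G_\ell)=\tfrac12\cM_2(F_\ell,G_\ell)$, so the quantity to differentiate is a pure second moment — a sum of fifth powers of the endpoints as in \eqref{xi} — rather than the full quadratic energy; without this, the ``differentiate-and-cancel'' computation you flag as the crux becomes considerably heavier. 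Second, the sign-changing factor the paper obtains is $\gamma_1^2-\gamma^2$, not $\alpha_1+\alpha$; these are distinct quantities, both of which happen to vanish at $\ell=0$ and be increasing on $\Lambda$ (the paper establishes monotonicity of $\gamma_1^2-\gamma^2$ directly from the signs of $\gamma_1\gamma_1'$ and $\gamma\gamma'$, whereas your argument for $\alpha_1+\alpha$ via uniqueness of the even profile plus Lemma~\ref{L:2.12b} is also sound), so a factorization onto either would close the proof — but you must actually exhibit one, and that computation, which the paper carries out using \eqref{R1:1}--\eqref{R1:3} to massage the derivative into \eqref{dbog1}, is the entire substance of the lemma and is left open in your proposal. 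Your second, ``Lagrange multiplier / boundary term'' route is genuinely different and does plausibly produce $\tfrac{d}{d\ell}\E_*=\tfrac{b_2-b}{R}\,\dot m$ with $b_2-b$ proportional to $\gamma_1^2-\gamma^2$ (so it naturally explains why that factor appears), but it too requires an unfinished sign analysis of $\dot m$; as sketched it is a promising alternative, not a proof.
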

%%%%%%%%%%%%%%%%%%%%%%%%%%%%%%%%%%%%%%%%

\begin{proof} 
\textbf{Case~1: $R_\mu>R_\mu^+(R,\eta)$.} 
We keep the notation introduced in Proposition~\ref{P:6} and its proof. We infer from Theorem~\ref{T:WP}~$(v)$ that
$\E_*(F_\ell,G_\ell)=\cM_2(F_\ell,G_\ell)/2$ for all $\ell\in \Lambda$. 
Therefore, using the explicit formula found in Proposition~\ref{P:1}~$(i)$, we have that $\xi(\ell):= \E_*(F_{\ell},G_\ell)$ satisfies
\begin{align}\label{xi}
\xi&=\frac{1}{90\eta^2R_\mu^2}\left[R(\gamma^5-\gamma_1^5)+(R_\mu-R)^2(\beta^5-\beta_1^5)-\frac{R(R_\mu-R-1)^2}{1+R}(\alpha^5-\alpha_1^5)\right]
\end{align}
where $(\gamma_1,\beta_1,\alpha_1,\alpha,\beta,\gamma)(\ell)$ is the corresponding solution of \eqref{R1:1}-\eqref{R1:5}.
In order to study the sign of $\xi'$, we need to determine the derivative of $\zeta=(\gamma_1,\beta_1,\alpha_1,\alpha,\beta,\gamma).$ 
It follows from the equations \eqref{R1:1}-\eqref{R1:5}, after rather lengthy computations, that
\begin{align*}
\gamma_1\gamma_1'=&\frac{R_\mu-R-1}{1+R}\frac{R(\beta-\alpha)(\gamma-\alpha_1)+R(\gamma-\alpha)(\alpha_1-\beta_1)+(\beta-\beta_1)(\gamma-\alpha)}{R(\gamma_1-\gamma)(\beta-\alpha)+(\beta_1-\beta)(\gamma-\alpha)}\alpha_1>0,\\
\beta_1\beta_1'=&\frac{R(R_\mu-R-1)}{(1+R)(R_\mu-R)}\frac{R(\gamma_1-\gamma)(\beta-\alpha)+(\gamma-\alpha)(\alpha_1-\beta)+(\beta-\alpha)(\gamma_1-\alpha_1)}{R(\gamma_1-\gamma)(\beta-\alpha)+(\beta_1-\beta)(\gamma-\alpha)}\alpha_1<0,\\
   \alpha\alpha'= &\frac{ R(\gamma_1-\gamma)(\beta_1-\alpha_1)+(\beta_1-\beta)(\gamma_1-\alpha_1)}{R(\gamma_1-\gamma)(\beta-\alpha)+(\beta_1-\beta)(\gamma-\alpha)}\alpha_1>0,\\
\beta\beta'=&\frac{R(R_\mu-R-1)}{(1+R)(R_\mu-R)}\frac{R(\gamma_1-\gamma)(\beta_1-\alpha_1)+(\gamma-\alpha)(\alpha_1-\beta_1)+(\gamma_1-\alpha_1)(\beta_1-\alpha)}{R(\gamma_1-\gamma)(\beta-\alpha)+(\beta_1-\beta)(\gamma-\alpha)}\alpha_1>0,\\
    \gamma\gamma'=& \frac{R_\mu-R-1}{1+R}\frac{R(\gamma_1-\alpha)(\alpha_1-\beta_1)+R(\gamma_1-\alpha_1)(\beta-\alpha)+(\beta-\beta_1)(\gamma_1-\alpha_1)}{R(\gamma_1-\gamma)(\beta-\alpha)+(\beta_1-\beta)(\gamma-\alpha)}\alpha_1<0.
\end{align*}
Recalling \eqref{clementi}, we realize that $\alpha'>0$, $\beta'>0$, $\beta_1'>0$, $\gamma'<0$, and $\gamma_1'<0$ in $(\underline{\alpha_1}, \overline{\alpha_1}).$
Differentiating \eqref{xi} and making use of the previous formulas, we deduce that
\begin{align}\label{dbog1}
 \xi'=\frac{1}{18\eta^2R_\mu^2}\frac{R(R_\mu-R-1)}{(1+R)}\frac{\alpha_1  }{R(\gamma_1-\gamma)(\beta-\alpha)+(\beta_1-\beta)(\gamma-\alpha)}(T_1+RT_2)
\end{align}
where, making use of \eqref{R1:1}-\eqref{R1:3}, the terms $T_1$ and $T_2$ may be expressed, after rather lengthy algebraic manipulations, as follows:
\begin{align*}
T_1  :=& R(\gamma_1^2-\gamma^2)\Big\{ \beta(\gamma-\alpha)(\gamma_1-\beta_1)-\beta_1(\gamma_1-\alpha_1)(\gamma-\beta) \nonumber\\
&\ +\frac{1+R}{R}(\beta_1-\beta)(\gamma_1-\alpha_1)(\gamma-\alpha)\Big\}\ , \\
T_2  :=& R(\gamma_1^2-\gamma^2)\Big\{ \gamma(\beta-\alpha)(\gamma_1-\beta_1)-\gamma_1(\beta_1-\alpha_1)(\gamma-\beta) \nonumber\\
&\ + \frac{1+R}{R}\left[ \gamma_1(\beta_1-\alpha_1)(\gamma-\alpha)-\gamma(\beta-\alpha)(\gamma_1-\alpha_1) \right]\Big\}\ .
\end{align*}
Thanks to \eqref{clementi}, the expressions in the curly brackets of both $T_1$ and $T_2$ are positive. We finally note that, since
$$
\left( \gamma_1^2 - \gamma^2 \right)' = 2 \gamma_1 \gamma_1' - 2 \gamma \gamma' >0
$$
by the explicit formulas above and $\left( \gamma_1^2 - \gamma^2 \right)(0)=0$ due to the evenness of $(F_0,G_0)$, $\gamma_1^2-\gamma^2$ is an increasing 
function on $\Lambda$ which vanishes at zero. Inserting these information in \eqref{dbog1} completes the proof.

\medskip

\noindent\textbf{Case~2: $R_\mu\in (0,R_\mu^-(R,\eta))$.} We use once more the transformation found in  Lemma~\ref{L:2.4c}~$(ii)$ 
to deduce this case from the previous one.
\end{proof}

Collecting all the results established in this section allows us to complete the proof of Theorem~\ref{T:MT1}.

\begin{proof}[Proof of Theorem~\ref{T:MT1}]
The statements $(i)$-$(iii)$ follow from Propositions~\ref{P:-1} and~\ref{P:6}, the unimodal
structure of the rescaled energy $\mathcal{E}_*$ stated in $(iv)$ being a consequence of
Lemma~\ref{L:MI}. The properties $(v)$-$(vii)$ of the supports of steady-state solutions of \eqref{RS} are obtained by
combining the outcome of Propositions~\ref{P:0}, \ref{P:1}, and \ref{P:6} .
\end{proof}

%%%%%%%%%%%%%%%%%%%%%%%%%%%%%%%%%%%%%%%%
%%%%%%%%%%%%%%%%%%%%%%%%%%%%%%%%%%%%%%%%
\subsection{Variational characterization of even self-similar profiles}\label{S:22var}
%%%%%%%%%%%%%%%%%%%%%%%%%%%%%%%%%%%%%%%%
%%%%%%%%%%%%%%%%%%%%%%%%%%%%%%%%%%%%%%%%

We conclude the analysis of self-similar profiles to \eqref{Pb} by a variational characterization of the even self-similar profiles we constructed in Proposition~\ref{P:-1}. 
More precisely, each of them is the unique minimizer of the scaled energy functional $\E_{*} $ defined in \eqref{eq:sup3}.

%%%%%%%%%%%%%%%%%%%%%%%%%%%%%%%%%%%%%%%%
\begin{prop}\label{P1} Given positive parameters $(R, R_\mu,\eta),$ there exists a unique minimizer $(F,G)$ of the functional $\E_{*}$ within $\cK^2$. 
Additionally, both functions $F$ and $G$ are even, belong to $H^1(\R)$, and solve the system \eqref{ststa} (and are explicitly computed in Proposition~\ref{P:-1}).
\end{prop}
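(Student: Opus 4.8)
The plan is to establish existence of a minimizer by the direct method, then show that any minimizer is even and solves the Euler--Lagrange system \eqref{ststa}, so that by the classification in Proposition~\ref{P:-1} it coincides with the explicitly computed even profile; uniqueness follows because $\E_*$ is strictly convex on the convex set $\cK^2$. First I would check that $\E_*$ is well-defined and nonnegative on $\cK^2$: indeed, for $(u,v)\in\cK^2$ both $\|u\|_2$ and $\|v\|_2$ are finite, the second moments $\cM_2(u,v)$ are finite by the weight $(1+x^2)$, and every term in \eqref{energy} and \eqref{eq:sup3} is nonnegative. Hence $m := \inf_{\cK^2}\E_* \ge 0$ is finite. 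Take a minimizing sequence $(u_n,v_n)\in\cK^2$. The bound on $\E_*(u_n,v_n)$ controls $\|u_n\|_2$, $\|\eta u_n+\eta^{-1}v_n\|_2$ (hence also $\|v_n\|_2$ after a triangle inequality), and $\int (u_n+\Theta v_n)x^2\,dx$; combined with $\|u_n\|_1=\|v_n\|_1=1$ this gives a uniform bound on $u_n,v_n$ in $L_1(\R,(1+x^2)dx)\cap L_2(\R)$. Extracting a subsequence, $u_n\rightharpoonup F$ and $v_n\rightharpoonup G$ weakly in $L_2$; the tightness coming from the uniform second-moment bound upgrades this to strong $L_1$ convergence along a further subsequence and ensures $\|F\|_1=\|G\|_1=1$ and $F,G\ge0$ a.e., i.e. $(F,G)\in\cK^2$. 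Lower semicontinuity of the $L_2$-norms under weak convergence, together with Fatou's lemma for the (nonnegative) second-moment term, yields $\E_*(F,G)\le\liminf\E_*(u_n,v_n)=m$, so $(F,G)$ is a minimizer.

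Next I would prove strict convexity of $\E_*$ on $\cK^2$, which simultaneously gives uniqueness of the minimizer and will later be used to identify it. The map $(u,v)\mapsto\cM_2(u,v)$ is linear, and $(u,v)\mapsto\frac{\eta^2}{2}\|u\|_2^2+\frac{R}{2}\|\eta u+\eta^{-1}v\|_2^2$ is a nonnegative quadratic form; it is in fact strictly convex because its associated bilinear form is positive definite: if $\frac{\eta^2}{2}\|u\|_2^2+\frac{R}{2}\|\eta u+\eta^{-1}v\|_2^2=0$ then $u=0$ and then $v=0$. Therefore $\E_*$ is strictly convex on the convex set $\cK^2$ (note $\cK^2$ is convex since $\cK$ is), and the minimizer is unique.

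To show the minimizer is even and solves \eqref{ststa}: since the functional $\E_*$ is invariant under the reflection $x\mapsto -x$ (both $\|\cdot\|_2$ and $\cM_2$ are), the pair $(\tilde F,\tilde G)(x):=(F(-x),G(-x))$ is also a minimizer, hence equals $(F,G)$ by uniqueness, so $F$ and $G$ are even. For the Euler--Lagrange equations I would use the standard Wasserstein/transport argument already underlying the gradient-flow structure of \eqref{RS}: perturb $(F,G)$ by pushing forward along smooth compactly supported vector fields, i.e. consider $(F_s,G_s) := ((\id+s\xi)_\#F,(\id+s\zeta)_\#G)$ for $\xi,\zeta\in C_c^\infty(\R)$ and small $s$, which stays in $\cK^2$; differentiating $s\mapsto\E_*(F_s,G_s)$ at $s=0$ and using minimality gives, after the usual computation, that $F\,\p_x(\eta^2(1+R)F+RG+x^2/6)=0$ and $G\,\p_x(\eta^2 R_\mu F+R_\mu G+x^2/6)=0$ a.e. (one first obtains these in the sense of distributions against $\p_x\xi,\p_x\zeta$ and then, using that $F,G\in H^1$ — which itself follows from the formulas on each component of the positivity set once one knows $F,G$ are bounded and the positivity sets have finitely many bounded components — concludes the pointwise identities). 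Thus $(F,G)$ is a solution of \eqref{ststa}; being also even, Proposition~\ref{P:-1} identifies it with the explicit even profile constructed there, which in particular shows $F,G\in H^1(\R)$ and completes the proof.

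The main obstacle I anticipate is making the Euler--Lagrange step fully rigorous: one must justify that the minimizer has enough regularity (boundedness, $H^1$, finitely many connected components of the positivity sets) to pass from the weak/variational identity to the pointwise system \eqref{ststa}, rather than merely to a measure-theoretic statement. The cleanest route is probably to first derive the variational inequality, deduce from it that on its positivity set $\eta^2(1+R)F+RG+x^2/6$ is locally constant where both are positive (and the analogous statements where only one is positive), which forces the explicit parabolic formulas \eqref{F1}--\eqref{F3} and hence boundedness and $H^1$ regularity a posteriori; alternatively, one can invoke the existing identification of \eqref{RS} as the $2$-Wasserstein gradient flow of $\E_*$ and note that a minimizer is a steady state. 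Everything else is routine compactness and convexity.
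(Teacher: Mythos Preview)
Your proposal is correct and follows essentially the same strategy as the paper: direct method plus strict convexity for existence and uniqueness, reflection invariance plus uniqueness for evenness, and a transport-type first-variation argument for the Euler--Lagrange system \eqref{ststa}. The paper's proof is terser, delegating the technical steps to \cite{LM12x} and \cite{MMS09}.

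The one structural difference worth noting is the order in which $H^1$-regularity and the Euler--Lagrange identities are obtained. You propose to derive the variational inequality first and read off $H^1$ a posteriori from the explicit piecewise-parabolic formulas \eqref{F1}--\eqref{F3}; you also flag this as the step where circularity threatens. The paper avoids that circularity by establishing $H^1$-regularity of the minimizer \emph{before} deriving \eqref{ststa}, using the technique of \cite{MMS09} (as adapted in \cite[Lemma~2.1]{LM12x}): roughly, one compares the minimizer with a mollified competitor and exploits the quadratic structure of $\E$ to control difference quotients. With $H^1$ in hand, the pushforward first-variation computation you describe goes through cleanly and yields \eqref{ststa} pointwise a.e.\ without further regularity bootstrapping. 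Your a~posteriori route can also be made to work, but it requires extracting the piecewise-parabolic structure from a variational inequality for functions that are, at that stage, only known to be in $L_2$; the paper's ordering is the simpler way to close the argument.
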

%%%%%%%%%%%%%%%%%%%%%%%%%%%%%%%%%%%%%%%%

\begin{proof} Since $\E_{*}$ is a non-negative and strictly convex functional on the convex set $\cK^2$, the existence and 
uniqueness of a unique minimizer $(F,G)\in\cK^2$ of $\E_{*}$ in $\cK^2$ can be established by classical variational arguments, 
following for instance the lines of the proof of \cite[Lemma~2.1]{LM12x}. Moreover, the uniqueness of the minimizer and the 
rotational invariance of the functional $\E_*$ imply that both functions $F$ and $G$ are even. 
The $H^1$-regularity can then be proved as in \cite[Lemma~2.1]{LM12x} with the help of a technique developped in \cite{MMS09}.
We finally argue as in \cite[Lemma~2.2]{LM12x} to establish that $(F,G)$ solves \eqref{ststa} and complete the proof.
\end{proof}

%%%%%%%%%%%%%%%%%%%%%%%%%%%%%%%%%%%%%%%%
%%%%%%%%%%%%%%%%%%%%%%%%%%%%%%%%%%%%%%%%
\section{Asymptotic behavior in self-similar variables}\label{S:3}
%%%%%%%%%%%%%%%%%%%%%%%%%%%%%%%%%%%%%%%%
%%%%%%%%%%%%%%%%%%%%%%%%%%%%%%%%%%%%%%%%

This section is dedicated to the study of the asymptotic behavior of weak solutions to \eqref{RS}, as defined in Theorem~\ref{T:WP}.

%%%%%%%%%%%%%%%%%%%%%%%%%%%%%%%%%%%%%%%%
%%%%%%%%%%%%%%%%%%%%%%%%%%%%%%%%%%%%%%%%
\subsection{Weak solutions}\label{S:31}
%%%%%%%%%%%%%%%%%%%%%%%%%%%%%%%%%%%%%%%%
%%%%%%%%%%%%%%%%%%%%%%%%%%%%%%%%%%%%%%%%

Since  the change of variables induces a one-to-one correspondence  between the set of  weak solutions of \eqref{Pb} and that of the system
\eqref{RS}, we obtain  from \cite{LM12x} the following existence result.

%%%%%%%%%%%%%%%%%%%%%%%%%%%%%%%%%%%%%%%%
\begin{thm}\label{T:WP} Let  $R,$ $R_\mu,$ and $\eta$ be given positive constants. Given $(f_0,g_0)\in\cK^2,$ there exists at least a pair
$(f,g):[0,\infty)\to  \cK^2$ such that
\begin{itemize}
\item[$(i)$] $(f,g)\in L_\infty(0,\infty; \cK^2)$, $ (f,g) \in L_2(0,t;H^1(\R;\R^2))$,
\item[$(ii)$] $(f,g)\in C ([0,\infty);H^{-3}(\R;\R^2))$ with $(f,g)(0)=(f_0,g_0),$
\end{itemize}
and $(f,g)$ is a weak solution of the rescaled system \eqref{RS} in the sense that 
\begin{align}\label{T2a}   
&\int_\R f(t) \xi\, dx-\int_\R f_0 \xi\, dx + \int_0^t\int_\R f \p_x\left( \eta^2(1+R) f+R g+\frac{x^2}{6}\right) \p_x\xi\, dx\, d\sigma=0\, ,\\[1ex]
&\int_\R g(t) \xi\, dx - \int_\R g_0 \xi\, dx+\ \int_0^t \int_\R g\p_x\left( \eta^2R_\mu f+   R_\mu g+\frac{x^2}{6} \right) \p_x\xi\, dx\, d\sigma=0\,,\label{T2b}  
\end{align}
for all $\xi\in C_0^\infty(\R) $ and all $t\ge 0.$
 In addition, $(f,g)$ satisfies the following estimates:
\begin{align*}
(iii) \quad & \mathcal{H}(f(t),g(t)) +\int_s^t \left( \eta^2\|\p_x f\|^2_2 + R \eta^{-2}\|\eta^2\p_x f+\p_x g\|_2^2 \right)  \, d\sigma\\
&\hspace{2.25cm}\leq  \mathcal{H}(f(s),g(s))+\frac{1+\Theta}{3}(t-s),\\[1ex]
(iv)\quad &\E_{*}(f(t), g(t))
+ \frac{1}{2}\int_{s}^t\int_\R f\left( \eta^2(1+R)\p_x f+R\p_x g+\frac{x }{3}\right)^2 \, dx\,d\sigma\\
&\hspace{2.3cm}+\frac{\Theta}{2} \int_{s}^t \int_\R g\left( \eta^2 R_\mu \p_x f+ R_\mu\p_x g + \frac{x}{3}\right)^2   dx\,d\sigma \leq \E_{*}(f(s),g(s)),\\
(v)\quad &\cM_{2}(f(t), g(t))+\int_{s}^t\cM_{2}(f(\sigma),g(\sigma))\,d\sigma =\cM_{2}(f(s),g(s)) + 2 \int_{s}^t \E_{*}(f(\sigma),g(\sigma)) \,d\sigma
\end{align*}
for all $s\in [0,\infty)\setminus \mathcal{N}$ and $t\in [0,\infty)$ with $s\leq t$, $\mathcal{N}$ being a measurable subset of $(0,\infty)$ with Lebesgue measure zero.
The functionals $\E_*$, $\cM_2$, and $\mathcal{H}$ are defined by \eqref{eq:sup3}, \eqref{eq:sup1c}, and
\begin{equation}
\mathcal{H}(u,v) := \int_\R \left( u \ln{u} + \Theta v \ln{v} \right)\, dx\, ,
\label{eq:sup1b}
\end{equation}
respectively.

Furthermore, if $f_0$ and $g_0$ are even, then $f(t)$ and $g(t)$ are even functions for all $ t\in(0,\infty).$
\end{thm}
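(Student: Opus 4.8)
The plan is to obtain Theorem~\ref{T:WP} as a direct consequence, via the self-similar change of variables \eqref{eq:Cha2}, of the existence result for the system \eqref{Pb} recalled in Section~\ref{S:main1}. That result provides, for every $(\phi_0,\psi_0)\in\cK^2$, a global weak solution $(\phi,\psi):[0,\infty)\to\cK^2$ of \eqref{Pb} which belongs to $L_\infty(0,\infty;\cK^2)\cap L_2(0,t;H^1(\R;\R^2))$ for each $t>0$ and to $C([0,\infty);H^{-3}(\R;\R^2))$ with $(\phi,\psi)(0)=(\phi_0,\psi_0)$, which solves the weak formulation of \eqref{Pb}, and which satisfies the energy inequality for the functional $\E$ of \eqref{energy}, an entropy estimate for $\cH$, and a second-moment control, each of these holding for all initial times outside a Lebesgue-null set. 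Given $(f_0,g_0)\in\cK^2$ I would apply this with $(\phi_0,\psi_0)=(f_0,g_0)$ and set $(f,g)(t,x):=e^{t/3}(\phi,\psi)(e^t-1,xe^{t/3})$ as in \eqref{eq:Cha2}.

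First I would verify that \eqref{eq:Cha2} is a bijection between weak solutions of \eqref{Pb} on $[0,\infty)$ and weak solutions of \eqref{RS} on $[0,\infty)$. Because the spatial Jacobian of $x\mapsto xe^{t/3}$ is cancelled by the prefactor $e^{t/3}$, the map preserves the $L_1$-norm, so $(f(t),g(t))\in\cK^2$ for all $t\ge0$; the attendant scaling relations also show that it carries the regularity classes in $(i)$-$(ii)$ for \eqref{Pb} onto those for \eqref{RS}. Substituting $(f,g)$ into \eqref{T2a}-\eqref{T2b} and changing the time and space integration variables back to those of \eqref{Pb}, the chain rule reproduces exactly the weak formulation of \eqref{Pb} for $(\phi,\psi)$, up to the extra flux contributions $f\,\p_x(x^2/6)$ and $g\,\p_x(x^2/6)$, which are precisely the terms distinguishing \eqref{RS} from \eqref{Pb}. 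Hence $(f,g)$ is a weak solution of \eqref{RS} enjoying $(i)$ and $(ii)$.

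Next I would transfer the estimates $(iii)$-$(v)$ using the scaling relations induced by \eqref{eq:Cha2}: the quadratic terms entering $\E$ transform by a factor $e^{t/3}$, e.g. $\|f(t)\|_2^2=e^{t/3}\|\phi(e^t-1)\|_2^2$, one has $\cM_2(f(t),g(t))=e^{-2t/3}\cM_2(\phi(e^t-1),\psi(e^t-1))$, and $\cH$ picks up in addition the affine term $\tfrac13(1+\Theta)t$ coming from the $\ln e^{t/3}$ produced by the scaling in the entropy integrand. The dissipation integrals transform in the same way, the factors $e^{\pm\sigma}$ arising from $d(e^\sigma-1)=e^\sigma\,d\sigma$ and from the scaling of $\|\p_x\phi\|_2$ cancelling exactly. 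Inserting these relations into the energy inequality for $\E$ yields $(iv)$, into the entropy estimate for $\cH$ yields $(iii)$ with the constant $\tfrac{1+\Theta}{3}(t-s)$, and into the second-moment control yields the identity $(v)$, which is just the differential identity $\tfrac{d}{dt}\cM_2(f,g)=2\E_*(f,g)-\cM_2(f,g)$ obtained by a standard moment computation on \eqref{RS}; the exceptional set $\mathcal{N}$ is the image under $\sigma\mapsto\log(1+\sigma)$ of the null set provided by the result for \eqref{Pb}. Finally, if $f_0$ and $g_0$ are even then so are $\phi_0$ and $\psi_0$, and the solution of \eqref{Pb} constructed in \cite{LM12x} through a minimizing-movement scheme for the $2$-Wasserstein metric is even at all times: at each step the minimizer over $\cK^2$ is unique by strict convexity of $\E$, and since the metric, the energy, and the constraints are all invariant under the reflection $x\mapsto-x$, that minimizer is even; an induction followed by the passage to the limit in $C([0,\infty);H^{-3})$ gives evenness of $(\phi(t),\psi(t))$, hence of $(f(t),g(t))$, for every $t$.

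The main obstacle is technical rather than conceptual and concerns the third step: the moment identity $(v)$ involves the weight $x^2$, which is not an admissible test function in \eqref{T2a}-\eqref{T2b}, so it must be established by testing against compactly supported cutoffs $x^2\chi_R$ and letting $R\to\infty$, the remainder terms being controlled by the bounds in $L_1(\R,(1+x^2)\,dx)\cap L_2(\R)$ and $L_2(0,t;H^1(\R))$; one must also check that the energy and entropy inequalities for \eqref{Pb}, valid only for almost every initial time, persist under the change of variables with the precise constants stated in $(iii)$-$(iv)$. The change-of-variables bookkeeping in the first two steps and the symmetry argument in the last one are, by comparison, routine once the exact formulation of the result of \cite{LM12x} for \eqref{Pb} is at hand.
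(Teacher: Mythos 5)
Your proposal follows exactly the paper's route: invoke the existence result of \cite{LM12x} for \eqref{Pb}, apply the self-similar change of variables \eqref{eq:Cha2} to transfer the weak solution and the estimates to \eqref{RS}, derive the moment identity $(v)$ by approximating $x\mapsto x^2$ with cutoffs in \eqref{T2a}–\eqref{T2b}, and obtain evenness by pushing forward the reflection-invariance of the scheme through the $H^{-3}$-continuity. The correspondence with the paper's (very brief) argument is complete; you merely unpack a few of the steps the paper dismisses as ``readily following,'' e.g.\ why the minimizing-movement scheme preserves evenness.
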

%%%%%%%%%%%%%%%%%%%%%%%%%%%%%%%%%%%%%%%%

A classical consequence of Theorem~\ref{T:WP} is that $(f,g)$ solves \eqref{RS} in the sense of distributions, that is, 
\begin{align}
 &\p_tf=\p_x\left(f\p_x\left( \eta^2(1+R) f+R g+\frac{x^2}{6}\right)\right)=:\p_xJ_f\label{Beth1}\ ,\\
 &\p_tg=\p_x\left(g\p_x\left( \eta^2R_\mu f+   R_\mu g+\frac{x^2}{6} \right)\right)=:\p_xJ_g\label{Beth2}
\end{align}
in $\mathcal{D}'((0,\infty)\times\R).$

%%%%%%%%%%%%%%%%%%%%%%%%%%%%%%%%%%%%%%%%
\begin{rem}\label{Rem1}
It is easy to see  that the identity~$(v)$ is valid for all $0\le s\le t.$
\end{rem}
%%%%%%%%%%%%%%%%%%%%%%%%%%%%%%%%%%%%%%%%

\begin{proof}[Proof of Theorem~\ref{T:WP}]
Let $(\phi,\psi) $ denote the weak solution of \eqref{Pb} constructed in \cite[Theorem~1.1]{LM12x} by
using the gradient flow structure of \eqref{Pb} with respect to the $2$-Wasserstein distance in the space of probability measures with finite second moment. 
The function $(f,g)$ defined by the transformation \eqref{eq:Cha2} is then a weak solution of \eqref{RS} and all the properties stated in Theorem~\ref{T:WP} readily 
follow from 
those enjoyed by $(\phi,\psi)$ which are established in \cite{LM12x}.
Moreover, the time evolution~$(v)$ of the second moment is derived from \eqref{T2a}, \eqref{T2b}, and the estimate~$(iv)$ 
by choosing a suitable approximating sequence $(\xi_n)_n\subset C_0^\infty(\R)$ for the function $x\mapsto x^2.$

Finally, if $f_0 $ and $g_0$ are even, then the  solution $(\phi,\psi)$ of \eqref{Pb}  constructed in \cite{LM12x} has the property that both $\phi(t)$ and
$\psi(t)$ are even for almost all $t\geq 0,$ and, by \eqref{eq:Cha2} and the continuity property established in Theorem~\ref{T:WP}~$(ii)$, $f(t)$ and $g(t)$ also enjoy this property for all $t>0$. 
\end{proof}

%%%%%%%%%%%%%%%%%%%%%%%%%%%%%%%%%%%%%%%%
%%%%%%%%%%%%%%%%%%%%%%%%%%%%%%%%%%%%%%%%
\subsection{Convergence}\label{S:32}
%%%%%%%%%%%%%%%%%%%%%%%%%%%%%%%%%%%%%%%%
%%%%%%%%%%%%%%%%%%%%%%%%%%%%%%%%%%%%%%%%

In order to prove Theorem~\ref{T:MT3} we exploit the estimates recalled for weak solutions $(f,g)$ of \eqref{RS} in Theorem~\ref{T:WP} to identify the cluster points of $(f(t),g(t))_{t\ge 0}$ as $t\to\infty$ for the weak topology of $L_2(\R,\R^2)$. 
More precisely, given $(f_0,g_0)\in\mathcal{K}^2$ and a 
weak solution $(F,G)$ to \eqref{RS} as in Theorem~\ref{T:WP}, we define the $\omega$-limit set $\omega(f_0,g_0)$ for the weak topology of $L_2(\R,\R^2)$ as follows:

\begin{equation}
\omega(f_0,g_0) := \left\{(F_\infty,G_\infty) \,:\,\begin{minipage}{9cm} 
$(F_\infty,G_\infty) \in \mathcal{K}^2$ and there exists a sequence $(t_n)_{n\ge 1}$ of positive 
real numbers satisfying $t_n\to \infty$ and $(f(t_n),g(t_n))\rightharpoonup (F_\infty,G_\infty)$ in $L_2(\R,\R^2)$ as $n\to \infty$
\end{minipage}\right\}. \label{ol}
\end{equation}

%%%%%%%%%%%%%%%%%%%%%%%%%%%%%%%%%%%%%%%%
\begin{prop}\label{prpl1}
The $\omega$-limit set $\omega(f_0,g_0)$ is non-empty and bounded in $H^1(\R,\R^2)$ and in $L_1(\R, (1+x^2) dx,\R^2)\big)$ and is connected in $H^{-3}(\R,\R^2)$. 
In addition, if $(F_\infty,G_\infty)\in \omega(f_0,g_0)$, then $(F_\infty,G_\infty)$ solves \eqref{ststa}, i.e. is a stationary solution of \eqref{RS}.
\end{prop}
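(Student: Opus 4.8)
The plan is to extract compactness from the a priori bounds in Theorem~\ref{T:WP} and then to pass to the limit in the weak formulation \eqref{T2a}--\eqref{T2b} along a diverging sequence of times. I will split the argument into three parts: (1) non-emptiness and the uniform bounds on $\omega(f_0,g_0)$ in $H^1$ and in $L_1(\R,(1+x^2)\,dx)$; (2) the identification of every element of $\omega(f_0,g_0)$ as a solution of \eqref{ststa}; (3) connectedness in $H^{-3}$.

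\textbf{Step 1: boundedness and non-emptiness.} Fix a weak solution $(f,g)$ as in Theorem~\ref{T:WP}. From Theorem~\ref{T:WP}~$(iv)$, the map $t\mapsto \E_*(f(t),g(t))$ is non-increasing and non-negative, hence converges to some limit $\E_\infty\ge 0$ as $t\to\infty$; in particular $\E_*(f(t),g(t))$ is bounded uniformly in $t$. Since $\E_*=\E+\tfrac16\cM_2$ with both summands non-negative, and since $\E(u,v)\ge \tfrac{\eta^2}{2}\|u\|_2^2$ controls $\|f(t)\|_2$ and (via the second term of \eqref{energy}) also $\|\eta f(t)+\eta^{-1}g(t)\|_2$, hence $\|g(t)\|_2$, we get a uniform $L_2$-bound on $(f(t),g(t))$; simultaneously $\cM_2(f(t),g(t))\le 6\E_*(f(t),g(t))$ is uniformly bounded, which together with $\|f(t)\|_1=\|g(t)\|_1=1$ gives a uniform bound on $(f(t),g(t))$ in $L_1(\R,(1+x^2)\,dx,\R^2)$. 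For the $H^1$-bound one integrates estimate~$(iv)$ over $(t,t+1)$ and uses the monotonicity of $\E_*$: this forces
\begin{equation*}
\int_t^{t+1}\!\!\int_\R f\Big(\eta^2(1+R)\p_xf+R\p_xg+\tfrac{x}{3}\Big)^2\,dx\,d\sigma + \Theta\int_t^{t+1}\!\!\int_\R g\Big(\eta^2R_\mu\p_xf+R_\mu\p_xg+\tfrac{x}{3}\Big)^2\,dx\,d\sigma \xrightarrow[t\to\infty]{} 0 .
\end{equation*}
Combining this with estimate~$(iii)$ integrated on $(t,t+1)$ — which bounds $\int_t^{t+1}(\eta^2\|\p_xf\|_2^2+R\eta^{-2}\|\eta^2\p_xf+\p_xg\|_2^2)\,d\sigma$ in terms of $\mathcal H(f(t),g(t))-\mathcal H(f(t+1),g(t+1))+\tfrac{1+\Theta}{3}$ and the fact that $\mathcal H$ is bounded below on $\cK^2$ (by the $L_2$- and second-moment bounds already obtained, via the standard estimate $u\ln u\ge -C(1+x^2)u - u^{\theta}$) and bounded above (again by the $L_2$-bound) — yields $\int_t^{t+1}\|(f(\sigma),g(\sigma))\|_{H^1}^2\,d\sigma\le C$ uniformly in $t$. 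Hence along a sequence $t_n\to\infty$ one can first pick $s_n\in(t_n,t_n+1)$ with $\|(f(s_n),g(s_n))\|_{H^1}\le C$, then extract a weakly convergent subsequence in $H^1(\R,\R^2)$; weak $H^1$ limits automatically lie in $\cK^2$ (non-negativity and the $L_1$-normalization pass to the limit using the uniform $L_1(\R,(1+x^2)\,dx)$-bound to prevent mass escaping to infinity, exactly as in the PME case). This gives $\omega(f_0,g_0)\ne\emptyset$ and the claimed boundedness, since weak limits inherit the uniform bounds.

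\textbf{Step 2: limit points are stationary.} Let $(F_\infty,G_\infty)\in\omega(f_0,g_0)$, with $(f(t_n),g(t_n))\rightharpoonup(F_\infty,G_\infty)$ in $L_2$. Using the $H^1\times L^2$-bounds of Step~1 and the decay of the entropy dissipation, one shows first that the time-shifted solutions $(f(\cdot+t_n),g(\cdot+t_n))$ converge, along a subsequence, in $C([0,T];H^{-3})$ and weakly in $L_2(0,T;H^1)$ to a limit $(\bar f,\bar g)$ which is again a weak solution of \eqref{RS} and satisfies $(\bar f,\bar g)(0)=(F_\infty,G_\infty)$; the strong $H^{-3}$-convergence together with the weak $H^1$-convergence upgrades to strong $L^2_{loc}$-convergence (interpolation), which is enough to pass to the limit in the nonlinear flux terms of \eqref{T2a}--\eqref{T2b}. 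Since the entropy dissipation integral $\int_{t_n}^{t_n+T}2\cI(f,g)\,d\sigma\to 0$, the limiting solution has $\cI(\bar f,\bar g)\equiv 0$ for a.e. time, i.e.
\begin{equation*}
\bar f\,\p_x\Big(\eta^2(1+R)\bar f+R\bar g+\tfrac{x^2}{6}\Big)=0,\qquad \bar g\,\p_x\Big(\eta^2R_\mu\bar f+R_\mu\bar g+\tfrac{x^2}{6}\Big)=0
\end{equation*}
a.e., which is precisely \eqref{ststa}; in particular $(\bar f,\bar g)$ is time-independent and $(F_\infty,G_\infty)=(\bar f,\bar g)$ solves \eqref{ststa}. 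Finally, $(F_\infty,G_\infty)$ is a weak $H^1$-limit and, as in Proposition~\ref{P1}, the $H^1$-regularity stated in the claim follows because any solution of \eqref{ststa} in $\cK^2$ belongs to $H^1$; alternatively one invokes Proposition~\ref{P1} directly. Uniqueness of the limit is \emph{not} asserted here — only that each cluster point is a stationary solution.

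\textbf{Step 3: connectedness.} This is the standard argument for $\omega$-limit sets of a dynamical system with a continuous flow on a complete metric space. The solution map $t\mapsto (f(t),g(t))$ is continuous from $[0,\infty)$ into $H^{-3}(\R,\R^2)$ by Theorem~\ref{T:WP}~$(ii)$, and the trajectory $\{(f(t),g(t)):t\ge t_0\}$ is relatively compact in $H^{-3}$ (from the uniform $L_2$-bound of Step~1 and the compact embedding $L_2\hookrightarrow H^{-3}$ on bounded sets, after controlling the tails via the uniform second-moment bound). A classical topological fact then says that the $\omega$-limit set of a relatively compact, continuous trajectory in a metric space is connected: if it were the disjoint union of two non-empty relatively closed sets $A_1,A_2$ at positive distance $\delta$, the trajectory would have to jump across the $\delta$-annulus between them infinitely often, producing a further cluster point in the annulus and hence in neither $A_i$, a contradiction. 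Here the relevant metric is that of $H^{-3}$, and the weak-$L_2$ $\omega$-limit set of \eqref{ol} coincides with the $H^{-3}$ $\omega$-limit set because on the bounded set furnished by Step~1 the weak $L_2$-topology and the $H^{-3}$-topology induce the same convergence. This yields connectedness of $\omega(f_0,g_0)$ in $H^{-3}(\R,\R^2)$.

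\textbf{Main obstacle.} The delicate point is Step~2: passing to the limit in the nonlinear, doubly degenerate flux terms $f\p_x(\eta^2(1+R)f+Rg)$ and $g\p_x(\eta^2R_\mu f+R_\mu g)$ without a comparison principle. Weak $L^2(0,T;H^1)$-convergence alone does not control the products $f\p_xf$, $f\p_xg$, etc.; one genuinely needs the strong $L^2_{loc}$ space-time compactness obtained by combining the $H^1$-bound with the $C([0,T];H^{-3})$-equicontinuity (an Aubin–Lions–Simon type argument), and one must be careful that the uniform second-moment bound of Step~1 prevents loss of mass at spatial infinity when identifying the limit. Once strong local convergence of $(f,g)$ and weak convergence of $(\p_xf,\p_xg)$ are in hand, the product terms converge and the vanishing of the entropy dissipation forces the limit to satisfy \eqref{ststa}.
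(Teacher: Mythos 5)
Your proof follows essentially the same strategy as the paper's: deduce the $L_2$, second-moment, and time-averaged $H^1$ bounds from Theorem~\ref{T:WP}~$(iii)$--$(v)$ together with the classical $h\ln h$ estimates, then apply an Aubin--Lions--Simon compactness argument to the time-shifted solutions to get strong $L_2$-convergence in space-time, and finally use the vanishing of the entropy dissipation integral along with weak lower semicontinuity to identify the limit as a solution of~\eqref{ststa}, with connectedness of the $\omega$-limit set obtained from the topological argument for continuous relatively compact trajectories. The only differences are cosmetic: the paper routes the compactness through $\bigl(W^1_4(\R)\bigr)'$ and cites Simon's lemma directly, whereas you use $H^{-3}$ and an interpolation inequality, and you introduce an auxiliary time-dependent limit $(\bar f,\bar g)$ before observing it is stationary, while the paper shows directly that the shifted sequence converges to the constant-in-time pair $(F_\infty,G_\infty)$.
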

%%%%%%%%%%%%%%%%%%%%%%%%%%%%%%%%%%%%%%%%

\begin{proof}
We first note that Theorem~\ref{T:WP}~ $(iv)$ and~$(v)$ guarantee that
\begin{align}
\mathcal{E}_*(f(t),g(t)) & \le \mathcal{E}_*(f_0,g_0)\ , \quad t\ge 0\ , \label{pl1} \\
\int_0^\infty \mathcal{I}(f(s),g(s)) ds & \le \mathcal{E}_*(f_0,g_0)\ , \label{pl2} \\
\mathcal{M}_2(f(t),g(t)) & \le \mathcal{M}_2(f_0,g_0) e^{-t} + 2 \mathcal{E}_*(f_0,g_0) (1-e^{-t})\ , \quad t\ge 0\ , \label{pl3}
\end{align}
where the entropy dissipation is defined in \eqref{EP}. 
We first deduce from \eqref{pl1} and \eqref{pl3} that the trajectory $\{(F(t),G(t))\ :\ t\ge 0\}$ is bounded in $\mathcal{K}^2$. 
The reflexivity of $L_2(\R,\R^2)$ and the Dunford-Pettis theorem then ensure that $\omega(f_0,g_0)$ is non-empty and bounded 
in $\big(L_1(\R, (1+x^2) dx,\R^2\big) \cap L_2(\R,\R^2)$. 
It further follows from \eqref{pl1}, \eqref{pl3}, and the classical bounds
\begin{align*}
\int_\R h(x)\ |\ln{h(x)}|\, dx & \le C + \int_\R h(x)\ \left( 1 + x^2 \right)\ dx + \|h\|_2^2\ ,  \\
\int_\R h(x)\ \ln{h(x)} & \ge - C - \int_\R h(x)\ \left( 1 + x^2 \right)\ dx\ , 
\end{align*}
see \cite[Lemma~A.1]{LM12x} for instance, that
$$
\left| \mathcal{H}(f(t),g(t)) \right| \le C\ , \quad  t\ge 0\ ,
$$
the constant $C$ being independent of $t$. Together with Theorem~\ref{T:WP}~$(iii)$, this gives
\begin{equation}
\int_{t-1}^{t+1} \left( \eta^2 \|\partial_x f(s)\|_2^2 + R \eta^{-2} \|(\eta^2 \partial_x f + \partial_x g)(s)\|_2^2 \right) ds \le C\ , \quad t\ge 1\ . \label{pl4}
\end{equation}

Consider now $(F_\infty,G_\infty)\in\omega(f_0,g_0)$ and a sequence $(t_n)_{n\ge 1}$ of positive real numbers such that 
\begin{equation}
t_n\to \infty \;\;\text{ and }\;\; (f(t_n),g(t_n)) \rightharpoonup (F_\infty,G_\infty) \;\;\text{ in }\;\; L_2(\R,\R^2)\ . \label{pl4b}
\end{equation}
Owing to \eqref{pl4b}, we may assume without loss of generality that $t_n>1$ for all $n\ge 1$ and we define functions $(f_n,g_n): (-1,1)\times\R \to \R^2$ by the relation
\begin{equation}
(f_n(s,x),g_n(s,x)) := (f(s+t_n,x),g(s+t_n,x))\ , \quad (s,x)\in (-1,1)\times\R\ . \label{pl5}
\end{equation} 
We infer from \eqref{pl1}-\eqref{pl4} that
\begin{align}
 \big( f_n,g_n \big)_n&\quad \text{is bounded in }\;\; L_\infty((-1,1);\cK^2)\ ,\label{pl6}\\
 \big( f_n,g_n \big)_n&\quad \text{is bounded in }\;\; L_2((-1,1);H^1(\R,\R^2))\ ,\label{pl7}
 \end{align}
 and
\begin{equation}
\lim_{n\to\infty} \int_{-1}^1 \cI(f_n(s),g_n(s))\ ds = \lim_{n\to\infty} \int_{-1+t_n}^{1+t_n} \cI(f(t),g(t))\ dt = 0\ . \label{pl8}
\end{equation}
Moreover, it follows from \eqref{Beth1} that $\p_t f_n=\p_x J_{f_n} $ in $\mathcal{D}'((-1,1)\times\R),$ whereby in virtue of \eqref{pl6}, \eqref{pl8},
and H\"older's inequality we have
\begin{align*}
 \|J_{f_n}\|_{L_2((-1,1); L_{4/3}(\R))}^2\leq
 &\int_{-1}^1 \left\| \sqrt{f_n(s)} \right\|_{4}^2\left\| \sqrt{f_n(s)} \left( \eta^2 (1+R) \partial_x f_n + R \partial_x g_n + \frac{x}{3} \right)(s) \right\|_2^2 \, ds\\
 \leq &C\int_{-1}^1 \cI(f_n(s),g_n(s))\ ds\ \to \ 0
\end{align*}
as $n\to\infty.$
Consequently $\p_t f_n=\p_x J_{f_n}\in L_2((-1,1); \big(W_4^1(\R)\big)')$ for all $n\ge 1$ and
\begin{equation}
\partial_t f_n \ \to \ 0 \;\;\text{ in }\;\; L_2((-1,1); \big(W_4^1(\R)\big)')\ . \label{pl9}
\end{equation}
Proceeding in a similar way, we deduce from \eqref{Beth2}, \eqref{pl6}, \eqref{pl8}, and H\"older's inequality that
\begin{equation}
\partial_t g_n \ \to \ 0 \;\;\text{ in }\;\; L_2((-1,1); \big(W_4^1(\R)\big)')\ . \label{pl10}
\end{equation}

We now infer from \eqref{pl4b}, \eqref{pl9}, and \eqref{pl10} that
\begin{equation}
(f_n,g_n) \ \to \ (F_\infty,G_\infty) \;\;\text{ in }\;\; C([-1,1]; \big(W_4^1(\R)\big)')\ . \label{pl11}
\end{equation}
Indeed, for $\xi\in W_4^1(\R)$,
\begin{align*}
\left| \int_\R (f_n(s) - F_\infty) \xi\ dx \right| = & \left| \int_0^s \int_\R \partial_t f_n(s) \xi\ dx \ ds \right| \\
\le & \int_0^s \|\partial_t f_n(s)\|_{(W_4^1(\R))'} \|\xi\|_{W_4^1}\ ds \\
\le & \|\xi\|_{W_4^1} \int_{-1}^1 \|\partial_t f_n(s)\|_{(W_4^1(\R))'} \ ds\ ,
\end{align*}
hence the claim. 
Furthermore, invoking \cite[Lemma 3.2]{LM12x}, the embedding $H^1(\R)\cap L_1(\R,(1+x^2)\, dx)$ in $L_2(\R)$ is compact and moreover the
embedding of $L_2(\R)$ in $\big(W_4^1(\R)\big)'$ is continuous. 
Thanks to \eqref{pl6}, \eqref{pl7}, \eqref{pl9}, and \eqref{pl10}, we are in a position to apply \cite[Corollary~4]{Si87} and use \eqref{pl11}
to conclude that there is a subsequence of $\big((f_n,g_n)\big)_n$  (not relabeled) such that
\begin{align}
 & (f_{n},g_{n}) \ \to \ (F_\infty, G_\infty) \;\;\text{ in }\;\; L_2((-1,1)\times\R,\R^2)\ , \label{pl12}\\
& (\p_x f_{n},\p_x g_{n}) \rightharpoonup (\p_x F_\infty,\p_x G_\infty) \;\;\text{ in}\;\; L_2((-1,1)\times \R,\R^2)\ .\label{pl13}
\end{align}
Consequently, owing to \eqref{pl4} and \eqref{pl13}, $(F_\infty,G_\infty)$ lies in a bounded subset of $H^1(\R,\R^2)$, 
which proves the boundedness of $\omega(f_0,g_0)$ in $H^1(\R,\R^2)$. Additionally, we deduce from \eqref{pl12} that there 
is (at least) a sequence $(s_n)_n$ such that $s_n\in (-1+t_n,1+t_n)\setminus\mathcal{N}$ for all $n\ge 1$ and
\begin{equation}
(f(s_n),g(s_n)) \ \to \ (F_\infty,G_\infty) \;\;\text{ in }\;\; L_2(\R,\R^2)\ . \label{pl13b}
\end{equation}
Finally, it follows from \eqref{pl6}, \eqref{pl12}, and \eqref{pl13} that
$$
\sqrt{f_n} \partial_x \left( \eta^2 (1+R) f_n + R g_n + \frac{x^2}{6} \right) 
\rightharpoonup \sqrt{F_\infty} \partial_x \left( \eta^2 (1+R) F_\infty + R G_\infty + \frac{x^2}{6} \right)
$$
in $L_1((-1,1)\times\R)$, while \eqref{pl8} guarantees that it converges strongly to zero in $L_2((-1,1)\times\R)$. Therefore, 
$$
\sqrt{F_\infty} \partial_x \left( \eta^2 (1+R) F_\infty + R G_\infty + \frac{x^2}{6} \right) = 0 \;\;\text{ a.e. in }\;\; \R\ .
$$
A similar argument ensures that
$$
\sqrt{G_\infty} \partial_x \left( R F_\infty + R \eta^{-2} G_\infty + \Theta \frac{x^2}{6} \right)  = 0 \;\;\text{ a.e. in }\;\; \R\ , 
$$
so that $(F_\infty,G_\infty)$ solves \eqref{ststa}.

Finally the fact that $\omega(f_0,g_0)$ is connected in $H^{-3}(\R)$ is a consequence of the time continuity of $f$ and $g$  in $H^{-3}(\R)$ and the 
compactness of $\omega(f_0,g_0)$ in $L_2(\R)$.
\end{proof}

%%%%%%%%%%%%%%%%%%%%%%%%%%%%%%%%%%%%%%%%
\begin{lemma}\label{lepl2} 
There exists $L> 0$ such that, for all $(F_\infty,G_\infty) \in \omega(f_0,g_0)$,
\begin{equation}
\mathcal{E}_*(F_\infty,G_\infty) = \frac{1}{2} \mathcal{M}_2(F_\infty,G_\infty) = L\ . \label{pl14}
\end{equation}
\end{lemma}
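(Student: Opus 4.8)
The plan is to show that $\mathcal{E}_*$ is constant on $\omega(f_0,g_0)$ by exploiting the monotonicity of $t\mapsto \mathcal{E}_*(f(t),g(t))$ together with the characterization of $\omega$-limit points as stationary solutions, and then to derive the second equality from the identity in Theorem~\ref{T:WP}~$(v)$.

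First I would establish that $t\mapsto \mathcal{E}_*(f(t),g(t))$ converges to a limit $L\ge 0$ as $t\to\infty$. Indeed, by Theorem~\ref{T:WP}~$(iv)$ this map is non-increasing (a.e., and hence, after accounting for the continuity of $(f,g)$ in $H^{-3}$ together with the lower semicontinuity of $\mathcal{E}_*$, it is non-increasing in the usual sense on a suitable modification) and bounded below by $0$, so it has a limit $L := \lim_{t\to\infty}\mathcal{E}_*(f(t),g(t)) \ge 0$. Now fix $(F_\infty,G_\infty)\in\omega(f_0,g_0)$. From the proof of Proposition~\ref{prpl1}, in particular \eqref{pl13b}, there is a sequence $(s_n)_n$ with $s_n\to\infty$ and $(f(s_n),g(s_n))\to (F_\infty,G_\infty)$ strongly in $L_2(\R,\R^2)$; moreover, by \eqref{pl12}--\eqref{pl13} and \eqref{pl3}, one also has $(f(s_n),g(s_n))\to(F_\infty,G_\infty)$ in $L_1(\R,(1+x^2)\,dx,\R^2)$ along that sequence (strong $L_2$ convergence plus uniform second-moment bounds give convergence of the weighted $L_1$ norms, hence strong convergence there). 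Since $\mathcal{E}_*$ is continuous with respect to the $L_2 \cap L_1(\R,(1+x^2)dx)$ topology on bounded subsets of $\mathcal{K}^2$, we get $\mathcal{E}_*(F_\infty,G_\infty) = \lim_n \mathcal{E}_*(f(s_n),g(s_n)) = L$. This proves the first equality in \eqref{pl14} with an $L$ that is independent of the chosen point of $\omega(f_0,g_0)$.

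Next I would prove the second equality $\mathcal{E}_*(F_\infty,G_\infty) = \tfrac12\mathcal{M}_2(F_\infty,G_\infty)$. Here the key point is that $(F_\infty,G_\infty)$ is a stationary solution of \eqref{RS} by Proposition~\ref{prpl1}, so the entropy dissipation $\mathcal{I}$ evaluated along the constant-in-time trajectory $(F_\infty,G_\infty)$ must vanish; equivalently, from the explicit formula \eqref{EP} and the stationarity equations \eqref{ststa} one reads off $\mathcal{I}(F_\infty,G_\infty)=0$. Then applying the second-moment identity of Theorem~\ref{T:WP}~$(v)$ (valid for all $0\le s\le t$ by Remark~\ref{Rem1}) to the stationary solution $(F_\infty,G_\infty)$, with $f(\sigma)\equiv F_\infty$, $g(\sigma)\equiv G_\infty$, gives
\begin{equation*}
\mathcal{M}_2(F_\infty,G_\infty) + (t-s)\mathcal{M}_2(F_\infty,G_\infty) = \mathcal{M}_2(F_\infty,G_\infty) + 2(t-s)\mathcal{E}_*(F_\infty,G_\infty)\ ,
\end{equation*}
and dividing by $t-s>0$ yields $\mathcal{M}_2(F_\infty,G_\infty) = 2\mathcal{E}_*(F_\infty,G_\infty)$, as claimed. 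Alternatively, one can differentiate the definition \eqref{eq:sup3}--\eqref{eq:sup1c} directly on the stationary profile using the ODEs \eqref{F1}--\eqref{F3} satisfied piecewise by $(F_\infty,G_\infty)$ and an integration by parts, but invoking Theorem~\ref{T:WP}~$(v)$ is cleaner.

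The main obstacle is the justification of the continuity of $\mathcal{E}_*$ along the approximating sequence $(f(s_n),g(s_n))$: weak $L_2$ convergence alone is not enough, since $\mathcal{E}_*$ contains squared $L_2$-norms and the second moment, both of which are only weakly lower semicontinuous in general. The resolution, as indicated above, is to use the \emph{strong} $L_2$ convergence already extracted in the proof of Proposition~\ref{prpl1} (equations \eqref{pl12} and \eqref{pl13b}), upgrade the second-moment term using the uniform tightness provided by \eqref{pl3}, and thereby obtain genuine convergence $\mathcal{E}_*(f(s_n),g(s_n))\to\mathcal{E}_*(F_\infty,G_\infty)$. One should also be slightly careful that the pointwise (not merely a.e.) monotonicity of $t\mapsto\mathcal{E}_*(f(t),g(t))$ is used only to guarantee that the full limit $L$ exists and coincides with the limit along $(s_n)$; since $s_n\notin\mathcal{N}$ can be arranged and the exceptional set $\mathcal{N}$ has measure zero, no difficulty arises.
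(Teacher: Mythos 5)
The first equality in \eqref{pl14} is where your argument breaks down. You assert that strong $L_2$-convergence of $(f(s_n),g(s_n))$ to $(F_\infty,G_\infty)$, combined with the uniform bound on second moments from \eqref{pl3}, upgrades to convergence in $L_1(\R,(1+x^2)\,dx,\R^2)$, and hence that $\mathcal{E}_*(f(s_n),g(s_n))\to\mathcal{E}_*(F_\infty,G_\infty)$. This implication is false: a uniform \emph{bound} on $\int x^2 f(s_n)\,dx$ is not the same as \emph{tightness} of the measures $x^2 f(s_n)\,dx$, and without tightness you can have $f(s_n)\to F_\infty$ strongly in $L_2$ while $\int x^2 f(s_n)\,dx$ fails to converge to $\int x^2 F_\infty\,dx$ (mass of order $n^{-2}$ escaping to $|x|\sim n$ has vanishing $L_2$-norm but second moment of order one). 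Nothing in \eqref{pl1}--\eqref{pl13} or the definition of $\omega(f_0,g_0)$ provides the required uniform integrability of the second moment along the trajectory; in fact, establishing strong convergence in $L_1(\R,(1+x^2)\,dx)$ is part of what Theorem~\ref{T:MT3} proves, and it is deduced there \emph{after} and \emph{from} Lemma~\ref{lepl2}, so invoking it here would be circular.

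The paper sidesteps the issue by treating $\mathcal{E}$ and $\cM_2$ separately. The quadratic functional $\mathcal{E}$ from \eqref{energy} \emph{is} continuous under strong $L_2$-convergence, so $\mathcal{E}(f(t_n),g(t_n))\to\mathcal{E}(F_\infty,G_\infty)$ is legitimate (this is \eqref{pl18}). The second moment is handled dynamically: writing $m_2(t):=\cM_2(f(t),g(t))-2L$ and using Theorem~\ref{T:WP}~$(iv)$--$(v)$, one obtains the ODE
\[
\frac{dm_2}{dt}+m_2 = 2\bigl(\E_*(f,g)-L\bigr)\quad\text{a.e.,}
\]
whose variation-of-constants formula together with $\E_*(f(t),g(t))\searrow L$ gives $\cM_2(f(t),g(t))\to 2L$ as $t\to\infty$ \emph{directly}, with no need for any tightness along the trajectory. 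Combining $\cM_2(f(t_n),g(t_n))\to 2L$, $\E(f(t_n),g(t_n))\to\E(F_\infty,G_\infty)$, the decomposition $\E_*=\E+\cM_2/6$, and the stationarity relation $\cM_2(F_\infty,G_\infty)=2\E_*(F_\infty,G_\infty)=3\E(F_\infty,G_\infty)$ then yields $\E_*(F_\infty,G_\infty)=L$ and $\cM_2(F_\infty,G_\infty)=2L$ by algebra. Your treatment of the second equality (applying Theorem~\ref{T:WP}~$(v)$ to the constant trajectory) is correct and coincides with the paper's \eqref{pl17a}, but without the ODE argument for $\cM_2$ the first equality is not established.
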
 
%%%%%%%%%%%%%%%%%%%%%%%%%%%%%%%%%%%%%%%%

\begin{proof}
Since $t\mapsto \E_*(f(t),g(t))$ is a positive function which is non-increasing on $[0,\infty)\setminus\mathcal{N}$,  it follows from Theorem~\ref{T:WP}~$(iv)$ that there exists a constant $L\geq 0$ such  that 
\begin{equation}
\E_*(f(t),g(t))\ \searrow\  L\qquad\text{ as $t\nearrow\infty$, $ t\notin \mathcal{N}$}\ .\label{pl15}
\end{equation}

Defining the function $m_2(t):=\cM_{2}(f(t), g(t))-2L$ for $t\geq 0,$ we deduce from the assertions~$(iv)$ and~$(v)$ of Theorem~\ref{T:WP} that $m_2$ is differentiable almost everywhere in $(0,\infty)$, with
$$
\frac{dm_2}{dt} + m_2 = 2(\E_*(f,g)-L) \;\;\text{ a.e. in }\;\; (0,\infty)\ .
$$
Consequently, $m_2$ is a non-negative function and 
\begin{align*}
m_2(t) = & m_2(0) e^{-t} + 2\int_0^\tau (\E_*(f(s),g(s))-L) e^{s-t}\, ds + 2 \int_\tau^t (\E_*(f(s),g(s))-L) e^{s-t}\, ds
\end{align*}
for all $0<\tau<t.$ Given $\e>0$, we infer from \eqref{pl15} that there is $t_\e>0$ such that $L \le \E_*(f(s),g(s))<L+\e$ for every $s\geq t_\e$  with $s\not\in\mathcal{N}$. Taking $t>t_\e$ and $\tau=t_\e$ in the above identity and using \eqref{pl1}, we obtain
$$
0 \leq m_2(t) \le m_2(0) e^{-t} + 2(\E_*(f_0,g_0)-L)  \left( e^{t_\e} - 1 \right) e^{-t} + 2 \e \left( 1 - e^{t_\e-t} \right)\ .
$$
Letting first $t\to\infty$ and then $\e\to 0$ we conclude that 
\begin{equation}
\lim_{t\to\infty} \cM_{2}(f(t),g(t)) = 2L\ . \label{pl16}
\end{equation}

Now, take $(F_\infty,G_\infty) \in \omega(f_0,g_0)$. 
By Proposition~\ref{prpl1} and in particular \eqref{pl13b}, $(F_\infty,G_\infty)$ is a stationary solution to \eqref{RS} and there is a sequence
$(t_n)_n\subset (0,\infty)\setminus\mathcal{N}$  such that $t_n\to\infty$ and 
\begin{equation}
(f(t_n),g(t_n)) \ \to \ (F_\infty,G_\infty) \;\;\text{ in }\;\; L_2(\R,\R^2)\ . \label{pl17b}
\end{equation}
Since $(F_\infty,G_\infty)$ is a stationary solution to \eqref{RS}, we infer from Theorem~\ref{T:WP}~$(v)$ that 
\begin{equation}
\mathcal{M}_2(F_\infty,G_\infty) = 2 \E_*(F_\infty,G_\infty)\ , \label{pl17a}
\end{equation}
or, alternatively, owing to \eqref{eq:sup3},
\begin{equation}
\mathcal{M}_2(F_\infty,G_\infty) = 3 \E(F_\infty,G_\infty)\ . \label{pl17}
\end{equation}
Next, the convergence \eqref{pl17b} gives
\begin{equation}
\E(F_\infty,G_\infty) = \lim_{n\to\infty} \E(f(t_n),g(t_n))\ . \label{pl18}
\end{equation}
Since $\E_*(f,g)=\E(f,g)+\mathcal{M}_2(f,g)/6$ by \eqref{eq:sup3}, it follows from \eqref{pl15}, \eqref{pl16}, \eqref{pl17}, and \eqref{pl18} that 
\begin{align*}
\E_*(F_\infty,G_\infty) = & \E(F_\infty,G_\infty) + \frac{1}{6} \mathcal{M}_2(F_\infty,G_\infty) = \frac{3}{2} \E(F_\infty,G_\infty) \\
= & \frac{3}{2} \lim_{n\to\infty} \E(f(t_n),g(t_n)) = \frac{3}{2} \lim_{n\to\infty} \left[ \E_*(f(t_n),g(t_n)) - \frac{1}{6} \mathcal{M}_2(f(t_n),g(t_n)) \right] \\
= & \frac{3}{2} \left( L - \frac{L}{3} \right) = L\ .
\end{align*}
Recalling \eqref{pl17a}, we find $\mathcal{M}_2(F_\infty,G_\infty)=2L>0$. 
\end{proof}

\bigskip
 We are now in a position to prove our convergence  result Theorem~\ref{T:MT3}. 

\begin{proof}[Proof of Theorem~\ref{T:MT3}]
Consider $(f_0,g_0)\in \cK^2$ and let $(f,g)$ be the corresponding solution to \eqref{RS} given by Theorem~\ref{T:WP}. We aim at showing that $\omega(f_0,g_0)$ contains only one element. 
Indeed, we infer from Theorem~\ref{T:MT1}, Proposition~\ref{prpl1}, and Lemma~\ref{lepl2} that there is $L > 0$ such that 
$$
\omega(f_0,g_0) \subset \mathcal{S}_L := \left\{ (F_\ell,G_\ell)\ :\ \ell\in \Lambda \;\text{ and }\; \mathcal{E}_*(F_\ell,G_\ell) = L \right\}\ .
$$
According to Theorem~\ref{T:MT1} the set $\mathcal{S}_L$ contains at most two elements so that $\omega(f_0,g_0)$ is a discrete set and also contains at most two elements. 
Since it is connected in $H^{-3}(\R)$ by Proposition~\ref{prpl1} we conclude that it is reduced to a single element $(F_\infty,G_\infty)\in \mathcal{S}_L$. 
Consequently, $(f(t),g(t))_{ t\ge 0}$ converges weakly towards $(F_\infty,G_\infty)$ in $L_2(\R,\R^2)$ as $t\to\infty$. 

We now claim that  $(f(t),g(t))_{t\ge 0}$ converges  towards $(F_\infty,G_\infty)$ as $t\to\infty$  in $L_2(\R,\R^2)$.
To this end, we argue by contradiction and assume that there exist a sequence $t_n\to\infty$ and $\e>0$ such that 
$$
\E(f(t_n)-F_\infty, g(t_n)-G_\infty)\ge \e\qquad\text{for all $n\ge 1\ .$}
$$
Owing to the estimate~$(iv)$ of Theorem~\ref{T:WP} we may assume, after extracting eventually a subsequence, that  $\big(\E(f(t_n),g(t_n))\big)_{n\ge 1}$  converges in $\R.$
Since
\begin{align*}
\e&\le \E(f(t_n)-F_\infty, g(t_n)-G_\infty)\\
&=\E(f(t_n), g(t_n))+\E(F_\infty,G_\infty)-\eta^2\int_{\R} f(t_n) F_\infty\ dx \\
& \quad - R \int_{\R} \left( \eta f(t_n) + \frac{1}{\eta} g(t_n) \right) \left( \eta F_\infty + \frac{1}{\eta} G_\infty \right)\ dx\ , 
\end{align*}
the weak convergence in $L_2(\R,\R^2)$ ensures, after passing to the limit $n\to\infty,$ that
\begin{equation}\label{dl10}
\lim_{n\to\infty}\E(f(t_n), g(t_n))\geq \e+\E(F_\infty,G_\infty)\ .
\end{equation}
Due to Theorem~\ref{T:WP}~$(iv)$, there exists a sequence $s_n\to\infty$ with $s_n<t_n$  and $s_n\not\in\mathcal{N} $ for all $n\ge 1$. In view of Theorem~\ref{T:WP}~$(iv)$ we get
$$
\E_*(f(t_n), g(t_n))\leq \E_*(f(s_n), g(s_n)) \qquad\text{for all $n\ge 1\ .$}
$$
Since $\E_*(f(s_n), g(s_n))\to \E_*(F_\infty, G_\infty)=\E(F_\infty, G_\infty)+\cM_2(F_\infty, G_\infty)/6$ as $n\to\infty$, we obtain from \eqref{pl14}, \eqref{pl16}, and \eqref{dl10}, after passing to the limit $n\to\infty$ in the previous inequality,  that
 \begin{align*}
  \E(F_\infty, G_\infty)+\cM_2(F_\infty, G_\infty)/6 \geq& \lim_{n\to\infty}\E_*(f(t_n), g(t_n))\\
  =&\lim_{n\to\infty}\left(\E(f(t_n), g(t_n))+\frac{1}{6}\cM_2(f(t_n), g(t_n))\right)\\
  \geq &\e+\E(F_\infty,G_\infty)+\cM_2(F_\infty, G_\infty)/6\ ,
 \end{align*}
which is a contradiction.
This shows that our assumption was false, thus  $(f(t),g(t))_{t\geq0}$ converges  towards $(F_\infty,G_\infty)$ as $t\to\infty$  in $L_2(\R,\R^2)$.

Now, assume for contradiction that $\mathcal{M}_2(|f(t)-F_\infty|,|g(t)-G_\infty|)$ does not converge to zero as $t\to\infty$. There are then a sequence of positive times $(t_k)_{k\ge 1}$, $t_k\to\infty$, and $\delta>0$ such that $\mathcal{M}_2(|f(t_k)-F_\infty|,|g(t_k)-G_\infty|)>\delta$. 
Owing to the strong convergence of $(f(t),g(t))$ towards $(F_\infty,G_\infty)$ in $L_2(\R,\R^2)$ we may assume, after possibly extracting a further subsequence, 
that $(f(t_k),g(t_k))$ converges almost everywhere in $\R$ towards $(F_\infty,G_\infty)$. 
 Since $(F_\infty,G_\infty)\in\cK^2$, we infer from the dominated convergence theorem
\begin{align*}
\lim_{k\to\infty} \int_{\R} \left[ x^2 f(t_k,x) - x^2 |f(t_k,x)-F_\infty(x)| \right]\ dx & = \int_{\R} x^2 F_\infty(x)\ dx\ , \\
\lim_{k\to\infty} \int_{\R} \left[ x^2 g(t_k,x) - x^2 |g(t_k,x)-G_\infty(x)| \right]\ dx & = \int_{\R} x^2 G_\infty(x)\ dx\ .
\end{align*}
This implies that
$$
\lim_{k\to\infty} \left[ \mathcal{M}_2(f(t_k),g(t_k)) - \mathcal{M}_2(|f(t_k)-F_\infty|,|g(t_k)-G_\infty|) \right] = \mathcal{M}_2(F_\infty,G_\infty)\ ,
$$
hence, thanks to Lemma~\ref{lepl2} and \eqref{pl16},
$$
\lim_{k\to\infty} \mathcal{M}_2(|f(t_k)-F_\infty|,|g(t_k)-G_\infty|) = 0\ ,
$$
and a contradiction. 
We have thus shown that $(f(t),g(t))_{t\geq0}$ converges towards $(F_\infty,G_\infty)$ in  $L_1(\R, (1+x^2) dx,\R^2) \cap L_2(\R,\R^2)$.
\end{proof}

\bigskip

Finally, we show that the first moment of each weak solution of \eqref{RS} vanishes at an exponential rate.

%%%%%%%%%%%%%%%%%%%%%%%%%%%%%%%%%%%%%%%%
\begin{prop}\label{P:3}
 Define the first moment
 \[
\cM_{1}(u,v):=\int_\R ( u+\Theta v)(x) x\, dx\qquad \text{for $(u,v)\in\cK^2.$}
\]
If $(f,g)$ is a non-negative weak solution of \eqref{RS}, then
\begin{equation}\label{E:Eb}
 \cM_{1}(f(t),g(t))=\cM_{1}(f_0,g_0)e^{-t/3} \qquad\text{for all $t\geq 0.$}
\end{equation}
\end{prop}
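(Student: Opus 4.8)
The plan is to insert into the weak formulation \eqref{T2a}--\eqref{T2b} a sequence of test functions approximating the (unbounded) function $x\mapsto x$ and to pass to the limit, in the same spirit as the derivation of the second-moment identity in Theorem~\ref{T:WP}~$(v)$. Pick $\chi\in C_0^\infty(\R)$ with $0\le \chi\le 1$, $\chi\equiv 1$ on $[-1,1]$, and $\chi\equiv 0$ outside $(-2,2)$, and set $\xi_n(x):=x\,\chi(x/n)$ for $n\ge 1$. Then $\xi_n\in C_0^\infty(\R)$, $|\xi_n(x)|\le |x|$ and $\xi_n(x)\to x$ for all $x\in\R$, while $\partial_x\xi_n(x)=\chi(x/n)+(x/n)\chi'(x/n)$ is bounded in $L_\infty(\R)$ uniformly in $n$ and satisfies $\partial_x\xi_n(x)\to 1$ as $n\to\infty$ for every $x\in\R$. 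Using $\xi_n$ as test function in \eqref{T2a}, using it in \eqref{T2b} after multiplication by $\Theta$, and adding the resulting identities, we obtain for every $t\ge 0$ and $n\ge 1$
\begin{equation*}
\int_\R (f(t)+\Theta g(t))\,\xi_n\,dx-\int_\R (f_0+\Theta g_0)\,\xi_n\,dx+\int_0^t\!\!\int_\R (J_f+\Theta J_g)\,\partial_x\xi_n\,dx\,d\sigma=0\ ,
\end{equation*}
with $J_f$ and $J_g$ given by \eqref{Beth1}--\eqref{Beth2}.

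The next step is to justify the passage to the limit $n\to\infty$. Since $(f(t),g(t))\in\cK^2$, the functions $(f(t)+\Theta g(t))\,|x|$ and $(f_0+\Theta g_0)\,|x|$ are integrable on $\R$, and the first two terms converge by dominated convergence to $\cM_1(f(t),g(t))$ and $\cM_1(f_0,g_0)$. For the flux term, writing $J_f=\sqrt{f}\cdot\sqrt{f}\,(\eta^2(1+R)\partial_x f+R\partial_x g+x/3)$ and $J_g=\sqrt{g}\cdot\sqrt{g}\,(\eta^2 R_\mu\partial_x f+R_\mu\partial_x g+x/3)$ and using the Cauchy--Schwarz inequality together with $\|f(\sigma)\|_1=\|g(\sigma)\|_1=1$ and the definition \eqref{EP} of $\cI$, we get $\|J_f(\sigma)\|_1+\Theta\,\|J_g(\sigma)\|_1\le (1+\Theta^{1/2})\,(2\,\cI(f(\sigma),g(\sigma)))^{1/2}$; since $\int_0^t\cI(f,g)\,d\sigma<\infty$ by Theorem~\ref{T:WP}~$(iv)$, the function $J_f+\Theta J_g$ belongs to $L_1((0,t)\times\R)$, and as $|\partial_x\xi_n|$ is bounded uniformly in $n$ and $\partial_x\xi_n\to 1$ pointwise, dominated convergence yields the convergence of the flux term to $\int_0^t\!\int_\R (J_f+\Theta J_g)\,dx\,d\sigma$. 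Hence
\begin{equation*}
\cM_1(f(t),g(t))-\cM_1(f_0,g_0)+\int_0^t\!\!\int_\R (J_f+\Theta J_g)\,dx\,d\sigma=0\ ,\qquad t\ge 0\ .
\end{equation*}

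It remains to compute the flux integral. From \eqref{eq:sup1c} we have $\Theta\,\eta^2 R_\mu=R$ and $\Theta\,R_\mu=R\eta^{-2}$, so
\begin{equation*}
J_f+\Theta J_g=\partial_x\!\left(\frac{\eta^2(1+R)}{2}f^2+\frac{R}{2\eta^2}g^2+Rfg\right)+\frac{x}{3}\,(f+\Theta g)\ .
\end{equation*}
As $f(\sigma),g(\sigma)\in H^1(\R)$ for a.e.\ $\sigma$, the products $f^2,g^2,fg$ are absolutely continuous on $\R$ with integrable derivatives and vanish at $\pm\infty$, so the first term integrates to zero over $\R$ and $\int_\R (J_f+\Theta J_g)\,dx=\tfrac13\,\cM_1(f(\sigma),g(\sigma))$. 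Therefore the function $m(t):=\cM_1(f(t),g(t))$, which is locally bounded on $[0,\infty)$ since $|m(t)|\le (1+\Theta)^{1/2}\,\cM_2(f(t),g(t))^{1/2}$ and $\cM_2(f(\cdot),g(\cdot))$ is bounded on bounded time intervals by Theorem~\ref{T:WP}~$(v)$, solves the linear integral equation $m(t)=m(0)-\tfrac13\int_0^t m(\sigma)\,d\sigma$ for $t\ge 0$. Consequently $m$ is continuous, hence of class $C^1$, with $m'=-m/3$ and $m(0)=\cM_1(f_0,g_0)$, which yields \eqref{E:Eb}. The only step requiring care is the domination of the flux term $J_f+\Theta J_g$ as $n\to\infty$, which is precisely controlled by the entropy dissipation bound of Theorem~\ref{T:WP}~$(iv)$.
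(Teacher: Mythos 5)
Your proposal is correct and takes essentially the same route as the paper, whose proof is merely a one-line sketch: ``choose a suitable approximating sequence $(\xi_n)\subset C_0^\infty(\R)$ for the identity map, use \eqref{T2a}--\eqref{T2b} and Theorem~\ref{T:WP}~$(iv)$, and solve the resulting integral identity.'' You have filled in precisely those details, including the algebraic observation that, because $\Theta\eta^2R_\mu=R$ and $\Theta R_\mu=R\eta^{-2}$, the combination $J_f+\Theta J_g$ is an exact $x$-derivative plus $\tfrac{x}{3}(f+\Theta g)$, and the domination of the flux term via the entropy dissipation bound. The only minor gloss is in concluding $J_f+\Theta J_g\in L_1((0,t)\times\R)$ from $\int_0^t\cI\,d\sigma<\infty$: one still needs a Cauchy--Schwarz in time to pass from $\int_0^t\cI^{1/2}$ to $\big(\int_0^t\cI\big)^{1/2}$, which is immediate on the bounded interval $(0,t)$ but should be stated.
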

%%%%%%%%%%%%%%%%%%%%%%%%%%%%%%%%%%%%%%%%

%%%%%%%%%%%%%%%%%%%%%%%%%%%%%%%%%%%%%%%%
\begin{rem}\label{R:4}
Particularly, relation \eqref{E:Eb} ensures that every steady state $(F_\infty,G_\infty)$ of \eqref{RS} satisfies the identity  $\cM_{1}(F_\infty,G_\infty)=0.$
\end{rem}
%%%%%%%%%%%%%%%%%%%%%%%%%%%%%%%%%%%%%%%%

\begin{proof} Choosing a suitable approximating sequence $(\xi_n)_n\subset C_0^\infty(\R)$ for the identity mapping on $\R,$ we obtain in view of  
\eqref{T2a}, \eqref{T2b}, and the estimate $(iv)$ of Theorem~\ref{T:WP} the following relation
\begin{equation*} 
 \cM_{1}(f(t),g(t))-\cM_{1}(f(s),g(s))+\frac{1}{3}\int_s^t\cM_{1}(f(\sigma),g(\sigma))\, d\sigma=0
\end{equation*}
for all $t\ge s \ge 0.$ This yields the desired claim.
\end{proof}

%%%%%%%%%%%%%%%%%%%%%%%%%%%%%%%%%%%%%%%%
%%%%%%%%%%%%%%%%%%%%%%%%%%%%%%%%%%%%%%%%
\section{Numerical simulations}\label{S:5}
%%%%%%%%%%%%%%%%%%%%%%%%%%%%%%%%%%%%%%%%
%%%%%%%%%%%%%%%%%%%%%%%%%%%%%%%%%%%%%%%%

In this section we present the results of several numerical simulations realized in the context of the rescaled system \eqref{RS}. We use the fully discrete finite volume
scheme for degenerate parabolic equations presented in \cite[Section~3.2]{FF12}, its accuracy being tested for the numerical simulation of various degenerate and 
non-degenerate parabolic equations in \cite{FF12} and which we present below.
More precisely, we will  compute the evolution of non-negative initial configurations $(f_0,g_0)$ that are compactly supported in the interval $\mathcal{I}:=(-5,5).$
This interval is discretized uniformly as follows 
$$
-5:=x_{1/2}<x_1<x_{3/2}<\ldots<x_{N_x}<x_{N_x+1/2}=5\ ,$$ 
whereby $N_x\in\N$  and $N_x\h=10.$ Here $\h/2$ denotes the spatial step size and $N_x$ is the number of control volumes $\{K_i=(x_{i-1/2}, x_{i+1/2})\}_{1\leq i\leq N_x}.$ 
The time step is denoted by $\Delta t>0$ and  $t^n:=n\Delta t$ for all non-negative integers $n$  less than or equal to the integer value $[T/\Delta t],$ $T>0$ 
being a positive fixed time.
The initial data $(f_0,g_0)$ are discretized as follows:
\begin{equation}\label{eq:IC}
 f_i^0:=\h^{-1}\int_{K_i} f_0\, dx\ ,\qquad  g_i^0:=\h^{-1}\int_{K_i} g_0\, dx, \qquad 1\leq i\leq N_x\ .
\end{equation}
Observe that the  system \eqref{RS} is written more compactly in the form
\begin{equation*} 
\left\{
\begin{array}{lll}
\p_t f+\p_x(- J_f)=0,\\[1ex]
\p_t g+\p_x (-J_g)=0,
\end{array}\right.
\end{equation*}
 with $-J_f=fV_f$ and $-J_g=gV_g$ being the advective fluxes defined in \eqref{Beth1}-\eqref{Beth2} and the velocities $V_f$ and $V_g$  given by
\[
V_f:=-\p_x\left( \eta^2(1+R) f+R g+\frac{x^2}{6}\right)\ , \qquad V_g:=-\p_x\left( \eta^2R_\mu f+   R_\mu g+\frac{x^2}{6} \right).
\]
In our setting the fully discrete scheme developed in \cite{FF12} for computing the approximation $(f_i^n, g_i^n)$ of the weak solution $(f,g)$ of \eqref{RS} 
on $K_i$ at time $t^n$ reads
\begin{equation}\label{NS} 
\left\{
\begin{array}{l}
\displaystyle{ \h\frac{f_i^{n+1}-f_i^n}{\Delta t}+\mathcal{F}_{i+1/2}^{n}-\mathcal{F}_{i-1/2}^n=0\ ,}\\
 \\
\displaystyle{ \h\frac{g_i^{n+1}-g_i^n}{\Delta t}+\mathcal{G}_{i+1/2}^{n}-\mathcal{G}_{i-1/2}^n=0\ ,}
\end{array}\right.
\end{equation}
for $1\leq i\leq N_x-1$ and $0\leq n\leq [T/\Delta t]-1.$
Here, $\mathcal{F}_{i+1/2}^{n}$ and $\mathcal{G}_{i+1/2}^{n}$ approximate the fluxes $-J_f$ and $-J_g$ at $(t^n,x_{i+1/2})$, respectively,  and  are discretized 
by the upwind method
\[\mathcal{F}_{i+1/2}^{n}=(A_{i+1/2}^n)^+f_i^n-(A_{i+1/2}^n)^-f_{i+1}^n\ ,\qquad\mathcal{G}_{i+1/2}^{n}=(B_{i+1/2}^n)^+g_i^n-(B_{i+1/2}^n)^-g_{i+1}^n\ ,\]
  where $x^+= \max\{0, x\}$ and $x^-= \max\{0, -x\}$. 
  Furthermore, $A_{i+1/2}$ and $B_{i+1/2}$ approximate the velocities $V_f$ and $V_g$ at  $(t^n,x_{i+1/2})$, respectively, and are defined by
  \begin{align*}
  A_{i+1/2}&:=-\frac{x_{i+1}+x_i}{6}-(1+R)\eta^2\frac{f^n_{i+1}-f^n_i}{\h}-R\frac{g^n_{i+1}-g^n_i}{\h},\\
  B_{i+1/2}&:=-\frac{x_{i+1}+x_i}{6}-\eta^2R_\mu\frac{f^n_{i+1}-f^n_i}{\h}-R_\mu\frac{g^n_{i+1}-g^n_i}{\h}.
  \end{align*}
  Finally, because we expect the weak solutions to remain compactly supported, which is also suggested by the numerical simulations, we supplement \eqref{eq:IC} 
  and \eqref{NS} by no-flux conditions on the boundary $\p\mathcal{I}.$ 
%%%%%%%%%%%%%%%%%%%%%%%%%%%%%%%%%%%%%%%%  
\begin{figure}[h]
\hspace{1cm}\includegraphics[width=2.5in, angle=270]{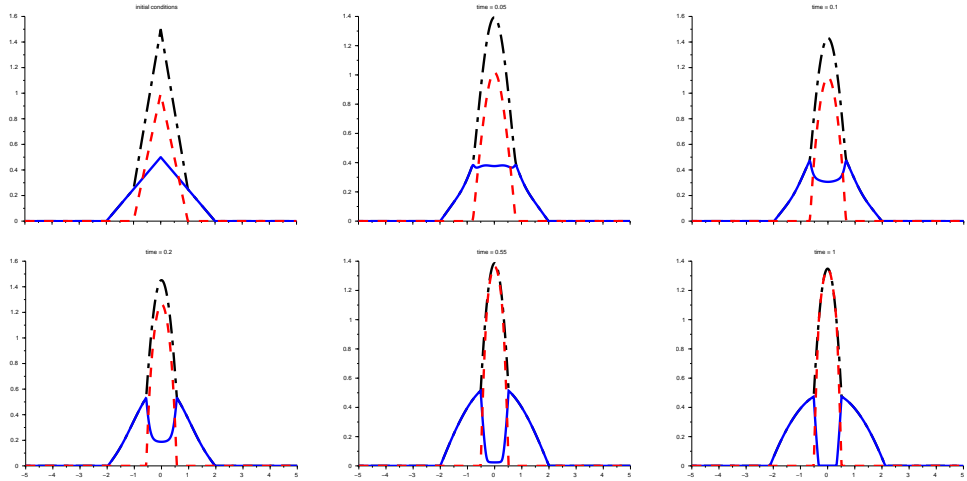}
\caption[Even initial data]{\small Time evolution (from  left up to right down) of the weak solution of \eqref{RS} corresponding to an even initial configuration
for $\eta=1$, $R=1$, $R_\mu=0.05$, $\Delta t=10^{-5},$ and $N_x=1000$. The blue line is $f$, the dashed red line is $g$, and the dash-dotted
 black line is $\eta^2f+g$.}\label{F:4}
\end{figure}
%%%%%%%%%%%%%%%%%%%%%%%%%%%%%%%%%%%%%%%%

Our simulations are all performed in the regime  $R_\mu<R_\mu^-(R,\eta)$.
  This regime is physically highly  relevant as $R_\mu<R_\mu^-(R,\eta)$ exactly when 
\begin{equation}\label{RRR}
 \frac{\mu_-}{\mu_+}<\frac{\rho_-\rho_+^2}{\rho_+^3+(\eta^2\rho_-+\rho_+)^2(\rho_--\rho_+)}.
\end{equation}
%%%%%%%%%%%%%%%%%%%%%%%%%%%%%%%%%%%%%%%%
\begin{figure}[h]
\hspace{1cm}\includegraphics[width=2.5in, angle=270]{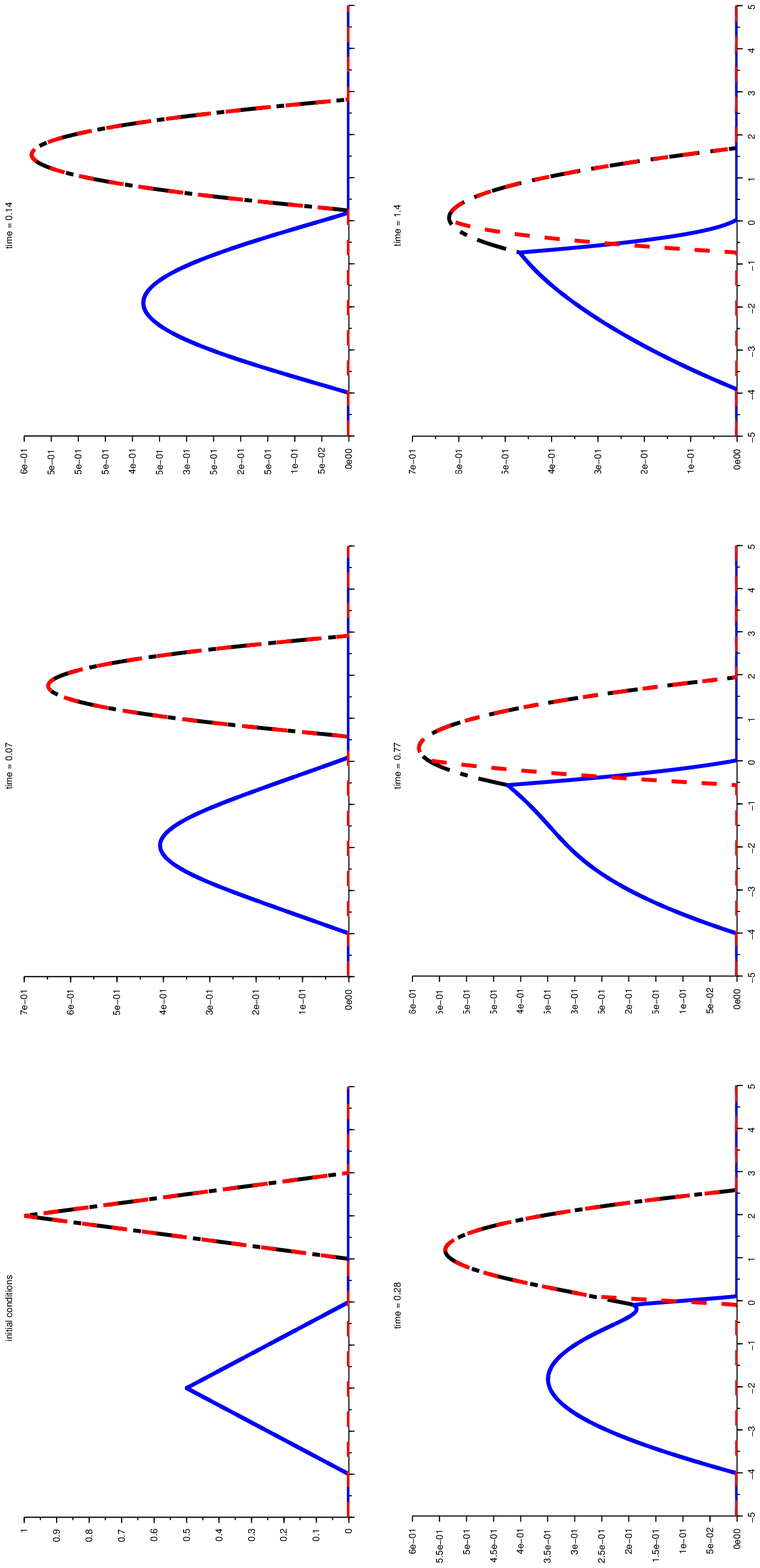}
\caption[Odd initial data]{\small Time evolution of the weak solution corresponding to a non-symmetric initial configuration for $\eta=1$, 
 $R=4$, $R_\mu=0.7$, $\Delta t=10^{-5},$ and $N_x=1000$. 
   The blue line is $f$, the dashed red line is $g$, and the dash-dotted  black line is $\eta^2f+g$.
   The solution converges towards the self-similar profile $(F_{\ell_-}, G_{\ell_-}).$ }\label{F:5}
\end{figure}
%%%%%%%%%%%%%%%%%%%%%%%%%%%%%%%%%%%%%%%%

\noindent The inequality \eqref{RRR} holds for example when  the denser fluid is water,  the other one is rapeseed oil, and $\eta=1$.
Indeed, at $20^\circ {\rm C},$ water has density $\rho_-\approx\ 1\ {\rm kg/litre}$  and viscosity $\mu_-\approx \ 1\ {\rm mPa \cdot s}$,
respectively $\rho_+\approx\ 0.92\ {\rm kg/litre}$  and  $\mu_+\approx\ 67.84\ {\rm mPa \cdot s}$ for rapeseed oil, cf. \cite{Esteban12}.

%%%%%%%%%%%%%%%%%%%%%%%%%%%%%%%%%%%%%%%%
\begin{figure}[h]
\hspace{1cm}\includegraphics[width=2.5in, angle=270]{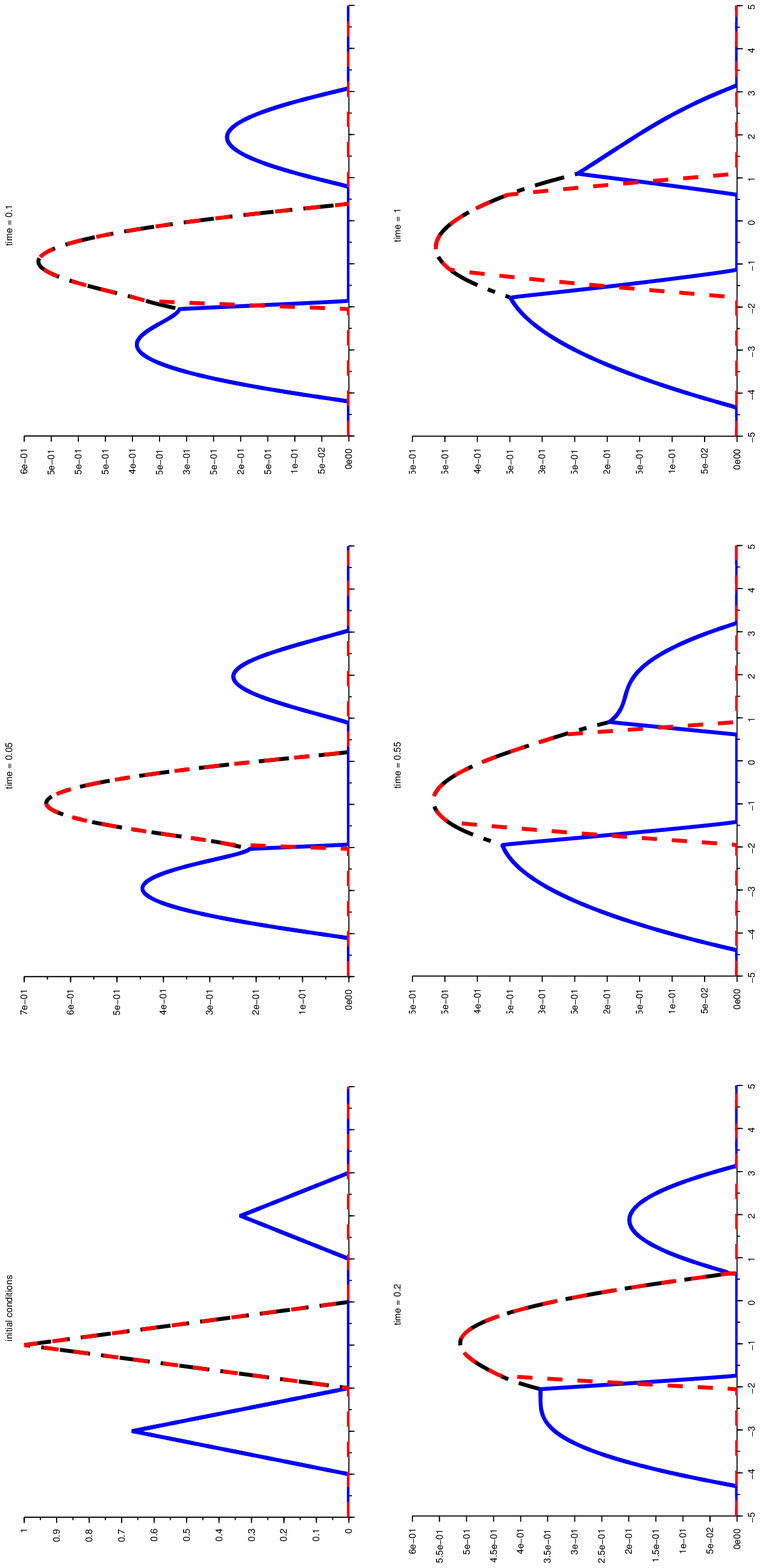}
\caption[Odd self-similar profiles]{\small Time evolution of the weak solution corresponding to a non-symmetric initial configuration for $\eta=1$,
$R=4$, $R_\mu=2$, $\Delta t=10^{-5},$ and $N_x=1000$. 
   The blue line is $f$, the dashed red line is $g$, and the dash-dotted black line is $\eta^2f+g$.
   The solution converges towards  a self-similar profile $(F_{\ell}, G_{\ell}) $ with $\ell\in(\ell_-,\ell_+)\setminus\{0\}$.}\label{F:6}
\end{figure}
%%%%%%%%%%%%%%%%%%%%%%%%%%%%%%%%%%%%%%%%

The scope of the simulations is threefold.
First, it can be seen from Figures~\ref{F:4}-\ref{F:6} that if the initial data are compactly supported they remain so as time evolves. 
This suggests that the  supports of weak  solutions of \eqref{RS}, and also of \eqref{eq:S2}, propagate with finite speed.

Secondly,  we have rigorously established in Theorem~\ref{T:MT3} that  weak solutions which correspond  to  even initial data  converge
towards the unique even stationary solution $(F_0,G_0)$ of \eqref{RS}. This even self-similar profile has the property that the 
positivity set of $F_0$ consists on two intervals if $R_\mu<R_\mu^-(R,\eta)$, cf. Proposition~\ref{P:1}. 
Hence, if the initial data have connected positivity sets, then the denser film will break at least in infinite time. 
Figure~\ref{F:4} suggests that in fact the film rupture occurs in finite time.   

At last Figures~\ref{F:5}-\ref{F:6} display the fact that the even self-similar profile is not a universal attractor 
for the dynamics and that other profiles belonging to the continuum found in Theorem~\ref{T:MT1} attract certain weak solutions of \eqref{RS}.  

Let us emphasize that  the above numerical simulations reveal some qualitative properties of the dynamics of \eqref{RS} which have not 
yet been studied analytically, including:
\begin{itemize}
\item the property of finite propagation speed of solutions of \eqref{RS},
\item the finite time film rupture in the  small/large viscosities ratio regime,
\item the fact that in the  small/large viscosities ratio regime each of the self-similar profiles  attracts certain weak solutions of the rescaled system \eqref{RS}.
%                       \item the uniqueness of weak solutions of \eqref{RS},
\end{itemize}

%%%%%%%%%%%%%%%%%%%%%%%%%%%%%%%%%%%%%%%%
%%%%%%%%%%%%%%%%%%%%%%%%%%%%%%%%%%%%%%%%

\appendix
\section{Solvability of the auxiliary algebraic systems}
%%%%%%%%%%%%%%%%%%%%%%%%%%%%%%%%%%%%%%%%
%%%%%%%%%%%%%%%%%%%%%%%%%%%%%%%%%%%%%%%%

We first study the system of three algebraic equations \eqref{eq:41}-\eqref{eq:43} arising in the analysis of even self-similar profiles in Section~\ref{S:22}. 

%%%%%%%%%%%%%%%%%%%%%%%%%%%%%%%%%%%%%%%%
\begin{lemma}\label{L:Z1}
Let $(R,R_\mu,\eta)$ be three positive real numbers such that $R_\mu>R+1$. The system of algebraic equations \eqref{eq:41}-\eqref{eq:43} has a 
unique solution $(\alpha,\beta, \gamma)$ satisfying 
$0\le \alpha < \beta < \gamma$ if and only if $R\ge R_\mu^+(R,\eta)$. Moreover, $\alpha>0$ if $R_\mu>R_\mu^+(R,\eta)$ and $\alpha=0$ if $R_\mu=R_\mu^+(R,\eta)$.
\end{lemma}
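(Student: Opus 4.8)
The plan is to treat the system \eqref{eq:41}-\eqref{eq:43} as a problem of solving for $(\beta,\gamma)$ in terms of a parameter and then enforcing the remaining constraint. First I would observe that \eqref{eq:43} is the quadratic compatibility relation $\gamma^2 = (R_\mu-R)\beta^2 - (R_\mu-R-1)\alpha^2$, which expresses $\gamma$ as a function of $(\alpha,\beta)$; note the sign condition $R_\mu>R+1$ guarantees the right-hand side is positive whenever $\beta>\alpha\ge 0$, and moreover that $\gamma^2 - \beta^2 = (R_\mu-R-1)(\beta^2-\alpha^2) > 0$, so $\gamma>\beta$ is automatic. Substituting into \eqref{eq:41} and \eqref{eq:42} leaves two equations for $(\alpha,\beta)$. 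It is convenient to reduce first to the one-variable problem by noticing that \eqref{eq:41} alone reads $(R_\mu-R)\beta^3 - \frac{R(R_\mu-R-1)}{1+R}\alpha^3 = \frac{9\eta^2 R_\mu}{2}$, which, for fixed $\alpha\ge 0$, determines $\beta = \beta(\alpha)$ uniquely and strictly increasingly (since the left side is increasing in $\beta$); one checks $\beta(0)^3 = \frac{9\eta^2 R_\mu}{2(R_\mu-R)}$ and $\beta(\alpha) > \alpha$ as long as $\alpha$ is not too large. Then \eqref{eq:42} (after eliminating $\gamma$ via \eqref{eq:43}) becomes a single scalar equation $\Psi(\alpha)=0$ that must be solved on the admissible range $\{\alpha\ge 0 : \alpha < \beta(\alpha)\}$.

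The core of the argument is then a monotonicity/intermediate-value analysis of $\Psi$. I would show $\Psi$ is continuous and strictly monotone on the admissible interval (computing $\Psi'$ using the implicit derivative $\beta'(\alpha)$ from \eqref{eq:41} and the expression for $\gamma$ from \eqref{eq:43}; the monotonicity should follow from the sign conditions $R_\mu-R-1>0$ and $\alpha<\beta<\gamma$), so that the existence of a solution with $0\le\alpha<\beta<\gamma$ is equivalent to a sign condition on $\Psi$ at the endpoints. The endpoint $\alpha=0$ is explicit: there $\beta^3 = \frac{9\eta^2 R_\mu}{2(R_\mu-R)}$, $\gamma^3$ is computed from \eqref{eq:43} and \eqref{eq:42}, and $\Psi(0)$ reduces, after simplification, precisely to a condition comparing $R_\mu$ with $R_\mu^+(R,\eta) = R + \left(\frac{1+\eta^2}{\eta^2}\right)^2$. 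The other endpoint of the admissible range is where $\beta(\alpha)=\alpha$, i.e. where the $\beta$-profile collapses onto $\alpha$; there I expect $\Psi$ to have a definite sign (coming from \eqref{eq:42}, whose left side there is $\gamma^3>0$ against $\frac{9R_\mu}{2}$, but with $\alpha=\beta$ forced $\gamma$ becomes small, giving the opposite sign), so that by the intermediate value theorem a zero exists in the interior if and only if $\Psi(0)$ has the appropriate sign, namely $R_\mu\ge R_\mu^+(R,\eta)$; equality $R_\mu=R_\mu^+(R,\eta)$ corresponds exactly to $\Psi(0)=0$, i.e. $\alpha=0$, and strict inequality to $\alpha>0$. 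Uniqueness is immediate from strict monotonicity of $\Psi$.

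The step I expect to be the main obstacle is verifying the strict monotonicity of $\Psi$ (equivalently, the uniqueness) cleanly, because $\Psi$ involves the composition $\beta(\alpha)$ and the algebraic substitution for $\gamma$, so $\Psi'$ is a somewhat unwieldy rational expression; the delicate point is to package the sign conditions $R_\mu>R+1$ and $0\le\alpha<\beta<\gamma$ so that every term in $\Psi'$ has a controlled sign throughout the admissible interval. An alternative, possibly cleaner route that I would try in parallel is to scale out $\eta$ and $R_\mu$ (e.g. set $u=\alpha/\beta\in[0,1)$ and rescale) so that the system reduces to two equations in two dimensionless unknowns and the solvability condition becomes manifestly a single inequality on a dimensionless combination of the parameters; with the right scaling the threshold $R_\mu^+(R,\eta)$ should drop out as the value at which a certain explicit decreasing function of $u$ at $u=0$ changes sign. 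Either way, the detailed but routine computations (the exact formula for $\Psi'$, the endpoint evaluations) I would defer, as the paper does, to the appendix, and only record here that they yield the stated equivalence with $R_\mu^+(R,\eta)$ together with the dichotomy $\alpha>0 \iff R_\mu>R_\mu^+(R,\eta)$ and $\alpha=0 \iff R_\mu=R_\mu^+(R,\eta)$.
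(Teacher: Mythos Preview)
Your reduction strategy---eliminate two of the unknowns and study a single scalar equation on an admissible interval via monotonicity and the intermediate value theorem---is exactly the right idea, and your endpoint identifications (the $\alpha=0$ endpoint giving the threshold $R_\mu^+(R,\eta)$, and the collapse endpoint $\beta(\alpha)=\alpha$ corresponding to $\alpha^3 = \tfrac{9}{2}(1+R)\eta^2$) are both correct. However, your specific elimination order makes the monotonicity step harder than necessary, as you yourself anticipate.

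The paper's route differs in two useful ways. First, rather than using \eqref{eq:43} to eliminate $\gamma$ and \eqref{eq:41} to \emph{implicitly} define $\beta(\alpha)$, it observes that \eqref{eq:41} and \eqref{eq:41}$+$\eqref{eq:42} are \emph{linear} in $\beta^3$ and $\gamma^3$, so both can be expressed \emph{explicitly} as affine functions of $\alpha^3$:
\[
\beta^3 = \frac{R(R_\mu-R-1)}{(1+R)(R_\mu-R)}\,\alpha^3 + \frac{9\eta^2 R_\mu}{2(R_\mu-R)}\,,\qquad
\gamma^3 = -\frac{R_\mu-R-1}{1+R}\,\alpha^3 + \frac{9R_\mu(1+\eta^2)}{2}\,.
\]
The only remaining constraint is then \eqref{eq:43}, which becomes a single equation in $\alpha$ alone with no implicit function involved. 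Second, for $\alpha>0$ the paper substitutes $Y=\alpha^{-3}$ and multiplies through by $\alpha^{-2}$, obtaining a function $\xi(y) = (A_1 y - B_1)^{2/3} - (A_2 y + B_2)^{2/3} + (R_\mu-R-1)$ whose derivative has a completely transparent sign structure; the existence and uniqueness of a zero in the admissible range $y>y_2$ (equivalent to your constraint $\alpha<\beta$) then reduces cleanly to the dichotomy $A_2>A_1$ versus $A_2\le A_1$, and $A_2>A_1$ is precisely $R_\mu>R_\mu^+(R,\eta)$. This sidesteps the implicit-derivative computation you flag as the obstacle; in your framework the same monotonicity should be provable, but the algebra is considerably heavier because $\Psi$ carries the implicit dependence $\beta(\alpha)$. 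Your alternative suggestion of a dimensionless reduction is closer in spirit to the paper's $Y=\alpha^{-3}$ substitution and would likely also succeed.
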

%%%%%%%%%%%%%%%%%%%%%%%%%%%%%%%%%%%%%%%%

\begin{proof}
Combining \eqref{eq:41}-\eqref{eq:43} gives
\begin{equation}
\begin{split}
\beta^3 & = \frac{R(R_\mu-R-1)}{(R+1)(R_\mu-R)} \alpha^3 + \frac{9 R_\mu}{2 (R_\mu-R)} \eta^2\ , \\
\gamma^3 & = -\frac{R_\mu-R-1}{R+1} \alpha^3 + \frac{9 R_\mu}{2} (1+\eta^2)\ ,
\end{split} \label{gaston}
\end{equation}
and
\begin{equation}
(A_1 - B_1 \alpha^3)^{2/3} - (A_2+B_2 \alpha^3)^{2/3} +(R_\mu-R-1) \alpha^2 = 0 \label{2.14b}
\end{equation}
with
\begin{align*}
&A_1 := \frac{9R_\mu}{2}  (1+\eta^2)> 0\ ,  \qquad B_1 := \frac{R_\mu-R-1}{R+1}>0 \ , \\
&A_2 := \frac{9R_\mu}{2}  \eta^2 \sqrt{R_\mu-R} > 0\ ,  \qquad B_2 := \frac{R (R_\mu-R-1)}{R+1} \sqrt{R_\mu-R} > 0 \ .
\end{align*}
We also observe that, if $(\alpha,\beta, \gamma)$ solves \eqref{eq:41}-\eqref{eq:43} with $0\le \alpha < \beta < \gamma$, then 
\begin{align*}
0 < (R_\mu-R) (\beta^3 - \alpha^3) & = \frac{R (R_\mu-R-1)}{R+1} \alpha^3 + \frac{9R_\mu}{2}  \eta^2 - (R_\mu-R) \alpha^3 \\
& = \frac{9R_\mu}{2}  \eta^2 - \frac{R_\mu}{1+R} \alpha^3\ ,
\end{align*}
whence
\begin{equation}
\alpha^3 < \frac{9}{2} (1+R) \eta^2\ . \label{2.14bb}
\end{equation}

We first look for positive solutions to \eqref{2.14b}.
To this end we set $Y=\alpha^{-3}$ and multiply \eqref{2.14b} by $\alpha^{-2}$ to obtain that $Y$ is a positive zero of the function 
\begin{equation}
\xi(y) := (A_1 y - B_1)^{2/3} - (A_2 y + B_2)^{2/3} + R_\mu-R-1\ , \quad y\ge 0\ . \label{2.14c}
\end{equation}
Then, for $y\ne y_1:=B_1/A_1$, 
$$
\xi'(y) = \frac{2}{3} \frac{A_1}{(A_2y+B_2)^{1/3}} \left[ \mathrm{sign}(y-y_1) \left( \frac{A_2 y + B_2}{|A_1 y - B_1|} \right)^{1/3} - \frac{A_2}{A_1} \right]\ ,
$$
so that 
\begin{itemize}
\item $\xi'(y)<0$ for $y\in [0,y_1)$,
\item $\xi'(y)>0$ for $y\in (y_1,\infty)$ if $A_2\le A_1$,
\item there is a unique $y_0\in (y_1,\infty)$ such that $\xi'>0$ in $(y_1,y_0)$, $\xi'<0$ in $(y_0,\infty)$, and $\xi'(y_0)=0$ if $A_2>A_1$. 
\end{itemize}
Moreover we note that
$$
y_2 := \frac{2}{9 \eta^2 (1+R)} = \frac{B_1}{A_1} \frac{1+\eta^2}{\eta^2} \frac{R_\mu}{R_\mu-R-1} > y_1\ ,
$$
with
\begin{equation}
\xi(y_2) = \left( 1 + \frac{R_\mu}{\eta^2 (1+R)} \right)^{2/3} - 1 > 0\ . \label{2.14d}
\end{equation}
Finally
\begin{equation*}
\lim_{y\to\infty} \xi(y) = \left\{ 
\begin{array}{lcl}
\infty & \text{ if } & A_1>A_2\ , \\
R_\mu - R - 1 & \text{ if } & A_1=A_2\ , \\
- \infty & \text{ if } & A_1<A_2\ .
\end{array}
\right. %\label{2.14e}
\end{equation*}
Recalling the constraint \eqref{2.14bb}, we thus look for a zero of $\xi$ in $(y_2,\infty)$. 
There is none if $A_1\ge A_2$ according to \eqref{2.14d} and the monotonicity of $\xi$. 
If $A_2>A_1$, we infer from \eqref{2.14d} and the behavior of $\xi$ that $\xi$ has a unique zero $Y>y_2$. 
Setting $\alpha=Y^{-1/3}$ and defining $(\beta,\gamma)$ by \eqref{gaston}, the property $Y>y_2$ implies that the constraint \eqref{2.14bb} 
is satisfied, so that $\alpha<\beta$. 
Furthermore, the properties $\alpha<\beta$ and $R_\mu>R+1$ and \eqref{eq:42} guarantee that $\gamma^3>\beta^3$. Finally, the requirement
$A_2>A_1$ for $Y$ to exist is equivalent to $R_\mu>R_\mu^+(R,\eta)$. 

It remains to consider the possibility of having $\alpha=0$. Then \eqref{2.14b} implies that $A_1=A_2$ and thus $R_\mu=R_\mu^+(R,\eta)$. 
We deduce from \eqref{gaston} that 
$$
\beta^3 = \frac{9R_\mu}{2} \frac{\eta^6}{(1+\eta^2)^2}>0\ , \qquad \gamma^3 = \frac{9R_\mu}{2} (1+\eta^2) > \beta^3\ ,
$$
which completes the proof.
\end{proof}

We next turn to the system of four algebraic equations \eqref{d1:V1}-\eqref{d1:V4} arising in the study of non-symmetric self-similar profiles with connected supports in Section~\ref{S:23}.

%%%%%%%%%%%%%%%%%%%%%%%%%%%%%%%%%%%%%%%%
\begin{lemma}\label{P:NSC}
Let $(R,R_\mu,\eta)$ be three positive real numbers such that $R_\mu>R+1$. 
There exists a constant $R_\mu^M(R,\eta)>R_\mu^+(R,\eta)$ with the property  that the system of algebraic equations \eqref{d1:V1}-\eqref{d1:V4} has a unique solution $(\beta_1,\alpha,\beta,\gamma)$ with 
\begin{equation}
\beta_1<0\leq\alpha<\beta<\gamma \label{chopin}
\end{equation}
for each $R_\mu\geq R_\mu^M(R,\eta),$ and it has no such solution when $R+1<R_\mu<R_\mu^M(R,\eta).$ Moreover, $-\beta_1>\alpha$ for all $R_\mu\ge R_\mu^M(R,\eta)$ and $\alpha=0$ if and only if $R_\mu=R_\mu^M(R,\eta).$ 
\end{lemma}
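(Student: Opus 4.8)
The goal is Lemma~\ref{P:NSC}: the four--equation system \eqref{d1:V1}--\eqref{d1:V4} with the sign constraints \eqref{chopin} has a solution precisely when $R_\mu\ge R_\mu^M(R,\eta)$, the solution is unique, $-\beta_1>\alpha$, and $\alpha=0$ iff $R_\mu=R_\mu^M(R,\eta)$. The strategy mirrors the proof of Lemma~\ref{L:Z1}: eliminate all unknowns but $\alpha$ (or a suitable power of it), reduce to a single scalar equation, analyse its monotonicity, and read off existence/uniqueness and the threshold value from the endpoint behaviour. The linear subsystem \eqref{d1:V3}--\eqref{d1:V4} expresses $\gamma^2$ and $\beta_1^2$ affinely in terms of $\alpha^2$ and $\beta^2$; combined with \eqref{d1:V1}--\eqref{d1:V2} this should let me write $\beta^3$, $\gamma^3$, and $\beta_1^3$ as functions of $\alpha^3$ (with coefficients involving $R,R_\mu,\eta$), after which the remaining equation becomes a single transcendental relation in $\alpha$, analogous to \eqref{2.14b}. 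I would substitute $Y=\alpha^{-3}$ (and handle $\alpha=0$ separately as in Lemma~\ref{L:Z1}) to obtain a function $\xi(y)$ whose positive zeros correspond to admissible solutions.

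First I would carry out the elimination carefully. From \eqref{d1:V3} we get $\gamma^2 = (R_\mu-R)\beta^2 - (R_\mu-R-1)\alpha^2$, and from \eqref{d1:V4}, $R_\mu\beta_1^2 = (1+R)(R_\mu-R)\beta^2 - R(R_\mu-R-1)\alpha^2$; both right-hand sides must be non-negative, which — together with $R_\mu>R+1$ — forces $\beta>\alpha$ and will later pin down the allowed range of $\alpha^3/\beta^3$. Then \eqref{d1:V1} and \eqref{d1:V2} give two more relations; taking the combination that eliminates $\beta^3$ (as in the derivation of \eqref{gaston}) yields $\gamma^3$ and $\beta_1^3$ as affine functions of $\alpha^3$. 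Plugging $\gamma^2=(\gamma^3)^{2/3}$ etc. back into the constraint $\gamma^2 = (R_\mu-R)\beta^2-(R_\mu-R-1)\alpha^2$ (and similarly for $\beta_1$) produces the scalar equation. I expect, just as in Lemma~\ref{L:Z1}, that after multiplying by $\alpha^{-2}$ and setting $y=\alpha^{-3}$, I get a function of the form $\xi(y) = (\text{linear})^{2/3} - (\text{linear})^{2/3} + (\text{constant})$, with the slopes of the two linear parts governing the behaviour at $y\to\infty$; the sign of (slope$_1$ $-$ slope$_2$), i.e. a comparison of two explicit constants depending on $(R,R_\mu,\eta)$, is exactly what determines whether a zero exists, and equating these slopes should give the defining equation \eqref{RmuMc} for $R_\mu^M(R,\eta)$.

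Next I would establish the monotonicity of $\xi$ on the relevant half-line. Differentiating, $\xi'$ has the same structure as in \eqref{2.14c}: it is a product of a positive factor and a bracket of the form $\mathrm{sign}(y-y_1)(\cdots)^{1/3} - (\text{const})$, which is negative for small $y$ and changes sign at most once. Combined with the constraint \eqref{2.14bb}-type bound coming from $\beta>\alpha$ (namely $\alpha^3$ bounded above, equivalently $y$ bounded below by some explicit $y_2$), and an evaluation $\xi(y_2)>0$, this yields: no zero in the admissible range when the slope comparison fails, and exactly one zero when it succeeds, i.e. when $R_\mu\ge R_\mu^M(R,\eta)$. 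For the boundary case $\alpha=0$ I substitute directly into \eqref{d1:V1}--\eqref{d1:V4}: the constraint equations then force the slope equality, hence $R_\mu=R_\mu^M(R,\eta)$, and one solves explicitly for $\beta,\gamma,\beta_1$, checking $\beta_1<0<\beta<\gamma$ and $-\beta_1>\alpha=0$. The inequality $-\beta_1>\alpha$ for general $R_\mu\ge R_\mu^M$ would follow from the explicit affine formula $\beta_1^3 = (\text{something})$ together with the already-known ordering, or by an extra sign argument analogous to the one giving $\gamma>\beta$.

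\textbf{Main obstacle.} The bookkeeping of the elimination is the crux: one must verify that the slopes of the two $2/3$-power terms genuinely collapse to the clean relation \eqref{RmuMc} — i.e. that $R_\mu^M(R,\eta)$ as defined by \eqref{RmuMc} is the exact threshold — and that \eqref{RmuMc} indeed has a unique root in $(R+1,\infty)$. The latter is a separate one-variable monotonicity check on the map $R_\mu\mapsto \sqrt{R_\mu-R}\,(\eta^2-\sqrt{(1+R)/R_\mu}) - (1+\eta^2)$, which is increasing in $R_\mu$ on $(R+1,\infty)$ (both $\sqrt{R_\mu-R}$ and $-\sqrt{(1+R)/R_\mu}$ increase), goes to $-\infty$ appropriately at the left end and to $+\infty$ at the right, hence has a unique zero; I also need $R_\mu^M>R_\mu^+(R,\eta)$, which I would get by evaluating the threshold function at $R_\mu^+(R,\eta)$ and showing it is still negative there. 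The rest — uniqueness from strict monotonicity of $\xi$ on the admissible interval, and the sign bookkeeping for $\beta_1<0$, $-\beta_1>\alpha$ — is routine once the reduction is in place, and I would relegate the longest algebraic manipulations to displayed computations without narrating every step.
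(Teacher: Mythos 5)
The overall meta-strategy (reduce the $4\times 4$ algebraic system to a single scalar equation and study its sign and monotonicity, mirroring Lemma~\ref{L:Z1}) is sound, but the specific elimination you propose does not go through, and this is a genuine gap rather than a bookkeeping nuisance. From \eqref{d1:V1}--\eqref{d1:V2} you obtain
\begin{align*}
R_\mu\beta_1^3 &= (1+R)(R_\mu-R)\beta^3 - R(R_\mu-R-1)\alpha^3 - 9\eta^2R_\mu(1+R)\ ,\\
\gamma^3 &= (R_\mu-R)\beta^3 - (R_\mu-R-1)\alpha^3 + 9R_\mu\ ,
\end{align*}
so $\gamma^3$ and $\beta_1^3$ are affine in $\alpha^3$ \emph{and} $\beta^3$, and there is no third cubic equation to eliminate $\beta^3$; thus you cannot reach the L:Z1-type form $(\text{linear in }\alpha^{-3})^{2/3}-(\text{linear in }\alpha^{-3})^{2/3}+\text{const}$. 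Plugging the quadratic relations \eqref{d1:V3}--\eqref{d1:V4} into \eqref{d1:V1}--\eqref{d1:V2} leaves a genuine two-variable system in $(\alpha,\beta)$ involving half-powers, which does not solve for $\beta$ explicitly. The missing idea is to exploit the scaling structure: \eqref{d1:V3}--\eqref{d1:V4} are homogeneous of degree $2$ and the left sides of \eqref{d1:V1}--\eqref{d1:V2} are homogeneous of degree $3$, so dividing by $\beta^2$ and $\beta^3$ respectively and introducing the ratios $z=\alpha/\beta$, $y=\beta_1/\beta$, $x=\gamma/\beta$ lets the homogeneous equations determine $x,y$ from $z$, and then eliminating the single remaining quantity $\beta^3$ from the two cubic equations yields a scalar equation $\xi_0(R_\mu,z)=0$ on $z\in[0,1)$. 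This is the only way the reduction closes.

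Two further points where your expectations diverge from what actually happens. First, the resulting $\xi_0$ has $3/2$-power terms (coming from $\gamma^3=(\gamma^2)^{3/2}$, $\beta_1^3=-(\beta_1^2)^{3/2}$) together with a cubic $z^3$, not the $2/3$-power structure of \eqref{2.14c}; its derivative factors as $3(R_\mu-R-1)z^2\,\xi_1(\cdot)$ and requires two further layers of auxiliary-function analysis and a case split on the sign of $R^2-\eta^4 R_\mu(1+R)$, so the ``same structure as \eqref{2.14c}'' claim is too optimistic. Second, the threshold $R_\mu^M$ does \emph{not} arise from a slope comparison as in Lemma~\ref{L:Z1}; it arises from the condition $\xi_0(R_\mu,0)\ge 0$, i.e.\ from evaluating the reduced equation at the endpoint $z=0$ (equivalently $\alpha=0$), which after simplification becomes \eqref{RmuMc}. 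Your one-variable monotonicity check of the threshold function is also slightly off: the map $t\mapsto \sqrt{t}\bigl(\eta^2-\sqrt{(1+R)/(t+R)}\bigr)-(1+\eta^2)$ need not be monotone throughout $(1,\infty)$ (its derivative has a sign change governed by $(R+t)^{3/2}$ versus $R\sqrt{1+R}/\eta^2$), though it still has a unique root; this needs to be argued via the derivative rather than by the product-of-increasing-factors heuristic, since one of the factors changes sign.
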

%%%%%%%%%%%%%%%%%%%%%%%%%%%%%%%%%%%%%%%%

\begin{proof}
We fix $R,\eta\in (0,\infty)$ and consider $R_\mu>R+1$ as being a variable parameter. We observe that, if $(\beta_1,\alpha,\beta,\gamma)$ is a solution of \eqref{d1:V1}-\eqref{d1:V4} satisfying \eqref{chopin}, then the new variables
\begin{align}\label{UN1}
x:=\frac{\gamma}{\beta},\quad y:=\frac{\beta_1}{\beta},\quad z:=\frac{\alpha}{\beta}
\end{align}
are ordered as follows $y<0\leq z<1<x$. 
Moreover, dividing the equations \eqref{d1:V1}-\eqref{d1:V2} by $\beta^3$ and  \eqref{d1:V3}-\eqref{d1:V4} by $\beta^2$, we find the following relations
\begin{eqnarray}
\label{g1:1}
R_\mu y^2+R(R_\mu-R-1)z^2&=&(1+R)(R_\mu-R)\ ,\\
\label{g1:2}
x^2+(R_\mu-R-1)z^2&=&(R_\mu-R)\ ,\\
\label{g1:3}
R_\mu y^3+R(R_\mu-R-1)z^3-(1+R)(R_\mu-R)&=&-\frac{9\eta^2R_\mu(1+R)}{\beta^3}\ ,\\
\label{g1:4}
x^3 +(R_\mu-R-1)z^3-(R_\mu-R)&=&\frac{9R_\mu}{\beta^3}\ .
\end{eqnarray}
Extracting $x$ and $y$ from the relations \eqref{g1:1}-\eqref{g1:2} gives
\begin{equation}\label{xy}
\begin{aligned}
 x&=\sqrt{R_\mu-R-(R_\mu-R-1)z^2}\ ,\\
 y&=-\sqrt{\frac{(1+R)(R_\mu-R)-R(R_\mu-R-1)z^2}{R_\mu}}\ ,
\end{aligned}
\end{equation}
and recalling that $R_\mu>R+1$, we see that indeed $x>1$ and $y<0$ are well-defined for $z\in[0,1).$ 
We eliminate now $\beta$ from \eqref{g1:3}-\eqref{g1:4} and arrive at the equation $\xi_0(R_\mu, z)=0,$ where the function $\xi_0:(R+1,\infty)\times[0,1)\to\R$ is defined by 
\begin{align*}
 \xi_0(R_\mu,z):=&-(1+\eta^2)(1+R)(R_\mu-R)-\frac{1}{\sqrt{R_\mu}}\left[(1+R)(R_\mu-R)-R(R_\mu-R-1)z^2\right]^{3/2}\nonumber\\
 &+\eta^2(1+R)\left[R_\mu-R-(R_\mu-R-1)z^2\right]^{3/2}+ (R_\mu-R-1)(R+\eta^2(1+R))z^3.
\end{align*}
Clearly, if for some $R_\mu>R+1$ the map $\xi_0(R_\mu,\cdot)$ has no zero, then \eqref{d1:V1}-\eqref{d1:V4} has no solution with the desired ordering. 
Conversely, each zero of $\xi_0(R_\mu,\cdot)$ provides a unique solution $(\beta_1,\alpha,\beta,\gamma)$ of \eqref{d1:V1}-\eqref{d1:V4} satisfying \eqref{chopin}. 
Indeed, let $z\in [0,1)$ be a solution  $\xi_0(R_\mu,z)=0$ and define $x$ and $y$ by \eqref{xy}. Then, in view of  $z\in[0,1)$ and $R_\mu>R+1$, both $x$ and $y$ are well-defined and $y<0\leq z<1<x.$ 
Together with \eqref{g1:1} and \eqref{g1:3} this implies that 
\begin{equation}
\frac{9\eta^2R_\mu(1+R)}{\beta^3}=R(R_\mu-R-1)z^2(1-z)+R_\mu y^2(1-y)>0\ , \label{bxy}
\end{equation}
which uniquely determines $\beta$. 
Thus, with $\beta_1,\alpha,\gamma$ given by \eqref{UN1}, we obtain a solution of \eqref{d1:V1}-\eqref{d1:V4} satisfying \eqref{chopin}. 
We emphasize that there is in fact a one-to-one correspondence between the solutions $\beta_1<0\leq\alpha<\beta<\gamma$ of the system \eqref{d1:V1}-\eqref{d1:V4} 
and the solutions $z\in[0,1)$ of $\xi_0(R_\mu, z)=0,$ see \eqref{g1:3}, \eqref{xy}, and \eqref{bxy}. 

\smallskip
 
Thanks to the previous analysis, we are left with the simpler task of determining the zeros of $\xi_0.$ We first note that
\begin{equation}\label{1}
\lim_{z\to 1}\xi_0(R_\mu, z)=-2R_\mu<0\ ,
\end{equation}
and
\begin{equation}\label{der1}
\p_z\xi_0(R_\mu,z)=3(R_\mu-R-1)z^2\xi_1\left( R_\mu, \sqrt{ \frac{R_\mu-R}{z^2} - (R_\mu-R-1)} \right)\ , \quad z\in [0,1)\ ,
\end{equation}
with 
\begin{equation}\label{der2}
\xi_1(R_\mu,t) := \frac{R}{\sqrt{R_\mu}} \sqrt{(1+R) t^2 + R_\mu-R-1} - \eta^2 (1+R) t + R + \eta^2(1+R)
\end{equation}
for $(R_\mu,t)\in (R+1,\infty)\times(1,\infty)$. Note that 
\begin{equation}
\lim_{t\to 1} \xi_1(R_\mu,t)=2R \label{der2.5}
\end{equation}
and
\begin{align}\label{der3}
\lim_{t\to\infty}\xi_1(R_\mu,t) =\left\{
\begin{array}{lll}
\infty&,& R^2>\eta^4R_\mu(1+R),\\
 & & \\
R+\eta^2(1+R)&,& R^2=\eta^4R_\mu(1+R),\\
& & \\
-\infty&,& R^2<\eta^4R_\mu(1+R).
\end{array}
\right.
\end{align}
In addition, 
\begin{align}
\p_t\xi_1(R_\mu,t) =&  \frac{R(1+R)}{\sqrt{R_\mu}} \frac{\zeta(R_\mu,t^2)}{R^2 ((1+R) t^2 + R_\mu-R-1)} \\
& \qquad \times \left( \frac{t}{\sqrt{(1+R) t^2 + R_\mu - R - 1}} + \frac{\eta^2 \sqrt{R_\mu}}{R} \right)^{{-1}}\ ,\label{der4}
\end{align}
with
$$
\zeta(R_\mu,s) := (R^2-\eta^4R_\mu(1+R)) s - \eta^4 R_\mu (R_\mu-R-1)
$$
for $(R_\mu,s)\in (1+R,\infty)\times(1,\infty)$.

\medskip

We handle separately different cases:

\smallskip

\noindent\textbf{Case~1: $R^2>\eta^4R_\mu(1+R)$.} Introducing 
$$
s_0 := \frac{\eta^4 R_\mu (R_\mu-R-1)}{R^2-\eta^4R_\mu(1+R)} >0\ ,
$$
either $s_0\le 1$ and $\zeta(R_\mu,s)> \zeta(R_\mu,s_0) = 0$ for $s>1$ and we deduce that $\partial_t \xi_1(R_\mu,t)>0$ for $t>1$. 
Consequently, $\xi_1(R_\mu,t)>2R$ by \eqref{der2.5}. 
Or $s_0>1$ and $(t-\sqrt{s_0}) \partial_t \xi_1(R_\mu,t) \ge 0$ for $t>1$ with equality only when $t=\sqrt{s_0}$. 
Therefore, 
$$
\xi_1(R_\mu,t) \ge \xi_1(R_\mu,\sqrt{s_0})=\frac{R^2-\eta^4R_\mu(1+R)}{\eta^2R_\mu} \sqrt{s_0} + R + \eta^2(1+R)>0\ ,
$$
and we conclude that $\xi_1(R_\mu,\cdot)$ is positive in $(1,\infty)$ and $\xi_0(R_\mu,\cdot)$ is increasing in $(1,\infty)$ by \eqref{der1}. 
Recalling \eqref{1}, $\xi_0(R_\mu,\cdot)$ is negative in $[0,1)$ and the equation $\xi_0(R_\mu,z)=0$ has no solution in $[0,1)$.

\smallskip

\noindent\textbf{Case~2: $R^2=\eta^4R_\mu(1+R)$.} 
In that case, $\zeta(R_\mu,\cdot)$ is obviously negative in $(1,\infty)$ which, together with \eqref{der3} and \eqref{der4}, entails that $\xi_1(R_\mu,t)>0$ for all $t>1.$ 
Recalling \eqref{1} and \eqref{der1}, we conclude that the equation $\xi_0(R_\mu,z)=0$ has no solution in $[0,1).$

\smallskip

\noindent\textbf{Case~3: $R^2<\eta^4R_\mu(1+R)$.} In that case $\zeta(R_\mu,\cdot)<0$ in $(1,\infty)$ and $\xi_1(R_\mu,\cdot)$ is decreasing from $(1,\infty)$ 
onto $(-\infty,2R)$. 
There is thus a unique $t_1\in (1,\infty)$ such that
$$
\xi_1(R_\mu,t)>0 \;\;\text{ for }\;\; t\in (1,t_1) \;\;\text{ and }\;\; \xi_1(R_\mu,t)<0 \;\;\text{ for }\;\; t\in (t_1,\infty)\ .
$$
Setting 
$$
z_1 := \sqrt{\frac{R_\mu-R}{t_1^2+ R_\mu-R-1}} \in (0,1)\ ,
$$ 
it follows from \eqref{der1} that
$$
\partial_z \xi_0(R_\mu,z)>0 \;\;\text{ for }\;\; z\in (z_1,1) \;\;\text{ and }\;\; \partial_z \xi_0(R_\mu,z)<0 \;\;\text{ for }\;\; z\in (0,z_1)\ .
$$
Recalling \eqref{1}, we realize that $\xi_0(R_\mu,z)<-2 R_\mu<0$ for $z\in [z_1,1)$ so that the function $\xi_0(R_\mu,\cdot)$ vanishes at most once in $[0,1)$ and
necessarily in $[0,z_1)$. 
Clearly this can only happen if $\xi_0(R_\mu,0)\ge 0$.  

\smallskip

Summarizing, we have shown that the equation $\xi_0(R_\mu,z)=0$ has a solution $z\in [0,1)$ if and only if $\xi_0(R_\mu,0)\ge 0,$ this solution being unique.

\smallskip

To see when the inequality $\xi_0(R_\mu,0)\ge 0$ holds, we observe that
\begin{equation}\label{0}
\xi_0(R_\mu, 0)=(1+R)(R_\mu-R) \xi_2(R_\mu-R)\ ,
 \end{equation}
with
$$
\xi_2(t) := t^{1/2} \left(\eta^2-\sqrt{\frac{1+R}{t+R}}\right) - 1 - \eta^2\ , \quad t>1\ .
$$
The function $\xi_2$ satisfies 
$$
\lim_{t\to1}\xi_2(t) = -2 \qquad\text{and}\qquad \lim_{t\to\infty}\xi_2(t) =\infty\ ,
$$
and
$$
\xi_2'(t)= \frac{\eta^2}{2 \sqrt{t} (R+t)^{3/2}} \left[ (R+t)^{3/2} - \frac{R \sqrt{1+R}}{\eta^2} \right]\ , \quad t>1\ .
$$
Therefore,
\begin{align*}
\xi_2'(t) > 0 & \;\;\text{ for }\;\; t> \max\{1,t_2\}\ , \quad t_2 := \frac{R^{2/3} (1+R)^{1/3}}{\eta^{4/3}} - R\ , \\
\xi_2'(t) < 0 & \;\;\text{ for }\;\; t\in (1,\max\{1,t_2\})\ ,
\end{align*}
and $\xi_2$ has a unique zero $t_M\in (1,\infty)$. We set $R_\mu^M(R,\eta) := R + t_M$. Since
$$
\xi_2(R_\mu^+(R,\eta)-R)=- \frac{1+\eta^2}{\eta^2} \sqrt{\frac{1+R}{R_\mu^+(R,\eta)}}<0\ ,
$$
we conclude that $R_\mu^M(R,\eta)>R_\mu^+(R,\eta).$ 
With \eqref{0}, we have thus shown that $\xi_0(R_\mu, 0)<0$ for 
$R_\mu<R_\mu^M(R,\eta)$, $\xi_0(R_\mu^M(R,\eta), 0)=0$, and $\xi_0(R_\mu, 0)>0$ for $R_\mu>R_\mu^M(R,\eta)$.
Returning to the original problem, we have proven that \eqref{d1:V1}-\eqref{d1:V4} has a unique solution $(\beta_1,\alpha,\beta,\gamma)$ 
satisfying \eqref{chopin} if and only if $R_\mu\ge R_\mu^M(R,\eta)$ and the property $\xi_0(R_\mu^M(R,\eta), 0)=0$ entails that $\alpha=0$ 
if $R_\mu=R_\mu^M(R,\eta)$. We finally note that, if $R_\mu\ge R_\mu^M(R,\eta)>R+1$ and $(\beta_1,\alpha,\beta,\gamma)$ denotes the 
corresponding solution to \eqref{d1:V1}-\eqref{d1:V4} satisfying \eqref{chopin}, it follows from \eqref{d1:V4} and \eqref{chopin} that
$$
R_\mu \beta_1^2 > (1+R)(R_\mu-R) \alpha^2 - R (R_\mu-R-1) \alpha^2 = R_\mu \alpha^2\ ,
$$
hence $-\beta_1>\alpha$ by \eqref{chopin}. The proof of Lemma~\ref{P:NSC} is then complete.
\end{proof}

In the next lemma, we study some particular solutions of the system of five algebraic equations \eqref{R1:1}-\eqref{R1:5}.

%%%%%%%%%%%%%%%%%%%%%%%%%%%%%%%%%%%%%%%%
\begin{lemma}\label{L:2.12b}
Let $(R,R_\mu,\eta)$ be three positive real numbers such that $R_\mu>R+1$ and consider a solution
$(\gamma_1,\beta_1,\alpha_1,\alpha,\beta,\gamma)\in\mathbb{R}^6$ of the system of algebraic equations \eqref{R1:1}-\eqref{R1:5} such that
\begin{equation}
\gamma_1 \le \beta_1 \le \alpha_1 \le 0 \le \alpha \le \beta \le \gamma\ . \label{corelli}
\end{equation}
\begin{itemize}
\item[$(i)$] If $\gamma_1=\beta_1$ or $\beta_1=\alpha_1,$ then $\alpha_1=\beta_1=\gamma_1 <0\leq \alpha<\beta<\gamma$ and $(\beta_1,\alpha,\beta,\gamma)$
solves \eqref{d1:V1}-\eqref{d1:V4}.

\item[$(ii)$] If $\gamma=\beta$ or $\beta=\alpha,$ then $\gamma_1<\beta_1<\alpha_1\leq 0<\alpha=\beta=\gamma$ and $(-\alpha,-\alpha_1,-\beta_1,-\gamma_1)$ 
solves \eqref{d1:V1}-\eqref{d1:V4}.

\item[$(iii)$] If $\alpha_1=-\alpha$, then $\gamma_1=-\gamma$, $\beta_1=-\beta$, $0\leq\alpha<\beta<\gamma$, and $(\alpha,\beta,\gamma)$ solves \eqref{eq:41}-\eqref{eq:43}.
\end{itemize}
\end{lemma}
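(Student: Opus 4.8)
The plan is to handle the three items in order, deducing part~(ii) from part~(i) by a reflection symmetry of the system \eqref{R1:1}-\eqref{R1:5}, and treating parts~(i) and~(iii) by direct elimination. Throughout I would use repeatedly that $R_\mu-R-1>0$ and $R_\mu-R>0$ since $R_\mu>R+1$.

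For part~(i), assume first $\gamma_1=\beta_1$ (the case $\beta_1=\alpha_1$ being analogous). Substituting into \eqref{R1:1} and using $R_\mu-R-1>0$ gives $\alpha_1^2=\gamma_1^2$, and the sign constraints in \eqref{corelli} force $\alpha_1=\beta_1=\gamma_1=:m\le 0$. If $m=0$, then \eqref{R1:3} forces $\beta=\gamma=0$, \eqref{R1:2} then forces $\alpha=0$, and \eqref{R1:5} collapses to $0=9R_\mu$, a contradiction; hence $m<0$. Next I would substitute $\alpha_1=\beta_1=\gamma_1=m$ into \eqref{R1:1}-\eqref{R1:5}: equation \eqref{R1:1} becomes trivially satisfied; \eqref{R1:2} is literally \eqref{d1:V3}; eliminating $\gamma^2$ between \eqref{R1:2} and \eqref{R1:3} produces \eqref{d1:V4}; and, using the elementary identity $(1+R)(R_\mu-R)-R(R_\mu-R-1)=R_\mu$, the $m^3$-terms in \eqref{R1:4} and \eqref{R1:5} cancel and these two equations reduce exactly to \eqref{d1:V1} and \eqref{d1:V2}. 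Thus $(\beta_1,\alpha,\beta,\gamma)$ solves \eqref{d1:V1}-\eqref{d1:V4}. Finally, to upgrade $\alpha\le\beta\le\gamma$ to strict inequalities: if $\alpha=\beta$ then \eqref{d1:V3} gives $\gamma=\alpha$ and then \eqref{d1:V2} reads $0=9R_\mu$, impossible; if $\beta=\gamma$ then \eqref{d1:V3} forces $\alpha=\beta$, again excluded; hence $\alpha<\beta<\gamma$.

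For part~(ii), I would first check by direct substitution that \eqref{R1:1}-\eqref{R1:5} is invariant under the involution $(\gamma_1,\beta_1,\alpha_1,\alpha,\beta,\gamma)\mapsto(-\gamma,-\beta,-\alpha,-\alpha_1,-\beta_1,-\gamma_1)$: each of the five equations is carried onto another equation of the same system (equation \eqref{R1:3} onto itself up to an overall sign, equations \eqref{R1:1}$\leftrightarrow$\eqref{R1:2}, and \eqref{R1:4}, \eqref{R1:5} onto themselves). This involution preserves the chain \eqref{corelli} (negating and reversing it) and turns the hypothesis ``$\gamma=\beta$ or $\beta=\alpha$'' into ``$\gamma_1=\beta_1$ or $\beta_1=\alpha_1$''. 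Applying part~(i) to the transformed sextuplet and translating the conclusion back yields $\gamma_1<\beta_1<\alpha_1\le 0<\alpha=\beta=\gamma$ together with the fact that $(-\alpha,-\alpha_1,-\beta_1,-\gamma_1)$ solves \eqref{d1:V1}-\eqref{d1:V4}, which is exactly the claim of part~(ii). For part~(iii), from $\alpha_1=-\alpha$ (so $\alpha_1^2=\alpha^2$) I would subtract \eqref{R1:2} from \eqref{R1:1} to get $\gamma_1^2-\gamma^2=(R_\mu-R)(\beta_1^2-\beta^2)$; combining this with \eqref{R1:3} gives $(1+R)(R_\mu-R)(\beta_1^2-\beta^2)=0$, whence $\beta_1^2=\beta^2$, and then $\gamma_1^2=\gamma^2$. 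The signs in \eqref{corelli} force $\beta_1=-\beta$ and $\gamma_1=-\gamma$. Substituting these, \eqref{R1:1} and \eqref{R1:2} both reduce to \eqref{eq:43}, while in \eqref{R1:4} and \eqref{R1:5} the odd powers double up, so dividing by $2$ yields \eqref{eq:41} and \eqref{eq:42}; the strictness $\alpha<\beta<\gamma$ follows as before from \eqref{eq:42} and \eqref{eq:43}.

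No step presents a genuine obstacle; the proof is essentially a bookkeeping exercise. The only points requiring care are the algebraic identity used to collapse \eqref{R1:4}-\eqref{R1:5} onto \eqref{d1:V1}-\eqref{d1:V2} in part~(i), the correct verification of the sign behaviour of the five equations under the involution in part~(ii), and the systematic use of the inhomogeneous equations \eqref{R1:5}, \eqref{d1:V2}, and \eqref{eq:42} to exclude the degenerate orderings at each stage.
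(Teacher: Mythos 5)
Your proof is correct and follows essentially the same strategy as the paper's: derive the collapse of the negative triple from \eqref{R1:1} and the sign chain, reduce \eqref{R1:2}--\eqref{R1:5} to \eqref{d1:V1}--\eqref{d1:V4} via the identity $(1+R)(R_\mu-R)-R(R_\mu-R-1)=R_\mu$, deduce part~(ii) from part~(i) by the reflection involution, and for part~(iii) combine \eqref{R1:1}--\eqref{R1:3} to force $\beta_1=-\beta$, $\gamma_1=-\gamma$ before halving the cubics. One small imprecision: in part~(i) you say the $m^3$-terms ``cancel'' in both \eqref{R1:4} and \eqref{R1:5}; they genuinely cancel in \eqref{R1:5}, but in \eqref{R1:4} they do not cancel --- they combine (via the identity you quote) into the surviving $R_\mu\beta_1^3$ term of \eqref{d1:V1}, so the wording should be ``combine'' rather than ``cancel''.
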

%%%%%%%%%%%%%%%%%%%%%%%%%%%%%%%%%%%%%%%%

\begin{proof}
$(i)$: If $\gamma_1=\beta_1$ or $\beta_1=\alpha_1$ it readily follows from \eqref{R1:1} that
$\gamma_1^2-\alpha_1^2 = (R_\mu-R) (\beta_1^2 - \alpha_1^2)$ and thus $\alpha_1=\beta_1=\gamma_1$ by \eqref{corelli}. 
A similar argument using \eqref{R1:2} and \eqref{corelli} shows that, if $\alpha=\beta$ or $\beta=\gamma$, then $\alpha=\beta=\gamma$. In that case
equation~\eqref{R1:3} yields additionally that $\alpha_1=-\alpha$, which contradicts \eqref{R1:5}. Consequently
$0\leq \alpha<\beta<\gamma$. 
Next, if $\alpha_1=0,$ then we infer from \eqref{R1:2}-\eqref{R1:3} that $\alpha=\beta=\gamma=0$, which also contradicts \eqref{R1:5}.
Finally, we  infer from \eqref{R1:2}-\eqref{R1:5} that $(\beta_1,\alpha,\beta,\gamma)$ solves \eqref{d1:V1}-\eqref{d1:V4}.

\noindent $(ii)$: We simply note that $(-\gamma,-\beta,-\alpha,-\alpha_1,-\beta_1,-\gamma_1)$ also satisfies \eqref{R1:1}-\eqref{R1:5} and \eqref{corelli} 
and deduce~$(ii)$ from~$(i)$.

\noindent $(iii)$: If $\alpha_1=-\alpha$, we infer from \eqref{R1:1}-\eqref{R1:3} that 
$$
\gamma^2-\gamma_1^2 = \beta^2-\beta_1^2 = 0\ .
$$
Combining these identities with \eqref{corelli} entails that $\gamma_1 = -\gamma$ and $\beta_1= -\beta$. 
A further use of \eqref{R1:1}-\eqref{R1:3} shows that $(\alpha,\beta,\gamma)$ solves \eqref{eq:41}-\eqref{eq:43}.
To conclude, we note that if $\alpha=\beta$ or $\beta=\gamma$, equation \eqref{eq:43} implies $\alpha=\beta=\gamma,$ which is not possible by \eqref{eq:42}.
\end{proof}

We investigate now the existence of solutions of the systems \eqref{R1:1}-\eqref{R1:5} and \eqref{R2:1}-\eqref{R2:5} 
which satisfy \eqref{haydn} as well as $\alpha=0$ and $\alpha_1>0$ (or equivalently $\alpha_1=0$ and $\alpha>0$ since 
these systems are invariant with respect to the transformation
$(\gamma_1,\beta_1,\alpha_1,\alpha,\beta,\gamma)\mapsto (-\gamma,-\beta,-\alpha,-\alpha_1,-\beta_1,-\gamma_1)$).
It turns out that, in this case, the solution, if it exists, is uniquely determined by the constants $R_\mu, R,$ and $\eta.$

%%%%%%%%%%%%%%%%%%%%%%%%%%%%%%%%%%%%%%%%%%%
\begin{lemma}\label{L:Nconn}
 Let $R$, $R_\mu$, and $\eta$ be given positive numbers such that $R_\mu>R+1$. Then the system \eqref{R1:1}-\eqref{R1:5} has a
 unique solution $(\gamma_1,\beta_1,\alpha_1,\alpha,\beta,\gamma)$ satisfying 
\begin{equation}
\gamma_1<\beta_1<\alpha_1<0=\alpha<\beta<\gamma\ , \label{monteverdi}
\end{equation} 
for each $R_\mu^+(R,\eta)<R_\mu< R_\mu^M(R,\eta)$ and no solution with this property if $R_\mu\in (R+1, R_\mu^+(R,\eta)]\cup [R_\mu^M(R,\eta), \infty).$
\end{lemma}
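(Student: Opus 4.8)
The plan is to reduce the system \eqref{R1:1}-\eqref{R1:5} with the constraint $\alpha=0$ to a single scalar equation in a single scalar unknown, exactly as was done for the connected-support system in Lemma~\ref{P:NSC}. First I would substitute $\alpha=0$ into \eqref{R1:1}-\eqref{R1:5}; the remaining five equations then involve only the five unknowns $(\gamma_1,\beta_1,\alpha_1,\beta,\gamma)$, with the ordering requirement reading $\gamma_1<\beta_1<\alpha_1<0<\beta<\gamma$. Equations \eqref{R1:1} and \eqref{R1:3} become $\gamma_1^2-(R_\mu-R)\beta_1^2+(R_\mu-R-1)\alpha_1^2=0$ and $R(\gamma_1^2-\gamma^2)+(R_\mu-R)(\beta_1^2-\beta^2)=0$, while \eqref{R1:2} gives $\gamma^2=(R_\mu-R)\beta^2$, i.e. $\gamma=\sqrt{R_\mu-R}\,\beta$. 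This last identity immediately lets me eliminate $\gamma$, and I expect that a suitable normalization — most naturally dividing through by $\beta$ (for the quadratic relations) and $\beta^3$ (for the cubic relations) and setting $u:=\gamma_1/\beta$, $v:=\beta_1/\beta$, $w:=\alpha_1/\beta$ — will turn \eqref{R1:1}-\eqref{R1:3} into algebraic relations expressing $u$ and $v$ in terms of $w\in(-\infty,0)$, and \eqref{R1:4}-\eqref{R1:5} into one equation determining $\beta$ in terms of $w$ together with one scalar closing equation $\Xi(R_\mu,w)=0$.

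Next I would analyse the scalar equation $\Xi(R_\mu,w)=0$ for $w<0$. The goal is to show it has exactly one root in the relevant range precisely when $R_\mu^+(R,\eta)<R_\mu<R_\mu^M(R,\eta)$, and none otherwise. Here I expect a strong structural parallel with the proof of Lemma~\ref{P:NSC}: one computes the boundary behavior of $\Xi(R_\mu,\cdot)$ as $w\to 0^-$ and as $w\to-\infty$ (or as $w$ approaches the endpoint where $v$ or $u$ degenerates), shows monotonicity of $\Xi(R_\mu,\cdot)$ via a derivative computation that itself reduces to the sign of an auxiliary function, and then reads off that a root exists iff a single inequality on $R_\mu$ holds. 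The endpoint cases should match the already-known thresholds: as $w\uparrow 0$ the configuration degenerates to the even profile of \eqref{eq:41}-\eqref{eq:43}, which by Lemma~\ref{L:Z1} exists exactly at $R_\mu=R_\mu^+(R,\eta)$; as $\alpha_1\uparrow\beta_1$ (i.e. $w\uparrow v$) the configuration degenerates to the connected-support profile of \eqref{d1:V1}-\eqref{d1:V4}, which by Lemma~\ref{P:NSC} has $\alpha=0$ exactly at $R_\mu=R_\mu^M(R,\eta)$. This is really the heart of the matter: the interval $(R_\mu^+,R_\mu^M)$ should emerge as precisely the set of $R_\mu$ for which the degenerate-at-$0$ branch lies strictly between these two endpoint profiles.

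Alternatively, and perhaps more cheaply, I would try to deduce the whole statement from Proposition~\ref{P:2} and Lemma~\ref{P:NSC} directly, since Proposition~\ref{P:2}(b) already asserts that for $R_\mu\in(R_\mu^+(R,\eta),R_\mu^M(R,\eta))$ the lower endpoint $\varphi(\underline\alpha_1)$ of the analytic curve is ``the solution of \eqref{R1:1}-\eqref{R1:5} given by Lemma~\ref{L:Nconn}'' with $\varphi_4(\underline\alpha_1)=0$. But this would be circular as stated, so the self-contained route is the scalar-reduction argument above; Proposition~\ref{P:2} then provides the needed consistency check that such a solution does occur as a curve endpoint for $R_\mu$ in the claimed range, and the non-existence for $R_\mu\ge R_\mu^M(R,\eta)$ can be cross-checked against the fact (Lemma~\ref{P:NSC}) that in that regime the curve endpoint is instead the $\beta_1=\alpha_1=\gamma_1$ configuration.

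The main obstacle I anticipate is purely computational: carrying out the elimination cleanly enough that the resulting scalar function $\Xi(R_\mu,w)$ is tractable, and in particular that its monotonicity in $w$ can be established by a manageable derivative computation (as happened for $\xi_0$ and $\xi_1$ in Lemma~\ref{P:NSC}). A secondary subtlety is keeping track of the strict inequalities in \eqref{monteverdi} — one must verify that the root $w$ produced actually satisfies $v<w<0$ and that $u<v$, i.e. $\gamma_1<\beta_1<\alpha_1<0$, rather than collapsing onto one of the two degenerate endpoint profiles; this is exactly where the open interval $(R_\mu^+,R_\mu^M)$, as opposed to its closure, comes from, and it must be checked at both ends that the limiting configuration leaves the admissible set.
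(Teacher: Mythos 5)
Your plan is essentially the paper's proof. The paper sets $\alpha=0$, normalizes by $\beta$ (introducing $x=\gamma_1/\beta$, $y=\beta_1/\beta$, $z=\alpha_1/\beta$, $t=\gamma/\beta$), solves the quadratic relations to get $t=\sqrt{R_\mu-R}$ and $x,z$ as functions of $y\in(y_0,-1)$, eliminates $\beta$ from the two cubic relations to get a scalar equation $\xi_3(R_\mu,y)=0$, shows $\xi_3(R_\mu,\cdot)$ is increasing, and reads off the endpoint signs $\xi_3(R_\mu,-1)$ (governed by $R_\mu^+$, where $z\to0$, i.e.\ the even-profile degeneration you describe) and $\xi_3(R_\mu,y_0)$ (governed by $R_\mu^M$, where $x\to y$, i.e.\ the connected-support degeneration you describe) — exactly your scalar-reduction-plus-monotonicity-plus-endpoint strategy, differing only in which of the three ratios is retained as the free scalar variable.
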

%%%%%%%%%%%%%%%%%%%%%%%%%%%%%%%%%%%%%%%%%%%%

\begin{proof} Let $R_\mu>R+1$ be given (recall that we are in \textbf{Case~(II-a)}) and 
pick a solution $(\gamma_1,\beta_1,\alpha_1,\alpha,\beta,\gamma)$ of \eqref{R1:1}-\eqref{R1:5} satisfying \eqref{monteverdi}. It is useful to define the variables
\begin{align}\label{nv3}
x:=\frac{\gamma_1}{\beta}<y:=\frac{\beta_1}{\beta}<z:=\frac{\alpha_1}{\beta}<0<1<t:=\frac{\gamma}{\beta}.
\end{align}
Dividing the equations \eqref{R1:1}-\eqref{R1:3} by $\beta^2$ and the equations \eqref{R1:4}-\eqref{R1:5} by $\beta^3$ we obtain
that $t=(R_\mu-R)^{1/2} $ and $(x,y,z)$ solves the following system:
\begin{eqnarray}
x^2-(R_\mu-R)y^2+ (R_\mu-R-1) z^2&=&0\ ,\label{g3:1}\\
R x^2 + (R_\mu-R) y^2 - (1+R)(R_\mu-R)&=& 0\ ,\label{g3:2}\\
(R_\mu-R)(1-y^3)+\frac{R(R_\mu-R-1)}{1+R}z^3&=&\frac{9\eta^2R_\mu}{\beta^3}\ ,\label{g3:3}\\
 (t^3-x^3)-(R_\mu-R)(1-y^3)-(R_\mu-R-1)z^3&=&\frac{9R_\mu}{\beta^3}\ .\label{g3:4}
\end{eqnarray}
We may extract now $x$ and $z$ from \eqref{g3:1} and \eqref{g3:2} and find
 \begin{align}\label{xy3}
  x=-\sqrt\frac{R_\mu-R}{R}\sqrt{(1+R)-y^2}\qquad\text{and}\qquad z=-\sqrt\frac{(1+R)(R_\mu-R)}{R(R_\mu-R-1)}\sqrt{y^2-1}.
 \end{align}
We note that $x$ and $z$ are well-defined and satisfy $x<y<z$ exactly when  $y\in \left(y_0,-1\right)$ where
$$ 
y_0 := -\sqrt{\frac{(1+R)(R_\mu-R)}{R_\mu}} \ .
$$
Eliminating $\beta^3$ from \eqref{g3:3}-\eqref{g3:4} and using \eqref{xy3} we are left with a single equation $\xi_3(R_\mu,y)=0$ for $y$ 
where $\xi_3:(R+1,\infty)\times (y_0,-1)\to\R$ is defined by
\begin{align*}
 \xi_3(R_\mu,y):=&(1+\eta^2)(R_\mu-R)y^3+\eta^2\left(\frac{R_\mu-R}{R}\right)^{3/2}\left( (1+R)-y^2 \right)^{3/2}\\
 &+ \left( R+\eta^2(1+R) \right) \left(\frac{R_\mu-R}{R}\right)^{3/2} \sqrt{\frac{1+R}{R_\mu-R-1}} \left( y^2-1 \right)^{3/2}\\
 &+\eta^2(R_\mu-R)^{3/2}-(1+\eta^2)(R_\mu-R).
\end{align*}
Consequently any solution $(\gamma_1,\beta_1,\alpha_1,\alpha,\beta,\gamma)$ of \eqref{R1:1}-\eqref{R1:5} satisfying \eqref{monteverdi} 
provides a solution $y\in (y_0,-1)$ of $\xi_3(R_\mu,y)=0.$ 
Conversely, if $y\in (y_0,-1)$ is a solution of the equation $\xi_3(R_\mu,y)=0,$ then we set $t=(R_\mu-R)^{1/2} $ and define $x$ and $z$ by \eqref{xy3},
$\beta$ by \eqref{g3:3}, and $(\gamma_1,\beta_1,\alpha_1,\gamma)$ by \eqref{nv3}.
The sextuplet $(\gamma_1,\beta_1,\alpha_1,\alpha,\beta,\gamma)$ thus constructed is a solution of \eqref{R1:1}-\eqref{R1:5} satisfying \eqref{monteverdi}.
In addition, we infer from \eqref{g3:3} and \eqref{xy3} that
\begin{equation}
\frac{9 \eta^2 R_\mu}{\beta ^3} = B(y) := (R_\mu-R) \left( 1-y^3 \right) - (R_\mu-R)^{3/2} \sqrt{\frac{1+R}{R(R_\mu-R-1)}} \left( y^2 - 1 \right)^{3/2}\ . \label{beethoven}
\end{equation}
Then
$$
B'(y) = - 3 y (R_\mu-R) B_1(y) \;\;\text{ with }\;\; B_1(y) := y + \sqrt{\frac{(1+R)(R_\mu-R)}{R(R_\mu-R-1)}} \left( y^2 - 1 \right)^{1/2}\ .
$$
Since $B_1'(y)<0$ for $y\in (y_0,-1)$, we deduce that 
$$
-1 = B_1(-1) < B_1(y) < B_1(y_0) = 0\ , \quad y\in (y_0,-1)\ ,
$$
so that $B$ is decreasing on $(y_0,1)$. Owing to the monotonicity of the left-hand side of \eqref{beethoven}, we conclude that there is a one-to-one
correspondence between the  solutions $(\gamma_1,\beta_1,\alpha_1,\alpha,\beta,\gamma)$ of \eqref{R1:1}-\eqref{R1:5} satisfying \eqref{monteverdi}
and the solutions $y\in (y_0,-1)$ of $\xi_3(R_\mu,y)=0.$

We now proceed to determine the latter. To this end, notice that
\begin{align*}
 \xi_3(R_\mu,-1) & = 2\eta^2(R_\mu-R)\left(\sqrt{R_\mu-R}-\frac{1+\eta^2}{\eta^2}\right) \\
& = 2\eta^2(R_\mu-R)\left(\sqrt{R_\mu-R}- \sqrt{R_\mu^+(R,\eta)-R}\right),\\
\xi_3(R_\mu,y_0) & = (R_\mu-R)\left[\sqrt{R_\mu-R}\left(\eta^2-\sqrt{\frac{1+R}{R_\mu}}\right)-(1+\eta^2)\right] \\
& = \frac{\xi_0(R_\mu,0)}{1+R},
\end{align*}
where $\xi_0(R_\mu,\cdot)$ is defined in Lemma~\ref{P:NSC}, see also \eqref{0}. We infer from the proof of Lemma~\ref{P:NSC} that
\begin{equation}
\label{Le}
\xi_3(R_\mu^M(R,\eta), y_0)=0\ , \quad \xi_3(R_\mu, y_0) \left( R_\mu - R_\mu^M(R,\eta) \right) >0 \;\text{ for }\; R_\mu\ne R_\mu^M(R,\eta)\ ,
\end{equation}
while it is  easy to see from the above formula that
\begin{equation}
\label{Ra}
\xi_3(R_\mu^+(R,\eta), -1)=0\ , \quad \xi_3(R_\mu, -1)  \left( R_\mu  - R_\mu^+(R,\eta)\right) >0 \;\text{ for }\; R_\mu\ne R_\mu^+(R,\eta)\ .
\end{equation}
Moreover, we have that $\p_y\xi_3(R_\mu,y)=3y^2\xi_4(R_\mu,1/y^2),$ with $\xi_4:(R+1,\infty)\times(1/y_0^2,1)\to\R$ being defined by
 \begin{align*}
\xi_4(R_\mu,r):= & \ (1+\eta^2)(R_\mu-R) +\eta^2\left(\frac{R_\mu-R}{R}\right)^{3/2}\sqrt{(1+R)r-1}\\
 & - \left( R+\eta^2(1+R) \right) \left(\frac{R_\mu-R}{R}\right)^{3/2} \sqrt{\frac{1+R}{R_\mu-R-1}}\ \sqrt{1-r}\ .
 \end{align*}
A simple computation reveals that $\xi_4$ is increasing with $\xi_4(R_\mu,r)\ \to \ 0$ as $r\to 1/y_0^2.$ Consequently, $\xi_3(R_\mu,\cdot)$ is an increasing function which, together with \eqref{Le} and \eqref{Ra}, gives the expected result.
\end{proof}

%%%%%%%%%%%%%%%%%%%%%%%%%%%%%%%%%%%%%%%%
%%%%%%%%%%%%%%%%%%%%%%%%%%%%%%%%%%%%%%%%
\bibliographystyle{siam}
\bibliography{BP}
%%%%%%%%%%%%%%%%%%%%%%%%%%%%%%%%%%%%%%%%
%%%%%%%%%%%%%%%%%%%%%%%%%%%%%%%%%%%%%%%%

\end{document}